\newif\iffinalrun
  \newcommand{\need}[1]{}
  \newcommand{\mar}[1]{}
  \newcommand{\need}[1]{{\tiny *** #1}}
  \newcommand{\mar}[1]{\marginpar{\raggedright\tiny squirrel: #1}}
\newcommand{\Xreg}{X_{\mathrm{reg}}}
\newcommand{\uT}{\underline{T}}
\newcommand{\uB}{\underline{B}}
\newcommand{\uG}{\underline{G}}
\newcommand{\Stab}{\operatorname{Stab}}
\newcommand{\ch}{\operatorname{ch}}
\newcommand{\cXbar}{\overline{\cX}}
\newcommand{\WBM}{\mathrm{W}_{\mathrm{BM}}}
\newcommand{\WBT}{\mathrm{W}_{\mathrm{BT}}}
\newcommand{\Wexpl}{\mathrm{W}_{\mathrm{expl}}}
\newcommand{\Wobv}{\mathrm{W}_{\mathrm{obv}}}
\newcommand{\Wcrisexists}{\mathrm{W}^\exists_{\mathrm{cris}}}
\newcommand{\Wcrisforall}{\mathrm{W}^\forall_{\mathrm{cris}}}
\def\Lbar{\overline{L}}
\newcommand{\Xbar}{\overline{X}}
\newcommand{\Fpbarx}[1]{\Fbar{}_p^{#1}}
\newcommand{\Qpbarx}[1]{\Qbar{}_p^{#1}}
\newcommand{\Zpbarx}[1]{\Zbar{}_p^{#1}}
\def\jbar{\overline{j}}
\def\dotw{\dot {w}}
\def\Bhat{\widehat{B}}
\def\Ghat{\widehat{G}}
\def\Hhat{\widehat{H}}
\def\That{\widehat{T}}
\def\Uhat{\widehat{U}}
\def\Mhat{\widehat{M}}
\def\Delhat{\widehat{\Delta}}
\def\can{\mathrm{can}}
\def\expll{\mathrm{expl}}
\def\ve{{\varepsilon}}
\newcommand{\isorho}{\stackrel{\rho}{\To}}
\newcommand{\isorhoK}{\stackrel{\rho_K}{\To}}
\newcommand{\isotau}{\stackrel{\tau}{\To}}
\newcommand{\isotauK}{\stackrel{\tau_K}{\To}}
\let\emptyset\varnothing
\newcommand{\JH}{\operatorname{JH}}
\def\A{\mathbb A}
\def\C{\mathbb C}
\def\F{\mathbb F}
\def\Q{\mathbb{Q}}
\def\R{\mathbb{R}}
\def\T{\mathbb{T}}
\def\Z{\mathbb{Z}}
\def\Fbar{\overline{\F}}
\def\Qbar{\overline{\Q}}
\def\Zbar{\overline{\Z}}
\def\Pr{\mathbb{P}}
\def\m{\mathfrak m}
\newcommand{\wh}{\widehat}
\def\Vbar{\overline{V}}
\def\chibar{\overline{\chi}}
\def\id{\mathrm{id}}
\def\ab{\mathrm{ab}}
\def\ss{\mathrm{ss}}
\def\GL{\mathrm{GL}}
\def\Gal{\mathrm{Gal}}
\def\Aut{\mathrm{Aut}}
\DeclareMathOperator{\Sym}{Sym}
\def\Ext{\mathrm{Ext}}
\def\Art{\mathop{\mathrm{Art}}\nolimits}
\def\Hom{\mathop{\mathrm{Hom}}\nolimits}
\def\Spf{\mathop{\mathrm{Spf}}\nolimits}
\def\Frob{\mathop{\mathrm{Frob}}\nolimits}
\def\Ind{\mathop{\mathrm{Ind}}\nolimits}
\def\rhobar{\overline{\rho}}
\def\W{\mathrm{W}}
\def\WD{\mathrm{WD}}
\def\m{\mathfrak{m}}
\def\plim#1{\displaystyle \lim_{\myatop{\longleftarrow}{#1}}}
\def\ev{\mathrm{ev}}
\newcommand{\onto}{\twoheadrightarrow}
\newcommand{\into}{\hookrightarrow}
\newcommand{\ua}{\uparrow}
\newcommand{\To}{\longrightarrow}
\newcommand{\isoto}{\stackrel{\sim}{\To}}
\newlength{\ownl}
\newcommand{\diag}{{\operatorname{diag}}}
\newcommand{\Fr}{{\operatorname{Fr}}}
\newcommand{\gr}{{\operatorname{gr}\,}}
\newcommand{\rec}{{\operatorname{rec}}}
\newcommand{\Res}{{\operatorname{Res}}}
\newcommand{\wt}[1]{\widetilde{#1}}
\newcommand{\GSp}{\operatorname{GSp}}
\newcommand{\LG}{{{}^{L}G}}
\newcommand{\LH}{{{}^{L}H}}
\newcommand{\der}{{\operatorname{der}}}
\newcommand{\reg}{{\operatorname{reg}}}
\newcommand{\semis}{{\operatorname{ss}}}
\newcommand{\G}{{\mathbb{G}}}
\newcommand{\CC}{{\mathcal{C}}}
\newcommand{\CH}{{\mathcal{H}}}
\newcommand{\CS}{{\mathcal{S}}}
\newcommand{\cA}{\mathcal{A}}
\newcommand{\cC}{\mathcal{C}}
\renewcommand{\cH}{\mathcal{H}}
\newcommand{\cO}{\mathcal{O}}
\renewcommand{\O}{\cO}
\newcommand{\cS}{\mathcal{S}}
\newcommand{\cV}{\mathcal{V}}
\newcommand{\cW}{\mathcal{W}}
\newcommand{\cX}{\mathcal{X}}
\newcommand{\cZ}{\mathcal{Z}}
\newcommand{\barK}{\overline{{K}}}
\newcommand{\tR}{\widetilde{{R}}}
\newcommand{\tW}{\widetilde{{W}}}
\newcommand{\ta}{\widetilde{{a}}}
\newcommand{\gammabar   }{\overline{\gamma}}
\newcommand{\varepsilonbar  }{\overline{\varepsilon}}
 \newcommand{\sigmabar   }{\overline{\sigma}}
 \newcommand{\psibar   }{\overline{\psi}}
\DeclareMathOperator{\conjj}{conj}
\def\RCS$#1: #2 ${\expandafter\def\csname RCS#1\endcsname{#2}}
\DeclareMathOperator{\sgn}{sgn}
\newcommand{\p}{\mathfrak{p}}
\newcommand{\bb}{\mathbb}
\DeclareMathOperator{\FL}{FL}
\renewcommand{\o}[1]{\overline{#1}}
\newcommand{\rbar}{{\bar{r}}}
\newcommand{\HT}{\operatorname{HT}}
 \newcommand{\Qp}{{\Q_p}}
\newcommand{\Zp}{{\Z_p}}
\newcommand{\Qpbar}{\overline{\Q}_p}
\newcommand{\Qpbartimes}{{\overline{\Q}_p^\times}}
\newcommand{\Zpbar}{\overline{\Z}_p}
\newcommand{\Fpbar}{\overline{\F}_p}
\newcommand{\Fp}{{\F_p}}
\newcommand{\emb}{\kappa}
\newcommand{\embb}{\overline{\emb}}
\DeclareMathOperator{\BM}{BM}
\newcommand{\dual}{^\vee}
\newcommand{\nua}{\!\not\,\ua}
\newcommand{\uua}{\ua\!\ua}
\renewcommand{\AA}{\mathcal A}
\newcommand{\upl}{\textup{(}\hskip -0.0833em\nolinebreak}
\newcommand{\upr}{\nolinebreak\hskip 0.0833em\textup{)}}
\newcommand{\HC}{\cC}
\newcommand{\HA}{\cA}
\newcommand{\SK}{S_K}
\newcommand{\Sk}{S_k}
\DeclareMathOperator{\obv}{obv} 
\DeclareMathOperator{\expl}{expl}
\theoremstyle{plain} 
\newtheorem{lem}[equation]{Lemma}
\newtheorem{lemma}[equation]{Lemma}
\newtheorem{prop}[equation]{Proposition}
\newtheorem{thm}[equation]{Theorem}
\newtheorem{cor}[equation]{Corollary}
\newtheorem{conj}[equation]{Conjecture}
\newtheorem{question}[equation]{Question}
\theoremstyle{definition}
\newtheorem{defn}[equation]{Definition}
\newtheorem{hyp}[equation]{Hypothesis}
\theoremstyle{remark}
\newtheorem{rem}[equation]{Remark}
\newtheorem{remark}[equation]{Remark}
\newtheorem{ex}[equation]{Example}
\newtheorem{example}[equation]{Example}
\numberwithin{equation}{subsection}
\numberwithin{figure}{subsection}
\newcommand{\SW}{\W}
\newcommand{\Xn}{X^{(n)}_1}
\newcommand{\XnSk}{(\Xn)^{S_k}}
\newcommand{\SWX}{\XnSk/{\sim}}
\newcommand{\Cr}{\mathrm{cr}}
\newcommand{\myatop}[2]{\genfrac{}{}{0pt}{}{#1}{#2}}
\begin{document}

\title{General Serre weight conjectures}
\author{Toby Gee} \address{Department of
  Mathematics, Imperial College London}\email{toby.gee@imperial.ac.uk}
\thanks{The first author was 
  partially supported by a Leverhulme Prize, EPSRC grant EP/L025485/1, Marie Curie Career
  Integration Grant 303605, and by
  ERC Starting Grant 306326.}
\author{Florian Herzig}
\address{Department of Mathematics, University of Toronto}
\email{herzig@math.toronto.edu}
\thanks{The second author was partially supported by NSF grants
  DMS-0902044 and DMS-0932078,
  a Sloan Fellowship, and an NSERC grant.}
\author{David Savitt} 
\address{Department of Mathematics, Johns Hopkins University}\email{savitt@math.jhu.edu}
\thanks{The third author was partially
  supported by NSF  grant DMS-0901049 and NSF CAREER grant
  DMS-1054032.}

\begin{abstract} We formulate a number of related generalisations of the weight
  part of Serre's conjecture to the case of $\GL_n$ over an arbitrary number
  field, motivated by the formalism of the Breuil--M\'ezard conjecture. We
  give evidence for these conjectures, and discuss their relationship to
  previous work. We generalise one of these conjectures to the case of
  connected reductive groups which are unramified over $\Qp$, and we also
  generalise the second author's previous conjecture for $\GL_n/\Q$ to this
  setting, and show that the two conjectures are generically in agreement.
\end{abstract}

\maketitle

\setcounter{tocdepth}{1}
\tableofcontents

\section{Introduction}
\label{sec:introduction}

The goal of this paper is to formulate a number of related generalisations of the weight
  part of Serre's conjecture to the case of $\GL_n$ over an arbitrary number
  field. Since this is a problem with a long and involved history and
  since we work in significant generality in this paper, we
  begin with an extended introduction, in which we try to summarize
  this history (Sections~\ref{sec:serr-conj-gl_2} to~\ref{sec:shad-weights-cryst}) and give a detailed overview of the approach that we
  have taken (Sections~\ref{sec:this-paper} and~\ref{sec:summary-our-weight}). 

\subsection{Serre's conjecture for \texorpdfstring{$\GL_2$}{GL(2)} over \texorpdfstring{$\Q$}{Q}}
\label{sec:serr-conj-gl_2}

Let $p$ be a prime.
 Serre's conjecture, as originally formulated in 1973 (see \cite[p.\
 9]{MR0406931} and~\cite[\S3]{MR0382173}), 
predicted that every odd irreducible continuous representation 
$\rbar: G_{\mathbb Q} \to \GL_2(\Fbar_p)$ which is unramified outside
$p$ has a twist by a power of the mod $p$ cyclotomic character 
which arises from a cuspidal modular Hecke
eigenform of level $1$ and weight at most $p+1$. The theory of the
$\theta$-operator then implies that $\rbar$ itself is modular of weight at
most $p^2-1$.
This was a bold conjecture, for at the time there was little evidence 
outside of the cases $p=2,3$. In those cases, since there are no
cusp forms of level~$1$ and weight less than $12$ the conjecture simply
predicts that there are no such representations. This can be established via
discriminant bounds, as in~\cite{MR1299740} and~\cite[p.\
710]{MR926691}.

Serre later formulated (\cite{MR885783}) a version of the conjecture with no
restriction on the ramification of $\rbar$, 
which included a precise recipe for both the weight and 
the level of a modular eigenform giving rise to $\rbar$. In this way
the conjecture became computationally verifiable, and was tested in a
number of cases in which $\rbar$ has small image.

At least as far back as~\cite{MR0382173}, it had been known that in the theory
of mod~$p$ modular forms, one can trade off the weight and level (and
Nebentypus) at $p$. For this reason Serre restricted his attention to modular forms of level
prime to $p$. He conjectured that the minimal possible level of the
candidate eigenform giving rise to $\rbar$ could be taken
to be the prime-to-$p$ Artin conductor of $\rbar$, while
his conjectural recipe for the minimal possible weight
of the eigenform (in prime-to-$p$-level) was more
intricate, and depended on the ramification behaviour of $\rbar$
at~$p$.



The part of Serre's conjecture which predicts that every odd
irreducible continuous representation $\rbar: G_{\Q} \to
\GL_2(\Fpbar)$ arises from some modular eigenform
is often referred to
as ``the weak form of Serre's conjecture'', while the form
of the conjecture that  includes the precise recipes for
the minimal weight and level is called ``the strong
form of Serre's conjecture''.
Much of the early work concerning Serre's conjecture was focussed 
on proving that the weak form implies the strong form, and it is natural to
expect that work on generalisations of Serre's conjecture will follow the same
pattern. (Indeed, the eventual proof of Serre's conjecture
\cite{MR2551763,MR2551764,MR2551765} relied on the work
that had been done to prove the equivalence of the weak and strong forms.)

Serre's conjectural recipe for the minimal weight of an eigenform of
prime-to-$p$ level giving rise to
$\rbar$ was more subtle than the recipe for the level, but essentially amounted to providing the minimal
weight $k$ that was consistent with the known properties of the restriction to a
decomposition group at $p$ of the Galois representations associated to
eigenforms. To make this precise one nowadays uses the language of $p$-adic
Hodge theory. Given a modular eigenform $f$ of weight $k \ge 2$ and
prime-to-$p$ level, the
associated $p$-adic Galois representation $r_f : G_{\Q} \to
\GL_2(\Qpbar)$ has the property that the restriction $r_f|_{G_{\Qp}}$ to
a decomposition group at $p$ is crystalline with Hodge--Tate weights $k-1$
and $0$. Therefore any results on the reduction mod~$p$ of crystalline
representations of $G_{\Qp}$ with Hodge--Tate weights $k-1$ and $0$, such as the early
results of Deligne and Fontaine--Serre when $k \le p$, give
information (purely in 
terms of $\rbar|_{G_{\Qp}}$) about the possible weights $k$ of the modular eigenforms giving
rise to 
$\rbar$. 

To give a concrete example, let $\varepsilon$ denote the cyclotomic
character, and $\varepsilonbar$ its reduction mod $p$.  Suppose that \begin{equation}\label{eq:easy-rbar} \rbar|_{I_{\Qp}} \cong
\begin{pmatrix}
  \varepsilonbar^{k-1} & * \\ 0 &  1
\end{pmatrix}\end{equation}
where $I_{\Qp}$
is the inertia group at $p$, and $2 < k
< p+1$. Then the minimal weight predicted by Serre's recipe is
$k$. Indeed, it is known that
 any crystalline representation $\rho : G_{\Qp} \to
\GL_2(\Qpbar)$ with Hodge--Tate weights $k-1$ and $0$ (with $k$ in the
given range) and whose reduction mod $p$ is reducible must be an
extension of an unramified character by an unramified twist of
$\varepsilon^{k-1}$, and therefore the shape of $\rhobar|_{I_{\Qp}}$ must be 
as on the right-hand side of \eqref{eq:easy-rbar}.

We make one further remark about the above example. Suppose that the
extension class $*$ vanishes, and assume for simplicity that $k < p-1$. Serre observed that \begin{equation}\label{eq:companion} (\rbar \otimes
\varepsilonbar^{1-k})|_{I_{\Qp}} \cong \begin{pmatrix}
  \varepsilonbar^{p-k} & 0 \\ 0 &  1
\end{pmatrix}\end{equation} and therefore has minimal weight $p+1-k$. Thus,
although Serre's conjecture predicts that any $\rbar$ has a twist
which is modular with weight at most $p+1$, in this split case there are
actually two such twists. This is the so-called  ``companion forms'' phenomenon.

\subsection{Serre weights}
\label{sec:serre-weights} 
We now explain a representation-theoretic reformulation of the weight
$k$ in Serre's conjecture. This optic first appears
in the work of Ash--Stevens~\cite{ashstevens}, and both simplifies the original weight recipe for
$\GL_2$ over $\Q$ and has proved crucial for formulating the weight part of
Serre's conjecture for other groups and over other fields. 

The Eichler--Shimura isomorphism allows one to reinterpret Serre's
conjecture in terms of the cohomology of arithmetic groups. 
If $V$ is an $\Fpbar$-representation of
$\GL_2(\Fp)$ and $N$ is prime to $p$, then we have a natural action of the Hecke algebra of
$\Gamma_1(N)$ on $H^1(\Gamma_1(N),V)$, and so it makes sense to speak
of a continuous representation $\rbar :  G_{\Q} \to \GL_2(\Fpbar)$
being associated to an eigenclass in that cohomology group. If $\rbar$
is odd and irreducible, then the Eichler--Shimura isomorphism implies that $\rbar$ is
modular of weight $k$ and prime-to-$p$ level $N$ if and only if $\rbar$ is
associated to an eigenclass in $H^1(\Gamma_1(N),\Sym^{k-2} \Fpbarx{2})$, where
$\Sym^{k-2} \Fpbarx{2}$ is the $(k-2)$th symmetric power of the standard
representation of $\GL_2(\Fp)$ on $\Fpbarx{2}$. 
 By d\'evissage one deduces that
$\rbar$ is modular of weight $k$ and prime-to-$p$ level $N$ if and
only if $\rbar$ is associated to an eigenclass in $H^1(\Gamma_1(N),V)$
for some Jordan--H\"older factor $V$ of $\Sym^{k-2}
\Fpbarx{2}$. (Recall that the representation $\Sym^{k-2} \Fpbarx{2}$ is
reducible as soon as $k > p+1$.) 

It is then natural to associate to $\rbar$ the set
$W(\rbar)$ of irreducible $\Fpbar$-representations $V$ of $\GL_2(\Fp)$
such that $\rbar$ is associated to an eigenclass in
$H^1(\Gamma_1(N),V)$ for some prime-to-$p$ level $N$. Thanks to the
argument in the previous paragraph, the (finite) set $W(\rbar)$ determines all weights in which $\rbar$
occurs in prime-to-$p$ level, and not just the minimal such weight. 
For this reason such representations of $\GL_2(\Fp)$
are now often referred to as \emph{Serre weights}, or even simply
\emph{weights}.

To illustrate, suppose once again that $\rbar$ is as in
\eqref{eq:easy-rbar}, with $2 < k < p-1$. If the extension class $*$ is non-split, then
we have $W(\rbar) = \{ \Sym^{k-2} \Fpbarx{2}\}$. However, in the
companion forms case where the
extension class $*$ is split, we have \[ W(\rbar) = \{ \Sym^{k-2}
\Fpbarx{2}, \det\nolimits^{k-1} \otimes \Sym^{p-1-k} \Fpbarx{2}  \}\]
instead. Here the second weight comes from observing via
\eqref{eq:companion} that the weight $\Sym^{p-1-k} \Fpbarx{2}$ should
lie in $W(\rbar \otimes \varepsilonbar^{1-k})$, and then undoing the twist.


 Serre in fact  asked~\cite[\S3.4]{MR885783} whether a ``mod-$p$ Langlands philosophy'' exists
which would give a more natural definition of the weight, and which would
allow for generalisations of the conjecture to other groups and number
fields. This is now known to be true for $\GL_2$ over $\Q$
(\cite{MR2642409,emerton2010local}) and the set $W(\rbar)$
intervenes naturally from this point of view (see for example
\cite{MR2827792}). There is considerable evidence that such a
philosophy remains true in more
general settings, although it is far from completely developed at this
point. Indeed the results to date on
generalisations of the weight part of Serre's conjecture have been a major
guiding influence on the development of the mod $p$ Langlands program, rather
than a consequence of it.

\subsection{Early generalisations}
\label{sec:early-gener}

Formulations of very general versions of the weak conjecture have been known to the
experts for many years; the main issue is to define the correct generalisation
of ``odd'', for which see for example~\cite{grossodd}
and~\cite[\S6]{MR3028790}. (If one wishes to consider automorphic forms or
cohomology classes for groups which are not quasi-split, it is also necessary to
impose conditions on the ramification of
$\rbar$ at places at which the underlying group is ramified; see~\cite[Def.\
4.5.3]{geekisin} for the case of quaternion algebras.) Moreover,
granting an understanding of classical local Langlands and its
relationship to local-global compatibility, it is reasonably
straightforward to generalise the definition of the (prime-to-$p$) level in terms of the
prime-to-$p$ ramification of $\rbar$. For example, for generalisations to
$\GL_n$ over arbitrary number fields, one again expects to take the level to be
the prime-to-$p$ Artin conductor of $\rbar$; see e.g.~\cite[\S2.2]{bib:ADP}
for the case that the number field is~$\Q$. 

However, formulating the weight part of the conjecture in any generality
has proved difficult.  We stress at the outset that, in keeping with
the mod $p$ Langlands philosophy, one conjectures that the set of Serre weights associated
to $\rbar$ depends only on the restrictions of $\rbar$ to
decomposition groups at places dividing $p$. For this reason all of
the 
weight predictions that we discuss in this paper are formulated in
terms of local Galois representations.

For Hilbert modular forms over a totally real
field $F$ in which~$p$ is unramified, a precise conjecture was formulated by
Buzzard--Diamond--Jarvis in~\cite{bdj}. It was essential for
\cite{bdj} to use the
``Serre weight'' point of view, since weights of Hilbert modular forms
are $[F:\Q]$-tuples of integers and so  there isn't a natural notion of
minimal weight of $\rbar$. In this context a Serre weight
is an irreducible $\Fpbar$-representation of $\prod_{v \mid p}
\GL_2(k_v)$, where $k_v$ is the residue field of the completion
$F_v$. The recipe of \cite{bdj} predicts the set of weights
$W(\rbar)$ in terms of the Hodge--Tate weights of crystalline lifts of
$\rbar|_{G_{F_v}}$ for $v \mid p$, in line with the discussion at the
end of Section~\ref{sec:serr-conj-gl_2}. The prediction of \cite{bdj}
is now known to be correct \cite{gls12,geekisin}.

In another direction, the study of the weight part of Serre's
conjecture for $\GL_n$ over $\Q$ was initiated by Ash and his
collaborators \cite{bib:ASinn,bib:ADP}, with a particular focus on $\GL_3$. 
They gave a combinatorial recipe for a predicted set of weights, in the spirit of Serre's original
recipe but using the language of Serre weights. The combinatorial
recipe takes as input the tame inertia weights of $\rbar|_{I_{\Qp}}$
(the base $p$ ``digits'' of the exponents when
$\rbar|_{I_{\Qp}}$ is written as a successive extension of powers of
fundamental characters),
much as in the examples~\eqref{eq:easy-rbar}, \eqref{eq:companion} and their
reformulations in Section~\ref{sec:serre-weights}.

In the case where $\rbar|_{G_{\Qp}}$ is semisimple, the thesis
\cite{herzigthesis} of the second-named author gave a
representation-theoretic recipe for a predicted set of weights, which
for generic $\rbar|_{G_{\Qp}}$ should be the full set of weights.  The
prediction is made in terms of the reduction mod~$p$ of
Deligne--Lusztig representations, and involves a mysterious
involution $\mathcal{R}$ on the set of Serre weights. In particular
\cite{herzigthesis} predicts some weights for $\GL_3$ that are not
predicted by \cite{bib:ADP}, and that were subsequently
computationally confirmed (in some concrete cases) by Doud and Pollack.
(We stress that \cite{bib:ASinn,bib:ADP} did not claim to predict the
full set of weights for~$\rbar$.)

\subsection{The Breuil--M\'ezard conjecture}
\label{sec:breu-mezard-conj}
We now turn to the Breuil--M\'ezard conjecture, which gives a new way
of looking at the weight part of Serre's conjecture. 

Originally the
Breuil--M\'ezard conjecture arose in the context of attempts to
generalise the Taylor--Wiles method \cite{Taylor--Wiles}, and was also one of the starting points of the $p$-adic Langlands
program. It was clear early on that understanding the geometry of deformation spaces
of local mod $p$ Galois representations with prescribed $p$-adic
Hodge-theoretic conditions was essential for proving automorphy
lifting theorems; the earliest automorphy lifting theorems required the smoothness
of such deformation spaces. The Breuil--M\'ezard conjecture gives a
measure of the complexity of these deformation spaces, in terms of the modular
representation theory of $\GL_2$. 

We state a version of this conjecture for $\GL_n$ over $\Qp$,
following~\cite{emertongeerefinedBM}. We need the following
data and terminology:\
\begin{itemize}
\item a continuous representation $\rhobar : G_{\Qp} \to
  \GL_n(\Fpbar)$,
\item a \emph{Hodge type} $\lambda$, which in this setting is an $n$-tuple 
  $\lambda = (\lambda_1,\ldots,\lambda_n)$ of integers with $\lambda_1 \ge \dots 
  \ge \lambda_n$, and
\item an \emph{inertial type} $\tau$, i.e.\ a representation $I_{\Qp} \to
  \GL_n(\Qpbar)$ with open kernel and that can be extended to a
  representation of $G_{\Qp}$. 
\end{itemize}
Kisin \cite{kisindefrings} associates to this data a lifting ring $R^{\lambda,\tau}_{\rhobar}$
whose characteristic $0$ points parameterise the lifts of $\rhobar$
that are potentially crystalline with  type $\tau$ and Hodge--Tate
weights \begin{equation}\label{eq:shift}\lambda_1 + n-
  1,\dots,\lambda_{n-1}+1,\lambda_n.\end{equation} The Breuil--M\'ezard conjecture predicts the Hilbert--Samuel
multiplicity $e(R^{\lambda,\tau}_{\rhobar} \otimes_{\Zpbar} \Fpbar)$
of $R^{\lambda,\tau}_{\rhobar} \otimes_{\Zpbar} \Fpbar$, as follows. 

The inertial local Langlands correspondence (cf.\ Henniart's appendix
to \cite{MR1944572}) associates to $\tau$ a finite-dimensional
smooth $\Qpbar$-representation $\sigma(\tau)$ of $\GL_n(\Zp)$. On the other
hand associated to $\lambda$ is the irreducible algebraic
representation $W(\lambda)$ of $\GL_n(\Qp)$ of highest weight
$\lambda$.

\begin{conj}[The generalised Breuil--M\'ezard conjecture]\label{conj:intro-bm}
  There exist non-negative integers $\mu_V(\rhobar)$, indexed by Serre
  weights $V$, such that for all Hodge types $\lambda$ and inertial
  types $\tau$ we have
\[ e(R^{\lambda,\tau}_{\rhobar} \otimes_{\Zpbar} \Fpbar) = \sum_{V}  n_{\lambda,\tau}(V)
\mu_V(\rhobar)\]
where $ n_{\lambda,\tau}(V)$ is the multiplicity of $V$ in the
reduction modulo $p$ of $W(\lambda) \otimes_{\Qpbar} \sigma(\tau)$ \upl as
a $\GL_n(\Zp)$-representation\upr.
\end{conj}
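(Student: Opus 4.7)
\medskip

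\noindent\textbf{Proof proposal.} The plan is to follow the Taylor--Wiles--Kisin patching strategy, which is how the original Breuil--M\'ezard conjecture for $\GL_2/\Q_p$ was established (in its ``numerical'' form) by Kisin. I would first embed $\rhobar$ into a suitable global situation: choose a CM field $F/F^+$ with $F^+$ totally real in which $p$ splits, and a unitary group $G/F^+$ with a place $v \mid p$ such that $F_v \cong \Q_p$ and the completion is $\GL_n$, together with a globalisation $\rbar : G_F \to \GL_n(\Fpbar)$ of $\rhobar$ that is automorphic, absolutely irreducible, and satisfies the usual Taylor--Wiles hypotheses. The patched completed cohomology (or algebraic automorphic forms) construction then produces a finitely generated module $M_\infty$ over the framed patched deformation ring $R_\infty \cong R^\square_{\rhobar}[[x_1, \ldots, x_h]]$ equipped with an action of $\GL_n(\Z_p)$ commuting with $R_\infty$.

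The next step is to exploit the key property of patching: for any continuous finite $\Zpbar$-lattice $\sigma^\circ$ in a $\GL_n(\Z_p)$-representation of the form $W(\lambda) \otimes \sigma(\tau)$, the module
\[
M_\infty(\sigma^\circ) := \Hom_{\GL_n(\Z_p)}^{\cont}(\sigma^\circ, M_\infty^\vee)^\vee
\]
is a maximal Cohen--Macaulay module over $R_\infty^{\lambda,\tau} := R_\infty \otimes_{R^\square_{\rhobar}} R^{\lambda,\tau}_{\rhobar}$, supported on its entire spectrum, by the classical local-global compatibility at $v$ (via the inertial local Langlands correspondence) together with the density of automorphic points. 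Consequently its Hilbert--Samuel multiplicity modulo $\varpi$ is a positive integer multiple of $e(R^{\lambda,\tau}_{\rhobar} \otimes \Fpbar)$, with the multiplier being an invariant $m(\rhobar)$ independent of $(\lambda,\tau)$. I would then define
\[
\mu_V(\rhobar) := \frac{1}{m(\rhobar)} \cdot e\bigl(M_\infty(V)\bigr)
\]
for each Serre weight $V$, where $M_\infty(V)$ denotes the patched module attached to $V$ (viewed as a $\GL_n(\Z_p)$-representation via inflation); these are non-negative integers by construction.

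To derive the multiplicity formula, reduce $\sigma^\circ$ modulo $\varpi$ and use the Jordan--H\"older filtration of $W(\lambda) \otimes \sigma(\tau) \otimes \Fpbar$. The functor $\sigma \mapsto e(M_\infty(\sigma))$ is additive on short exact sequences of $\GL_n(\Z_p)$-representations on $\Fpbar$-vector spaces, because $M_\infty$ is exact in the $\GL_n(\Z_p)$-variable (this is one of the standard outputs of the patching construction, essentially Nakayama plus flatness of $M_\infty$ over the appropriate Iwasawa algebra). Combining additivity with the identity $e(M_\infty(\sigma^\circ)/\varpi) = m(\rhobar) \cdot e(R^{\lambda,\tau}_{\rhobar}\otimes \Fpbar)$ and dividing by $m(\rhobar)$ yields exactly the desired formula $e(R^{\lambda,\tau}_{\rhobar} \otimes \Fpbar) = \sum_V n_{\lambda,\tau}(V)\, \mu_V(\rhobar)$.

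The main obstacle is producing the patched module $M_\infty$ with the required properties: this rests on a suitable $R = \T$ theorem in the Taylor--Wiles--Kisin framework at level of $\GL_n$ over a CM field, together with a local-global compatibility result at $p$ strong enough to identify the support of $M_\infty(\sigma^\circ)$ with all of $\Spec R_\infty^{\lambda,\tau}$. Both ingredients are delicate and genuinely global; in particular the independence of $\mu_V(\rhobar)$ from the global choices (which is needed to regard it as a purely local invariant of $\rhobar$) requires an automorphy lifting argument covering sufficiently many $(\lambda,\tau)$, which is what is not yet available in the generality considered here. A secondary difficulty is that for non-minimal deformation problems one must also track the auxiliary variables $x_1, \ldots, x_h$, but this is a standard adjustment that affects neither side of the formula.
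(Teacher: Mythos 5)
The statement you are addressing is a \emph{conjecture}, and the paper does not prove it; what you have written is a conditional strategy, not a proof, and you have in fact identified the fatal gap yourself in your final paragraph. Your outline is essentially the paper's own Proposition~\ref{prop: patching functors imply BM weights}: given a patching functor $M_\infty$ whose modules $M_\infty(L_{\lambda,\tau})$ are maximal Cohen--Macaulay with support equal to \emph{all} of $X_\infty(\lambda,\tau)$, the multiplicity formula follows by exactness of $M_\infty$, additivity of Hilbert--Samuel multiplicities along the Jordan--H\"older filtration of $L_{\lambda,\tau}\otimes_\cO\F$, and the rank-one (or rank-$m$) comparison $e(M_\infty(L_{\lambda,\tau})/\varpi)=e(R^{\lambda,\tau}_{\rhobar}/\varpi)$. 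So the formalism is right. But the input you need --- that the support of $M_\infty(L_{\lambda,\tau})$ is the whole of $\Spec R_\infty^{\lambda,\tau}$ for \emph{every} pair $(\lambda,\tau)$ --- is precisely the open problem: the Taylor--Wiles--Kisin construction only gives support equal to a union of irreducible components of the generic fibre, namely those containing an automorphic point, and upgrading this to full support amounts to an automorphy lifting statement for arbitrary potentially crystalline lifts (essentially the Fontaine--Mazur conjecture in this setting). By the results of~\cite{BLGGT} this is known when all potentially crystalline representations of type $(\lambda,\tau)$ are potentially diagonalisable, but that hypothesis cannot currently be verified beyond very restricted cases (e.g.\ $n=2$ with $\lambda=0$). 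Without it, your argument only produces the inequality $e(R^{\lambda,\tau}_{\rhobar}/\varpi)\ge\sum_V n_{\lambda,\tau}(V)\mu_V(\rhobar)$ for the $\mu_V$ you define, not the equality.

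Two secondary points. First, your normalisation $\mu_V(\rhobar)=e(M_\infty(V))/m(\rhobar)$ is not obviously an integer unless $m(\rhobar)=1$; in the minimal (rank-one) patching situation one simply takes $\mu_V(\rhobar)=e(M_\infty(V),\Xbar_\infty(V))$ and non-negativity and integrality are automatic. Second, the independence of the $\mu_V(\rhobar)$ from the global choices is not an extra difficulty to be resolved by further automorphy lifting: once the equations of the conjecture hold for a Breuil--M\'ezard system $\cS$ (a family of pairs $(\lambda,\tau)$ for which the map $(x_a)\mapsto(\sum_a n_{\lambda,\tau}(a)x_a)$ is injective), the $\mu_V(\rhobar)$ are uniquely determined by the purely local quantities $e(R^{\lambda,\tau}_{\rhobar}/\varpi)$, hence independent of the globalisation. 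The genuine obstruction is the support statement, and nothing in your proposal supplies it.
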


This conjecture was first formulated by
Breuil--M\'ezard~\cite{MR1944572} for $\GL_2$
with certain restrictions on $\lambda$ and $\tau$. In the special case
where the Serre weight $V$ is actually a Weyl module, and therefore lifts to
some $W(\lambda)$ in characteristic zero, taking $\tau$ trivial in
Conjecture~\ref{conj:intro-bm} gives an equality 
$\mu_V(\rhobar) = e(R^{\lambda,\mathrm{triv}}_{\rhobar}
\otimes_{\Zpbar} \Fpbar)$. For this reason we typically refer to the integers
$\mu_V(\rhobar)$ as multiplicities.

Based on some explicit
calculations, Breuil and M\'ezard
furthermore gave predictions for the multiplicities $\mu_V(\rhobar)$,
and observed a close connection between these multiplicities and the
weight part of Serre's original modularity conjecture:\ namely, that
one appeared to have $\mu_V(\rbar|_{G_{\Qp}}) > 0$ if and only if~$V$ was a
predicted Serre weight for $\rbar$. 

More recently, the first-named author and Kisin \cite{geekisin}
suggested that one could turn this around and use the Breuil--M\'ezard
conjecture to define the set of Serre weights $W_{\BM}(\rhobar) = \{ V
: \mu_V(\rhobar) > 0 \}$ associated to a local Galois
representation~$\rhobar$. One would then conjecture that $W(\rbar) =
W_{\BM}(\rbar|_{G_{\Qp}})$.   

Note that this prediction for the set of Serre weights associated to 
$\rbar$, while very general, is contingent on the truth of the 
Breuil--M\'ezard conjecture.  In fact, what \cite{geekisin} actually
do is prove the Breuil--M\'ezard conjecture for $\GL_2$ and
$\lambda=0$ (for arbitrary $K/\Qp$), which allows them
unconditionally to define a set of weights
$W_{\mathrm{BT}}(\rhobar)$. Here $\mathrm{BT}$ stands for Barsotti--Tate. 
This description of the set of weights turns out to be extremely useful, and was an important
part of the resolution in~\cite{geekisin} and~\cite{gls13} of the conjectures of~\cite{bdj} and their
generalisations to arbitrary totally real fields. 

The key technique used by \cite{geekisin} is the method of
Taylor--Wiles--Kisin patching. One first constructs a globalisation
$\rbar$ of $\rhobar$. 
 Write $X_{\infty} = \Spf
R_{\rhobar}[[x_1,\ldots,x_h]]$, with $R_{\rhobar}$ the universal
lifting ring of $\rhobar$  and $h \ge 0$ a certain integer.
Similarly write $X_{\infty}^{\tau} = \Spf 
  R_{\rhobar}^{0,\tau}[[x_1,\ldots,x_h]]$, which if non-empty is of
  dimension $d+1$ for some $d$ independent of $\tau$.
In the
context of \cite{geekisin} a \emph{patching functor} is a non-zero
covariant exact functor $M_{\infty}$ from the category of finitely
generated $\Zpbar$-modules with a continuous action of $\GL_2(\cO_K)$,
to the category of coherent sheaves on $X_{\infty}$, with the
properties that:
\begin{itemize}
\item for all inertial types $\tau$ the sheaf 
  $M_{\infty}(\sigma(\tau))$ is $p$-torsion free and has scheme-theoretic support~$X_{\infty}^{\tau}$,
  and in fact is maximal Cohen--Macaulay over $X_{\infty}^{\tau}$;
\item the (maximal Cohen--Macaulay over a regular scheme, so) locally free sheaf $M_{\infty}(\sigma(\tau))[1/p]$ has rank
  one over the generic fibre of~$X_{\infty}^{\tau}$, and
\item for all Serre weights $V$, the support of the sheaf
  $M_{\infty}(V)$ either has dimension $d$ or is empty.
\end{itemize}
This is an abstraction of the output of the Taylor--Wiles--Kisin
patching method applied to spaces of automorphic forms. The existence
of a patching functor can be shown to imply that the Breuil--M\'ezard
conjecture holds (in the cases under consideration in
\cite{geekisin}), and moreover that $W_{\mathrm{BT}}(\rhobar)$ is
precisely the set of weights $V$ for which $M_{\infty}(V) \neq 0$. 
On the other hand, \cite{geekisin} construct such a functor, and the
construction implies that $M_{\infty}(V) \neq 0$ if and only if
$\rbar$ is automorphic of weight $V$. Putting these together, \cite{geekisin} conclude that the set
$W_{\mathrm{BT}}(\rhobar)$ is indeed the correct weight set for
$\rbar$. 


\subsection{Shadow weights and the crystalline lifts conjecture}
\label{sec:shad-weights-cryst}

One of the features of the weight part of Serre's conjecture for
$\GL_n$ ($n \ge 3$) that distinguishes it from the $\GL_2$ case is
that there exist Serre weights that do not lift to 
characteristic $0$. For example, for $\GL_3$ over $\Q$ roughly half the Serre
weights are so-called ``upper alcove weights''. These have the
property that if $W$
is the irreducible representation in characteristic~$0$ with the same highest
weight as an upper alcove weight $U$, then the reduction mod $p$ of $W$ is not irreducible but
rather has two Jordan--H\"older factors, one of which is $U$ and 
another which we denote by $L$ (for ``lower alcove'').

It was observed in the conjecture of \cite{herzigthesis} (in the
semisimple case) as well as in the computations of~\cite{bib:ADP}
(including some non-semisimple examples) that whenever $L$ was a predicted Serre weight for some
$\rbar$, so also was $U$. For this reason one began to refer to
the weight $U$ as a \emph{shadow} of the weight $L$. The
conjecture that $U$ occurs in the set of Serre weights of $\rbar$
whenever $L$ does (as well as its natural generalisation to the
$\GL_n$ setting) became known as the shadow weight conjecture.

In the optic of the Breuil--M\'ezard conjecture, the shadow weight
conjecture  says that if $\mu_L(\rhobar) > 0$ for some local Galois
representation $\rhobar$ and if $U$ is a shadow of $L$, then also $\mu_U(\rhobar) > 0$. The
Breuil--M\'ezard conjecture itself implies that if $\rhobar$ has Serre
weight $U$ then $\rhobar$ has a crystalline lift  with
Hodge--Tate weights corresponding to the highest weight of $U$, or
equivalently to the highest weight of $W$; and conversely that if
$\rhobar$ has such a crystalline lift, then $\rhobar$ has at least one
Serre weight that occurs in the reduction of $W$. In combination with the shadow
weight conjecture, this is elevated to an if-and-only-if:\ that $\rhobar$ has Serre
weight $U$ if and only if a crystalline lift of~$\rhobar$ with Hodge--Tate weights as above exists.

This attractive picture (as well as its generalisation to $\GL_n$ over
more general number fields) was known as the crystalline lifts
version of the weight part of Serre's conjecture, and was
widely believed for a number of years. For the sake of historical
accuracy, we should remark that the crystalline lifts version of the
weight part of Serre's conjecture emerged \cite[\S4]{gee061} before the Breuil--M\'ezard
optic, motivated by its evident parallels with the $\GL_2$ case (both
Serre's original conjecture and the conjecture of \cite{bdj}) and its
compatibility with the conjectures of \cite{herzigthesis}.

The crystalline lifts version of the weight part of Serre's conjecture was contained in drafts of the present paper as recently as
2014. We had a narrow escape, then, when (prior to the completion of
this paper) Le, Le Hung, Levin, and Morra \cite{LLLM} produced
counterexamples to the shadow weight conjecture for $\GL_3$ over $\Q$ (in the
non-semisimple case), thus also disproving the crystalline lifts
version of the weight part of Serre's conjecture for $\GL_3$.

The geometric explanation seems to be as follows. The
papers~\cite{emertongeeproper,emertongeepicture}
construct a finite type equidimensional Artin stack $\cXbar$ over
$\Fp$ whose $\Fpbar$-points naturally correspond to the isomorphism
classes of representations $\rhobar : G_{\Qp} \to
\GL_3(\Fpbar)$. The stack $\cXbar$ should have among its irreducible components
$\cXbar(U)$ and $\cXbar(L)$, whose $\Fpbar$-points are precisely the
representations $\rhobar$ for which $U$ and $L$ respectively are Serre
weights, and these components appear to intersect in codimension
one. Since~\cite{bib:ADP} make computations for representations
$\rbar$ which by construction have small image, it is not surprising
in hindsight that those representations might lie in special loci of $\cXbar$.

\subsection{This paper}
\label{sec:this-paper} In this paper, we explain a general formulation 
of the weight part of Serre's conjecture (Conjecture~\ref{conj:BM-version}) in terms of the
Breuil--M\'ezard conjecture, based on the philosophy outlined in
Section~\ref{sec:breu-mezard-conj}.  
Moreover, there are compelling reasons (coming from the
Fontaine--Mazur conjecture and the Taylor--Wiles method) to believe that this
recipe gives the correct weights in full generality. In particular, in
Proposition~\ref{prop: patching functors imply BM weights} we
prove that the existence of a suitable patching functor would on the one hand imply the
Breuil--M\'ezard conjecture, and would on the other hand imply that the
set $W_{\BM}(\rhobar)$ is the set of Serre weights of 
globalisations of $\rhobar$.

Although we believe this description of the weights is the ``correct'' one, and
it seems likely that any proof of the weight part of Serre's conjecture in
general situations will need to make use of this formulation, it is of interest
to have more explicit descriptions of the set of weights. 
For a variety of
reasons (which we discuss in the body of the paper), it seems unlikely
that in general
there will be explicit and complete descriptions of the sort that one
finds  for $\GL_2$  in \cite{gls12,gls13,ddr,cegm}, but it does
seem reasonable to hope for something more concrete in the case that
$\rhobar$ is semisimple and suitably generic.

Indeed, it remains plausible that the crystalline lifts version of the weight
part of Serre's conjecture, despite being false in general, is
nevertheless true in the case where $\rbar$ is semisimple locally at places
above $p$. For instance, there is considerable evidence in the $3$-dimensional
case over $\Q$:\ when $\rbar|_{G_{\Qp}}$ is suitably generic many
cases of the conjecture are proved in \cite{MR3079258} and
\cite{LLLM}, and for some non-generic $\rbar|_{G_{\Qp}}$ there is
computational evidence due to \cite{bib:ADP}.  The more recent papers
\cite{LLLM2,LLL}  extend the results of \cite{LLLM} to the case of totally real fields in
which $p$ is unramified, and establish \emph{weight elimination} (that
the set of modular weights is a subset of the set of predicted
weights) in arbitrary dimension in this setting, again with a
genericity hypothesis on $\rbar$ locally above $p$.

In Conjecture~\ref{conj:crystalline-version} we formulate the
crystalline lifts version of the weight part of Serre's conjecture for 
Galois representations that are semisimple locally at primes above
$p$.  We remark that when the extension $K/\Qp$ is ramified, the
definition of the weight set in terms of crystalline lifts involves a
choice of lifting $K \into \Qpbar$ for each embedding $k \into \Fpbar$
of the residue field $k$ of $K$. 
(The former embeddings index Hodge--Tate weights, the latter are used to
parameterise Serre weights.)
This leads us to define two weight
sets, $\Wcrisexists(\rhobar)$ and $\Wcrisforall(\rhobar)$. The former
is the set of weights obtained by taking the union over all such
choices of liftings, and the latter is the set of weights obtained by
taking the intersection. We conjecture that these two sets are in fact equal.


Section~\ref{sec: the picture} contains a brief and informal
discussion of some intuition for Serre weight conjectures that is
suggested to us by
the
Galois moduli stacks of~\cite{emertongeeproper,emertongeepicture}.

We next explore the possibility of making the
conjectures of Section~\ref{sec:cryst-lifts-serre} explicit. Our basic idea is that in the case when
$\rhobar$ is semisimple, we can explicitly construct many crystalline
lifts of $\rhobar$ by lifting each irreducible factor of $\rhobar$
separately (and this comes down to constructing crystalline lifts of characters, since
each irreducible mod $p$ representation of $G_K$ is induced from a
character of an unramified extension of $K$). We call a crystalline
lift obtained in this way an \emph{obvious crystalline lift}, and
correspondingly we obtain a set of weights $\Wobv(\rhobar)$. (In fact
this is not quite accurate:\ the set $\Wobv(\rhobar)$ also takes into
account our expectation that the set of Serre weights of $\rhobar$
should depend only on $\rhobar|_{I_K}$;\ see Definition~\ref{defn:obvious-lift-ss}.)
To illustrate, 
the representation \[\rhobar \cong \begin{pmatrix}
  \varepsilonbar^{k-1}\o\chi_1 & 0 \\ 0 &  \o\chi_2
\end{pmatrix}\] of $G_{\Qp}$ with $\o\chi_i$ unramified has a crystalline lift $\rho$
of the form \[\rho \cong \begin{pmatrix}
  \varepsilon^{k-1}\chi_1 & 0 \\ 0 &  \chi_2
\end{pmatrix}\] where each $\chi_i$ is unramified and lifts $\o\chi_i$
($i = 1,2$). The representation $\rho$
has Hodge--Tate weights $k-1$ and $0$;\ taking into account the shift by $(1,0)$ as 
in \eqref{eq:shift}, we predict that the Serre weight
$\Sym^{k-2} \Fpbarx{2}$, which is described by the highest weight $(k-2,0)$, is
contained in $\Wobv(\rhobar)$. Similarly, $\rhobar$ has
a crystalline lift with Hodge--Tate weights $p-1$, $k-1$ obtained by
lifting $\o\chi_2$ instead to $\varepsilon^{p-1} \chi_2$, leading to
the inclusion
$\det\nolimits^{k-1} \otimes \Sym^{p-1-k} \Fpbarx{2} \in \Wobv(\rhobar)$,
in accordance with Section~\ref{sec:serre-weights}.

As will be discussed in Remark~\ref{rem: equivalence of crystalline lifts and shadow weights}, under the assumption of the
generalised Breuil--M\'ezard conjecture, the shadow weight conjecture and the crystalline lifts conjecture
are equivalent. Therefore, our explicit weight set for $\rhobar$ needs to be closed under the consequences of the 
shadow weight conjecture. We denote the smallest set of Serre weights that satisfies this requirement and that contains
$\Wobv(\rhobar)$ by $\cC(\Wobv(\rhobar))$ (see Section~\ref{sec:shad-weights:-inert}). We call the weights that lie
in the complement $\cC(\Wobv(\rhobar)) \setminus \Wobv(\rhobar)$ \emph{shadow weights}. The simplest example occurs
for $\GL_3$ over~$\Qp$, as explained in Section~\ref{sec:shad-weights-cryst}.

For a period of time, we hoped that the set $\cC(\Wobv(\rhobar))$
might
 explain the full set of weights of
$\rhobar$ arising from crystalline lifts; unfortunately, this cannot always
be the case. Again this phenomenon first occurs for $\GL_3$ over $\Qp$.  In some cases we can
inductively construct further crystalline lifts of $\rhobar$ coming from Levi subgroups. The idea is that we write $\rhobar
= \oplus_i\, \rhobar_i$ and take the direct sum of certain crystalline lifts
$\rho_i$ of~$\rhobar_i$ whose existence would be implied by the generalised
Breuil--M\'ezard conjecture in combination with the explicitly
constructed weight set for~$\rhobar_i$. 
 The weight set resulting 
from these (hypothetical) crystalline lifts is denoted by $\Wexpl(\rhobar)$. It contains in fact all shadow weights. We call
the weights in the complement $\Wexpl(\rhobar) \setminus \cC(\Wobv(\rhobar))$ \emph{obscure weights}. See
Example~\ref{ex:exceptional-weight-gl3} for examples of such weights in the case of $\GL_3$ over $\Qp$. In Section~\ref{sec:shifted-weights}
we ask furthermore whether the weight set of $\rhobar$ should be closed under certain ``weight shifts'', and we give a limited
amount of evidence for a positive answer.

In general we do not know how close our explicit weight set
$\Wexpl(\rhobar)$ is to the actual set of weights of $\rhobar$. 
In Section~\ref{sec:evidence} we compare this predicted weight set to all
existing conjectures and computational evidence that we are aware
of. Then, in the final part of our paper, we give strong evidence that
in case $K/\Qp$ is unramified and $\rhobar$ is sufficiently
generic (a genericity condition on the tame inertia weights of
$\rhobar|_{I_{\Qp}}$) we are not missing any weights.
It turns out that it is most natural to work in the setting of unramified groups $G$ over $\Qp$, considering $\GL_n$ over $K$
as the restriction of scalars $\Res_{K/\Qp} \GL_n$ to $\Qp$. (For our precise conditions on $G$, see Hypothesis~\ref{hyp:gp}.)
We extend both our explicit weight set $\Wexpl(\rhobar)$ as well as the weight set $W^?(\rhobar)$ of~\cite{herzigthesis} to 
this general setting and then prove the following theorem.
\begin{thm}[Theorem~\ref{thm:main-result}]
  If $\rhobar|_{I_{\Qp}}$ is semisimple and sufficiently generic, then $W^?(\rhobar) = \Wexpl(\rhobar) = \cC(\Wobv(\rhobar))$.
\end{thm}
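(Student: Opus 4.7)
The plan is to prove the two equalities separately, since the inclusion $\cC(\Wobv(\rhobar)) \subseteq \Wexpl(\rhobar)$ is built into the definition of $\Wexpl$. Thus the work consists of (a) showing that the obscure weights in $\Wexpl(\rhobar)$ do not contribute anything new when $\rhobar|_{I_{\Qp}}$ is sufficiently generic, giving $\Wexpl(\rhobar) = \cC(\Wobv(\rhobar))$, and (b) identifying $\cC(\Wobv(\rhobar))$ with the Deligne--Lusztig-type weight set $W^?(\rhobar)$.

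For (a), I would unwind the inductive construction of the obscure weights. An element of $\Wexpl(\rhobar)\setminus\cC(\Wobv(\rhobar))$ is produced by a direct-sum decomposition $\rhobar = \oplus_i \rhobar_i$ through a Levi subgroup, together with a choice of hypothetical crystalline lift $\rho_i$ of each $\rhobar_i$ whose existence is predicted by the Breuil--M\'ezard conjecture applied to a weight already in the explicit weight set of $\rhobar_i$. Inductively one may assume $\Wexpl(\rhobar_i) = \cC(\Wobv(\rhobar_i))$ for each $i$. Under the genericity hypothesis, the tame inertia weights of the Jordan--H\"older constituents remain separated modulo $p-1$, so any crystalline lift of $\rhobar_i$ whose Hodge--Tate weights lie in the generic range must in fact be a sum of crystalline characters, i.e.\ is itself an obvious lift. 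Assembling the $\rho_i$ into a lift of $\rhobar$ therefore produces an obvious lift, so the resulting weight lies in $\Wobv(\rhobar) \subseteq \cC(\Wobv(\rhobar))$, a contradiction.

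For (b), I would carry out a combinatorial matching using the parameterization of both sides by Weyl group data. On the $W^?$ side, under the genericity hypothesis the construction of \cite{herzigthesis} (as extended to the unramified-group setting earlier in the paper) expresses the weights as $\mathcal{R}(F(\mu_w))$ for a collection of weights $\mu_w$ indexed by elements $w$ of the relative Weyl group, arising from the reduction mod~$p$ of an appropriate Deligne--Lusztig character associated to $\rhobar|_{I_{\Qp}}$. On the $\cC(\Wobv)$ side, an obvious crystalline lift is specified by (i) a permutation of the characters of $\rhobar|_{I_{\Qp}}$ indicating which character is lifted to have which $n$-tuple of Hodge--Tate weights, and (ii) a choice of such $n$-tuples modulo fundamental-character twists; together with the shadow closure operation, this data is again parameterized by an element of the Weyl group. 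The crux is to verify that the Serre weight attached by the recipe of Section~\ref{sec:shift} to an obvious lift corresponding to $w$ coincides, after accounting for the $\rho$-shift in \eqref{eq:shift} and the dot action, with $\mathcal{R}(F(\mu_w))$.

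The main obstacle is the last identification: matching the rather mysterious involution $\mathcal{R}$ of \cite{herzigthesis} with the combinatorial operation on obvious lifts induced by swapping Hodge--Tate weight assignments and composing with the $\rho$-shift. I expect this to reduce, via the known behaviour of the dot action on lowest-alcove weights and an explicit description of $\mathcal{R}$ in terms of the longest element of the Weyl group of the Levi, to a direct but combinatorially intricate verification. The genericity hypothesis is essential here: it guarantees that all relevant weights lie in a single alcove, so no Jordan--H\"older degeneracies on the Deligne--Lusztig side and no unexpected crystalline lifts on the obvious-lift side can obstruct the bijection, and in particular the shadow closure need only be taken with respect to the simple-reflection shadow operations whose parallel on the $W^?$ side is transparent.
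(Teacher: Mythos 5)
Your overall architecture (prove $\Wexpl(\rhobar)=\cC(\Wobv(\rhobar))$ and separately $\cC(\Wobv(\rhobar))=\W^?(\rhobar)$) differs from the paper, which instead closes a cycle of inclusions $\cC(\Wobv)\subset\Wexpl\subset\W^?\subset\cC(\Wobv)$, the first being definitional (Remark~\ref{rem:cl-wobv}); that by itself would be fine, but both halves of your plan rest on claims that fail. In step (a), the assertion that for sufficiently generic $\rhobar$ ``any crystalline lift of $\rhobar_i$ whose Hodge--Tate weights lie in the generic range must in fact be a sum of crystalline characters'' is false: shadow weights exist precisely because generic semisimple $\rhobar$ admits crystalline lifts that are not obvious (see Example~\ref{ex:niveau-one-3-dimensional}, where a lift involving an irreducible two-dimensional crystalline summand with reducible reduction witnesses $F(c+p-2,b-1,a-p)$). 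Moreover $\Wexpl$ is not defined via actual crystalline lifts at all; its recursive step (Definition~\ref{def:wexpl-general}) is the purely combinatorial condition $\Wexpl(\tau^M)\cap\JH_{M(\F_p)}W^M(w\cdot\nu)\ne\varnothing$, so an argument about which lifts exist does not engage with what has to be proved. What is actually needed at this step is a comparison of the $\uparrow_M$-order on the Levi with the $\uparrow$-order on $G$ (Lemma~\ref{lem:fh-jan15-lemma3}) together with the genericity of $\tau^M$ (Lemma~\ref{lem:fh-jan15-lemma2}), and none of this appears in your sketch.

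In step (b), the claim that genericity forces ``all relevant weights to lie in a single alcove'' is also false: genericity means the parameters lie \emph{deep} in their (generally distinct) alcoves, and the closure operation $\cC$ genuinely moves between alcoves --- for $\GL_3$ it passes from the upper to the lower alcove. The substantive content your matching would have to supply is exactly what is missing: a characterisation of $\W^?(\tau)$ in terms of the $\uparrow$-order (here Proposition~\ref{prop: sufficiently generic tame inertial L-param}, whose proof needs Jantzen's generic decomposition of Deligne--Lusztig representations and the computation of $[\widehat{Z}_1(\lambda):\widehat{L}_1(\mu)]$), and the key equivalence that $\W^?(\tau)\cap\JH_{G(\F_p)}W(\lambda)\ne\varnothing$ iff $\tau\cong\tau(w,\lambda'+\eta)$ for some dominant $\lambda'\uparrow\lambda$, with a constituent of strictly smaller alcove distance $d$ when $\lambda'\ne\lambda$ (Proposition~\ref{prop:fh-jan15-4}). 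That equivalence is what drives the downward induction on $d(\lambda)$ proving $\W^?\subset\cC(\Wobv)$, and its proof requires the Jantzen sum formula, Lemma~\ref{lem: lambda mu relation on bounded epsilon}, and the Ye--Wang theorem on the $\uparrow$-ordering of dominant alcoves (Corollary~\ref{cor:ye-wang}). Your proposal replaces all of this with an asserted ``transparent'' match between $\mathcal R$ and the $\rho$-shift, which is not a proof and, as stated, is not salvageable without importing precisely the machinery above.
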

In particular, there are no obscure weights in this generic setting. The proof is not immediate but requires some subtle
modular representation theory. We thus see this result as an encouraging sign that $\Wexpl(\rhobar)$ is correct
in the generic unramified case.

This paper has two appendices. Appendix~\ref{sec:wangs-result-uparrow}
contains the proof of a theorem of J.\ C.\ Ye and J.\ P.\ Wang on alcove geometry that is needed in
Section~\ref{sec:herzig-comparison}; as far as we know the only
published proofs are in Chinese. In
Appendix~\ref{sec:wobvrhbar-nonempty} we prove by 
combinatorial arguments that the explicit set
of Serre weights defined in Section~\ref{sec:expl-weight-conj} is
always non-empty.

The only part of the paper that we have not yet mentioned is
Section~\ref{sec:global framework}, in which we
describe 
a global framework for formulations of generalisations of Serre's conjecture, in
terms of the cohomology of arithmetic quotients of ad\`ele
groups.  Although it is of course necessary to have chosen such a framework
before one can begin to speak about Serre's conjecture (e.g.\ in order
to define what one means when one says that $\rbar$ to be modular of a given weight!),
this discussion is in some sense secondary to the rest of the paper,
which is entirely local except for parts of
Section~\ref{sec:patching, BM and serre weights} on patching. 

\subsection{Index to the weight sets defined in this paper}\hfill
\label{sec:summary-our-weight}

\medskip

Associated to a global  Galois
representation $\rbar : G_F \to \GL_n(\Fpbar)$:\
\begin{itemize}
\item $\W(\rbar)$, the Serre weights of $\rbar$: Definition~\ref{defn: weights of a modular Galois
    representation};
\item $\W_v(\rbar)$, the local Serre weights of $\rbar$ at a place
  $v\mid p$: Conjecture~\ref{conj:basic-conjecture}.
\end{itemize}

Associated to a local Galois representation $\rhobar : G_K \to
\GL_n(\Fpbar)$:\
\begin{itemize}
\item $\W_{\BM}(\rhobar)$, the Breuil--M\'ezard predicted weights for
  $\rhobar$: Definition~\ref{defn:Serre weights predicted by BM};
\item $\W_{\cS}(\rhobar)$, the $\cS$-Breuil--M\'ezard predicted
  weights for $\rhobar$: Definition~\ref{defn:Serre weights predicted
    by BM of type S};
\item $\Wcrisexists(\rhobar)$ and $\Wcrisforall(\rhobar)$, the crystalline weights for $\rhobar$: Definition~\ref{defn:crystalline-weight-sets}.
\end{itemize}

Associated to $\rhobar : G_K\to \GL_n(\Fpbar)$ such that
$\rhobar|_{I_K}$ is semisimple:
\begin{itemize}
\item $\Wobv(\rhobar)$, the obvious weights for $\rhobar$:
  Definition~\ref{defn:obvious-lift-ss};
\item $\cC(\Wobv(\rhobar))$, the obvious and shadow weights for $\rhobar$: Definition~\ref{defn:closure-operation};
\item $\Wexpl(\rhobar)$, the explicit predicted weights for $\rhobar$:
  Definition~\ref{defn:explicit-weights-ss};
\item $\W^?(\rhobar)$ ($n=3$), the weights predicted by
  \cite{herzigthesis}: Proposition~\ref{prop:def-of-W-herzig}.
\end{itemize}

See Section~\ref{sec:summary} for a summary of our conjectures about
these weight sets.

\medskip

Associated to a tame inertial $L$-parameter $\tau : I_{\Qp} \to
\Ghat(\Fbar)$, for a group $G$ as in Hypothesis~\ref{hyp:gp}
(generalising the corresponding definitions for $\rhobar$ such that
$\rhobar|_{I_K}$ is semisimple):
\begin{itemize}
\item $\Wobv(\tau)$, the obvious weights for $\tau$: Definition~\ref{defn:obv-tau};
\item $\cC(\Wobv(\tau))$, the obvious and shadow weights for $\tau$: Definition~\ref{def:closure-C-for-general-G};
\item $\Wexpl(\tau)$, the explicit predicted weights for $\tau$: Definition~\ref{def:wexpl-general};
\item $\W^?(\tau)$,\ the weights for $\tau$ predicted in the manner of 
  \cite{herzigthesis}:\ Definition~\ref{defn:herzigwts}.
\end{itemize}

The latter three of these sets coincide for sufficiently generic
$\tau$ (Theorem~\ref{thm:main-result}).

\subsection{Acknowledgments}\label{subsec:thanks} The point of view
that we adopt in this paper owes a considerable debt to the 
ideas of Matthew Emerton; he has declined to be listed as a coauthor,
but we hope that his influence on the paper is clear. We would like to thank Kevin
Buzzard, Fred Diamond, Brandon Levin and L\^e H\`ung Vi\^et Bao for many helpful conversations. F.H.\ would like to thank
Ida Bulat for her assistance with typing, as well as the MSRI for the excellent working conditions it
provided. We are very grateful to
Mehmet Haluk \c{S}eng\"{u}n for repeating the calculations of~\cite{MR2887610}
for us, and to Darrin Doud for extending some of his calculations for us.  We thank Chuangxun (Allen) Cheng for his translation of parts of \cite{wang} and Jim Humphreys for inspiring correspondence related to Corollary~\ref{cor:ye-wang}.
We are thankful to the referee for helpful comments.

\subsection{Notation and conventions}\label{subsec:notation}

We
fix a prime $p$.  If $K$ is any field, we let $\o K$ be a separable
closure of $K$, and let $G_K = \Gal(\o K/K)$; nothing we do will
depend on the choice of $\o K$, and in particular we will sometimes
consider $G_L$ to be a subgroup of $G_K$ when $K\subset L$. 
In Section~\ref{sec:unramified groups} we will
instead denote $\Gal(\o K/K)$  by~$\Gamma_K$ to avoid a conflict of
notation. All Galois representations
are assumed to be
continuous with respect to the profinite topology on the Galois group 
and the natural topology on the coefficients (which will usually be
either the $p$-adic topology or the discrete topology).

If $K$ is a finite
extension of $\bb{Q}_p$, we write $\cO_K$ and $k$ respectively for the ring of
integers and residue field of $K$, $I_K$ for the inertia
subgroup of~$G_K$, and
$\Frob_K$ for a geometric Frobenius element of $G_K$.
If $F$ is a number field and $v$ is a
finite place of $F$ then we let $\Frob_v$ denote a geometric Frobenius
element of $G_{F_v}$ and we write $k_v$ for the residue field of the
ring of integers of $F_v$.

We will use $E$ to denote our coefficient field, a finite
extension of $\Qp$ contained in $\Qpbar$.   We
write $\cO=\cO_E$ for the ring of integers of $E$ and $\F$ for its
residue field.  When we are working with representations of the
absolute Galois group of a finite extension $K/\Qp$, we will often
assume that $E$ is \emph{sufficiently large}, by which we mean that 
the images of all embeddings $K\into \Qpbar$ are
contained in $E$.
We also let $\Zpbar$ denote the ring of integers of $\Qpbar$ and $\Fpbar$ its
residue field (it is thus our fixed choice of algebraic closure of $\Fp$).

Let $K$ be a finite extension of $\Qp$, and let $\Art_K:K^\times\to W_K^{\ab}$ be the isomorphism
provided by local class field theory, which we normalise so that
uniformisers correspond to geometric Frobenius elements. Let $\rec$ denote the local
Langlands correspondence from isomorphism classes of irreducible
smooth representations of $\GL_n(K)$ over $\C$ to isomorphism classes
of $n$-dimensional Frobenius semisimple Weil--Deligne representations
of $W_K$ as in the introduction to \cite{ht}, so that when $n = 1$ we have
$\rec(\pi) = \pi \circ \Art_K^{-1}$.
 We fix an isomorphism $\imath:\Qpbar\to\C$ and define the
local Langlands correspondence $\rec_p$ over $\Qpbar$ by $\imath \circ
\rec_p = \rec \circ \imath$. This depends only on
$\imath^{-1}(\sqrt{p})$ (and the only ambiguity is a quadratic
unramified twist, so that in particular $\rec_p|_{I_K}$ does not depend on any choices).

Assume for the rest of this
section that~$E$ is sufficiently large. For the purposes of defining
the notation below, we also allow $E=\Qpbar$, $\cO = \Zpbar$,
$\F=\Fpbar$ in what follows in this section.
Define $S_k = \{ \sigma : k \into \F
\}$ and $S_K = \{ \emb : K
\into E\}$.  If $\emb \in S_{K}$, we let $\embb$ be
the induced element of $S_{k}$.  Let
$\varepsilon$ denote the $p$-adic cyclotomic character, and
$\varepsilonbar$ the mod $p$ cyclotomic character.
For each $\sigma\in S_{k}$ we define the
fundamental character $\omega_{\sigma}$ corresponding to $\sigma$ to be
the
composite  $$\xymatrix{I_{K}\ar@{->>}[r] & \cO_{K}^{\times}\ar[r]
  & k^{\times}\ar[r]^{\sigma} & \F^{\times},}$$ where the first map
is induced by $\Art_K^{-1}$.  In particular $\left(\prod_{\sigma \in S_k}
\omega_\sigma\right)^{e(K/\Qp)} = \varepsilonbar|_{I_K}$. When $k =
\Fp$ and $\sigma : k \to \F$ is the unique embedding, we will often
write~$\omega$ in place of $\omega_\sigma$.    If $\chi$ is a
character of $G_{K}$ or $I_{K}$, we denote its reduction mod $p$ by
$\overline{\chi}$.

   If $W$ is a de Rham
representation of $G_K$ over $E$,
then for each $\emb \in S_K$ we will write $\HT_\emb(W)$ for the multiset of Hodge--Tate
weights of $W$ with respect to $\emb$. By definition this set contains
the integer $-i$ with multiplicity
$\dim_{E} (W \otimes_{\emb,K} \widehat{\overline{K}}(i))^{G_K}
$. Thus for example $\HT_\emb(\varepsilon)=\{ 1\}$.   The set $\HT_{\emb}(W)$ is invariant under
extensions of the coefficient field, and so also makes sense for de Rham
representations over~$\Qpbar$ (and embeddings $\emb: K \into
\Qpbar$).

We say that $W$ has \emph{regular} Hodge--Tate weights if for each
$\emb$, the elements of $\HT_\emb(W)$ are pairwise distinct. Let $\Z^n_+$
denote the set of tuples $(\lambda_1,\dots,\lambda_n)$ of integers
with $\lambda_1\ge \lambda_2\ge\dots\ge \lambda_n$. A \emph{Hodge
  type} is an element of $(\Z^n_+)^{\SK}$. Then if $W$ has
regular Hodge--Tate weights, there is a Hodge type $\lambda$ such that for each $\emb \in
S_K$ we have \[\HT_{\emb}(\rho)=\{\lambda_{\emb,1}+n-1,\lambda_{\emb,2}+n-2,\dots,\lambda_{\emb,n}\},\]
and we say that $W$ is regular of weight $\lambda$ (or Hodge type
$\lambda$).

An \emph{inertial type} is a representation
$\tau:I_K\to\GL_n(E)$  with open kernel and which extends
to the Weil group $W_K$. Then we say that a de Rham representation
$\rho:G_K\to\GL_n(E)$ has inertial type $\tau$ and Hodge
type~$\lambda$, or more briefly that $\rho$ has type $(\lambda,\tau)$,  if $\rho$ is regular of weight $\lambda$, and the
restriction to $I_K$ of the Weil--Deligne representation $\WD(\rho)$ associated to
$\rho$ is equivalent to $\tau$.

For any
$\lambda\in\Z^n_+$, view $\lambda$ as a dominant weight of the
algebraic group $\GL_{n/\cO_K}$ in the usual way, and let
$M'_\lambda$
be the algebraic $\cO_K$-representation of $\GL_n$ given
by \[M'_\lambda:=\Ind_{B_n}^{\GL_n}(w_0\lambda)_{/\cO_K}\] where $B_n$
is the Borel subgroup of upper-triangular matrices of $\GL_n$, and
$w_0$ is the longest element of the Weyl group. (This representation
is denoted by $H^0_{\cO_K}(\lambda)$ in \cite[\S II.8]{MR2015057}. Note
that its generic fibre is irreducible with 
highest weight $\lambda$ by \cite[II.5.6]{MR2015057}.) Write $M_\lambda$ for the
$\cO_K$-representation of $\GL_n(\cO_K)$ obtained by evaluating
$M'_\lambda$ on $\cO_K$. For any $\lambda\in(\Z^n_+)^{\SK}$
we write $L_{\lambda,\cO}$ for the $\cO$-representation of $\GL_n(\cO_K)$
defined by \[\otimes_{\emb \in
  S_K} (M_{\lambda_\emb}\otimes_{\cO_K,\emb}\cO),\]
although when $\cO$ is clear from the context we will suppress it and write simply $L_{\lambda}$.

We remark that the sets $S_K$ for varying (sufficiently large) coefficient fields $E$ can
be naturally identified, and we will freely do so; and similarly for the sets
$S_k$, $(\Z^n_+)^{S_K}$, and $(\Z^n_+)^{S_k}$.

If $A$ is a Noetherian local ring with maximal ideal $\m$ of dimension~$d$,
and $M$ is a finite $A$-module, then there is polynomial
$P_M^A(X)$ of degree at most $d$ (the Hilbert--Samuel polynomial of $M$),
uniquely determined by the requirement 
that for $n\gg 0$, the value $P_M^A(n)$ is equal to the length of
$M/\m^{n+1}M$ as an $A$-module. Then the Hilbert--Samuel multiplicity
$e(M,A)$ is defined to be $d!$ times the
coefficient of $X^d$ in $P_M^A(X)$, and we write $e(A)$ for $e(A,A)$.
\section{A global setting}
\label{sec:global framework}\subsection{\texorpdfstring{$\GL_{n}$}{GL(n)} over a number field}\label{subsec: GLn framework}In this section we briefly explain a
possible global setup in which we can formulate the weight part of Serre's
conjecture for $\GL_n$ over a number field $F$. It is presumably possible to
formulate such conjectures for a general connected reductive group over a number
field (by a characteristic $p$ analogue of the conjectures
of~\cite{BuzzardGee}), but this would entail developing a great deal of material
of no relevance to the bulk of this paper.

The point of this paper is to formulate and study only the weight part
of Serre's conjecture, and this is expected to be a purely local question (see
Conjecture~\ref{conj:basic-conjecture} below). Indeed, essentially everything in
this paper (other than various comparisons to results in the literature, giving
evidence for our conjectures) after the present section is purely local. On the
other hand, to make a general global formulation requires a careful discussion
of various technical issues (such as:\ the association of mod $p$ Satake
parameters to characteristic polynomials of Frobenii; characteristic $p$
analogues of the various considerations of~\cite{BuzzardGee}, such as the
$C$-group; variants using the Galois action on the \'etale cohomology of Shimura
varieties, rather than the Hecke action on the Betti cohomology; and so on).

As all of these points are orthogonal to our goals in the remainder of
the paper, we have restricted ourselves to a brief description of the case of
$\GL_n$, as this suffices for the bulk of the paper, and for much of the
computational evidence to date. (The remaining computations concern forms of
$\GL_2$.)

Let $\A_F$ denote the ad\`eles of $F$, and let $\A_F^\infty$ denote the finite
ad\`eles of $F$. Let $U=U^pU_p$ be a compact open subgroup of
$\GL_n(\A_F^\infty)$, where $U^p$ is a compact open subgroup of
$\GL_n(\A_F^{\infty,p})$, assumed to be sufficiently small, and $U_p=\GL_n(\cO_{F}\otimes_\Z\Zp)$. Let
$A_\infty^\circ=\R^\times_{>0}$, embedded diagonally in
$\prod_{v|\infty}\GL_n(F_v)$, and write
$U_\infty^\circ=\prod_{v|\infty}U_v^\circ\subset \prod_{v|\infty}\GL_n(F_v)$, where
$U_v^\circ=\mathrm{SO}_n(\R)$ if $v$ is real and $U_v^\circ=\mathrm{U}_n(\R)$ if $v$ is
complex. Set
\[Y(U):=\GL_n(F)\backslash \GL_n(\A_F)/UA_\infty^\circ U_\infty^\circ.\]

Let $W$ be an irreducible smooth $\Fpbar$-representation of
$U_p$; the action of $U_p$ on $W$ necessarily factors through
$\prod_{v|p}\GL_n(k_v)$, and we write $W\cong\otimes_{v|p}W_v$, where $W_v$ is an
irreducible $\Fpbar$-representation of $\GL_n(k_v)$.  We can define a local
system $\cW$ of $\Fpbar$-vector spaces on $Y(U)$ via 
\begin{equation}
\cW:=
\bigl((\GL_n(F)\backslash\GL_n(\A_F)/U^pA_\infty^\circ U_\infty^\circ)\times W\bigr)/U_p.\label{eq:9}
\end{equation}

By shrinking $U^p$ we are free to
assume that it is a product $U^p=\prod_{v\nmid p}U_v$. 
There is a finite set $\Sigma_0$ of finite places of $F$ (dependent on $U$)
which contains all places dividing $p$, and which has the property that if $v\notin\Sigma_0$ is a finite
place of $F$, then $U_v=\GL_n(\cO_{F_v})$. For each $v\notin\Sigma_0$, the spherical
Hecke algebra $\cH_{v}:=\cH(\GL_n(\cO_{F_v})\backslash\GL_n(F_v)/\GL_n(\cO_{F_v}),\Zpbar)$
(cf.~\cite{MR1696481}) with
coefficients in $\Zpbar$ acts naturally on each cohomology group
$H^i(Y(U),\cW)$. Indeed, $\cH_v$ is identified with the subalgebra $\cH(U\backslash U \GL_n(F_v) U/U,\Zpbar)$
of the usual adelic Hecke algebra $\cH(U\backslash\GL_n(\A_F^{\infty})/U,\Zpbar)$, and 
the bigger subalgebra $\cH(U\backslash U\GL_n(\A_F^{\infty,p}) U/U,\Zpbar)$ acts naturally on each cohomology group
$H^i(Y(U),\cW)$ (the prime-to-$p$ condition being relevant, as $W$ may be non-trivial).

Let $\rbar:G_F\to\GL_n(\Fpbar)$ be an  irreducible
representation. For any $U$ as above, and for any finite set 
$\Sigma$ of places of $F$ containing $\Sigma_0$, all the
finite places at which $\rbar$ is ramified, and all the infinite
places of $F$, we may define  a maximal ideal $\m=\m(\rbar,U,\Sigma)$
of $\T_\Sigma :=\otimes'_{v\notin \Sigma}\cH_v$ with
residue field $\Fpbar$ by demanding that for all places $v\notin\Sigma$, the
semisimple part of $\rbar(\Frob_v^{-1})$ is conjugate to the class defined by
the $\cH_v$-eigenvalues determined by $\m$ under the (suitably twisted) Satake
isomorphism (cf.~\cite[\S 17]{MR1729443}). (Of course, since we are
working with $\GL_n$, this just amounts to specifying the
characteristic polynomial of~$\rbar(\Frob_v)$, as in~\cite[Prop.\
3.4.4(2)]{cht}, but the formulation we have used here generalises more
easily to more general groups.)

\begin{defn}\label{defn: modular Galois representation}
  We say that $\rbar$ is \emph{automorphic} if there is some $W, U,\Sigma$ as
  above such that $H^i(Y(U),\cW)_\m\ne 0$ for some $i\ge 0$.
  \end{defn}

\begin{defn}
  \label{defn: weights of a modular Galois representation}Suppose that $\rbar$ is
  automorphic. Let $\W(\rbar)$ denote the set of isomorphism classes of
  irreducible representations $W$
  of $\prod_{v|p}\GL_n(k_v)$ for which
  $H^i(Y(U),\cW)_\m\ne 0$ for some $i\ge 0$. We refer to $\W(\rbar)$ as the set
  of \emph{Serre weights} of~$\rbar$.
\end{defn}

\begin{rem}
  \label{rem: trivial coefficients}Let $U_p(1)$ be the kernel of the
  homomorphism $U_p\to\prod_{v|p}\GL_n(k_v)$. A natural variant of the
  definition of the Serre weights of $\rbar$ would be
  to ask that $\Hom_{U_p}(W^\vee,H^i(Y(U_p(1)),\Fpbar)_\m)\ne 0$ for some $i\ge
  0$. We do not know how to show unconditionally that the two definitions always give the same set
  of weights, but this would follow from conjectures in the
  literature, as we now explain.

  Let $r_1$ be the number of real places of $F$, and let $r_2$ be the
  number of complex-conjugate pairs of complex places. Set
  $q_0=r_1\lfloor n^2/4\rfloor+r_2n(n-1)/2$; this is the minimal
  degree of cohomology to which tempered cohomological automorphic
  representations of $\GL_n(\A_F)$ will contribute. According to the
  conjectures of~\cite{MR2905536}, as expanded upon
  in~\cite[\S3.1.1]{emertonICM}, it is expected that if some
  $H^i(Y(U),\Fpbar)_\m$ is non-zero (where now $U_p$ can be arbitrarily
  small), then in fact $H^{q_0}(Y(U),\Fpbar)_\m\ne 0$, while
  $H^i(Y(U),\Fpbar)_\m=0$ for $i< q_0$; there is a similar expectation
  for the $H^i(Y(U),\cW)_\m$. If this conjecture holds, then it
  follows easily from the Hochschild--Serre spectral sequence that
  this variant definition gives the same set of Serre weights as
  Definition~\ref{defn: weights of a modular Galois representation}.
\end{rem}

We now have the following general formulation of a weak version of the
weight part of Serre's conjecture.
\begin{conj}
  \label{conj:basic-conjecture} Suppose that $\rbar$ is automorphic. Then
  we may write
  $\W(\rbar) = \bigotimes_{v|p} \W_v(\rbar)$, where  $\W_v(\rbar)$ is a set of
  isomorphism classes of irreducible representations of $\GL_n(k_v)$, which
  depends only on $\rbar|_{G_{F_v}}$.
\end{conj}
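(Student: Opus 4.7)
The plan is to deduce Conjecture~\ref{conj:basic-conjecture} from the existence of a Taylor--Wiles--Kisin patching functor, as already alluded to in the sketch of Section~\ref{sec:breu-mezard-conj} and to be pursued more systematically later in the paper. The strategy proceeds in three stages.

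First, one globalises by choosing a suitable CM extension $F'/F$ together with a compact (at infinity) unitary group split by $F'$ to which $\rbar$ transfers, then patches spaces of algebraic automorphic forms along a Taylor--Wiles system. This should produce a covariant exact functor $M_\infty$ from the category of continuous $\Zpbar$-representations of $U_p=\prod_{v|p}\GL_n(\cO_{F_v})$ (or its analogue for $F'$) to coherent sheaves on $\Spf\, \Rinfty$, where $\Rinfty = \bigl(\cotimes_{v|p} R_{\rhobar_v}\bigr)[[x_1,\ldots,x_h]]$ with $\rhobar_v:=\rbar|_{G_{F_v}}$, enjoying the two properties: (a) $M_\infty(W)\ne 0$ if and only if $W\in \W(\rbar)$ in the sense of Definition~\ref{defn: weights of a modular Galois representation}, and (b) $M_\infty$ is compatible with the product decomposition of $U_p$ and $\Rinfty$ in the sense that the support of $M_\infty(W)$ is contained in a product of closed subschemes of the $\Spf\, R_{\rhobar_v}$-factors.

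Second, any irreducible $\Fpbar$-representation $W$ of $\prod_{v|p}\GL_n(k_v)$ factors uniquely as $W\cong \bigotimes_{v|p}W_v$ with $W_v$ irreducible. Combining (b) with a K\"unneth-style argument on the completed tensor product of local deformation rings should yield that $M_\infty(\bigotimes_v W_v)\ne 0$ if and only if the local patching functor $M_{\infty,v}(W_v)$ is non-zero for every $v\mid p$ (the ``only if'' is the serious direction and uses that each factor $R_{\rhobar_v}$ is a power-series quotient of a nice regular ring, so supports behave well under the completed tensor product). Defining $\W_v(\rbar):=\{W_v : M_{\infty,v}(W_v)\ne 0\}$ then gives the tensor-product decomposition $\W(\rbar)=\bigotimes_{v\mid p}\W_v(\rbar)$ required by (a).

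Third, to establish the local-dependence claim, I would identify $\W_v(\rbar)$ with the Breuil--M\'ezard set $\W_{\BM}(\rhobar_v)$ of Definition~\ref{defn:Serre weights predicted by BM}: this is precisely the content that Proposition~\ref{prop: patching functors imply BM weights} is designed to extract from a patching functor, simultaneously proving the generalised Breuil--M\'ezard conjecture for $\rhobar_v$ and describing the non-vanishing locus of $M_{\infty,v}$ as $\W_{\BM}(\rhobar_v)$. Since $\W_{\BM}(\rhobar_v)$ is manifestly defined from $\rhobar_v$ alone, this gives the conjecture.

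The hard part is of course step one: constructing $M_\infty$ in the generality demanded by the conjecture. Existing Taylor--Wiles--Kisin methods require $\rbar$ to be (essentially) conjugate self-dual, to have large image (Taylor--Wiles hypotheses), and to admit a suitable potentially automorphic globalisation; for general $F$ and general $\rbar$ one must either pass through Calegari--Geraghty--style derived patching (itself conditional on expected vanishing of cohomology outside degree $q_0$ as in Remark~\ref{rem: trivial coefficients}) or invoke unproven potential automorphy theorems. For $\rbar$ with small image none of these techniques apply, so unconditional proofs of the conjecture in full generality are well beyond current technology, and the best one can realistically hope for is to establish it under the usual Taylor--Wiles-type hypotheses.
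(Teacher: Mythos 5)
You are attempting to prove a statement that the paper presents as a \emph{conjecture} (the weak form of the weight part of Serre's conjecture for $\GL_n$ over a general number field); the paper contains no proof of it, so there is nothing to compare your argument against. What the paper does offer is a heuristic justification along exactly the lines you sketch: the patching-functor formalism of Section~\ref{sec:patching, BM and serre weights}, Proposition~\ref{prop: patching functors imply BM weights} (which is your step three), and Remark~\ref{rem: patching functors and GK}. In that sense your outline correctly reproduces the paper's motivation. But it is not a proof, and you concede as much in your final paragraph, which is really the honest content of the proposal.

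To be concrete about where the argument breaks down as a proof. (1) The conjecture concerns $\W(\rbar)$ defined via the Betti cohomology of the arithmetic quotients $Y(U)$ for $\GL_n/F$ itself; transferring to a compact unitary group changes both the group and the cohomology, and the assertion that the resulting weight sets agree is precisely the conjectural mod-$p$ local-global compatibility of Remark~\ref{rem:mod p Langlands philosophy and different groups} --- it cannot be used as an input. For $\GL_n/F$ directly, patching requires the Calegari--Geraghty framework, which is conditional on the vanishing conjectures recalled in Remark~\ref{rem: trivial coefficients} and on the existence of Galois representations attached to torsion classes. (2) Even in the unitary setting, the existence of a patching functor in the sense of Definition~\ref{defn:patching functor} is open: as the paper stresses in Remark~\ref{rem: patching functors from patching}, showing that the support of $M_\infty(L_{\lambda,\tau})$ is all of $X_\infty(\lambda,\tau)$ (rather than a union of components) is a major open problem tied to potential diagonalisability, known for $n=2$ but not in general. (3) Your step two, the factorisation $\W(\rbar)=\bigotimes_{v|p}\W_v(\rbar)$, is itself part of what is being conjectured; it would follow from (1) and (2) via the module structure over $\cotimes_{v|p}R_{\rhobar_v}$, but it is not independently established. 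So the proposal should be read as a restatement of the paper's reasons for believing the conjecture, not as a proof of it.
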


In fact one expects something more, namely that the set $\W_v(\rbar)$ should
depend only on $\rbar|_{I_{F_v}}$, a point that will be important for
making explicit Serre weight conjectures later in the paper
(see especially Section~\ref{sec:obvious-semisimple}).

Much of the rest of the paper will be occupied with the question of making
Conjecture~\ref{conj:basic-conjecture} more precise (and for giving evidence for
the more precise conjectures) in the sense of giving conjectural descriptions of
the sets $\W_v(\rbar)$ in terms of $\rbar|_{G_{F_v}}$.

\subsection{Groups which are compact modulo centre at infinity}\label{subsec:discrete
  series} While it is natural to work with the group $\GL_n/F$, just as in the
characteristic $0$ Langlands program it is often advantageous to work with other
choices of group, in particular those that admit discrete series. From the point
of view of Serre's conjecture, it is particularly advantageous to work with
groups which are compact mod centre at infinity; the associated arithmetic
quotients only admit cohomology in degree $0$, which facilitates an easy
exchange between information in characteristic $0$ and characteristic $p$. (In
the more general context of groups that admit discrete series, there is an
expectation that after localising at a maximal ideal $\m$ as above which is
``non-Eisenstein'' in the sense that it corresponds to an irreducible Galois
representation, cohomology should only occur in a single degree; however there
are at present only fragmentary results in this direction, beyond the case of
groups of semisimple rank $1$.)

In particular, in the papers~\cite{MR1729443}, \cite{MR1643625} and~\cite{grossodd} Gross
considers questions relating to the weak form of Serre's
conjecture for certain groups over $\Q$ which are compact mod centre at
infinity.
While he does not consider the weight part
of Serre's conjecture in his setting (although the discussion
of~\cite[\S4]{MR1729443} could be viewed as a starting point in this
direction), the conjectures we will make in this paper, especially
those for more general reductive groups,
could be used to make explicit Serre weight
conjectures for   ``algebraic modular forms'' (in
Gross's terminology). It seems likely that computations with these
automorphic forms would be a
good way to
investigate our general Serre weight conjectures.

A great deal of progress has been made on these and related questions for a
particular class of such groups, namely unitary groups or quaternion algebras over
totally real and CM fields, 
which are compact mod centre at
infinity. (In the case of quaternion algebras over a totally real field, it is
also possible to allow the quaternion algebra to split at a single infinite
place:\ in that case the semisimple rank is $1$, and it is easy to show that
the cohomology of the associated Shimura curves vanishes outside of degree $1$
after localising at a non-Eisenstein maximal ideal, for example
via~\cite[Lem.\ 2.2]{bdj}).

In particular, for these groups the association of Galois representations
(valued in $\GL_n(\Qpbar)$) to
automorphic representations is well-understood (see~\cite{shin} and the
references therein), and the Taylor--Wiles machinery is also well-developed
(\cite{cht}) and has been successfully applied to the problem of the weight part
of Serre's conjecture (see for example~\cite{geekisin}). The relevance of these
results  (which address characteristic $0$ Galois representations and how
characteristic $p$ Galois representations deform to characteristic $0$) to the
weight part of Serre's conjecture is the following simple principle, which
underlies the proofs of most of what is known about the weight part of Serre's
conjecture to date, and also motivates much of the material in the following
sections. 

Let us abusively adopt the notation of Section~\ref{subsec: GLn framework}
above, although the groups we are considering are now (say) unitary groups which
are compact mod centre at infinity. Let $V$ be a finite free $\Zpbar$-module
with a continuous action of $U_p$, and let
$\Vbar=V\otimes_{\Zpbar}\Fpbar$. Then (recalling that $U$ is
sufficiently small)
we can define a local system of $\Zpbar$-modules $\cV$ on $Y(U)$ as in (\ref{eq:9}),
and since $Y(U)$ only has cohomology in degree $0$,  we see that
$H^0(Y(U),\cV)_\m\ne 0$ if and only if $H^0(Y(U),\overline{\cV})_\m\ne 0$
if and only if $H^0(Y(U),{\cW})_\m\ne 0$ for some Jordan--H\"older factor $W$ of $\Vbar$. 

Now, if $H^0(Y(U),\cV)_\m\ne 0$ then we may consider the $p$-adic Galois
representations attached to the automorphic representations contributing to
$H^0(Y(U),\cV)_\m$; these will lift our representation $\rbar$, and in
particular for places $v|p$ the restrictions to $G_{F_v}$ of these
representations will lift
$\rbar|_{G_{F_v}}$. The known $p$-adic Hodge-theoretic properties of the
Galois representations associated to these automorphic representations then
prescribe non-trivial relationships between the $\rbar|_{G_{F_v}}$ (for $v|p$) and $V$, and
thus between the $\rbar|_{G_{F_v}}$ and $\Vbar$. In particular, by considering
the Jordan--H\"older factors $W$ of $\Vbar$, we
obtain necessary conditions in terms of the $\rbar|_{G_{F_v}}$ for $\rbar$ to be
automorphic of Serre weight $W$. The basic perspective of this paper (which was
perhaps first considered in Section 4 of~\cite{gee061}, and was refined in~\cite{geekisin}) is that these
necessary conditions are often also sufficient.

\begin{example}
  \label{example: crystalline lifts for modular forms and more generally}As a
  specific example of these considerations, consider the case of a definite
  quaternion algebra over $\Q$ that is split at $p$. Up to twist, an irreducible
  $\Fpbar$-representation of $\GL_2(\Fp)$ is of the form $W=\Sym^{k-2}\Fpbarx{2}$
  for some $2\le k\le p+1$. Taking $V=\Sym^{k-2}\Zpbarx2$ in the above
  discussion, and using the Jacquet--Langlands correspondence, we find that if
  $\rbar$ is automorphic of Serre weight $W$, then $\rbar$ can be lifted to the Galois
  representation attached to a newform of weight $k$ and level prime to~$p$. By
  local-global compatibility, this means that $\rbar|_{G_{\Qp}}$ has a lift to
  a crystalline representation with Hodge--Tate weights $\{k-1,0\}$. If
  one assumes that conversely the only obstruction to $\rbar$ being
  automorphic of Serre weight~$W$ is this property of $\rbar|_{G_{\Qp}}$ having
  a crystalline lift  with Hodge--Tate weights $\{k-1,0\}$, then an
  examination of the possible reductions modulo~$p$ of such
  crystalline representations recovers Serre's original
  conjecture~\cite{MR885783} (or rather, the specialisation of the
  conjecture of~\cite{bdj} to the case of modular forms over~$\Q$,
  which implies Serre's original conjecture by, for example, an
  explicit comparison of Serre's original recipe for a minimal weight
  with the explicit list of Serre weights; see the proof of~\cite[Thm.\ 3.17]{bdj}).

This example generalises in an obvious fashion to the case of forms of $\mathrm
U(2)$ over totally real fields which are compact at infinity, and allows one to recover the Serre weight conjecture
of~\cite{bdj}. (In the case of quaternion algebras over totally real fields
there is a parity obstruction to finding lifts to characteristic zero, coming
from the global units. However, in line with Remark~\ref{rem:mod p Langlands
  philosophy and different groups} below, the weight part of Serre's conjecture
is known for both quaternion
algebras and compact forms of $\mathrm U(2)$ over totally real fields, and the sets of Serre weights are the same in
both cases. We will elaborate on this point, and in particular say a
few words about its proof, in Remark~\ref{rem: patching functors and GK} below.)

More generally one can work over a totally real field with a form of $\mathrm
U(n)$ which is compact at infinity, and employ similar considerations;
the general theory of ``change of weight'' for Galois representations
developed in~\cite{BLGGT} (which generalises an argument of
Khare--Wintenberger) shows that it is reasonable to expect that the
only obstructions to producing automorphic lifts of particular weights
will be the local ones prescribed by $p$-adic Hodge theory. However,
for most choices of $W$ it is no longer possible to find a representation $V$
for which $\overline{V}\cong W$, and it is far from clear how to
extract complete information in characteristic~$p$ from information in
characteristic zero, and accordingly far from clear how to
generalise the description of the weight part of Serre's conjecture for
$\GL_2$.
However, we do still obtain information (for example, that being
automorphic of some Serre weight implies the existence of a crystalline lift of some
specific Hodge--Tate weights), and much of this paper is devoted to exploring
the relationship between the weight part of Serre's conjecture and 
$p$-adic Hodge theory.
In particular, a consequence of the
philosophy of the paper~\cite{geekisin} is that information about 
potentially semistable lifts
 is sufficient to determine the set of Serre weights
in general; we explain this in Sections~\ref{sec:BM} and~\ref{sec:patching, BM
  and serre weights} below.
\end{example}

\begin{rem}
  \label{rem:mod p Langlands philosophy and different groups}It is generally
  expected that there is a mod $p$ Langlands correspondence satisfying
  local-global compatibility at places dividing $p$; this is known for
  $\GL_2/\Q$ by the results of~\cite{emerton2010local}. A consequence of such a
  compatibility would be that the sets $\W_v(\rbar)$ would only depend on the
  reductive group over $F_v$. It therefore seems reasonable to use
  considerations from groups which are compact modulo centre at infinity to make conjectures about
  $\W_v(\rbar)$ for more general groups; in particular, one can use
  considerations about unitary groups which split at places above $p$ (as in
  Example~\ref{example: crystalline lifts for modular forms and more generally})
  to make predictions about the weight part of Serre's conjecture for $\GL_n$.
\end{rem}

\begin{rem}\label{rem:other things we won't do: level and oddness}
One could consider the question of the relationship of the ramification of the Galois
  representation away from $p$ to the tame level (``the level in Serre's
  conjecture''), and the question of sufficient conditions for a mod $p$ Galois
  representation to correspond to a Hecke eigenclass in the first place (``the
  weak form of Serre's conjecture'' which
  should correspond to an oddness condition at infinite places, see for
  example~\cite{grossodd} and~\cite[\S 6]{MR3028790}). Again, these questions lie in
  a rather different direction to the concerns of this paper, and we will not
  address them here.
\end{rem}
\section[Alt]{The Breuil--M\'ezard formalism for \texorpdfstring{$\GL_{n}$}{GL(n)} and Serre weights}
\label{sec:BM}
In this section we will recall the formalism of the general
Breuil--M\'ezard conjecture for $\GL_n$,
following~\cite{emertongeerefinedBM}, and then explain how the
formalism leads to a Serre weight conjecture. As
in~\cite{emertongeerefinedBM}, we will only formulate the potentially
crystalline (as opposed to potentially semistable) version of the
conjecture, as this is all that we will need. We expect an analogous
conjecture to hold in the potentially semistable case, and we refer
the reader to Section~1.1.4 of~\cite{kisinfmc} for a discussion of the
differences between the potentially crystalline and potentially
semistable versions of the conjecture in the case of $\GL_2/\Qp$. (See also
Lemma 5.2 of~\cite{gee-geraghty-quaternion}, which shows for $\GL_2$ that the
potentially crystalline and potentially semistable conjectures predict the same
multiplicities; we anticipate that the proof will extend to $\GL_n$.)
\subsection{Serre weights}\label{subsec:Serre weights} Let $K/\Qp$ be a finite
extension, and assume throughout this section that the field $E/\Qp$
(our field of coefficients) is sufficiently large. Recall that $k$ and $\F$ denote the residue fields of
$K$ and $E$ respectively. Fix a  representation
$\rhobar:G_K\to\GL_n(\F)$. (We will use $\rhobar$ to denote a local
Galois representation, typically of the group $G_K$, in contrast to $\rbar$
which we reserve for a global Galois representation, typically of the
group $G_F$.)

\begin{defn}
  \label{defn:serre-weight}
  A \emph{Serre weight} is an isomorphism class of irreducible
  $\F$-represen\-tations of $\GL_n(k)$. (This definition will be
  extended to more general reductive groups in Definition~\ref{defn:
    Serre weight general group}.)
\end{defn}

We will sometimes (slightly abusively) refer to an individual irreducible
representation as a Serre weight.

\begin{remark}
  \label{rem:coeff-irrelevant-serre-wt}
From the results recalled below, it follows that all Serre weights 
 can be defined over $k$ 
 (note that our running assumptions imply in particular that $\F$ contains the 
 images of all embeddings $k\into\Fpbar$). Hence the choice of coefficient field 
 is irrelevant, will occasionally be elided below, and will be 
 taken to be $\Fpbar$ from Section~\ref{sec:cryst-lifts-serre} onwards. 
  \end{remark}

Let $\SW(k,n)$ denote the
set of Serre weights for our fixed $k$ and $n$.
In the following paragraphs we give an explicit
description of this set.

Write $\Xn$
for the subset of $\Z^n_{+}$ consisting of tuples
$(a_i)$ such that $p-1 \ge a_i - a_{i+1}$ for all $1\le i \le n-1$.
If $a = (a_{\sigma,i}) \in (\Z^n_+)^{S_k}$, write $a_{\sigma}$ for the
component of $a$ indexed by $\sigma \in S_k$. Set $f = [k:\Fp]$, and
let $\sim$ denote the equivalence relation on $(\Z^n_+)^{S_k}$ in
which $a \sim a'$ if and only if there
exist integers $x_{\sigma}$ such that $a_{\sigma,i} - a'_{\sigma,i} =
x_{\sigma}$ for all $\sigma,i$ and for any
labeling $\sigma_j$ of the elements of $S_k$ such that $\sigma_j^p =
\sigma_{j+1}$ we have $\sum_{j=0}^{f-1} p^j x_{\sigma_j} \equiv 0
\pmod{p^f-1}$.
When
$k=\Fp$ we can omit the subscript $\sigma$, and the above equivalence
relation amounts to $a_i - a'_i = (p-1)y$ for some integer $y$, independent of $i$.

Given any $a\in \Xn$, we define the $k$-representation $P_a$ of $\GL_n(k)$ to be the
representation obtained by evaluating $\Ind_{B_n}^{\GL_n}(w_0 a)_{/\cO_K}$
on $k$ (so we have a natural $\GL_n(\cO_K)$-equivariant isomorphism $M_a \otimes_{\cO_K} k \isoto P_a$), 
and let $N_a$ be the irreducible sub-$k$-representation of $P_a$
generated by the highest weight vector (that this is indeed
irreducible follows from the analogous result for the algebraic group $\GL_n$,
cf.\ II.2.2--II.2.6 in \cite{MR2015057}, and the appendix to \cite{herzigthesis}).

If $a\in \XnSk$ then we define an
irreducible $\F$-representation $F_a$ of $\GL_n(k)$
by \[F_a:=\otimes_{\sigma\in\Sk}(N_{a_\sigma}\otimes_{k,\sigma}\F).\]
The representations $F_a$ are absolutely irreducible,  and every irreducible $\F$-represen\-tation of $\GL_n(k)$ is
of the form $F_a$ for some $a$ (see for example the appendix
to~\cite{herzigthesis}). Furthermore $F_{a} \cong
F_{a'}$ if and only if $a \sim a'$, and so the map $a \mapsto F_a$
gives a bijection from $\SWX$ to the set of Serre weights. We
identify the two sets $\SW(k,n)$ and $\SWX$ under this bijection,
and we 
refer to the elements of $\SWX$ as Serre weights. (We will abuse this
terminology in two specific ways: if $a \in \XnSk$ and $\W$ is a set
of weights, we may write $a
\in \W$ when literally we mean $F_a \in \W$, and we may
write ``the weight $a$'' when literally we mean ``the Serre weight represented
by $a$.'')

If $k = \Fp$ we will also write $F(a_{\sigma,1},\dots,a_{\sigma,n})$ for $F_a$, where $S_k = \{\sigma\}$.

\subsection{The Breuil--M\'ezard conjecture}\label{sec:bmc} By the main results of~\cite{kisindefrings}, for each Hodge type
$\lambda$ and inertial type
$\tau$ there is a unique reduced and $p$-torsion free quotient
  $R^{\lambda,\tau}_{\rhobar,\cO}$ of the universal lifting
  $\cO$-algebra $R_{\rhobar,\cO}$ which is characterised by the
  property that if $E'/E$ is a finite extension of fields, then an $\cO$-algebra homomorphism
    $R_{\rhobar,\cO}\to E'$ factors through
    $R^{\lambda,\tau}_{\rhobar,\cO}$ if and only if the
    corresponding representation $G_K\to\GL_n(E')$ is potentially
    crystalline of Hodge type $\lambda$ and inertial type $\tau$.  The
    ring  $R^{\lambda,\tau}_{\rhobar,\cO}[1/p]$ is regular by \cite[Thm.~3.3.8]{kisindefrings}.
    When $\cO$ is clear from the context, we will suppress it and
    write simply $R^{\lambda,\tau}_{\rhobar}$.  If $\tau$ is trivial we will write
$R^\lambda_{\rhobar}$ for
$R^{\lambda,\tau}_{\rhobar}$.

Given an inertial type $\tau$, there is a finite-dimensional smooth
irreducible $\Qpbar$-representation $\sigma(\tau)$ of $\GL_n(\cO_K)$
associated to $\tau$ by the ``inertial local Langlands
correspondence'', as in the following consequence of the results
of~\cite{MR1728541}, which is Theorem~3.7 of~\cite{EGPatching}.

\begin{thm}\label{thm: inertial local Langlands, N=0}If $\tau$ is an
  inertial type, then there is a finite-dimensional smooth irreducible
  $\Qpbar$-representation $\sigma(\tau)$ of $\GL_n(\cO_K)$  such that
  if $\pi$ is any irreducible smooth
  $\Qpbar$-representation of $G$, 
  then the restriction of $\pi$ to $\GL_n(\cO_K)$ contains \mbox{\upl an}
  isomorphic copy of\upr\ $\sigma(\tau)$ as a subrepresentation if
  and only if $\rec_p(\pi)|_{I_K}\sim\tau$ and $N=0$ on
  $\rec_p(\pi)$. Furthermore, in this case the restriction of $\pi$ to
  $\GL_n(\cO_K)$ contains a unique copy of $\sigma(\tau)$. 
  \end{thm}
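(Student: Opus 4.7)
The plan is to deduce this theorem from the construction of types for $\GL_n(K)$ carried out by Schneider--Zink in MR1728541, which refines the general theory of Bushnell--Kutzko types, combined with the explicit form of the local Langlands correspondence on the automorphic side.

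First I would use the local Langlands correspondence to re-express the datum of $\tau$ in terms of Bernstein components. Inertial equivalence classes of representations $I_K \to \GL_n(\Qpbar)$ that extend to $W_K$ are in bijection with Bernstein components $\mathfrak{s}(\tau)$ of the category of smooth $\Qpbar$-representations of $\GL_n(K)$, with $\pi \in \mathfrak{s}(\tau)$ if and only if $\rec_p(\pi)|_{I_K} \sim \tau$. Within the block $\mathfrak{s}(\tau)$, the additional condition $N = 0$ on $\rec_p(\pi)$ singles out the ``generic'' members of the block, namely the irreducible full parabolic inductions from a supercuspidal support in which no Zelevinsky linking occurs (equivalently, the irreducible constituents whose Weil--Deligne parameter has semisimple truncation equal to the parameter itself).

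Second, I would apply Schneider--Zink. Their main theorem produces, for each Bernstein component, an explicit irreducible smooth representation $\sigma(\tau)$ of $\GL_n(\cO_K)$: one first forms a Bushnell--Kutzko type $(J,\lambda)$ on a suitable compact open subgroup $J \subseteq \GL_n(\cO_K)$, inflates to $\GL_n(\cO_K)$ by induction, and then selects a specific irreducible constituent pinned down by genericity. By their work, the resulting $\sigma(\tau)$ embeds into $\pi|_{\GL_n(\cO_K)}$ exactly when $\pi \in \mathfrak{s}(\tau)$ and $\pi$ is generic in the above sense, and the embedding is unique up to scalar. This gives simultaneously the existence, the characterisation, and the multiplicity-one statements.

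The main obstacle is the translation between Schneider--Zink's genericity condition and the Galois-theoretic condition $N = 0$ on $\rec_p(\pi)$. This is a standard compatibility of the local Langlands correspondence with Zelevinsky's classification: non-triviality of $N$ detects precisely the presence of Steinberg-type degenerations in the segment decomposition of $\pi$, and it is exactly those degenerations that obstruct $\sigma(\tau)$ from embedding into $\pi|_{\GL_n(\cO_K)}$ in the Schneider--Zink construction. Once this compatibility is recorded, the theorem follows directly. A last minor point is that, although Schneider--Zink work over $\mathbb{C}$ (or equivalently over $\Qpbar$ via the choice of isomorphism $\imath$), their construction is Galois-equivariant in a way that ensures $\sigma(\tau)$ descends to a sufficiently large finite extension $E/\Qp$, which is what will be needed in later sections.
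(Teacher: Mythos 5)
Your overall route---reducing the theorem to the Schneider--Zink refinement of the Bushnell--Kutzko theory of types, together with the compatibility of $\rec_p$ with the Zelevinsky classification---is exactly the one the paper intends: the theorem is quoted from \cite{MR1728541} via \cite[Thm.~3.7]{EGPatching}, and the deduction there runs along the lines you sketch. However, your translation of the condition ``$N=0$ on $\rec_p(\pi)$'' into representation theory is wrong, and this is not a cosmetic slip: it changes which representations the type $\sigma(\tau)$ is supposed to detect.

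You assert that within the Bernstein component $\mathfrak{s}(\tau)$ the condition $N=0$ singles out the \emph{generic} members, and accordingly that $\sigma(\tau)$ is the Schneider--Zink constituent ``pinned down by genericity'', occurring in $\pi|_{\GL_n(\cO_K)}$ exactly for generic $\pi$ in the component. In fact $N=0$ and genericity pull in opposite directions. Under the segment description of the correspondence, $N=0$ on $\rec_p(\pi)$ holds if and only if every Zelevinsky segment of $\pi$ has length one; for a fixed supercuspidal support this picks out the unique ``maximally split'' constituent (the Langlands quotient attached to the singleton segments), whereas the generic constituent is the one with maximally merged segments. The two agree only when the support is unlinked. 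Already for $n=2$ and $\tau$ trivial the discrepancy is visible: $\sigma(\tau)$ is the trivial representation of $\GL_2(\cO_K)$ (Remark~\ref{rem:trivial type}), which occurs in the trivial representation of $\GL_2(K)$ (non-generic, with $N=0$) and does \emph{not} occur in the Steinberg representation (generic, with $N\neq 0$). For the same reason your alternative description of the relevant class as ``irreducible full parabolic inductions from a supercuspidal support in which no linking occurs'' is too small, and is not equivalent to $N=0$: the trivial representation of $\GL_2(K)$ has $N=0$ but is neither generic nor a full irreducible induction. The repair is to invoke the correct member of Schneider--Zink's family of $K$-types: they attach a type to each partition-valued datum on a component, and the one needed here is the one whose occurrence in $\pi|_{\GL_n(\cO_K)}$ detects the all-singleton-segment constituents (equivalently $N=0$), not the one detecting generic representations. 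With that substitution, and the genuinely standard dictionary ``$N\neq 0$ if and only if some segment has length greater than one'', your outline matches the cited proof; as written, the characterisation you would establish is false.
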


\begin{rem}
  \label{rem:trivial type}In particular, if $\tau$ is the trivial
  inertial type, then $\sigma(\tau) \cong \Qpbar$ is the trivial
  one-dimensional representation of~$\GL_n(\cO_K)$. 
\end{rem}
  \begin{rem}
    \label{rem: types aren't unique}In general the type $\sigma(\tau)$ need not
    be unique, although it is a folklore conjecture (which is known for $n=2$,
    see Henniart's appendix to~\cite{MR1944572}) that $\sigma(\tau)$ is unique
    if $p>n$. The Breuil--M\'ezard conjecture, as formulated below, should hold
    for any choice of $\sigma(\tau)$; indeed it seems plausible that the
    semisimplification
    of the reduction mod $p$ of $\sigma(\tau)$ does not depend on any choices
    (this is the case when $n=2$ by
    Proposition~4.2 of~\cite{breuildiamond}).
  \end{rem}
  Enlarging $E$ if necessary, we may assume that $\sigma(\tau)$ is
  defined over $E$. Since it is a finite-dimensional representation of
  the compact group $\GL_n(\cO_K)$, it contains a
  $\GL_n(\cO_K)$-stable $\cO$-lattice $L_\tau$.  Set
  $L_{\lambda,\tau}:=L_\tau\otimes_\cO L_\lambda$, a finite free
  $\cO$-module with an action of $\GL_n(\cO_K)$. Then we may
  write \[(L_{\lambda,\tau}\otimes_\cO\F)^{\semis}\isoto\oplus_a
  F_a^{n_{\lambda,\tau}(a)},\]where the sum runs over Serre
  weights $a \in \SW(k,n)$, and 
 the $n_{\lambda,\tau}(a)$ are non-negative
  integers. Then we have the following conjecture.

\begin{conj}[The generalised Breuil--M\'ezard conjecture]
  \label{conj: generalised BM conjecture}There exist non-negative integers
  $\mu_a(\rhobar)$ depending only on $\rhobar$ and $a$ such that for
  all Hodge types $\lambda$ and inertial types $\tau$ we have $e(R^{\lambda,\tau}_{\rhobar}/\varpi)=\sum_an_{\lambda,\tau}(a)\mu_a(\rhobar)$.
\end{conj}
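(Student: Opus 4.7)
The plan is to establish the conjecture via the Taylor--Wiles--Kisin patching method, building on the strategy that Kisin employed for $\GL_2/\Qp$ and which has been extended to $\GL_n$ in settings where suitable automorphy lifting results are available. The starting point is to fix a globalisation: find a CM field $F$ with a place $v \mid p$ such that $F_v \cong K$, a definite unitary group $G/F^+$ which is quasi-split at all finite places and becomes $\GL_n$ after base change, and an irreducible automorphic $\rbar : G_F \to \GL_n(\F)$ with $\rbar|_{G_{F_v}} \cong \rhobar$, satisfying Taylor--Wiles--type hypotheses at auxiliary places. Existence of such globalisations in sufficient generality is itself a serious (but by now fairly standard) input, relying on potential automorphy theorems.

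With the globalisation in hand, I would perform Taylor--Wiles--Kisin patching on spaces of automorphic forms on $G$ with coefficients in $\GL_n(\cO_{F_v^+})$-modules, obtaining a patching functor $M_\infty$ from the category of finitely generated $\cO$-modules with continuous $\GL_n(\cO_K)$-action to coherent sheaves on $X_\infty = \Spf R_{\rhobar,\cO}^{\mathrm{loc}}[[x_1,\ldots,x_h]]$, where $R_{\rhobar,\cO}^{\mathrm{loc}}$ packages the framed deformation rings at $v$ (and at auxiliary places at $p$ absorbed into formal variables). The key properties are that $M_\infty(L_{\lambda,\tau})$ is supported on the closed subscheme $X_\infty^{\lambda,\tau}$ cut out by $R^{\lambda,\tau}_{\rhobar}$, is maximal Cohen--Macaulay there and generically of rank one on its generic fibre, so that
\[
e\bigl(R^{\lambda,\tau}_{\rhobar}/\varpi\bigr) \;=\; \frac{1}{c}\, e\bigl(M_\infty(L_{\lambda,\tau})/\varpi,\, R^{\lambda,\tau}_{\rhobar}/\varpi\bigr)
\]
for a fixed constant $c$ independent of $(\lambda,\tau)$.

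Now I would define the candidate multiplicities by setting
\[
\mu_a(\rhobar) \;:=\; \tfrac{1}{c}\, e\bigl(M_\infty(F_a),\, R_{\rhobar}^{\mathrm{loc}}/\varpi\bigr),
\]
which is a non-negative integer. The desired identity then drops out by d\'evissage: exactness of $M_\infty$, together with the Jordan--H\"older decomposition $(L_{\lambda,\tau}\otimes_\cO \F)^{\ss} = \bigoplus_a F_a^{n_{\lambda,\tau}(a)}$, gives
\[
e\bigl(M_\infty(L_{\lambda,\tau})/\varpi\bigr) \;=\; \sum_a n_{\lambda,\tau}(a)\, e\bigl(M_\infty(F_a)\bigr),
\]
so that $e(R^{\lambda,\tau}_{\rhobar}/\varpi) = \sum_a n_{\lambda,\tau}(a)\mu_a(\rhobar)$ as required.

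The main obstacle — and the reason this remains only a conjecture in general — is showing that $\mu_a(\rhobar)$ depends only on the local representation $\rhobar$, and not on the globalisation $\rbar$ nor on the patching choices (ultrafilter, auxiliary primes, choice of $G$). For $\GL_2/\Qp$ this independence is extracted from Colmez's functor and the categorical $p$-adic local Langlands correspondence; no such tool is available in general. A secondary obstacle is producing globalisations $\rbar$ realising an arbitrary local $\rhobar$ together with the vanishing of a relevant Selmer group and existence of automorphic lifts of every type $(\lambda,\tau)$ for which $R^{\lambda,\tau}_{\rhobar}/\varpi \neq 0$; without this, the patching functor computes multiplicities only on a subset of the lifting ring. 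Any unconditional proof therefore seems to require either (a) a sufficiently robust mod $p$ local Langlands correspondence on the local Galois side that makes $\mu_a(\rhobar)$ intrinsic, or (b) a direct geometric construction on the Emerton--Gee stack $\cXbar$ producing cycles whose intersection multiplicities with the potentially crystalline strata realise the $\mu_a(\rhobar)$ universally.
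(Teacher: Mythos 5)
The statement you are trying to prove is labelled as a \emph{conjecture} in the paper, and the paper offers no proof of it; it is presented as open in general (known essentially only for $n=2$, $K=\Qp$, and in a handful of other cases). So there is no proof of the paper's to compare against, and your text should not be read as a proof either --- to your credit, you say as much in your final paragraph. What you have written is, in substance, the paper's own conditional framework: the construction of a patching functor $M_\infty$, the definition $\mu_a(\rhobar):=e(M_\infty(F_a),\Xbar_\infty(F_a))$, and the d\'evissage using exactness of $M_\infty$ and the decomposition $(L_{\lambda,\tau}\otimes_\cO\F)^{\semis}\cong\oplus_a F_a^{n_{\lambda,\tau}(a)}$ appear in the paper's Proposition~\ref{prop: patching functors imply BM weights}, which proves only the weaker statement that \emph{if} a patching functor for a Breuil--M\'ezard system $\cS$ exists, \emph{then} Conjecture~\ref{conj: BM conjecture of type S} holds for that $\cS$. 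The gaps you name at the end are exactly the ones that prevent this from being a proof: (a) the Taylor--Wiles--Kisin construction is only known to produce a module that is maximal Cohen--Macaulay over \emph{a union of irreducible components} of $X_\infty(\lambda,\tau)$, and showing its support is all of $X_\infty(\lambda,\tau)$ for every $(\lambda,\tau)$ is equivalent to hard open problems (potential diagonalisability / automorphy of all characteristic-zero points), as the paper notes in Remark~\ref{rem: patching functors from patching}; and (b) the independence of $\mu_a(\rhobar)$ from the globalisation $\rbar$ and from the patching choices is not known for $n>2$.

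Two smaller points. First, your normalisation $\mu_a(\rhobar):=\tfrac1c\,e\bigl(M_\infty(F_a),R^{\mathrm{loc}}_{\rhobar}/\varpi\bigr)$ is computed against the wrong ring: the special fibre of the unrestricted lifting ring has dimension strictly larger than $d=\dim\Xbar_\infty(F_a)$, so that multiplicity vanishes identically. The multiplicity must be taken relative to the support $\Xbar_\infty(F_a)$ itself (or any quotient of the same dimension $d$), which is what makes the comparison $e(M_\infty(F_a),\Xbar_\infty(\lambda,\tau))=e(M_\infty(F_a),\Xbar_\infty(F_a))$ via \cite[Thm.\ 14.7]{MR1011461} the substantive step. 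Second, even granting a patching functor for a fixed Breuil--M\'ezard system $\cS$, one only obtains the identity $e(R^{\lambda,\tau}_{\rhobar}/\varpi)=\sum_a n_{\lambda,\tau}(a)\mu_a(\rhobar)$ for $(\lambda,\tau)\in\cS$; extending it to \emph{all} Hodge and inertial types, as the conjecture demands, requires the support condition for every $(\lambda,\tau)$ simultaneously, which is a strictly stronger input than anything currently available.
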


Here $\varpi$ is a uniformiser of $\cO$. The finitely many integers $\mu_a(\rhobar)$ are in fact hugely
overdetermined by the infinitely many equations
$e(R^{\lambda,\tau}_{\rhobar}/\varpi)=\sum_a n_{\lambda,\tau}(a)\mu_a(\rhobar)$.
 We will return to this point in the next subsection.

\begin{rem}
  \label{rem:enlarge-coefficients}
 The multiplicities $\mu_a(\rhobar)$ in Conjecture~\ref{conj: generalised BM
    conjecture} will be
  independent of the coefficient field $E$, in the following sense.
 Let $E'$ be a finite extension of $E$, with ring of integers $\cO'$
  and residue field $\F'$.   Write $\rhobar' = \rhobar \otimes_{\F}
  \F'$ and $\tau' = \tau \otimes_{E} E'$.   One knows 
  (\cite[Lem.~1.2.1 and \S1.4]{BLGGT}) that there is an isomorphism
  $R^{\lambda',\tau'}_{\rhobar',\cO'}  \cong
  R^{\lambda,\tau}_{\rhobar,\cO} \otimes_{\cO} \cO'$.  It
  follows that if Conjecture~\ref{conj: generalised BM
    conjecture} holds then $\mu_a (\rhobar') = \mu_a(\rhobar)$ for all
  $a \in \SW(k,n)$.
\end{rem}

The generalised Breuil--M\'ezard conjecture is almost completely
understood when $n=2$ and $K=\Qp$ i.e.\ in the setting originally
studied and conjectured by Breuil and M\'ezard \cite{MR1944572}\footnote{Breuil and M\'ezard restricted their original
conjecture to the case of Hodge types $\lambda = (r,0)$ with $0 \le r
\le p-3$, due to the lack of a suitable integral $p$-adic Hodge theory
at the time, and considered potentially semistable deformation rings. The conjecture was later
extended to
arbitrary Hodge types and adapted to the potentially crystalline setting by Kisin~\cite{kisinICM}.}. In
fact, it is completely understood in this setting when $p>3$
\cite{kisinfmc,PaskunasBM,HuTan}; when 
$p=2,3$ it is known in all cases except when the representation~$\rhobar$ is
  reducible and the  characters on the diagonal of $\rhobar$ have
  ratio~$\varepsilonbar$ 
($= \varepsilonbar^{-1}$ when $p \le 3$)    \cite{PaskunasBMatTwo, kisinfmc,
    SanderScalar}.   The multiplicities $\mu_a(\rhobar)$ are
described in most cases (those for which $\rhobar$ has only scalar
endomorphisms) in~\cite[\S 2.1.2]{MR1944572}, and in general in~\cite[\S
1.1]{kisinfmc} together with~\cite{sandermultiplicities}.

Assuming Conjecture~\ref{conj: generalised BM conjecture}, we make the
following definition and conjecture.
\begin{defn}
  \label{defn:Serre weights predicted by BM}We define $\WBM(\rhobar)$,
  the \emph{Breuil--M\'ezard predicted weights for} $\rhobar$, to be
  the set of Serre weights $a$ such that $\mu_a(\rhobar)>0$.
\end{defn}

\begin{conj}
  \label{conj:BM-version} In the weight part of Serre's conjecture
  \upl Conj.~\ref{conj:basic-conjecture}\upr,
  we may take $\W_v(\rbar) = \WBM(\rbar|_{G_{F_v}})$.
\end{conj}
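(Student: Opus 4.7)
My plan is to prove the conjecture by following the blueprint laid out in the introduction: constructing a patching functor and deducing the identification of $\W_v(\rbar)$ with $\WBM(\rbar|_{G_{F_v}})$ as a two-step consequence. Since the conjecture is stated for arbitrary $n$ and arbitrary number fields, there is no hope of an unconditional proof with current technology; the goal is rather to reduce it to standard conjectures (Fontaine--Mazur, existence of automorphy lifts, a Taylor--Wiles--Kisin patching functor with the expected properties, and Conjecture~\ref{conj: generalised BM conjecture} itself) in those cases where the patching apparatus applies, e.g.\ for compact-at-infinity unitary groups as discussed in Section~\ref{subsec:discrete series}.

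The plan is to proceed as follows. First, given an automorphic $\rbar$ and a Serre weight $W = \otimes_{v\mid p} W_v$, I would like to identify $H^*(Y(U),\cW)_\m$ with $M_\infty(W)$ modulo the maximal ideal of the patched diagonal arguments ring, where $M_\infty$ is a patching functor as abstracted in Section~\ref{sec:breu-mezard-conj} (working on $X_\infty = \Spf R_{\rhobar}[[x_1,\dots,x_h]]$ with $\rhobar := \rbar|_{G_{F_v}}$ for a fixed $v\mid p$). This is the content of applying the Taylor--Wiles--Kisin method to the cohomology of $Y(U)$, and for groups compact modulo centre at infinity (Section~\ref{subsec:discrete series}) it is standard; for general groups one needs a non-Eisenstein vanishing result in the sense of Remark~\ref{rem: trivial coefficients}, as conjectured in~\cite{MR2905536,emertonICM}. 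Second, I would invoke Proposition~\ref{prop: patching functors imply BM weights} (to be established in the body of the paper) to conclude that the existence of such an $M_\infty$ implies both Conjecture~\ref{conj: generalised BM conjecture} and the identity
\[
\{W : M_\infty(W) \neq 0\} = \WBM(\rhobar).
\]

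Third, and this is where we need one more global input, I would globalize $\rhobar$: given an arbitrary local $\rhobar : G_{F_v} \to \GL_n(\Fpbar)$ with $\mu_a(\rhobar) > 0$ for some Serre weight $a$, I need to produce a global automorphic $\rbar$ with $\rbar|_{G_{F_v}} \cong \rhobar$ and $a \in \W_v(\rbar)$. For the forward direction, that each weight in $\WBM(\rhobar)$ is realised, this amounts to (i) using the assumption $\mu_a(\rhobar) > 0$ together with the Breuil--M\'ezard formula to exhibit a non-zero potentially crystalline lifting ring $R^{\lambda,\tau}_{\rhobar}$ in which the appropriate Serre weight appears, (ii) applying a potential automorphy theorem in the style of~\cite{BLGGT} to produce an automorphic lift of the desired type, and (iii) running the patching functor on the relevant inertial type to detect $a$ as a Jordan--H\"older factor. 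For the reverse direction, the fact that Serre weights of $\rbar$ have positive Breuil--M\'ezard multiplicity follows from the non-vanishing of $M_\infty(F_a)$ combined with the support argument in the definition of patching functors.

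The main obstacle, by a comfortable margin, is establishing the existence of a patching functor with the required properties for a group general enough to capture $\rbar$. In the setting of unitary groups that are compact modulo centre at infinity the construction is essentially known in combination with~\cite{cht,geekisin,EGPatching}, but (a) checking that the supports of $M_\infty(\sigma(\tau))$ are full inside $X_\infty^\tau$ depends on mod $p$ local Langlands considerations that are not yet available for $\GL_n$ when $n > 2$, and (b) one must show that the weight sets extracted from the patching functor depend only on $\rbar|_{G_{F_v}}$ and not on the global choices made to produce $M_\infty$, which is essentially the local--global compatibility part of the mod $p$ Langlands philosophy invoked in Remark~\ref{rem:mod p Langlands philosophy and different groups}. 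A secondary obstacle is matching the two possible definitions of $\W(\rbar)$ discussed in Remark~\ref{rem: trivial coefficients}, which in general requires the vanishing conjectures of~\cite{MR2905536}. Once these ingredients are in place, the rest of the argument is formal, driven by Proposition~\ref{prop: patching functors imply BM weights}.
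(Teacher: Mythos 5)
The statement you are addressing is a conjecture, and the paper does not prove it; the closest the paper comes is Proposition~\ref{prop: patching functors imply BM weights}, which shows that the existence of a patching functor for a Breuil--M\'ezard system $\cS$ implies Conjecture~\ref{conj: BM conjecture of type S} and identifies $\W_{\cS}(\rhobar)$ with $\{F_a : M_\infty(F_a)\ne 0\}$. Your proposal reconstructs exactly this conditional reduction, together with the same list of open inputs the paper flags (full support of $M_\infty(L_{\lambda,\tau})$ on $X_\infty(\lambda,\tau)$, i.e.\ potential diagonalisability/Fontaine--Mazur, as in Remark~\ref{rem: patching functors from patching}; local--global compatibility so that the weight set depends only on $\rbar|_{G_{F_v}}$; and the degree-vanishing conjectures needed to reconcile the two definitions of $\W(\rbar)$ in Remark~\ref{rem: trivial coefficients}). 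So your approach is the paper's approach, and like the paper's it is a reduction to other conjectures rather than a proof.

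One small point: your third step, globalising an arbitrary local $\rhobar$ with $\mu_a(\rhobar)>0$, is not needed for the statement as posed, since Conjecture~\ref{conj:BM-version} takes a global automorphic $\rbar$ as given and asks that its weight set equal $\prod_v \WBM(\rbar|_{G_{F_v}})$; the patching functor is built from that given $\rbar$, and both inclusions come from Proposition~\ref{prop: patching functors imply BM weights} applied to it. Globalisation in the style of~\cite{emertongeerefinedBM} and~\cite{BLGGT} is what one needs for the purely local assertions elsewhere in the paper (e.g.\ non-emptiness of $\Wcrisexists(\rhobar)$), not for this one.
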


In Section~\ref{sec:patching, BM and serre weights} below we will
explain how the formalism of the Taylor--Wiles--Kisin patching method
shows that this is a natural definition for the set of predicted Serre
weights.

\subsection{Breuil--M\'ezard systems}
\label{subsec:known cases of BM}

We now describe a family of variants of
Conjectures~\ref{conj: generalised BM conjecture}
and~\ref{conj:BM-version}.
Let $\CS$ be a set of pairs $(\lambda,\tau)$ such that
$\lambda$ is a Hodge type and $\tau$ is an inertial type (both for our
fixed $K$ and $n$).  
We say that $\CS$ is a \emph{Breuil--M\'ezard system} if the map
$\Z^{\SW(k,n)} \to \Z^{\CS}$ given by the formula 
$$(x_a)_{a \in \SW(k,n)} \mapsto \left(\sum_a n_{\lambda,\tau}(a)
x_a\right)_{(\lambda,\tau) \in \cS}$$
is injective; in particular, if $\CS$ is a Breuil--M\'ezard system then for each representation
$\rhobar$ the equations $e(R^{\lambda,\tau}_{\rhobar}/\varpi) = \sum_a
n_{\lambda,\tau}(a) \mu_a(\rhobar)$, regarded as a system of linear
equations in the variables $\mu_a(\rhobar)$, can have at most one solution.

We remark that $\CS$ is a Breuil--M\'ezard system if and only if the image of the
map $\Z[\CS] \to K_0(\mathrm{Rep}_{\F}(\GL_n(k)))$ sending  $(\lambda,\tau) \mapsto
[L_{\lambda,\tau} \otimes_{\cO} \F]$ has finite index. (Here we write
$K_0(\mathrm{Rep}_{\F}(\GL_n(k)))$  for the Grothendieck group of
finite-dimensional $\F[\GL_n(k)]$-modules.)  Indeed, if
$\CS$ is finite then this is precisely the dual of the definition in
the previous paragraph; in general, for the `only if' direction
one simply notes that any
Breuil--M\'ezard system contains a finite Breuil--M\'ezard system, and
similarly for the `if' direction.

\begin{ex}
  \label{ex:barsotti-tate-system}
  Take $n=2$ and let $\mathrm{BT}$ be the set of pairs $(0,\tau)$, so that
  $\mathrm{BT}$ is the set of potentially Barsotti--Tate types.  Then
  \cite[Lem.~3.5.2]{geekisin} shows that $\mathrm{BT}$ is a
    Breuil--M\'ezard system, and indeed that this is true even if we restrict
    to types $\tau$ such that $\det \tau$ is tame.
\end{ex}

To give another example, we make the
following definition.

\begin{defn}
  \label{defn:lift of a serre weight}
We say that an element
  $\lambda\in(\Z^n_+)^{\SK}$ is a \emph{lift} of an element
  $a\in(\Z^n_+)^{\Sk}$ if for each $\sigma \in \Sk$ there exists
  $\emb_\sigma \in \SK$ lifting $\sigma$ 
  such that $\lambda_{\emb_\sigma}=a_\sigma$, and $\lambda_{\emb'} =
  0$ for all other $\emb'\neq \emb_\sigma$
  in $\SK$ lifting $\sigma$.  In that case we may say that the lift
  $\lambda$  is taken with respect to the choice of embeddings $(\emb_\sigma)$.
When $a \in (\Xn)^{S_k}$  we will also say that $\lambda$ is a
  \emph{lift}   (with respect to the choice of embeddings $(\emb_{\sigma})$) of  the Serre weight represented by $a$.
\end{defn}

\begin{ex}
  \label{ex:crystalline-systems}
  Fix a lift $\lambda_b$ for each Serre weight $b$, and let
  $\widetilde{\Cr}$ be the set of pairs $(\lambda_b,\mathrm{triv})$,
  where $\mathrm{triv}$ denotes the trivial type.  Then
  $\widetilde{\Cr}$ is a Breuil--M\'ezard system, because Lemma~\ref{lem:correction to EG 4.1.1.} below shows
  (inductively) that the natural map $\Z[\widetilde{\Cr}] \to
  K_0(\mathrm{Rep}_{\F}(\GL_n(k)))$ is surjective.

\end{ex}

\begin{defn}\label{defn: norm on weights}
  For $a\in(\Z^n_+)^{S_k}$, let
  $\|a\|:=\sum_{i,\sigma}(n+1-2i)a_{\sigma,i}\in\Z_{\ge 0}$.
\end{defn}

\begin{lem}
  \label{lem:correction to EG 4.1.1.}If $\lambda$ is a lift of $a\in (\Xn)^{S_k}$, then
  $L_\lambda\otimes_{\O}\F$ has socle $F_a$, and every other Jordan--H\"older factor
  of $L_\lambda\otimes_{\O}\F$ is of the form $F_b$ with $b\in (X_1^{(n)})^{S_k}$ and $\|b\|<\|a\|$.
\end{lem}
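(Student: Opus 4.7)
Plan:

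Unwinding the tensor product definitions, $L_\lambda \otimes_\cO \F \cong \bigotimes_{\emb \in S_K} (P_{\lambda_\emb} \otimes_{k, \embb} \F)$, using that $M_\lambda \otimes_{\cO_K} k \cong P_\lambda$. Since $\lambda$ is a lift of $a$ relative to embeddings $(\emb_\sigma)_{\sigma \in S_k}$, we have $\lambda_{\emb_\sigma} = a_\sigma$ while $\lambda_{\emb'} = 0$ for every other $\emb'$ above $\sigma$; because $P_0$ is the trivial representation, this yields
\[
L_\lambda \otimes_\cO \F \cong \bigotimes_{\sigma \in S_k} \bigl(P_{a_\sigma} \otimes_{k, \sigma} \F\bigr)
\]
as $\GL_n(k)$-representations (with the diagonal action).

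Next I would analyze $P_a$ for a single $a \in \Xn$. The key facts, proved in the appendix of~\cite{herzigthesis}, are that (i) $\soc_{\GL_n(k)} P_a = N_a$, and (ii) every Jordan--H\"older factor of $P_a$ has the form $N_b$ with $b \in \Xn$ and $b \le a$ in the dominance order. These rely on Jantzen's theorem that the socle of the dual Weyl module (in the category of algebraic $\GL_{n,k}$-representations) is simple of highest weight $a$---which restricts to $N_a$ on $\GL_n(k)$-points when $a \in \Xn$---on the strong linkage principle (Jantzen~II.6.13), and on the fact that composition factors of $P_a$ remain $p$-restricted whenever $a$ is.

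Because the $\GL_n(k)$-actions on the factors $P_{a_\sigma} \otimes_{k, \sigma} \F$ are through distinct embeddings $\sigma : k \hookrightarrow \F$, the diagonal action on $\bigotimes_\sigma (P_{a_\sigma} \otimes_{k, \sigma} \F)$ comes by restriction from a representation of the algebraic group $\prod_\sigma \GL_{n/\F}$, and the image of $\GL_n(k)$ in this product is Zariski dense. Consequently the socle and composition series of the tensor product decompose as tensor products: the socle is $\bigotimes_\sigma (N_{a_\sigma} \otimes_{k, \sigma} \F) = F_a$, and each composition factor has the form $F_b$ with $b_\sigma \in \Xn$ and $b_\sigma \le a_\sigma$ for every $\sigma$.

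Finally, writing $\rho$ for the half-sum of the positive roots of $\GL_n$, one has $\|c\| = 2 \sum_\sigma \langle c_\sigma, \rho \rangle$ and $\langle \alpha_i, \rho \rangle = 1$ for each simple root $\alpha_i$. If $a_\sigma - b_\sigma = \sum_i c_{\sigma, i} \alpha_i$ with $c_{\sigma, i} \ge 0$, then $\|a_\sigma\| - \|b_\sigma\| = 2 \sum_i c_{\sigma, i} \ge 0$, with equality iff $a_\sigma = b_\sigma$. When $F_b \ne F_a$ the tuples $b$ and $a$ must differ (they cannot be $\sim$-equivalent, because dominance differences lie in the root lattice whereas $\sim$-equivalences do not), forcing $\|b\| < \|a\|$. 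The main obstacle in the proof is item~(i)---confirming that the $\GL_n(k)$-socle of $P_a$ does not enlarge beyond the algebraic socle $N_a$---which is the delicate modular representation-theoretic ingredient supplied by Herzig's appendix.
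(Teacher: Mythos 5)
Your reduction of $L_\lambda\otimes_{\cO}\F$ to $\bigotimes_{\sigma}(P_{a_\sigma}\otimes_{k,\sigma}\F)$ and your final norm computation are fine, but the heart of your argument rests on the assertion that ``composition factors of $P_a$ remain $p$-restricted whenever $a$ is,'' and this is false in general. The Jordan--H\"older factors of the dual Weyl module $W(a)$ \emph{as an algebraic representation} are $F(b)$ with $b\le a$ dominant and $b\uparrow a$, but for $n\ge 3$ such $b$ need not lie in $X_1^{(n)}$ even when $a$ does. This is exactly the difficulty the paper's proof is organised around: one must write a non-restricted constituent as $F(b)\cong F(b_0)\otimes F(pb_1)$ via Steinberg's tensor product theorem, observe that upon restriction to the finite group the Frobenius twist untwists ($F(pb_1)\cong F(\pi(b_1))$ as $\uG^F$-representations), and then induct on $\|b\|$, using $\|b_0+\pi(b_1)\|=\|b\|-(p-1)\|b_1\|<\|b\|$. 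Note that this untwisting destroys the dominance comparison with $a$ --- which is precisely why the lemma asserts only $\|b\|<\|a\|$ and not $b\le a$; your proposed conclusion $b_\sigma\le a_\sigma$ for all $\sigma$ is stronger than what is true.

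Two further points. First, the image of $\GL_n(k)$ in $\prod_\sigma\GL_{n/\F}$ is a finite group and hence is \emph{not} Zariski dense, so your reduction of the finite-group socle of the tensor product to the algebraic socle is unjustified as stated; the paper avoids this by working directly with the algebraic group $\Res_{k/\Fp}\GL_n$ and its twisted Frobenius. Second, the statement that the $\GL_n(k)$-socle of $W(a)$ is $F(a)$ is not supplied by the appendix to~\cite{herzigthesis} (which gives irreducibility of $N_a$ and the classification of irreducibles); in the paper it is obtained by dualising~\cite[Thm.~5.9]{bib:Hum}, and that argument in turn \emph{uses} the Jordan--H\"older statement about norms that you have not yet established. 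So the logical order matters: one first proves the multiplicity/norm statement by the Steinberg induction, and only then deduces the socle claim.
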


To prove Lemma~\ref{lem:correction to EG 4.1.1.} it is best to work
not with the group $\GL_n$ over $k$ but rather its restriction of
scalars to $\Fp$. For this reason we defer the proof until
Section~\ref{sec:proof-of-correction-to-EG}. However, we stress that
Lemma~\ref{lem:correction to EG 4.1.1.} will only be used in our
discussion of the Breuil--M\'ezard system $\widetilde{\Cr}$.

In the following conjectures and definition, we let $\CS$ be a Breuil--M\'ezard system.

\begin{conj}[The Breuil--M\'ezard conjecture for representations of
  type $\CS$]
  \label{conj: BM conjecture of type S} There exist non-negative integers
  $\mu_a(\rhobar)$ depending only on $\rhobar$ and $a$ such that for
  all $(\lambda,\tau) \in \CS$ we have $e(R^{\lambda,\tau}_{\rhobar}/\varpi)=\sum_an_{\lambda,\tau}(a)\mu_a(\rhobar)$.
\end{conj}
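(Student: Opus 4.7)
The natural strategy is to reduce the conjecture to the existence of a suitable patching functor $M_\infty$, following the philosophy sketched in Section~\ref{sec:breu-mezard-conj} of the introduction and developed more formally in Section~\ref{sec:patching, BM and serre weights}. The plan is first to globalise $\rhobar$ to an automorphic $\rbar\colon G_F \to \GL_n(\Fpbar)$ (say on a unitary group which is compact at infinity) such that $\rbar|_{G_{F_v}} \cong \rhobar$ at some place $v \mid p$, with the remaining local deformation problems made tractable by imposing, for instance, potentially diagonalizable lifts away from $v$, as in \cite{BLGGT}. Taylor--Wiles--Kisin patching applied to spaces of algebraic modular forms with coefficients in $L_{\lambda,\tau}$ should then produce an exact functor $M_\infty$ from $\GL_n(\cO_K)$-representations on finite $\cO$-modules to coherent sheaves on $X_\infty = \Spf R_{\rhobar}[[x_1,\ldots,x_h]]$, with the key properties that, for $(\lambda,\tau) \in \CS$, $M_\infty(L_{\lambda,\tau})$ is $p$-torsion free, supported on the lifting space $X_\infty^{\lambda,\tau}$ cut out by potentially crystalline lifts of type $(\lambda,\tau)$, and has a common generic rank $c$ independent of $(\lambda,\tau)$.

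Given such an $M_\infty$, set $\mu_a(\rhobar) := e(M_\infty(F_a))/c$. The exactness of $M_\infty$, combined with a $\GL_n(\cO_K)$-stable filtration of $L_{\lambda,\tau}$ realising the Jordan--H\"older decomposition $(L_{\lambda,\tau}\otimes_{\cO}\F)^{\semis} \cong \oplus_a F_a^{n_{\lambda,\tau}(a)}$ on reduction mod $\varpi$, together with additivity of Hilbert--Samuel multiplicity, yields
\[ e(M_\infty(L_{\lambda,\tau})/\varpi) = \sum_a n_{\lambda,\tau}(a)\, e(M_\infty(F_a)). \]
On the other hand, the support and generic-rank hypotheses on $M_\infty(L_{\lambda,\tau})$, together with its flatness over $\cO$, should imply
\[ e(M_\infty(L_{\lambda,\tau})/\varpi) = c \cdot e(R^{\lambda,\tau}_{\rhobar}/\varpi). \]
Dividing by $c$ gives the desired formula for every $(\lambda,\tau) \in \CS$. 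The fact that $\mu_a(\rhobar)$ depends only on $\rhobar$ and $a$ (and not on the globalisation, auxiliary primes, or patching variables) is then forced by the Breuil--M\'ezard-system hypothesis on $\CS$: by definition of that hypothesis, the map $\Z^{\SW(k,n)} \to \Z^{\CS}$ is injective, so the integers $\mu_a(\rhobar)$ are uniquely determined by the collection of equations indexed by $\CS$.

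The principal obstacle is the construction of $M_\infty$ in the required generality. Globalisation of $\rhobar$ with adequately controlled local behaviour at every place above $p$ is a serious problem that, even with current potential automorphy technology, requires hypotheses such as oddness, smallness of image, the existence of potentially diagonalizable lifts at places $w \mid p$ with $w \ne v$, and typically some genericity on $\rhobar$. Secondarily, verifying the support and generic-rank properties of $M_\infty(L_{\lambda,\tau})$ relies on good behaviour of the inertial local Langlands correspondence for $\sigma(\tau)$ (cf.\ Remark~\ref{rem: types aren't unique}) and on automorphic multiplicity-one statements. These difficulties can be partly mitigated by restricting to a small system $\CS$: in Example~\ref{ex:barsotti-tate-system} the Barsotti--Tate system for $\GL_2$ was essentially handled in \cite{geekisin} by this route, while in Example~\ref{ex:crystalline-systems} one might hope to pin down the $\mu_a(\rhobar)$ by induction on $\|a\|$ using Lemma~\ref{lem:correction to EG 4.1.1.} to isolate the socle contribution $F_a$ from the lower-weight Jordan--H\"older factors of $L_{\lambda_a}\otimes_{\cO}\F$.
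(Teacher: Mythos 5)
The statement you are trying to prove is a \emph{conjecture}: the paper offers no proof of it, and states it precisely because it is open. What the paper does prove is the conditional result that your proposal reproduces, namely Proposition~\ref{prop: patching functors imply BM weights}: \emph{if} a patching functor for $\cS$ exists, then Conjecture~\ref{conj: BM conjecture of type S} holds, with $\mu_a(\rhobar) = e(M_\infty(F_a),\Xbar_\infty(F_a))$ (the paper normalises the generic rank to $c=1$, but as in Remark~\ref{rem:mult one minimal patching functor} this is inessential). Your deduction from the existence of $M_\infty$ is essentially the paper's: exactness plus a lattice filtration gives $e(M_\infty(L_{\lambda,\tau}\otimes_\cO\F),\Xbar_\infty(\lambda,\tau))=\sum_a n_{\lambda,\tau}(a)\,e(M_\infty(F_a),\Xbar_\infty(F_a))$ (using equidimensionality of $\Xbar_\infty(\lambda,\tau)$ and the dimension axiom on the support of $M_\infty(F_a)$ to move between local rings), the generic-rank-one and maximal Cohen--Macaulay properties give $e(M_\infty(L_{\lambda,\tau}\otimes_\cO\F),\Xbar_\infty(\lambda,\tau))=e(R^{\lambda,\tau}_{\rhobar}/\varpi)$, and uniqueness of the $\mu_a(\rhobar)$ follows from the injectivity built into the definition of a Breuil--M\'ezard system.

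The gap is exactly the one you name, and it cannot be closed with current technology: Taylor--Wiles--Kisin patching produces a candidate $M_\infty$, but the Auslander--Buchsbaum argument only shows that $M_\infty(L_{\lambda,\tau})$ is maximal Cohen--Macaulay over \emph{its support}, which a priori is only a union of irreducible components of $X_\infty(\lambda,\tau)$. Showing the support is all of $X_\infty(\lambda,\tau)$ is the content of Remark~\ref{rem: patching functors from patching}; it is known only when all potentially crystalline representations of type $(\lambda,\tau)$ are potentially diagonalisable (via the methods of~\cite{BLGGT}), which is why the unconditional results in the paper are confined to systems such as $\mathrm{BT}$ for $\GL_2$. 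So your write-up should be read as a correct reduction of the conjecture to the existence of a patching functor (i.e.\ a proof of Proposition~\ref{prop: patching functors imply BM weights}), not as a proof of the conjecture itself.
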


\begin{defn}
  \label{defn:Serre weights predicted by BM of type S}
  Suppose that the Breuil--M\'ezard
  conjecture for representations of type $\CS$
  is true for $\rhobar$.  We define
  $\W_{\CS}(\rhobar)$ to be
  the set of Serre weights $a$ such that $\mu_a(\rhobar)>0$.
\end{defn}

\begin{conj}[The $\CS$-weight part of Serre's conjecture]
  \label{conj:BM-type-S-version}
  Suppose that the Breuil--M\'ezard
  conjecture for representations of type $\CS$ \upl Conj.~\ref{conj: BM
    conjecture of type S}\upr\ is true.  Then the weight part of
  Serre's conjecture \upl Conj.~\ref{conj:basic-conjecture}\upr\
  holds with $\W_v(\rbar) = \W_{\CS}(\rbar|_{G_{F_v}})$.
\end{conj}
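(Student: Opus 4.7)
The plan is to follow the Taylor--Wiles--Kisin patching paradigm of~\cite{geekisin}, suitably adapted to the type-$\cS$ setting, using the hypothesized $\cS$-Breuil--M\'ezard conjecture together with the defining property of a Breuil--M\'ezard system to pin down the Serre weights of $\rbar$ via a patching functor. Concretely, writing $\rhobar_v = \rbar|_{G_{F_v}}$, for each automorphic $\rbar$ one would construct a patching functor $M_\infty$ from the category of finitely generated $\Zpbar$-modules with continuous $\prod_{v\mid p}\GL_n(\cO_{F_v})$-action to coherent sheaves on $X_\infty = \Spf R_\infty$, where $R_\infty$ is a formal power series ring over the completed tensor product $\widehat{\bigotimes}_{v\mid p}\, R_{\rhobar_v}$. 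The functor is required to satisfy two key axioms: for any collection of pairs $(\lambda_v,\tau_v)_{v\mid p}$, the sheaf $M_\infty\bigl(\bigotimes_{v\mid p} L_{\lambda_v,\tau_v}\bigr)$ is $p$-torsion free with support equal to $\Spf\bigl(\widehat{\bigotimes}_v\, R^{\lambda_v,\tau_v}_{\rhobar_v}[[x_i]]\bigr)$ and of generic rank one there; and $M_\infty\bigl(\bigotimes_v F_{a_v}\bigr) \neq 0$ if and only if $\bigotimes_v F_{a_v} \in \W(\rbar)$ (this last property built in to the construction from the cohomology of $Y(U)$ as in Section~\ref{subsec: GLn framework}).

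Granted such a patching functor, the rest of the argument is formal. Fix a place $v\mid p$ and Serre weights $a_w$ at each place $w \neq v$ above $p$; write $F_{a^\flat} = \bigotimes_{w\neq v} F_{a_w}$. For each $(\lambda,\tau) \in \cS$ at $v$, the additivity of Hilbert--Samuel multiplicities along a Jordan--H\"older filtration of $L_{\lambda,\tau}\otimes\F$, combined with the generic rank-one hypothesis on $M_\infty$, yields
\[
e\bigl(R^{\lambda,\tau}_{\rhobar_v}/\varpi\bigr) \;=\; C\cdot \sum_{a \in \SW(k,n)} n_{\lambda,\tau}(a)\, e\bigl(M_\infty(F_a \otimes F_{a^\flat})\bigr),
\]
for a suitable normalization constant $C$ independent of $a$. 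The hypothesized $\cS$-Breuil--M\'ezard identity rewrites the same left-hand side as $\sum_a n_{\lambda,\tau}(a)\mu_a(\rhobar_v)$. Since $\cS$ is by definition a Breuil--M\'ezard system, these equations for $(\lambda,\tau)\in\cS$ determine the collection of non-negative integers $\mu_a$ uniquely, so the two families of solutions must agree up to the factor $C$. Hence $\mu_a(\rhobar_v) > 0$ if and only if $M_\infty(F_a \otimes F_{a^\flat}) \neq 0$, and varying the weight at each place of $F$ above $p$ independently yields both the factorisation $\W(\rbar) = \bigotimes_{v\mid p}\W_v(\rbar)$ and the identification $\W_v(\rbar) = \W_\cS(\rhobar_v)$.

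The main obstacle will be the construction of the patching functor itself. For $\GL_n$ over a totally real or CM field one can hope to adapt the construction of~\cite{geekisin} using forms of unitary groups that are compact modulo centre at infinity (Section~\ref{subsec:discrete series}), combined with an automorphy globalization argument for $\rhobar_v$ of the type developed in~\cite{BLGGT}. For general number fields~$F$, however, Taylor--Wiles--Kisin patching is not yet developed with the properties we require---in particular the non-vanishing of $H^i(Y(U),\cW)_\m$ in multiple cohomological degrees (Remark~\ref{rem: trivial coefficients}) complicates matters significantly---so one should probably first establish Conjecture~\ref{conj:BM-type-S-version} for groups that are compact modulo centre at infinity, where the cohomology vanishes outside degree zero. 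A secondary subtlety is that, even for such friendly groups, verifying the generic rank-one condition on $M_\infty$ at the types in $\cS$ typically requires precise local-global compatibility results and nontrivial information on the geometry of the local deformation rings $R^{\lambda,\tau}_{\rhobar_v}$, along the lines of what is achieved for $\cS = \mathrm{BT}$ (Example~\ref{ex:barsotti-tate-system}) in~\cite{geekisin}.
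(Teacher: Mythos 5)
This statement is a conjecture, and the paper does not prove it; what you have written is not a proof but a reduction, and it is essentially the paper's own reduction. The formal part of your argument — that a patching functor for $\cS$ forces $e(R^{\lambda,\tau}_{\rhobar_v}/\varpi)=\sum_a n_{\lambda,\tau}(a)\,e(M_\infty(F_a),\Xbar_\infty(F_a))$, and that the injectivity built into the definition of a Breuil--M\'ezard system then identifies $\mu_a(\rhobar_v)$ with these multiplicities and hence $\W_\cS(\rhobar_v)$ with the weights of nonzero support — is exactly Proposition~\ref{prop: patching functors imply BM weights} together with Remark~\ref{rem: patching functors and GK}. (Two small points: the normalising constant $C$ is unnecessary under the rank-one axiom and, if the rank were higher, a uniform $C$ independent of $(\lambda,\tau)$ is not automatic; and one should check, as the paper does via the uniqueness clause of the Breuil--M\'ezard system, that the resulting $\mu_a$ do not depend on the auxiliary weights $F_{a^\flat}$ at the other places.)

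The genuine gap is the one you flag in your last paragraph, and it cannot be waved away: the existence of a patching functor for $\cS$ requires knowing that the support of $M_\infty(L_{\lambda,\tau})$ is \emph{all} of $X_\infty(\lambda,\tau)$ for every $(\lambda,\tau)\in\cS$, not merely a union of irreducible components. As Remark~\ref{rem: patching functors from patching} explains, this is closely tied to the Fontaine--Mazur conjecture and is currently known only when the relevant potentially crystalline representations are potentially diagonalisable — which is why the unconditional results in the paper are restricted to $n=2$ and $\cS=\mathrm{BT}$ or $\widetilde{\Cr}$ (Examples~\ref{ex:pot-BT-known} and~\ref{ex:crystalline-known}). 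For general $F$ there is the further unresolved issue you mention of cohomology in multiple degrees. So your proposal correctly reproduces the paper's conditional argument and correctly locates the open problem, but it does not establish the conjecture.
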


Of course if the generalised Breuil--M\'ezard conjecture (Conj.~\ref{conj: generalised BM conjecture}) holds,
then so does the Breuil--M\'ezard conjecture for representations of
any type $\CS$, and
in that case we must always have $\WBM(\rhobar) =
\W_{\CS}(\rhobar)$. In particular, if we believe Conjecture~\ref{conj:
  generalised BM conjecture} (and as we explain in
Section~\ref{sec:patching, BM and serre weights} below, we certainly
\emph{should}  believe Conjecture~\ref{conj: generalised BM
  conjecture}!), then the Breuil--M\'ezard predicted weights for~$\rhobar$ are completely
determined by information about the crystalline lifts of~$\rhobar$ of
bounded Hodge--Tate weights.

\begin{ex}
  \label{ex:pot-BT-known}    Assume that $p > 2$.  Gee and Kisin \cite[Cor.~3.5.6]{geekisin}
  have established the Breuil--M\'ezard conjecture for potentially
  Barsotti--Tate representations; that is, they
  have shown 
  that Conjecture~\ref{conj: BM conjecture of type S} holds for the
  system $\mathrm{BT}$ of Example~\ref{ex:barsotti-tate-system}.  In
  fact they also prove (subject to a Taylor--Wiles-type hypothesis)
  that the $\mathrm{BT}$-weight part of Serre's conjecture 
  holds in this setting, i.e.\ that the
  analogue of Conjecture~\ref{conj:basic-conjecture} for quaternion algebras or
  forms of $\mathrm U(2)$ over totally real fields holds with
   $\W_v(\rbar) = \WBT(\rbar|_{G_{F_v}})$ (\cite[Cor.~4.5.4]{geekisin});
  see
 also  the discussion in
  Section~\ref{ss:relationship} of this paper.
\end{ex}

\begin{ex}
  \label{ex:crystalline-known}
  Let $\widetilde{\Cr}$ be one of the Breuil--M\'ezard
  systems of Example~\ref{ex:crystalline-systems}.  Then a weak version of
  the Breuil--M\'ezard conjecture for representations of type
  $\widetilde{\Cr}$ is trivially true; namely, there are uniquely
  determined integers~$\mu_a(\rhobar)$ satisfying the required
  equations, but it is not clear that these integers are non-negative. (Since the
  system~$\widetilde{\Cr}$ is in bijection with the set of Serre
  weights, it is immediate that there are  uniquely
  determined rational numbers~$\mu_a(\rhobar)$ satisfying the required
  equations, and that they are in fact integers follows easily from Lemma~\ref{lem:correction to EG 4.1.1.}.) 
  
If $n=2$, it follows trivially that the $\mu_a(\rhobar)$ are indeed
non-negative integers (so that the Breuil--M\'ezard conjecture holds for
representations of type  $\widetilde{\Cr}$), and that $a \in \W_{\widetilde{\Cr}}(\rhobar)$ if and
  only if $\rhobar$ has a crystalline lift of Hodge type $\lambda_a$
  (for our chosen lift $\lambda_a$ of $a$).  

The set
  $\W_{\widetilde{\Cr}}(\rhobar)$ \emph{a priori} could depend on the
  choice of lifts in the construction of $\widetilde{\Cr}$.  However,
  if $n=2$ and $p >
  2$, it is proved in \cite[Thm.~6.1.8]{gls13} that the set
  $\W_{\widetilde{\Cr}}(\rhobar)$ is independent of these choices, and indeed is
  equal to $\WBT(\rhobar)$ and thus (as explained in
  Example~\ref{ex:pot-BT-known}) under a mild Taylor--Wiles hypothesis the
  analogue of Conjecture~\ref{conj:basic-conjecture} for quaternion algebras or
  forms of $\mathrm U(2)$ over totally real fields holds with
  $\W_v(\rbar) = \W_{\widetilde{\Cr}}(\rbar|_{G_{F_v}})$.
\end{ex}

\section{Patching functors and the Breuil--M\'ezard
  formalism}\label{sec:patching, BM and serre weights}\subsection{Patching functors}The most general results
available to date on the weight part of Serre's conjecture have been based on
the method of Taylor--Wiles patching (see, for example,~\cite{blggU2}
and~\cite{geekisin}). In this section, we give a general formalism for
these arguments, and we  explain how the resolution of the weight part of
Serre's conjecture for Hilbert modular forms in~\cite{geekisin,blggU2,gls13}
fits into this framework.

The formalism we have in mind is a generalisation of the one employed
in~\cite{geekisin}, which in turn is based on Kisin's work on the
Breuil--M\'ezard conjecture~\cite{kisinfmc}. Since our aim in this paper is not
to prove new global theorems, but rather to explain what we believe should be true, we
avoid making specific Taylor--Wiles patching arguments, and instead use the
abstract language of patching functors, originally introduced for $\GL_2$
in~\cite{emertongeesavitt}. Our patching functors will be for $\GL_n$, and will
satisfy slightly different axioms from those in~\cite{emertongeesavitt}, but are
motivated by the same idea, which is to abstract the objects produced by
Taylor--Wiles patching. In practice one often wants to consider all places above $p$
at once, but for simplicity of notation we will work at a single place in this
section.

Continue to work in the context of Section~\ref{sec:BM}, so that we have a fixed
 representation $\rhobar:G_K\to\GL_n(\F)$. Fix some $h\ge 0$, and write
$R_\infty:=R_{\rhobar}[[x_1,\dots,x_h]]$, $X_\infty:=\Spf R_\infty$. (In
applications, the $x_i$ will be the auxiliary variables that arise in the
Taylor--Wiles method; they will be unimportant in our discussion, and the reader
unfamiliar with the details of the Taylor--Wiles method will lose nothing by
assuming that $h=0$.) We write
$R_{\infty}^{\lambda,\tau}:=R^{\lambda,\tau}_{\rhobar}[[x_1,\dots,x_h]]$ and
$X_\infty(\lambda,\tau):=\Spf R_{\infty}^{\lambda,\tau} $. Write $\Xbar_\infty$ and
$\Xbar_\infty(\lambda,\tau)$ for the special fibres of $X_\infty$ and
$X_\infty(\lambda,\tau)$ respectively. Write $d+1$ for the dimension of the non-zero
$X_\infty(\lambda,\tau)$ (which is independent of the choice of $\lambda,\tau$).

Let $\cC$ denote the category of finitely generated $\cO$-modules with a
continuous action of $\GL_n(\cO_K)$; in particular, we have $L_{\lambda,\tau}\in\cC$ for any
$\lambda,\tau$. Fix a Breuil--M\'ezard system $\cS$ in the sense of Section~\ref{subsec:known cases of BM}.

\begin{defn}\label{defn:patching functor}
  A \emph{patching functor} for $\cS$ is a non-zero covariant exact functor $M_\infty$ from $\cC$
  to the category of coherent sheaves on $X_\infty$, with the properties that:
  \begin{itemize}
  \item for all pairs $(\lambda,\tau)\in\cS$, the sheaf $M_\infty(L_{\lambda,\tau})$ is
    $p$-torsion free and has scheme-theoretic support $X_\infty(\lambda,\tau)$, and in fact is
    maximal Cohen--Macaulay over $X_\infty(\lambda,\tau)$;
  \item for all Serre weights $F_a$, the support $\Xbar_\infty(F_a)$ of the sheaf $M_\infty(F_a)$ either
    has dimension $d$ or is empty;
\item the (maximal Cohen--Macaulay over a regular scheme, so) locally
  free sheaf $M_\infty(L_{\lambda,\tau})[1/p]$  has rank one over the generic
  fibre of $X_\infty(\lambda,\tau)$.
  \end{itemize}

\end{defn}
\begin{rem}\label{rem: patching functors from patching}
      In practice, examples of patching functors $M_\infty$ come from the
      Taylor--Wiles--Kisin patching method applied to spaces of automorphic
      forms, localised at a maximal ideal of a Hecke algebra which corresponds
      to a global Galois representation $\rbar$ which locally at some place
      above $p$ restricts to give $\rhobar$. For example, the functor
      $\sigma^\circ\mapsto M_\infty(\sigma^\circ)$ defined
      in~\cite[\S4]{EGPatching} is conjecturally a patching functor; the only
      difficulty in verifying this is that the usual Auslander--Buchsbaum
      argument only shows that $M_\infty(L_{\lambda,\tau})$ is maximal
      Cohen--Macaulay over its support, which is a union of
      irreducible components of the generic fibre of $X_\infty(\lambda,\tau)$.

Showing that this support is in fact the whole of $X_\infty(\lambda,\tau)$ is
one of the major open problems in the field; it is closely related to the Fontaine--Mazur conjecture, and is
therefore strongly believed to hold in general. By the main results
of~\cite{BLGGT}, this is known whenever all potentially crystalline
representations of Hodge type $\lambda$ and inertial type $\tau$ are potentially
diagonalisable, but this condition seems to be hard to verify in
practice.

  \end{rem}
  \begin{rem}
    \label{rem:mult one minimal patching functor}The assumption that
    $M_\infty(L_{\lambda,\tau})[1/p]$ has rank one corresponds to the
    notion of a minimal patching functor
    in~\cite{emertongeesavitt}. The following arguments go through
    straightforwardly if one allows the rank to be higher, and in
    applications coming from Taylor--Wiles--Kisin patching, it is
    occasionally necessary to allow this (due to the need to ensure
    that the tame level is sufficiently small when the image of the
    global Galois representation is also small), but it makes no
    essential difference to the discussion below. However, these cases
    are rare, and in particular the patching constructions
    of~\cite{EGPatching} give examples where the rank is one.
  \end{rem}

\subsection{The relationship to the Breuil--M\'ezard conjecture} \label{ss:relationship}The connection between patching functors and Serre weights is the following
result, which is an abstraction of one of the main ideas of~\cite{geekisin}.
\begin{prop}\label{prop: patching functors imply BM weights}If a patching
  functor for $\cS$ exists, then the Breuil--M\'ezard conjecture for
  representations of type $\CS$ \upl Conj.~\ref{conj: BM conjecture of type S}\upr\
  holds, and the set $\W_{\cS}(\rhobar)$ is precisely the set of weights $\sigmabar$
  for which $M_\infty(\sigmabar)\ne 0$.
  \end{prop}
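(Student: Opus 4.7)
The plan is to adapt Kisin's cycle-theoretic argument from his work on the Breuil--M\'ezard conjecture (as generalised in \cite{geekisin}) to the abstract setting of patching functors for $\cS$. I will define the candidate multiplicities directly as multiplicities of the coherent sheaves $M_\infty(F_a)$ on $\Xbar_\infty$, and then compare the two sides of the Breuil--M\'ezard identity by working with cycles.

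First, for each $(\lambda,\tau)\in\cS$ I would apply $M_\infty$ to the short exact sequence $0 \to L_{\lambda,\tau} \xrightarrow{\varpi} L_{\lambda,\tau} \to L_{\lambda,\tau}\otimes_{\cO}\F \to 0$. Since $M_\infty$ is exact and $M_\infty(L_{\lambda,\tau})$ is $p$-torsion free, this identifies $M_\infty(L_{\lambda,\tau})/\varpi$ with $M_\infty(L_{\lambda,\tau}\otimes_{\cO}\F)$ as a coherent sheaf on $\Xbar_\infty(\lambda,\tau)$, which has dimension $d$. Next I would pick any Jordan--H\"older filtration of $L_{\lambda,\tau}\otimes_{\cO}\F$ and apply $M_\infty$ again; by exactness, the associated $d$-dimensional cycle on $\Xbar_\infty$ decomposes as
\[
Z\bigl(M_\infty(L_{\lambda,\tau})/\varpi\bigr) \;=\; \sum_a n_{\lambda,\tau}(a)\, Z\bigl(M_\infty(F_a)\bigr),
\]
since each $M_\infty(F_a)$ is either zero or has support of dimension exactly $d$, so all these cycles are well-defined elements of the group of $d$-cycles on $\Xbar_\infty$ and additivity in short exact sequences applies.

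Second, I would show that $Z(M_\infty(L_{\lambda,\tau})/\varpi)$ equals the fundamental cycle $Z(\Xbar_\infty(\lambda,\tau))$. This is where the hypotheses that $M_\infty(L_{\lambda,\tau})$ is maximal Cohen--Macaulay over $X_\infty(\lambda,\tau)$ and has generic rank one on the generic fibre are used: the MCM property ensures that $M_\infty(L_{\lambda,\tau})/\varpi$ is MCM over $R_\infty^{\lambda,\tau}/\varpi$, and at each minimal prime $\mathfrak p$ of $R_\infty^{\lambda,\tau}/\varpi$ the localisation $(M_\infty(L_{\lambda,\tau})/\varpi)_{\mathfrak p}$ has the same length as $(R_\infty^{\lambda,\tau}/\varpi)_{\mathfrak p}$, by the generic rank one hypothesis. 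Combining these two cycle identities yields
\[
Z(\Xbar_\infty(\lambda,\tau)) \;=\; \sum_a n_{\lambda,\tau}(a)\, Z(M_\infty(F_a)).
\]

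Finally, I would apply a degree map on $d$-dimensional cycles given by Hilbert--Samuel multiplicity. Defining $\mu_a(\rhobar)$ to be the multiplicity of the cycle $Z(M_\infty(F_a))$, this map sends the fundamental cycle of $\Xbar_\infty(\lambda,\tau)$ to $e(R_\infty^{\lambda,\tau}/\varpi) = e(R^{\lambda,\tau}_{\rhobar}/\varpi)$ (the final equality because formal power series variables do not affect multiplicities), so that
\[
e(R^{\lambda,\tau}_{\rhobar}/\varpi) \;=\; \sum_a n_{\lambda,\tau}(a)\, \mu_a(\rhobar),
\]
which is exactly Conjecture~\ref{conj: BM conjecture of type S}. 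The $\mu_a(\rhobar)$ are manifestly non-negative integers, and $\mu_a(\rhobar)>0$ if and only if $Z(M_\infty(F_a))\ne 0$, which by the dimension hypothesis in Definition~\ref{defn:patching functor} happens if and only if $M_\infty(F_a)\ne 0$; this identifies $\W_\cS(\rhobar)$ with the set of weights $\sigmabar$ such that $M_\infty(\sigmabar)\ne 0$, as required.

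The main technical point where care is needed is step two, i.e.\ the passage from the MCM and generic rank one hypotheses to the equality of cycles on $\Xbar_\infty$; everything else is formal bookkeeping with exactness of $M_\infty$ and additivity of cycle classes. The argument does not use any particular feature of $\cS$ beyond what is packaged into Definition~\ref{defn:patching functor}, which is why the same $\mu_a(\rhobar)$ work simultaneously for all $(\lambda,\tau)\in\cS$.
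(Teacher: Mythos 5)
Your proposal is correct and follows essentially the same route as the paper's proof, which likewise proceeds by additivity of multiplicities over a Jordan--H\"older filtration of $L_{\lambda,\tau}\otimes_\cO\F$, reduction of $e(M_\infty(F_a),\Xbar_\infty(\lambda,\tau))$ to $e(M_\infty(F_a),\Xbar_\infty(F_a))$ via the dimension hypothesis, and the identification $e(M_\infty(L_{\lambda,\tau})\otimes_\cO\F,\Xbar_\infty(\lambda,\tau))=e(R^{\lambda,\tau}_{\rhobar}/\varpi)$ obtained from the generic rank one hypothesis exactly as in Kisin's cycle argument (the paper invokes \cite[Prop.~1.3.4(2)]{kisinfmc} for the step you correctly single out as the technical crux). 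The only cosmetic difference is that you phrase the bookkeeping in terms of $d$-cycles and a degree map rather than directly in terms of Hilbert--Samuel multiplicities; note also that the paper records the equidimensionality of $\Xbar_\infty(\lambda,\tau)$ and the uniqueness of the $\mu_a(\rhobar)$ (from the definition of a Breuil--M\'ezard system), both of which are implicit in your argument.
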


  \begin{proof}
Let $M_\infty$ be a patching functor for $\cS$. The  $\Xbar_\infty(\lambda,\tau)$ are all equidimensional of 
dimension $d$ by~\cite[Lem.\ 2.1]{BreuilMezardRaffinee}. (Strictly speaking the 
context of \emph{loc.\ cit.} has $n=2$, but its
proof is completely general.)  By \cite[Thm.\ 14.6]{MR1011461} we have 
\begin{equation}\label{eq:matsumura} 
e(M_\infty(L_{\lambda,\tau}\otimes_\cO\F),\Xbar_\infty(\lambda,\tau))=\sum_a
n_{\lambda,\tau}(a)e(M_\infty(F_a),\Xbar_\infty(\lambda,\tau))
\end{equation}
(noting that $M_\infty(F_a)$ is supported on $\Xbar_\infty(\lambda,\tau)$ whenever $n_{\lambda,\tau}(a) > 0$).
Now from \cite[Thm.\ 14.7]{MR1011461} 
it follows that if $A \onto B$ is a surjection of
Noetherian local rings of the same dimension and $M$ is a finitely
generated $B$-module, then $e(B,M) = e(A,M)$, where on the right-hand
side $M$ is regarded as an $A$-module via the given map. If
$\Xbar_\infty(F_a)$ is non-empty, then from the definition of a
patching functor it has dimension $d$, and it follows that
\begin{equation}\label{eq:mult-component}
n_{\lambda,\tau}(a)e(M_\infty(F_a),\Xbar_\infty(\lambda,\tau)) =
n_{\lambda,\tau}(a)e(M_\infty(F_a),\Xbar_\infty(F_a)).   
\end{equation}
If we make the convention that $e(0, \varnothing) = 0$, then 
\eqref{eq:mult-component} holds in
general, since the left-hand side is $0$ when $\Xbar_\infty(F_a)$ is empty.

From the third bullet point in the definition of a patching functor,
we know that $M_\infty(L_{\lambda,\tau})_{\p}$ is free of rank $1$ for
any minimal prime $\p$ of $R_\infty^{\lambda,\tau}$ (note that the
latter ring has no $p$-torsion).  Let $S := R_\infty^{\lambda,\tau}\setminus \bigcup_{\p} \p$,
the union being taken over all minimal primes of
$R_\infty^{\lambda,\tau}$. We have $S^{-1} M_\infty(L_{\lambda,\tau})
\cong \prod_{\p} M_\infty(L_{\lambda,\tau})_{\p}$ as an $S^{-1}
R_\infty^{\lambda,\tau} \cong \prod_{\p}
  (R_\infty^{\lambda,\tau})_{\p}$-module, the products again being taken over all minimal primes of
$R_\infty^{\lambda,\tau}$. Hence we can find find $m' \in
M_\infty(L_{\lambda,\tau})$ such that for any such $\p$, the image of
$m'$ in $ M_\infty(L_{\lambda,\tau})_{\p}$ is a basis as a
$(R_\infty^{\lambda,\tau})_{\p}$-module.  It follows
that~\cite[Proposition 1.3.4]{kisinfmc}(2) applies with $A = M =
R_\infty^{\lambda,\tau}$, $M' = M_\infty(L_{\lambda,\tau})$, $G = 1$,
$x = \varpi$, and $f : M \to M'$ the map sending $1 \mapsto m'$, from
which we find that 
 \begin{equation}\label{eq:mult-compare}
e(M_\infty(L_{\lambda,\tau}\otimes_\cO\F),\Xbar_\infty(\lambda,\tau))=e(R^{\lambda,\tau}_{\rhobar}/\varpi).\end{equation}
(Note
that
$e(\Xbar_\infty(\lambda,\tau))=e(R^{\lambda,\tau}_\infty/\varpi)=e(R^{\lambda,\tau}_{\rhobar}/\varpi)$.)

Putting together equations \eqref{eq:matsumura}, \eqref{eq:mult-component}, and
\eqref{eq:mult-compare}, we find that Conjecture~\ref{conj: BM conjecture of type S}
  holds with \[\mu_a(\rhobar):=e(M_\infty(F_a),\Xbar_\infty(F_a)).\] By the
  definition of a Breuil--M\'ezard system, the $\mu_a(\rhobar)$ are uniquely
  determined.
 Finally, it follows from~\cite[Formula~14.2]{MR1011461} that $e(M_\infty(F_a),\Xbar_\infty(F_a))>0$ if and only if
  $M_\infty(F_a)\ne 0$, and the result follows.
      \end{proof}
      \begin{rem}\label{rem: patching functors and GK}
        In the cases that $M_\infty$ arises from the
        Taylor--Wiles--Kisin patching construction, $M_\infty(F_a)$
        corresponds to (patched) spaces of mod $p$
        automorphic forms of weight~$a$, and it is immediate from the definition
        that $M_\infty(F_a)\ne 0$ if and only if $\rbar$ is
        automorphic of Serre weight
        $a$. Thus in cases where it can be shown that the Taylor--Wiles--Kisin
        method gives a patching functor for $\cS$ (which, as explained in Remark~\ref{rem:
          patching functors from patching}, amounts to showing that the support
        of the $M_\infty(L_{\lambda,\tau})$ is as large as possible),
        the $\CS$-weight part of Serre's conjecture 
        (Conj.~\ref{conj:BM-type-S-version}) follows from Proposition~\ref{prop: patching functors imply BM weights}.

As explained in Remark~\ref{rem: patching functors from patching},
        it is not in general known that potentially crystalline representations
        are potentially diagonalisable, which limits the supply of patching
        functors for general Breuil--M\'ezard systems.

The situation is better when $n=2$, and indeed as a result of the papers~\cite{geekisin}, \cite{blggU2}
and~\cite{gls13}, it is now known that if $p>2$, then
$\WBT(\rhobar)=\W_{\widetilde{\Cr}}(\rhobar)$, where $\mathrm{BT}$ is the 
Breuil--M\'ezard system of Example~\ref{ex:barsotti-tate-system}, and
$\widetilde{\Cr}$ is any of the
Breuil--M\'ezard systems of
Example~\ref{ex:crystalline-systems}; and it is known that the analogue of Conjecture~\ref{conj:basic-conjecture} for quaternion algebras or forms of
$\mathrm U(2)$ over totally real
fields holds for this set of weights.

We briefly recall the argument. By the results of~\cite{kis04,MR2280776}
potential diagonalisability is known for the system $\mathrm{BT}$, and
Proposition~\ref{prop: patching functors imply BM weights} (applied to the
Taylor--Wiles--Kisin patching method for automorphic forms on suitable
quaternion algebras or forms of $\mathrm U(2)$) then implies the
result of~\cite{geekisin} discussed in Example~\ref{ex:pot-BT-known}. Indeed, as
we have already explained, Proposition~\ref{prop: patching functors imply BM
  weights} is an abstraction of the arguments of~\cite{geekisin}. It remains to
show that $\WBT(\rhobar)=\W_{\widetilde{\Cr}}(\rhobar)$. 

Since this question is purely local, it suffices to work in the $\mathrm U(2)$
setting, where it is essentially immediate (by the considerations explained in
Example~\ref{example: crystalline lifts for modular forms and more generally})
that $\WBT(\rhobar)\subset\W_{\widetilde{\Cr}}(\rhobar)$
(note that by the previous paragraph, the left hand side is known at this
point in the argument to be the set of weights that occur globally). The purely local
results of~\cite{gls13}, coming from a detailed study of the underlying integral
$p$-adic Hodge theory, show that if $F_a\in\W_{\widetilde{\Cr}}(\rhobar)$, then
$\rhobar$ necessarily has a potentially diagonalisable crystalline lift of Hodge
type $\lambda_{\ta}$. The above machinery then shows that
$\W_{\widetilde{\Cr}}(\rhobar)\subset\WBT(\rhobar)$ (again using that the right hand
side is the set of weights that occur globally; this part of the argument is
carried out in~\cite{blggU2}), as required.
      \end{rem}

\section{Crystalline lifts and Serre weights}
\label{sec:cryst-lifts-serre}

The Breuil--M\'ezard version of the weight part of Serre's conjecture (Conjecture~\ref{conj:BM-version})
has the obvious drawback that even the definition of the conjectural set of weights
$\WBM(\rhobar)$ is contingent on Conjecture~\ref{conj: generalised
  BM conjecture}.  (Of course, in theory it is possible to determine the
conjectural values of the $\mu_a(\rhobar)$'s without proving
the generalised Breuil--M\'ezard conjecture first, by computing
$e(R^{\lambda,\tau}_{\rhobar}/\varpi)$ for enough choices of
$\lambda$ and $\tau$, but in practise this seems to be very
difficult.)  In this section we will, under the assumption that
$\rhobar|_{I_K}$ is semisimple, define another 
conjectural set of Serre weights in terms of crystalline lifts.
Although this set of weights may not be any more computable than
$\WBM(\rhobar)$, its definition will not depend on any unproven conjectures,
and perhaps more importantly it will provide a bridge between the
Breuil--M\'ezard description of the set of Serre weights and a much more
explicit set of Serre weights to be defined in
Section~\ref{sec:obvious-semisimple}.

It is perhaps also worth recalling that, although we have emphasized
the Breuil--M\'ezard perspective in this article, the crystalline
lifts perspective historically came first. Indeed, the original explicit description of weights given in
\cite{MR885783} can in retrospect be understood as the most optimistic
conjecture that one could make given the constraints provided by known results
on the reduction mod $p$ of the crystalline representations associated to
modular forms, and similarly the conjecture of~\cite{bdj} arose from the
consideration of crystalline lifts via Fontaine--Laffaille
theory. Unfortunately, when $n>2$ it seems that (contrary to the
conjectures made in~\cite{gee061}) the obvious extension of these
conjectures to the general case that $\rhobar|_{I_K}$ is not semisimple is false, and 
it now seems likely that a precise description of the sets of
weights in general will be extremely complicated; see
Sections~\ref{subsec: crystalline lifts in the picture}
and~\ref{sec:obvious-general} below.

\subsection{Crystalline lifts}\label{subsec:crystalline} 

We fix a finite extension $K/\Qp$ and a representation
$\rhobar : G_K \to \GL_n(\Fpbar)$.

\begin{remark}
  \label{rem:coefficient-switch}
    Note that we have now switched (for the remainder of the paper) to
  working with $\rhobar$ whose coefficients are algebraically closed. 
  By Remark~\ref{rem:enlarge-coefficients} it
  still makes sense to speak of $\WBM(\rhobar)$:\ choose any
  sufficiently large finite extension $\F/\Fp$ such that $\rhobar$ has
  a model $\rhobar_{\F}$ over $\F$, set $\mu_a(\rhobar) = \mu_a(\rhobar_{\F})$, and take
  $\WBM(\rhobar) = \{ a : \mu_a(\rhobar) > 0\}$ as usual. (Recall that
  Serre
  weights can equally well be taken to be defined over $\Fpbar$; cf.~Remark~\ref{rem:coeff-irrelevant-serre-wt}.)
Similarly for any
  Hodge type $\lambda$ we can write $R^{\lambda}_{\rhobar} =
  R^{\lambda}_{\rhobar,\cO} \otimes_{\cO} \Zpbar$ for any sufficiently
  large $\cO$, and
  Remark~\ref{rem:enlarge-coefficients} again shows that this is
  well-defined. Correspondingly, in this section $L_\lambda$ will mean
$L_{\lambda,\cO} \otimes_{\cO} \Zpbar$ for any sufficiently
  large $\cO$.
\end{remark}

\begin{defn}
  \label{defn:crystalline-lift}
  Suppose that $\lambda \in (\Z_+^n)^{S_K}$.  A \emph{crystalline lift}
  of $\rhobar$ of Hodge type $\lambda$ is a representation
  $\rho : G_K \to \GL_n(\Zpbar)$ such that
  \begin{itemize}
  \item $\rho \otimes_{\Zpbar} \Fpbar \cong\rhobar$ and
  \item $\rho \otimes_{\Zpbar} \Qpbar$ is crystalline and regular of weight $\lambda$.
  \end{itemize}
\end{defn}

To motivate our reformulation of the weight part of Serre's conjecture
in terms of crystalline lifts, we consider the following lemma.

\begin{lemma}
  \label{lem:weight-implies-lift}
  Assume that the generalised Breuil--M\'ezard conjecture \upl
  Conj.~\ref{conj: generalised BM conjecture}\upr\ holds. Then
 $\rhobar$ has a crystalline lift of Hodge type $\lambda$ if and only if
 $\WBM(\rhobar) \cap \JH_{\GL_n(k)} (L_{\lambda} \otimes_{\Zpbar} \Fpbar) \ne \varnothing$.
\end{lemma}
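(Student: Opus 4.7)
The plan is to observe that this lemma is essentially a direct application of the generalised Breuil--M\'ezard conjecture in the case where the inertial type is trivial. First I would note (using Remark~\ref{rem:trivial type}) that when $\tau = \triv$ is the trivial inertial type, the representation $\sigma(\tau)$ is the trivial one-dimensional representation, so that $L_{\lambda,\tau} = L_\lambda$ and the integer $n_{\lambda,\tau}(a)$ is just the Jordan--H\"older multiplicity of $F_a$ in $L_\lambda \otimes_{\Zpbar}\Fpbar$. Thus Conjecture~\ref{conj: generalised BM conjecture} reads
\[
e(R^\lambda_{\rhobar}/\varpi) = \sum_a n_\lambda(a)\, \mu_a(\rhobar),
\]
where all the $\mu_a(\rhobar)$ are non-negative.

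Next I would establish the equivalence between the existence of a crystalline lift of Hodge type $\lambda$ and the positivity of $e(R^\lambda_{\rhobar}/\varpi)$. By the defining property of the quotient $R^\lambda_{\rhobar}$ recalled in Section~\ref{sec:bmc}, $\rhobar$ admits a crystalline lift of Hodge type $\lambda$ if and only if $R^\lambda_{\rhobar}$ has an $E'$-valued point for some finite $E'/E$, equivalently $R^\lambda_{\rhobar}[1/p] \ne 0$, equivalently (since $R^\lambda_{\rhobar}$ is $p$-torsion free by Kisin) $R^\lambda_{\rhobar} \ne 0$. Since $R^\lambda_{\rhobar}$ is either zero or local with residue field $\F$, Nakayama gives $R^\lambda_{\rhobar} \ne 0 \iff R^\lambda_{\rhobar}/\varpi \ne 0$, and in that case $e(R^\lambda_{\rhobar}/\varpi) > 0$ (while by convention the multiplicity is $0$ when the ring is zero).

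Finally I would combine these two observations. The right-hand side $\sum_a n_\lambda(a)\mu_a(\rhobar)$ is strictly positive if and only if there exists some Serre weight $a$ with both $n_\lambda(a) > 0$ and $\mu_a(\rhobar) > 0$, i.e., some $F_a \in \JH_{\GL_n(k)}(L_\lambda \otimes_{\Zpbar}\Fpbar)$ with $a \in \WBM(\rhobar)$. Hence $\rhobar$ has a crystalline lift of Hodge type $\lambda$ if and only if $\WBM(\rhobar) \cap \JH_{\GL_n(k)}(L_\lambda\otimes_{\Zpbar}\Fpbar) \ne \varnothing$.

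There is no real obstacle here; the content of the lemma is purely bookkeeping, and the only subtle input is the non-negativity of the multiplicities $\mu_a(\rhobar)$ built into the statement of the generalised Breuil--M\'ezard conjecture. The one minor point to verify carefully is that the coefficient-field switch of Remark~\ref{rem:coefficient-switch} is harmless, so that it is legitimate to speak of $\WBM(\rhobar)$ and of $L_\lambda$ directly over $\Zpbar$.
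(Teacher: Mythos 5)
Your proof is correct and follows the same route as the paper's: both reduce the existence of a crystalline lift to the non-vanishing of $R^\lambda_{\rhobar}$, and then apply the Breuil--M\'ezard equation for the trivial inertial type, using positivity of the $\mu_a(\rhobar)$ and of the Jordan--H\"older multiplicities. You merely spell out the intermediate equivalences ($R^\lambda_{\rhobar}\ne 0$ iff $R^\lambda_{\rhobar}/\varpi\ne 0$ iff $e(R^\lambda_{\rhobar}/\varpi)>0$) that the paper leaves implicit.
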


\begin{proof}
Since a  representation $\rhobar : G_K \to \GL_n(\Zpbar)$
has image contained in $\GL_n(\cO')$ for some finite $\cO'/\Zp$, it
follows that  $\rhobar$ has a crystalline lift of Hodge type $\lambda$ if
and only if $R^\lambda_{\rhobar}\ne 0$.
Under the assumption of
Conjecture~\ref{conj: generalised BM conjecture}, this is equivalent to there
being  a Jordan--H\"older factor $F_a$  of
  $L_{\lambda} \otimes_{\Zpbar} \Fpbar$ such that 
$\mu_a(\rhobar) > 0$, which by definition is equivalent to $a \in \WBM(\rhobar)$.
\end{proof}

\begin{cor}
  \label{cor:weight-implies-lift}
  Assume the generalised Breuil--M\'ezard conjecture \upl
  \mbox{Conj.~\ref{conj: generalised BM conjecture}\upr} holds, and let $\lambda$ be a
  lift of the Serre weight $a$.  If  $a \in \WBM(\rhobar)$, then
 $\rhobar$ has a crystalline lift of Hodge type $\lambda$.
\end{cor}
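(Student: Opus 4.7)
The plan is to reduce Corollary~\ref{cor:weight-implies-lift} immediately to Lemma~\ref{lem:weight-implies-lift} together with Lemma~\ref{lem:correction to EG 4.1.1.}. Concretely, Lemma~\ref{lem:weight-implies-lift} already supplies the iff criterion
\[ \rhobar \text{ has a crystalline lift of Hodge type } \lambda \iff \WBM(\rhobar) \cap \JH_{\GL_n(k)}(L_\lambda \otimes_{\Zpbar} \Fpbar) \neq \varnothing, \]
so the only thing left to verify is that, given $a \in \WBM(\rhobar)$ and given that $\lambda$ is a lift of $a$ in the sense of Definition~\ref{defn:lift of a serre weight}, the weight $F_a$ actually appears in $\JH_{\GL_n(k)}(L_\lambda \otimes_{\Zpbar} \Fpbar)$.

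For this I would simply invoke Lemma~\ref{lem:correction to EG 4.1.1.}: it asserts that when $\lambda$ lifts $a \in (X_1^{(n)})^{S_k}$, the reduction $L_\lambda \otimes_{\Zpbar} \Fpbar$ has socle equal to $F_a$ (with the remaining Jordan--Hölder factors being of strictly smaller $\|\cdot\|$-norm). In particular $F_a$ is a Jordan--Hölder factor of $L_\lambda \otimes_{\Zpbar} \Fpbar$, so the intersection appearing in Lemma~\ref{lem:weight-implies-lift} is non-empty and the corollary follows.

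There is no real obstacle here; the only minor points to check are bookkeeping ones. First, one should observe that Serre weights $a \in \WBM(\rhobar)$ are by definition elements of $\SW(k,n)$ and may be represented by tuples in $(X_1^{(n)})^{S_k}$, so Lemma~\ref{lem:correction to EG 4.1.1.} applies. Second, one should note that the statement makes sense with $\Fpbar$-coefficients after the coefficient switch described in Remark~\ref{rem:coefficient-switch}: the conjectural multiplicities $\mu_a(\rhobar)$, the rings $R^\lambda_{\rhobar}$, and the lattices $L_\lambda$ are all compatible with extension of scalars from a sufficiently large $\cO$ to $\Zpbar$, so the hypothesis $a \in \WBM(\rhobar)$ together with the generalised Breuil--Mézard conjecture implies $R^\lambda_{\rhobar} \neq 0$ exactly as in the proof of Lemma~\ref{lem:weight-implies-lift}. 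Thus the whole argument reduces to the two prior results and a one-line JH check.
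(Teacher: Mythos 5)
Your proof is correct, and its skeleton --- reduce to Lemma~\ref{lem:weight-implies-lift} and then verify that $F_a$ is a Jordan--H\"older factor of $L_\lambda\otimes_{\Zpbar}\Fpbar$ --- is exactly the paper's. The one place you diverge is in how that Jordan--H\"older check is carried out. The paper does it directly from Definition~\ref{defn:lift of a serre weight}: since $\lambda$ is a lift of $a$ with respect to embeddings $(\emb_\sigma)$, the factors $M_{\lambda_{\emb'}}=M_0$ for $\emb'\notin\{\emb_\sigma\}$ are trivial, so $L_\lambda=\otimes_{\sigma\in S_k}M_{a_\sigma}\otimes_{\cO_K,\emb_\sigma}\Zpbar$ and hence $L_\lambda\otimes_{\Zpbar}\Fpbar\cong\otimes_{\sigma}P_{a_\sigma}\otimes_{k,\sigma}\Fpbar$, which visibly contains $F_a=\otimes_\sigma N_{a_\sigma}\otimes_{k,\sigma}\Fpbar$ as a subrepresentation because $N_{a_\sigma}\subset P_{a_\sigma}$ by construction. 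You instead invoke Lemma~\ref{lem:correction to EG 4.1.1.}, whose socle statement certainly implies what you need, and there is no circularity since its deferred proof in Section~\ref{sec:proof-of-correction-to-EG} does not depend on this corollary. But note that the paper explicitly stresses that Lemma~\ref{lem:correction to EG 4.1.1.} is only intended to be used in the discussion of the Breuil--M\'ezard system $\widetilde{\Cr}$, precisely because its proof requires the restriction-of-scalars machinery developed much later; the elementary computation above avoids that dependency entirely, so it is the preferable route even though yours is logically sound. Your bookkeeping remarks about the coefficient switch (Remark~\ref{rem:coefficient-switch}) are fine and match what the paper does implicitly.
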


\begin{proof}
Suppose that $\lambda$ is a lift of $a$ with respect to the lift
$(\emb_\sigma)$ of $\Sk$.
 From Definition~\ref{defn:lift of a serre weight} we see that
 $L_{\lambda} = 
\otimes_{\sigma \in S_k} \ M_{a_{\sigma}}
\otimes_{\cO_K,\emb_{\sigma}} \Zpbar$,
and so
$L_{\lambda} \otimes_{\Zpbar} \Fpbar \cong  \otimes_{\sigma \in S_k} \ P_{a_{\sigma}}
\otimes_{k,\sigma} \Fpbar$.
In
particular $L_{\lambda} \otimes_{\Zpbar} \Fpbar$ has $F_a$ as a
Jordan--H\"older factor and Lemma~\ref{lem:weight-implies-lift} applies.
\end{proof}

We are thus led to make the following definition.

\begin{defn}
  \label{defn:crystalline-weight-sets}
  We define $\Wcrisexists(\rhobar)$, the \emph{crystalline 
    weights for $\rhobar$}, to be the set of Serre weights $a$ such that
 the representation $\rhobar$ has a crystalline lift of Hodge type $\lambda$ for
  some lift $\lambda$ of $a$.  We further define $\Wcrisforall(\rhobar)$
  to be the set of Serre weights $a$ such
  that $\rhobar$ has a crystalline lift of Hodge type $\lambda$ for
  every lift $\lambda$ of $a$.  
\end{defn}

It is not difficult to check that this definition
is reasonable in the following sense:\
let $a \in \SWX$ be a Serre weight, and suppose that $\lambda =
(a_{\sigma,i})$ and $\lambda' = (a'_{\sigma,i})$ are two lifts of $a$
to $(\Z_+^n)^{\SK}$, 
each taken with respect to the same choice of embeddings
$(\emb_\sigma)$;\ then $\rhobar$ has a
crystalline lift of Hodge type $\lambda$ if and only if it has a
crystalline lift of Hodge type $\lambda'$.  To see this, we first
recall the following basic fact about crystalline characters and
their reductions modulo $p$. 

\begin{lem}\label{lem:existenceofcrystallinechars}
Let $\Lambda=\{\lambda_\emb\}_{\emb\in S_{K}}$ be a collection of integers. 
\begin{enumerate}
\item There is a crystalline character $\psi^{K}_{\Lambda}:G_{K}\to \Zpbarx{\times}$
  such that for each $\emb\in S_{K}$ we have
  $\HT_{\emb}(\psi^{K}_{\Lambda})=\lambda_{\emb}$; this character is uniquely
  determined up to unramified twists.

\item  We
  have $\overline{\psi}^{K}_{\Lambda} |_{I_K} =\prod_{\sigma\in
    S_{k}}\omega_{\sigma}^{b_{\sigma}}$, where $b_{\sigma}=\sum_{\emb\in
    S_{K}:\overline{\emb}=\sigma }\lambda_{\emb}.$
\end{enumerate}
\end{lem}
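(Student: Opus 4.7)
For (i), the plan is as follows. It suffices to construct, for each $\emb\in S_K$, a crystalline character $\chi_\emb: G_K \to \Zpbar^\times$ whose Hodge--Tate weight is $1$ at $\emb$ and $0$ at all other embeddings $\emb'\in S_K$, for then $\psi^K_\Lambda := \prod_{\emb\in S_K} \chi_\emb^{\lambda_\emb}$ has the desired Hodge--Tate weights by additivity under tensor products. Such a $\chi_\emb$ is produced from Lubin--Tate theory:\ fix a uniformiser $\pi\in\cO_K$ and let $\chi_{\mathrm{LT},\pi}: G_K \to \cO_K^\times$ denote the associated Lubin--Tate character (the character describing the Galois action on the $\pi$-adic Tate module of a Lubin--Tate formal $\cO_K$-module). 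It is a standard consequence of Lubin--Tate theory that $\chi_{\mathrm{LT},\pi}$ is crystalline and that the composition $\chi_\emb := \emb\circ\chi_{\mathrm{LT},\pi}$ has Hodge--Tate weight $1$ at $\emb$ and $0$ at every other $\emb'\in S_K$.

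Uniqueness up to unramified twist follows from the standard fact that if $\psi, \psi'$ both satisfy~(i), then $\psi(\psi')^{-1}$ is crystalline with all Hodge--Tate weights zero, hence its $\Dcris$ has trivial Hodge filtration, and therefore $\psi(\psi')^{-1}$ is unramified.

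For (ii), by multiplicativity it is enough to prove that $\overline{\chi}_\emb|_{I_K} = \omega_{\embb}$ for each $\emb\in S_K$. This follows from the explicit description of $\chi_{\mathrm{LT},\pi}$ via local class field theory:\ with our normalisation of $\Art_K$ (sending uniformisers to geometric Frobenius), and with the appropriate sign convention for the Lubin--Tate character, one has $\chi_{\mathrm{LT},\pi}(\Art_K(u)) = u$ for all $u\in\cO_K^\times$. Reducing modulo $p$ and applying $\emb$ yields a character of $I_K$ that factors through $\cO_K^\times \twoheadrightarrow k^\times \stackrel{\embb}{\to} \F^\times$; by the definition of $\omega_{\embb}$ in the notation section, this is exactly $\omega_{\embb}$. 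Multiplying over $\emb$ gives
\[
\overline{\psi}^K_\Lambda\big|_{I_K} \;=\; \prod_{\emb\in S_K}\omega_{\embb}^{\lambda_\emb} \;=\; \prod_{\sigma\in S_k}\omega_\sigma^{b_\sigma},
\]
as desired. As a consistency check, taking $\lambda_\emb = 1$ for all $\emb$ recovers the relation $\overline{\varepsilon}|_{I_K} = (\prod_{\sigma\in S_k}\omega_\sigma)^{e(K/\Qp)}$ recorded in the notation section, since then $b_\sigma = e(K/\Qp)$ for every $\sigma$.

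The main obstacle is purely bookkeeping:\ matching the sign conventions between Lubin--Tate theory, the Artin map, and the fundamental characters so that the reduction comes out as $\omega_{\embb}$ rather than its inverse. No deep input is required beyond standard facts from $p$-adic Hodge theory and local class field theory, and indeed the lemma is a basic building block used throughout the literature on Serre weights (for example in the construction of obvious crystalline lifts in Section~\ref{sec:obvious-semisimple}).
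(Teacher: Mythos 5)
Your proof is correct and in substance the same as the paper's: the paper disposes of existence and of part (ii) by citing the literature (Serre, and Prop.\ B.3--B.4 of Conrad's appendix on lifting global representations), and those references establish precisely the Lubin--Tate construction you spell out, namely that the characters $\emb\circ\chi_{\mathrm{LT},\pi}$ are crystalline with Hodge--Tate weight $1$ at $\emb$ and $0$ elsewhere and reduce to $\omega_{\embb}$ on inertia. The uniqueness step --- a crystalline character with all labelled Hodge--Tate weights zero is unramified --- is word for word the paper's argument.
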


\begin{proof}\label{pf:existenceofcrystallinechars} Existence in (i)
  is well-known; see for instance \cite[\S2.3, Cor.~2]{MR563476} or
  \cite[Prop.~B.4]{conradlifting}. 
  If $\psi$ and
  $\psi'$ are crystalline characters of $G_{K}$ with the same labeled
  Hodge--Tate weights, then $\psi^{-1} \psi'$ is a crystalline
  representation all of whose Hodge--Tate weights are zero, and so is
  unramified.  This proves (i), while (ii) is a consequence of
  \cite[Prop.~B.3]{conradlifting} (see also the proof of \cite[Prop.~6.7]{gls12}).
\end{proof}

Now, to justify the claim preceding Lemma~\ref{lem:existenceofcrystallinechars}, write $a'_{\sigma,i}
- a_{\sigma,i} = x_{\sigma}$. Then the lift of Hodge type $\lambda'$ can be obtained by
twisting the lift of type $\lambda$
by a crystalline character with $\emb_\sigma$-labeled Hodge--Tate
weight $x_\sigma$ for each $\sigma\in \Sk$, $\emb'$-labeled
Hodge--Tate weights $0$ for all other $\emb' \in \SK$, and trivial
reduction; such a character exists by Lemma~\ref{lem:existenceofcrystallinechars}.

In general, we obviously have $\Wcrisforall(\rhobar) \subset
  \Wcrisexists(\rhobar)$, and assuming the generalised
  Breuil--M\'ezard conjecture we even have $\WBM(\rhobar) \subset
  \Wcrisforall(\rhobar) =  \Wcrisexists(\rhobar)$.  
(The equality follows from Lemma~\ref{lem:weight-implies-lift}, noting that
$L_{\lambda} \otimes_{\Zpbar} \Fpbar \cong L_{\lambda'} \otimes_{\Zpbar} \Fpbar$
for any two lifts $\lambda$, $\lambda'$ of the same Serre weight.)
If
  $\rhobar|_{I_K}$ is semisimple, then as in \cite[Conj.~4.2.1]{gee061}, we make the following conjecture. As
we have already remarked, we do not believe
that~\cite[Conj.~4.2.1]{gee061} is true without the semisimplicity
hypothesis that we impose here; even in the semisimple case, where
there is (as we will see below) considerable evidence in favour of the
conjecture, we do not have a fully satisfying reason to believe that it
holds in complete generality, in the sense that for instance we do not know
how to see that it would follow from other widely-believed conjectures.

\begin{conj}[The weight part of Serre's conjecture in terms of
  crystalline lifts]
  \label{conj:crystalline-version}  \hfill
  \begin{enumerate}
  \item  We have  $\Wcrisexists(\rhobar) = \Wcrisforall(\rhobar)$.  
  \item If $\rhobar|_{I_K}$ is semisimple, then in the context of the generalised Breuil--M\'ezard conjecture \upl
  Conj.~\ref{conj: generalised BM conjecture}\upr, one has
    $\WBM(\rhobar) = \Wcrisexists(\rhobar) = \Wcrisforall(\rhobar)$.  
  \item If $\rbar|_{I_{F_v}}$ is semisimple for all places $v|p$, then 
    the weight part of Serre's conjecture \upl 
    Conj.~\ref{conj:basic-conjecture}\upr\ holds with 
    $\W_v(\rbar) = \Wcrisexists(\rbar|_{G_{F_v}})$. 
  \end{enumerate}
\end{conj}

If one believes the $\CS$-weight part of Serre's conjecture
(Conj.~\ref{conj:BM-type-S-version}) --- and as explained in
Section~\ref{sec:patching, BM and serre weights} above, the Taylor--Wiles--Kisin
method strongly suggests that we \emph{should} believe
Conjecture~\ref{conj:BM-type-S-version} --- then the mysterious part
of Conjecture~\ref{conj:crystalline-version} is the assertion that 
$\Wcrisexists(\rhobar)$ is no larger than $\W_{\CS}(\rhobar)$. The evidence for this
conjecture is for the most part limited to the case $n \le 2$ (but see
Remark~\ref{rem:dims-1-2-weak} below) and the case of $\GL_3(\Qp)$, and from a
theoretical point of view the conjecture is rather mysterious; however, the
evidence for the case of $\GL_3(\Qp)$ is striking (see
Section~\ref{sec:evidence} for a detailed discussion of the theoretical and
computational evidence in this case), and makes the conjecture seem
plausible in general.

\begin{remark}
  \label{rem:dims-1-2}
 Considerable progress has been made on Conjecture~\ref{conj:crystalline-version} in the case
where $\rhobar$ is at most two-dimensional.  

 If $n=1$ then Conjecture~\ref{conj:crystalline-version} is a
 consequence of class field theory together with an analysis of
 crystalline characters and their reductions modulo~$p$. (For example,
 part (i) of
 the conjecture when $n=1$ follows from
 Lemma~\ref{lem:existenceofcrystallinechars}.)

  If $n=2$ and $p>2$ then,  as explained
  in Remark~\ref{rem: patching functors and GK}, part (i) of
  Conjecture~\ref{conj:crystalline-version} is known, and the
  analogue
  of part (iii)
  for quaternion algebras and forms of $\mathrm U(2)$ over totally real
  fields is also known. If $n=2$ and $K=\Qp$ then
  part~(ii) is known whenever the Breuil--M\'ezard conjecture is
  known; that is, it is known unless $p=2,3$, the representation~$\rhobar$ is
  reducible, and the
  characters on the diagonal of $\rhobar$ have ratio
  $\varepsilonbar$  ($= \varepsilonbar^{-1}$ when $p \le 3$)  \cite{PaskunasBM, HuTan, PaskunasBMatTwo,
    SanderScalar}. Indeed,
  all of these results hold without the assumption of semisimplicity.
\end{remark}

\begin{remark}
  \label{rem:dims-1-2-weak}
Again assuming the generalised Breuil--M\'ezard conjecture, we note that the weights in $\WBM(\rhobar)$ and $\Wcrisexists(\rhobar)$
  which are in the closure of the lowest alcove (i.e.\ the weights
  $a$ such that $a_{\sigma,1} - a_{\sigma,n} + (n-1)
  \le p$ for each $\sigma$)
   must always coincide:\
  this follows by considerations similar to those in the proofs of
  Lemma~\ref{lem:weight-implies-lift} and
  Corollary~\ref{cor:weight-implies-lift}, noting that if $\lambda$ is
  a lift of such a weight, the representation $L_{\lambda}
  \otimes_{\Zpbar}  \Fpbar$ is irreducible. In particular, when $n \le
  2$ all
  Serre weights are in the closure of the lowest alcove, so that the
  progress towards Conjecture~\ref{conj:crystalline-version} in the
  case $n \le 2$ should be regarded as relatively weak evidence for
  the general case.
\end{remark}

It is worth mentioning that while it is an open problem to prove
that~$\Wcrisexists(\rhobar)$ is non-empty in general, we strongly
believe that this is the case. Indeed, if~$\rhobar$ arises as the
local mod~$p$ representation associated to an automorphic
representation of some unitary group which is compact at infinity,
then this is automatic from the considerations explained in
Example~\ref{example: crystalline lifts for modular forms and more
  generally} (in brief: the corresponding system of Hecke eigenvalues
will show up in the cohomology associated to some Serre weight~$W$,
and lifting to characteristic $0$ gives a global Galois representation
which is crystalline of the appropriate Hodge--Tate weights).

While it might seem that this is a rather restrictive requirement
on~$\rhobar$, it is expected that such an automorphic representation
exists for every choice of~$\rhobar$ (of course, one has to allow
unitary groups associated to arbitrary CM fields). Indeed, as
explained in~\cite[App.\ A]{emertongeerefinedBM}, the methods
of~\cite{frankII} allow one to globalise~$\rhobar$ to a representation
which should (under the assumption of a weak version of Serre's
conjecture for unitary groups) correspond to an automorphic
representation on some unitary group. Furthermore, even without
knowing weak Serre, under the
assumptions that~$p\nmid 2n$ and that~$\rhobar$ admits a potentially
diagonalisable lift with regular Hodge--Tate weights,
the potential automorphy results of~\cite{BLGGT} imply that~$\rhobar$
can indeed be globalised to an automorphic Galois
representation~\cite[Cor.\ A.7]{emertongeerefinedBM}, so
that~$\Wcrisexists(\rhobar)$ is provably non-empty for such
representations. It is widely expected that every~$\rhobar$ admits such a
potentially diagonalisable lift, and this is known if~$\rhobar$ is
semisimple by~\cite[Lem.\ 2.2]{EGPatching}. (These considerations are
expanded upon in~\cite[\S3]{gs-fllifts}.)

 We close this section with the observation that
the generalised Breuil--M\'ezard
conjecture and the crystalline lifts version of the weight part of Serre's conjecture (Conjectures~\ref{conj:
  generalised BM conjecture} and~\ref{conj:crystalline-version})
together with Lemma~\ref{lem:weight-implies-lift} entail the following conjecture.

\begin{conj}\label{conj:closure-operation} Suppose that
  $\rhobar|_{I_K}$ is semisimple.  If $\Wcrisexists(\rhobar) \cap \JH_{\GL_n(k)} (L_{\lambda} \otimes_{\Zpbar} \Fpbar) \ne \varnothing$ for some lift $\lambda$ of the
    Serre weight $a$, then $a \in \Wcrisexists(\rhobar)$.
\end{conj}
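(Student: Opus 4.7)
The plan is to deduce Conjecture~\ref{conj:closure-operation} as a formal consequence of Conjectures~\ref{conj: generalised BM conjecture} and~\ref{conj:crystalline-version}, with Lemma~\ref{lem:weight-implies-lift} providing the bridge between crystalline lifts and Breuil--M\'ezard multiplicities; this is essentially how the paragraph immediately preceding the statement already suggests matters should proceed.

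First, under the semisimplicity hypothesis on $\rhobar|_{I_K}$, part~(iii) of Conjecture~\ref{conj:crystalline-version} identifies $\Wcrisexists(\rhobar)$ with $\WBM(\rhobar)$. Thus the hypothesis of the conjecture becomes $\WBM(\rhobar) \cap \JH_{\GL_n(k)}(L_\lambda \otimes_{\Zpbar} \Fpbar) \ne \varnothing$, i.e.\ the existence of some Serre weight $b$ which is a Jordan--H\"older factor of $L_\lambda \otimes_{\Zpbar} \Fpbar$ and which satisfies $\mu_b(\rhobar) > 0$. Applying the formula of Conjecture~\ref{conj: generalised BM conjecture} with trivial inertial type, the term $n_{\lambda,\mathrm{triv}}(b)\mu_b(\rhobar)$ then contributes positively to $e(R^\lambda_\rhobar/\varpi)$, so in particular $R^\lambda_\rhobar \ne 0$ and $\rhobar$ admits a crystalline lift of Hodge type $\lambda$. (This is exactly the statement of Lemma~\ref{lem:weight-implies-lift}.) Since $\lambda$ is by hypothesis a lift of $a$, the very definition of $\Wcrisexists(\rhobar)$ yields $a \in \Wcrisexists(\rhobar)$, as desired.

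The main obstacle is that this argument is entirely conditional on the two deep open conjectures invoked, and offers no unconditional information. An unconditional approach would presumably start from a crystalline lift $\rho_b$ of $\rhobar$ whose Hodge type is some lift $\lambda_b$ of the Serre weight $b$ produced by the hypothesis, and then attempt to modify $\rho_b$ to produce a crystalline lift of $\rhobar$ of Hodge type $\lambda$ (or of some other lift of $a$). Since $\lambda$ and $\lambda_b$ are typically incomparable, and since passing between the corresponding Jordan--H\"older factors of $L_\lambda \otimes_{\Zpbar} \Fpbar$ amounts to genuine alcove-shifts in the representation theory of $\GL_n(k)$, such a direct construction seems to require substantial integral $p$-adic Hodge theoretic input --- of the sort that in the case $n=2$ is furnished by \cite{gls13} --- or else detailed information about the geometry of the stacks $\cXbar$ discussed in Section~\ref{sec:shad-weights-cryst}. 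Producing either of these ingredients in general appears essentially equivalent to settling the nontrivial part of Conjecture~\ref{conj:crystalline-version} itself, which is why the statement is presented here only as a conjecture.
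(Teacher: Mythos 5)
Your derivation is correct and is precisely the one the paper intends: the statement is presented only as a conjecture, justified by the observation (in the paragraph preceding it) that it is entailed by Conjecture~\ref{conj: generalised BM conjecture} and Conjecture~\ref{conj:crystalline-version}(iii) via Lemma~\ref{lem:weight-implies-lift}, which is exactly the chain you spell out. Your closing remarks on the conditional nature of the argument also match the paper's own discussion (cf.\ Remark~\ref{rem: equivalence of crystalline lifts and shadow weights}).
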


It is possible to use (global) potential automorphy techniques to prove
Conjecture~\ref{conj:closure-operation} in certain special cases;
 see
\cite[\S3]{gs-fllifts} for details.

\begin{rem}
  \label{rem: equivalence of crystalline lifts and shadow
    weights}Assume that the generalised Breuil--M\'ezard conjecture
  holds so that,
  as we have already observed, $\WBM(\rhobar) \subset
  \Wcrisforall(\rhobar) =  \Wcrisexists(\rhobar)$. Then
  Conjecture~\ref{conj:crystalline-version}(ii) is equivalent to
  the variant of Conjecture~\ref{conj:closure-operation} where
  $\Wcrisexists$ is replaced with $\WBM$ (both times). Indeed, this variant is equivalent to
  $\Wcrisexists(\rhobar) \subset \WBM(\rhobar)$ by Lemma~\ref{lem:weight-implies-lift}.
\end{rem}

\section{The picture}\label{sec: the picture}
\subsection{A geometric perspective}\label{subsec:the picture}We now
explain a geometric perspective (``the picture'') on the weight part of Serre's
conjecture. Full details will appear in the
papers~\cite{emertongeeproper,emertongeepicture,CEGSKisinwithdd}. Continue
to fix a finite extension~$K/\Qp$ and an integer $n\ge 1$. Assume
that~$p$ is odd. Then the
papers~\cite{emertongeeproper,emertongeepicture,CEGSKisinwithdd}
construct a finite type equidimensional Artin stack~$\cXbar$
over~$\Fp$ (of dimension $[K:\Qp]\binom{n}{2}$), whose
$\Fpbar$-points naturally correspond to the isomorphism classes of those
 representations $\rhobar:G_K\to\GL_n(\Fpbar)$ that admit a de Rham
lift to $\GL_n(\Zpbar)$ (of course, these are conjecturally all the~$\rhobar$, but as far as we are aware this is only known
if~$n\le 3$; the case $n=3$ is due to Muller \cite{MullerThesis}). 

Furthermore, for each pair $(\lambda,\tau)$ consisting of a Hodge type
$\lambda$ and an inertial type $\tau$, there is a finite type
formal Artin stack $\cX_{\lambda,\tau}$ over~$\Spf\Zp$, whose~$\Zpbar$-points
are in natural bijection with the isomorphism classes of de
Rham representations $\rho:G_K\to\GL_n(\Zpbar)$ of
type~$(\lambda,\tau)$. There is a specialisation morphism
$\pi:\cX_{\lambda,\tau}\to\cXbar$, which on points just sends $\rho$
to its reduction modulo~$p$. The underlying reduced substack
of~$\pi(\cX_{\lambda,\tau})$ is a union of irreducible components
of~$\cXbar$. 

Each irreducible component of~$\cXbar$ has a dense open subset of
closed points that lie only on that component, and which correspond to
certain maximally non-split upper-triangular
representations with characters $\chi_1,\ldots,\chi_n$ on the diagonal
such 
that the
characters~$\chi_i|_{I_K}$ are fixed.  We refer to these points as the
\emph{generic $\Fpbar$-points} of the component. 

Suppose for example that $n=2$, and
fix characters $\psi_i : I_K \to \Fpbarx{\times}$ for $i=1,2$ that
extend to $G_K$.  Then
whenever $\psi_1\psi_2^{-1}\ne\varepsilonbar$, there is a unique component whose
generic $\Fpbar$-points correspond to extensions of $\chi_2$ by $\chi_1$ with
$\chi_i |_{I_K} \cong \psi_i$, and these representations have a unique
Serre weight.
We label the irreducible component by the
corresponding Serre weight. Note that this Serre weight can be read
off directly from an expression of the $\chi_i$ in terms of fundamental
characters (that is, from the tame inertial weights).

To illustrate what happens when 
$\psi_1\psi_2^{-1}=\varepsilonbar$, suppose further that $K=\Qp$. 
There is one component of~$\cXbar$ whose generic
$\Fpbar$-points are tr\`es ramifi\'ee extensions of $\chi$ by $\chi\varepsilonbar$, 
where $\chi$ is any unramified character,
and also another component whose
generic~$\Fpbar$-points are extensions of $\chi_2$ by $\chi_1\varepsilonbar$,
where $\chi_1 \ne \chi_2$ are any unramified characters.
The peu
ramifi\'ee extensions of $\chi$ by $\chi\varepsilonbar$
lie on both components (and so are not generic $\Fpbar$-points on
either of them).
 We label the first component by the Serre
weight~$\Sym^{p-1}\Fpbarx{2}$, while the second is labeled by both
$1$ and $\Sym^{p-1}\Fpbarx{2}$, the two Serre weights of a generic
$\Fpbar$-point on that component.  In particular every component of~$\cXbar$
labeled by~$1$ is also labeled by~$\Sym^{p-1}\Fpbarx{2}$. All other
components of~$\cXbar$ are labeled by a single Serre weight, as in the
previous paragraph, and in
fact each other Serre weight
is the label for  a unique irreducible
component of $\cXbar$.

More generally, we expect that when $n>2$ there will be a set of weights associated to
each component, and the Serre weights of any~$\rhobar$ will be
precisely the union of the sets of weights associated to the components that it
lies on.  In particular the labels of a component must therefore be the Serre
weights of its generic $\Fpbar$-points. This structure, with the set
of Serre weights for~$\rhobar$ being the set~$\W_{\cS}(\rhobar)$ for a
Breuil--M\'ezard system $\cS$, should be a consequence of the Breuil--M\'ezard
conjecture for representations of type $\cS$. Indeed for $n=2$ (with~$K$ arbitrary)
and $\CS = \mathrm{BT}$
this can be proved, as a consequence of
the results of~\cite{geekisin}
(see~\cite{CEGSKisinwithdd}).

Accordingly, understanding the weight
part of Serre's conjecture should
reduce to understanding the components
of~$\cXbar$ on which a given representation lies, and understanding what the Serre weights are for
maximally non-split upper-triangular representations (that are generic
enough to lie on a single component).

While this structure is already (at least to us) very
attractive, we expect that the picture is both simpler and more
structured than what is entailed by the Breuil--M\'ezard
conjecture. Specifically, we expect that most components are labeled
by a single weight, and that in the cases where there are multiple  weights labeling a
component, they are frequently related in a simple way 
(see Section~\ref{sec:shifted-weights}). 
For example, if $K=\Qp$ and a component has
$F(a_1,\dots,a_n)$ as a label, then the generic representations on the
component are of the form \[ \begin{pmatrix}
    \chi_1  & * &\dots & *\\
& \chi_2 &\dots &*\\
& & \ddots &\vdots\\
&&&\chi_n
  \end{pmatrix}\] where $\chi_i|_{I_{\Qp}}=\omega^{a_i+n-i}$. Furthermore, if none of the $a_i-a_{i+1}$ are equal to $0$
  or $p-1$, then we expect there to be a unique component labeled by
  this weight, and this
  component should be labeled only by $F(a_1,\dots,a_n)$. We will discuss the case where some $a_i-a_{i+1}$ are equal
  to~$0$ or~$p-1$ in Section~\ref{sec:shifted-weights}.

\subsection{Crystalline lifts}\label{subsec: crystalline lifts in the
  picture}We briefly explain what light the geometric perspective of
Section~\ref{subsec:the picture} sheds on the crystalline lifts
conjectures of Section~\ref{sec:cryst-lifts-serre}, and on their
expected failure to extend to the case of non-semisimple representations.

Let $a$ be a Serre weight, and let~$\lambda$ be a lift of~$a$. As
explained in Section~\ref{subsec:the picture}, there is a specialisation morphism
$\pi:\cX_{\lambda,\mathrm{triv}}\to\cXbar$, which on points just sends a 
crystalline representation~$\rho$ of weight~$\lambda$
to its reduction modulo~$p$. The underlying reduced substack
of~$\pi(\cX_{\lambda,\mathrm{triv}})$ is a union of irreducible components
of~$\cXbar$, and the geometrisation of the Breuil--M\'ezard conjecture
of~\cite{BreuilMezardRaffinee,emertongeerefinedBM} strongly suggests
that these irreducible components should be precisely the ones that
have some Jordan--H\"older factor of $L_\lambda\otimes\Fpbar$ among
their labels.

If the conjectures of Section~\ref{sec:cryst-lifts-serre} held for
arbitrary (not necessarily semisimple) $\rhobar$, then we would be
forced to conclude that the Serre weight~$F_a$ is a label of each
of the above components. However, work of L\^e H\`ung
Vi\^et Bao, Brandon Levin, Dan Le and Stefano Morra \cite{LLLM} contradicts this
conclusion; instead, their calculations indicate that already for
$n=3$ and $K=\Qp$, if $a$ is in the upper alcove and is suitably
generic, then the two Jordan--H\"older factors $F_a$, $F_b$ of
$L_\lambda\otimes\Fpbar$ correspond to two components of $\cXbar$,
labeled by the single weight $F_a$ (resp.\ $F_b$),
which meet in a codimension one substack. Thus the generic $\rhobar$
on the component labeled by the weight in the lower alcove do not
satisfy the conjectures of Section~\ref{sec:cryst-lifts-serre}; it is
only those $\rhobar$ which lie in a special position which do so. 
(The limited evidence available in the cases $n=2,3$ suggests that it
is possible that two components labeled by suitably generic weights
$F, F'$ meet in codimension~$i$, where $i$ is minimal such that
$\Ext^i_{\GL_n(\Fp)}(F,F')\ne 0$, but we do not know if it is
reasonable to expect this to be true in general.)

Of course, the most special position is that occupied by
semisimple~$\rhobar$, which agrees (in the case that $\rhobar$ is a
sum of characters) with the conjectures of
Section~\ref{sec:cryst-lifts-serre}. Note also that in general it
seems reasonable to expect that any component containing $\rhobar$
also contains $\rhobar^{\semis}$, which is consistent with the
folklore belief that the set of Serre weights for $\rhobar$ should be
a subset of those for $\rhobar^{\semis}$.

\section{Explicit weight conjectures in the semisimple case}
\label{sec:expl-weight-conj}

Once again assume that $\rhobar|_{I_K}$ is semisimple. The set $\Wcrisexists(\rhobar)$ is, in general, very badly understood:
for instance at the time of writing we do not know how to prove, in general,
that it is non-empty!  (Though we do when $\rhobar$ is semisimple; see Appendix~\ref{sec:wobvrhbar-nonempty}.)
We would therefore like to have a version of the weight part of Serre's
conjecture that is more explicit than conjectures we have already
described, such as Conjecture~\ref{conj:crystalline-version} in terms of
crystalline lifts.

In Sections~\ref{sec:obvious-semisimple}
and~\ref{sec:shad-weights:-inert} we construct various sets of weights that we have
good reason (e.g.\ as a consequence of the generalised
Breuil--M\'ezard conjecture) to believe are contained in
   $\Wcrisexists(\rhobar)$.  On the other hand there is no reason
to think that in general any of these sets are actually equal to
$\Wcrisexists(\rhobar)$; to the contrary, we explain in
Example~\ref{ex:FH-ramified-example} and
Section~\ref{sec:shifted-weights} why we believe that this should
\emph{not} be the case. These examples  illustrate the difficulty in making a general explicit conjecture.

However, we do expect that at least  for generic
$\rhobar$ and unramified $K/\Qp$, the set $\cC(\Wobv(\rhobar))$
defined below in Section~\ref{sec:shad-weights:-inert} \emph{is} equal to
$\Wcrisexists(\rhobar)$, motivated by a comparison with the
conjectures of~\cite{herzigthesis}; this will be explained in Section~\ref{sec:herzig-comparison}.

\subsection{Obvious lifts}\label{sec:obvious-semisimple}

Recall
from Section~\ref{sec:global framework} that if $\rbar : G_F \to
\GL_n(\Fpbar)$ is automorphic, one may hope that the set
$\W_v(\rbar)$ depends only on $\rbar|_{I_{F_v}}$.  We do not understand
this as well as we would like; for instance, it appears to be somewhat more than can be deduced
easily from the Breuil--M\'ezard formalism, because even in the case
of $\GL_2(\Qp)$, the quantities $\mu_a(\rhobar)$ do not depend only
on $\rhobar|_{I_{\Qp}}$ (see \cite[Thm.~1]{sandermultiplicities}).
However, since
$p$-adic Hodge theoretic conditions are fundamentally conditions about
ramification, it is not unreasonable to imagine that the set
$\Wcrisexists(\rbar|_{G_{F_v}})$ depends exclusively on
$\rbar|_{I_{F_v}}$ and not on the
image of $\Frob_v$ under $\rbar$.  For instance this is known to be
true when $n=2$ and $p > 2$  by \cite[Prop.~6.3.1]{gls13}.

To make Conjecture~\ref{conj:crystalline-version}
explicit, one can imagine trying to exhibit specific elements of
$\Wcrisexists(\rhobar)$ by constructing crystalline lifts of $\rhobar$ of various
Hodge types, for instance by taking sums $\rho'$ of inductions of
crystalline characters.  This is essentially what we
will do; however, an immediate flaw with this plan is that for such $\rho'$ one has
limited control over the image of $\Frob_K$ under $\rhobar'$, and in particular one may not be able to match the image of
$\Frob_K$ under $\rhobar$. (This will however be possible in generic situations.)  Guided
by the expectation that $\Wcrisexists(\rhobar)$ should depend only on
$\rhobar|_{I_K}$, we will be satisfied with constructing certain
crystalline representations $\rho'$ (that we call \emph{obvious lifts} of $\rhobar$) with the property that
$\rhobar'|_{I_K} \cong \rhobar|_{I_K}$. In particular we caution that an obvious lift of
$\rhobar$ need not literally be a lift of $\rhobar$.  When an obvious
lift $\rho'$ of $\rhobar$ has Hodge
type $\lambda_a$, with $\lambda_a$ a lift of a Serre weight $a$, we will call $a$ an
\emph{obvious weight} of $\rhobar$.

We now set up some basic results about crystalline characters.
For each integer $n\geq 1$, let $K_{n}$ be the unique extension of $K$
inside $\overline{K}$ which is unramified of degree $n$. We denote the
residue field of $K_{n}$ by $k_{n}$.
Given a character $\chi:G_{K_{n}}\to \Zpbarx{\times}$, we define a
character $\chi^{(r)}$ by $$\chi^{(r)}(g)=\chi(\Frob_{K}^{r}\cdot
g\cdot \Frob_{K}^{-r}).$$  Note that this character does not depend on
the choice of $\Frob_K$.  The following lemma is elementary (see also
Lemma~\ref{lem:HT}(iii) for a generalisation).
	
\begin{lem}\label{lem:HT wts under Frobenius conjugation} If
  $\chi:G_{K_{n}}\to \Zpbarx{\times}$ is crystalline, then so is $\chi^{(r)}$ and for
any $\emb' \in S_{K_n}$ we have $\HT_{\emb'}(\chi^{(r)})=\HT_{\emb'\circ\Frob_{K}^{-r}}(\chi)$.
\end{lem}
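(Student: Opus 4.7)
Since $K_n/K$ is Galois (being unramified), the subgroup $G_{K_n}$ is normal in $G_K$, which ensures both that $\chi^{(r)}$ is a well-defined character of $G_{K_n}$ and that its definition is independent of the choice of Frobenius lift in $G_K$: any two such lifts differ by multiplication by an element of $G_{K_n}$, and conjugation by such an element does not affect the value of the abelian character $\chi$ (again using normality).

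For the crystalline property, my plan is to argue via induction. Consider $\rho := \Ind_{G_{K_n}}^{G_K} \chi$, an $n$-dimensional $\Qpbar$-representation of $G_K$ where $n=[K_n:K]$. Since $K_n/K$ is unramified of degree $n$, so is $(K_n)_0/K_0$, and under the equivalence between crystalline representations and filtered $\varphi$-modules, induction from $G_{K_n}$ to $G_K$ corresponds to restriction of scalars from $(K_n)_0$ to $K_0$, which manifestly preserves the filtered $\varphi$-module structure; hence $\rho$ is crystalline. Using $\Frob_K^r$ as coset representatives for $G_K/G_{K_n}$ gives the standard decomposition $\rho|_{G_{K_n}} \cong \bigoplus_{r=0}^{n-1} \chi^{(r)}$, and since crystallinity is inherited by restriction to open subgroups and by direct summands, each $\chi^{(r)}$ is crystalline.

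For the labeled Hodge--Tate weights, I would set up a comparison isomorphism using the action of $\sigma := \Frob_K^r \in G_K$ on $C := \widehat{\overline{K}}$. The $\Qpbar$-linear map
\[
\Phi : \chi^{(r)} \otimes_{\Qpbar} C \lra \chi \otimes_{\Qpbar} C, \qquad 1 \otimes c \longmapsto 1 \otimes \sigma(c),
\]
satisfies $\Phi(g\cdot x)=(\sigma g\sigma^{-1})\Phi(x)$ for all $g \in G_{K_n}$, thanks to the identity $\sigma g = (\sigma g \sigma^{-1})\sigma$; combined with normality of $G_{K_n}$ this yields $\Qpbar$-linear isomorphisms of $G_{K_n}$-invariants on each Tate twist $C(i)$. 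A short calculation tracking the $K_n$-structure through $\Phi$ shows that the source $K_n$-action via an embedding $\kappa'$ corresponds to the target $K_n$-action via $\kappa' \circ \Frob_K^{-r}$, where $\Frob_K^{-r}|_{K_n} \in \Gal(K_n/K)$ is well-defined independently of any embedding chosen to make sense of the restriction, because $\Gal(K_n/K)$ is abelian. This yields the isomorphism of labeled Hodge--Tate spaces and hence the formula $\HT_{\kappa'}(\chi^{(r)}) = \HT_{\kappa' \circ \Frob_K^{-r}}(\chi)$.

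The main piece of bookkeeping is to correctly track the $K_n$-structure through $\Phi$ and identify the direction of the Frobenius reindexing of embeddings. A cleaner alternative would be to reduce, using Lemma~\ref{lem:existenceofcrystallinechars}, to the case of the explicit characters $\psi^{K_n}_\Lambda$: writing $\chi = \psi^{K_n}_\Lambda \cdot \eta$ with $\eta$ unramified, both assertions are trivial for unramified twists (which have zero Hodge--Tate weights and whose conjugates are again unramified), so it would suffice to verify both assertions directly on $\psi^{K_n}_\Lambda$ using any explicit construction of this character.
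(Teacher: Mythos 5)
Your proof is correct. The paper offers no argument for this lemma (it is declared elementary, with a pointer to Lemma~\ref{lem:HT}(iii) for the generalisation), so let me instead compare your route with the natural one. Your treatment of the Hodge--Tate weights --- the $\Frob_K^r$-semilinear comparison map $\Phi$, the equivariance $\Phi(g\cdot x)=(\sigma g\sigma^{-1})\Phi(x)$, and the observation that $\Phi$ transports the $K_n$-structure via $\emb'\mapsto \emb'\circ\Frob_K^{-r}$ --- is exactly the standard argument, and the direction of reindexing you land on agrees with Lemma~\ref{lem:HT}(iii) applied with $\gamma=\Frob_K^{-r}$. For crystallinity you take a detour through $\Ind_{G_{K_n}}^{G_K}\chi$ and Mackey. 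This is valid and, importantly, not circular: the fact that induction along a finite unramified extension preserves crystallinity is independent of the present lemma, even though the paper later runs the implication in the other direction (Corollary~\ref{lem:HT wts of induction} deduces crystallinity of the induction from this lemma together with the fact that crystallinity depends only on the restriction to inertia). Two small caveats. First, you should justify the induction step via $\Dcris$ and a dimension count --- $(\Bcris\otimes\Ind\chi)^{G_K}\cong(\Bcris\otimes\chi)^{G_{K_n}}$ by the projection formula, which has the correct dimension over the maximal unramified subfield of $K$ --- rather than by appealing to the equivalence with filtered $\varphi$-modules, since that equivalence presupposes the representation is already known to be crystalline (equivalently, that restriction of scalars preserves weak admissibility). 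Second, the shorter and presumably intended route for crystallinity is the one the paper itself invokes later: crystallinity depends only on the restriction to $I_{K_n}=I_K=G_{\widehat{K^{\mathrm{ur}}}}$, and $\Frob_K^r$ extends to an automorphism of $\Bcris$ compatible with conjugation on this group, so the very same semilinear comparison map that computes the weights also identifies $\Dcris(\chi^{(r)})$ with $\Dcris(\chi)$ and yields crystallinity directly, with no need for the induced representation.
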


Recall from Lemma~\ref{lem:existenceofcrystallinechars} that for any collection of integers $\Lambda = \{ \lambda_{\emb}\}_{\emb \in S_{K}}$ there exists a crystalline character
 $\psi^{K}_{\Lambda}:G_{K}\to \Zpbarx{\times}$ 
  such that for each $\emb\in S_{K}$ we have
  $\HT_{\emb}(\psi^{K}_{\Lambda})=\lambda_{\emb}$, and that this character is uniquely
  determined up to unramified twists.

\begin{cor}\label{cor:HT wts of induction} Let $\Lambda = \{
  \lambda_{\emb'}\}_{\emb' \in S_{K_n}}$ be a collection of integers.  The representation
  $\rho^K_\Lambda := \Ind_{G_{K_{n}}}^{G_{K}} \Zpbar(\psi^{K_{n}}_{\Lambda})$ is crystalline, and for
  each $\emb \in S_K$ we have
  $$\HT_{\emb}(\rho^K_\Lambda)=\left\{\lambda_{\emb'}
    \, : \, \emb' \in S_{K_n} \text{ such that } \emb'|_{K}=\emb\right\}.$$
Moreover we have $$ \rhobar^K_\Lambda |_{I_K} \cong \bigoplus_{i=0}^{n-1}
\left(\prod_{\sigma \in S_{k_n}} \omega_{\sigma}^{b_{\sigma}}
\right)^{q^i}$$ where $b_{\sigma} = \sum_{\emb' \in S_{K_n} : \embb' =
  \sigma} \lambda_{\kappa'}$ and $q = \#k$.
\end{cor}
\begin{proof}\label{pf:HT wts of induction} If $\rho$ is a Hodge--Tate
  representation of $G_K$ and $L$ is a finite extension of $K$, then
  $\gr^{-i} (D_{\HT}(\rho|_{G_{L}})) =  \gr^{-i}( D_{\HT}(\rho))
  \otimes_{K} L$.
From this we deduce that if $\emb' \in S_{L}$ then
  $\HT_{\emb'}(\rho|_{G_{L}}) = \HT_{\emb'|_{K}}(\rho)$.   Applying
  this statement for $L = K_n$, the Corollary now follows from Lemma
  \ref{lem:HT wts under Frobenius conjugation}, the fact that 
$\Ind_{G_{K_{n}}}^{G_{K}}(\psi^{K_{n}}_{\Lambda})|_{G_{K_{n}}} \cong
\oplus_{r=0}^{n-1} (\psi^{K_{n}}_{\Lambda})^{(r)}$, and (for the first part
of the statement) the fact that the
property of being crystalline only depends on the restriction to
inertia.  The formula for $\rhobar^K_\Lambda |_{I_K}$ follows from
Lemma~\ref{lem:existenceofcrystallinechars}(ii).
\end{proof}

\begin{defn}
  \label{defn:obvious-lift-ss} 
 Suppose that $\rhobar|_{I_K}$ is semisimple.  We define an \emph{obvious lift} of $\rhobar$ to be a representation
  of the form $\rho' = \rho^K_{\Lambda_1} \oplus \cdots \oplus \rho^K_{\Lambda_d}$ (for
  some partition $n_1 + \cdots + n_d = n$ of $n$) such that
  $\rhobar'|_{I_K} \cong \rhobar|_{I_K}$.

We define
$\Wobv(\rhobar)$ to be the set of Serre weights $a$ such that
$\rhobar$ has an obvious lift $\rho'$ of Hodge type $\lambda$ for some
lift $\lambda$ of $a$.  (In this case we say that the
lift $\rho'$ \emph{witnesses} the obvious weight $a$.)
\end{defn}

It is essential in this definition that we have required $\rhobar'|_{I_K} \cong
  \rhobar|_{I_K}$ rather than $\rhobar' \cong
  \rhobar$: as we will see in  Example~\ref{ex:unramified-3-example}
  making the latter definition would sometimes have produced a
  different (too small) set of weights.   We note that if $\rhobar$ has an obvious lift of some Hodge type
lifting the Serre weight $a$, then it has a lift of any Hodge type lifting $a$:\ this follows
from Corollary~\ref{cor:HT wts of induction} (specifically, the fact
that $\rhobar^K_\Lambda|_{I_K}$ only depends on the multisets $\{\lambda_{\emb'}
  : \emb' \in S_{K_n} \textrm{ lifting } \sigma\in S_{k_n}\}$)  and an argument as in the paragraph following
Definition~\ref{defn:crystalline-weight-sets}. 

\begin{rem}\label{rem:wobv-nonempty}
The set $\Wobv(\rhobar)$ is always
non-empty. This is not at all immediate from the definitions, and
unfortunately the only proof we have been able to find proceeds via a direct and
somewhat painful combinatorial argument; for this reason we have
deferred the proof to Appendix~\ref{sec:wobvrhbar-nonempty}.
\end{rem}

 Since we expect that
the possible Hodge types of the crystalline lifts of $\rhobar$ depend
only on $\rhobar|_{I_K}$, we make the following conjecture.

\begin{conj}
  We have $\Wobv(\rhobar) \subset \Wcrisforall(\rhobar)$.
\end{conj}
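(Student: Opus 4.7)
The plan is to start from an obvious lift $\rho' = \rho^K_{\Lambda_1} \oplus \cdots \oplus \rho^K_{\Lambda_d}$ witnessing $a \in \Wobv(\rhobar)$ and produce, for every lift $\mu$ of $a$, a genuine crystalline lift of $\rhobar$ itself (and not merely of $\rhobar|_{I_K}$) of Hodge type $\mu$. Fix such a $\mu$; the paragraph following Definition~\ref{defn:obvious-lift-ss} already guarantees that $a$ is witnessed by an obvious lift of Hodge type $\mu$, so we may assume that $\rho'$ itself has Hodge type $\mu$. It therefore suffices to deform $\rho'$ into a crystalline lift of $\rhobar$ of the same Hodge type.

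The first step is to match semisimplifications. Write $\rhobar^\ss = \bigoplus_j \tau_j$ as a direct sum of irreducibles of $G_K$; each $\tau_j$ is induced from a character of some unramified extension $K_{m_j}/K$. Since $\rhobar|_{I_K}$ is semisimple we have $\rhobar^\ss|_{I_K} \cong \rhobar|_{I_K} \cong \rhobar'|_{I_K}$. Using Corollary~\ref{lem:HT wts of induction} we match each summand $\rho^K_{\Lambda_i}$ of $\rho'$ with a specific $\tau_j$ (necessarily with $m_j = n_i$) by comparing inertial data. Lemma~\ref{lem:existenceofcrystallinechars} allows us to twist each $\psi^{K_{n_i}}_{\Lambda_i}$ by an unramified crystalline character of $G_{K_{n_i}}$ without altering its Hodge--Tate weights or the restriction of its reduction to inertia, so after appropriate such twists we can arrange $\rhobar' \cong \rhobar^\ss$, producing a split crystalline lift of $\rhobar^\ss$ of Hodge type $\mu$.

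It remains to deform this split lift so as to realise the non-trivial extension structure of $\rhobar$ over $\rhobar^\ss$. The natural approach is inductive: choose a chief series $0 = \rhobar^{(0)} \subsetneq \rhobar^{(1)} \subsetneq \cdots \subsetneq \rhobar^{(m)} = \rhobar$ with successive quotients among the $\tau_j$, build up a crystalline lift $\rho^{(j)}$ of $\rhobar^{(j)}$ step by step, and at the $(j+1)$-st stage lift the mod-$p$ extension class
\[
c_{j+1} \in \Ext^1_{G_K}\bigl(\tau_{j+1}, \rhobar^{(j)}\bigr)
\]
to a class in the Bloch--Kato subspace $\Ext^1_f\bigl(\rho_{j+1}, \rho^{(j)}\bigr)$, where $\rho_{j+1}$ is the crystalline lift of $\tau_{j+1}$ fixed above. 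The main obstacle, and the reason this statement is offered as a conjecture rather than a theorem, is controlling the image of the reduction map from crystalline extensions to arbitrary mod-$p$ extensions. Under sufficient genericity of the Hodge--Tate weights the dimension of $\Ext^1_f$ computed via the Bloch--Kato Euler characteristic formula is visibly large enough to surject onto the relevant mod-$p$ $\Ext^1$, and the argument closes. In general one has the additional flexibility of replacing components of each $\Lambda_i$ by translates differing by multiples of $p-1$ (which preserves $\rhobar|_{I_K}$ while potentially enlarging $\Ext^1_f$), and a complete proof would presumably require exploiting this freedom together with an integral $p$-adic Hodge-theoretic argument in the spirit of~\cite[\S3]{gs-fllifts}.
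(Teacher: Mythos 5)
The statement you are addressing is stated in the paper as a conjecture and is given no proof there; the only justification offered is the heuristic that the set of Hodge types of crystalline lifts of $\rhobar$ should depend only on $\rhobar|_{I_K}$ (cf.\ Conjecture~\ref{conj:various-serre-weight-conjectures-local}(ii)). You correctly recognise that the statement is open, so there is no question of your argument being "wrong where the paper is right"; but your proposed reduction locates the difficulty in the wrong place, and its first step contains a genuine gap.

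The gap is in the "matching semisimplifications" step. You claim that after unramified twists of the characters $\psi^{K_{n_i}}_{\Lambda_i}$ one can arrange $\rhobar' \cong \rhobar^{\ss}$. This fails in general, because the decomposition $\rho' = \rho^K_{\Lambda_1} \oplus \cdots \oplus \rho^K_{\Lambda_d}$ of an obvious lift need not be compatible with the decomposition of $\rhobar^{\ss}$ into irreducibles: a summand $\rho^K_{\Lambda_i}$ induced from $K_{n_i}$ can have reducible reduction, and conversely. The paper warns about exactly this immediately after Definition~\ref{defn:obvious-lift-ss} ("an obvious lift of $\rhobar$ need not literally be a lift of $\rhobar$"), and Example~\ref{ex:unramified-3-example} gives concrete instances: for unramified $\rhobar : G_{\Qp} \to \GL_3(\Fpbar)$, the obvious lifts $\rho^{\Qp}_{\{-1,p\}} \oplus \psi^{\Qp}_{\{0\}}$ and $\psi^{\Qp}_{\{p-1\}} \oplus \psi^{\Qp}_{\{0\}} \oplus \psi^{\Qp}_{\{-p+1\}}$ impose constraints on the eigenvalues of $\rhobar(\Frob_{\Qp})$ that a general $\rhobar$ with the same restriction to inertia does not satisfy, so no unramified twisting can make the reduction literally isomorphic to $\rhobar^{\ss}$. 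Consequently the conjecture is already nontrivial for \emph{semisimple} $\rhobar$, before any question of lifting extension classes arises; your later discussion of $\Ext^1_f$ and Bloch--Kato addresses a second, genuinely additional difficulty, but does not repair the first. A correct framing would be: the conjecture follows from the (equally conjectural) assertion that $\Wcrisforall(\rhobar)$ depends only on $\rhobar|_{I_K}$, since an obvious lift is by construction a genuine crystalline lift of \emph{some} $\rhobar'$ with $\rhobar'|_{I_K} \cong \rhobar|_{I_K}$; and that inertial-locality statement is what is really being conjectured.
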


We consider several illustrative examples.

\begin{example}
  \label{ex:1-dimensional} 
When $n=1$, any obvious lift of $\rhobar$ is an unramified twist of a
genuine crystalline lift of $\rhobar$, from which it follows that
$\Wobv(\rhobar) = \Wcrisforall(\rhobar) = \Wcrisexists(\rhobar)$.
\end{example}

\begin{example}[Comparison with Schein's conjecture]
  \label{ex:2-dimensional}
We determine the obvious weights of a representation $\rhobar : G_K
\to \GL_2(\Fpbar)$ such that $\rhobar|_{I_K}$ is semisimple.  Let $e$ be
the absolute ramification index of $K$.  The weight part of Serre's
conjecture has been formulated in this context by Schein
\cite{MR2430440}.

Suppose  first that $\rhobar$ is irreducible.  Consider a Serre
weight $a$ represented by $ (x_{\sigma},y_{\sigma})_{\sigma \in S_k}
\in (\Z_+^2)^{\Sk}$, and let $\lambda \in (\Z_+^2)^{\SK}$ be a lift of $ (x_{\sigma},y_{\sigma})$.  An obvious
lift $\rho'$ of $\rhobar$ must be
of the form
$\Ind_{G_{K_2}}^{G_K} \psi_{\Lambda}^{K_2}$.  Suppose that $\rho'$ witnesses
$a$. We may take $\rho'$ to have Hodge type $\lambda$, so that
the Hodge--Tate weights $\lambda_{\emb'}$ of $\psi_{\Lambda}^{K_2}$ are as follows.  For each $\sigma \in S_k$, there is a pair
$(\emb'_{\sigma,1},\emb'_{\sigma,2})$ of $K_2/K$-conjugate embeddings $K_2 \into \Qpbar$ such that
$\embb'_{\sigma,1}$, $\embb'_{\sigma,2} : k_2 \into \Fpbar$ extend
$\sigma$, and $\lambda_{\emb'_{\sigma,1}} = x_\sigma+1$, $\lambda_{\emb'_{\sigma,2}} =
y_\sigma$.   For the remaining $e-1$ pairs $(\emb'_{1},\emb'_{2})$ of
$K_2/K$-conjugate embeddings $K_2 \into \Qpbar$ such that
$\embb'_{1}$, $\embb'_{2} : k_2 \into \Fpbar$ extend
$\sigma$, we have $\{\lambda_{\emb'_1},\lambda_{\emb'_2}\} = \{1,0\}$. Write
$\sigma_1$ for $\embb'_{\sigma,1} \in S_{k_2}$ and $\sigma_2$ for its
$k_2/k$-conjugate.    Let
$0 \le m_{\sigma} \le e-1$ be the number of embeddings $\emb'_1 \neq \emb'_{\sigma,1}$ with
$\embb'_1 = \sigma_1$ and $\lambda_{\emb'_1}=1$.  Then we see from Corollary~\ref{cor:HT wts of induction}
that 
 $$\rhobar|_{I_{K}} \cong \left(\begin{array}{cc}\prod_{\sigma\in
       S_{k}}\omega_{\sigma_{1}}^{x_{\sigma}+1+m_{\sigma}}\omega_{\sigma_{2}}^{y_{\sigma}+e-1-m_{\sigma}}
     & 0\\ 0 &\prod_{\sigma\in
       S_{k}}\omega_{\sigma_{2}}^{x_{\sigma}+1+m_{\sigma}}\omega_{\sigma_1}^{y_{\sigma}+e-1-m_{\sigma}}\end{array}\right).$$
In other words, we have $a \in \Wobv(\rhobar)$ if and only if for
each $\sigma \in S_k$ we can write the elements of $S_{k_2}$ extending
$\sigma$ as $\sigma_1,\sigma_2$ so that the above formula holds for
some choice of integers  $0 \le m_{\sigma} \le e-1$.  Observe that these are
precisely the Serre weights predicted for $\rhobar$ in \cite[Thm.~2.4]{MR2430440}.

Next suppose that $\rhobar$ is reducible, and let $a$ be a
Serre weight as in the previous paragraph.  If $\rhobar|_{I_K}$ is
non-scalar then every obvious lift of $\rhobar$ must be a sum of two
characters, but if $\rhobar|_{I_K}$ is scalar then $\rhobar$ may also
have obvious lifts whose generic fibres are irreducible.  Consider first the
obvious lifts $\rho'$  of $\rhobar$ that have Hodge type $\lambda$  (hence witness $a$) and that are sums
of two characters.  Say $\rho' = \psi^K_{\Lambda} \oplus \psi^K_{\Lambda'}$ with
$\Lambda = \{\lambda_{\emb}\}$ and $\Lambda' = \{\lambda'_{\emb}\}$. For
each $\sigma \in S_k$, there is an embedding $\emb_{\sigma} \in S_K$ lifting
$\sigma$ such that $\{ \lambda_{\emb_\sigma},\lambda'_{\emb_{\sigma}}\} =
 \{ x_{\sigma}+1,y_{\sigma} \}$.  For the remaining $e-1$ embeddings
 $\emb \in S_K$ extending $\sigma$, we have $\{\lambda_{\emb},\lambda'_{\emb} \} =
 \{1,0\}$.  Define $J = \{ \sigma \in S_k \, : \, \lambda_{\emb_{\sigma}} =
   x_{\sigma} + 1\}$.  If $\sigma \in J$ we let  $0 \le m_{\sigma} \le
   e-1$ be the
   number of embeddings $\emb \neq \emb_{\sigma}$ extending $\sigma$
   such that $\lambda_\emb = 1$, while if $\sigma \not\in J$ we let $0 \le
   m_{\sigma} \le e-1$ be the number of embeddings $\emb \neq
   \emb_{\sigma}$ extending $\sigma$ such that $\lambda'_{\emb} = 1$.   Then
   we see from
 Corollary~\ref{cor:HT wts of induction}
that $\rhobar|_{I_{K}}$ is isomorphic to
$$\left(\begin{array}{cc}\prod_{\sigma\in J}\omega_{\sigma}^{x_{\sigma}+1+m_{\sigma}}\prod_{\sigma\notin J}\omega_{\sigma}^{y_{\sigma}+e-1-m_{\sigma}} & 0\\ 0 &\prod_{\sigma\notin J}\omega_{\sigma}^{x_{\sigma}+1+m_{\sigma}}\prod_{\sigma\in J}\omega_{\sigma}^{y_{\sigma}+e-1-m_{\sigma}}\end{array}\right).$$
In other words the weight $a \in \Wobv(\rhobar)$ is witnessed by an
obvious lift whose generic fibre is reducible if and only if the above formula holds for some
subset $J
\subset S_k$ and a choice of integers $0 \le m_{\sigma} \le e-1$.

In fact if $\rhobar|_{I_K}$ is scalar, then it turns out that every weight $a \in
\Wobv(\rhobar)$ that is witnessed by an obvious lift
whose generic fibre is irreducible is also witnessed by an obvious lift 
whose generic fibre is reducible, so that the previous paragraph still describes
the whole set $\Wobv(\rhobar)$ in this case.
 This observation
is an elementary but not necessarily straightforward exercise that we
leave  to the reader. (One first reduces to the case $e \le p-1$ by noting
that if $e \ge p$ then \emph{every} weight $a$ whose central character
is compatible with $\det(\rhobar)|_{I_K}$ lies in $\Wobv(\rhobar)$
and is  witnessed by an
obvious lift whose generic fibre is reducible.  Alternately, if $p\ge 3$ the observation can
be deduced from the local results in \cite{gls13}, specifically
Theorem~4.1.6 of \emph{loc.\ cit.}, while for $p=2$ one reduces to the
case $e=1$ as above. But the case $e=1$ is straightforward:\  after
twisting one may suppose that $y_{\sigma}=0$ for all $\sigma$;\ then
$x_{\sigma}+1 \in \{1,p-1,p\}$ for all $\sigma$ (see the last
paragraph of the proof of \cite[Thm.~10.1]{gls12} for a more precise statement), and one checks that
$\rhobar|_{I_K}$ has the above shape with $J = \{ \sigma : x_\sigma = 0\}$.)
  Observe that these are
precisely the Serre weights predicted for $\rhobar$ in \cite[Thm.~2.5]{MR2430440}.

\end{example}

\begin{example}
  \label{ex:niveau-one-3-dimensional}
  Consider $(a,b,c)\in\Z^3_+$ with $a-b,b-c
  > 1$ and $a-c<p-2$.  We determine the obvious
  weights of a representation $\rhobar : G_{\Qp} \to \GL_3(\Fpbar)$ such
  that $\rhobar|_{I_{\Qp}} = \omega^a \oplus \omega^b \oplus
  \omega^c$. 
  Any obvious lift $\rho'$ must be a sum of characters.  In
  particular $\rho'$ has the form $\psi^{\Qp}_{\{x\}} \oplus \psi^{\Qp}_{\{y\}} \oplus
  \psi^{\Qp}_{\{z\}}$ where $(x-2,y-1,z)\in X_1^{(3)}$, and $\{x,y,z\}$ and
  $\{a,b,c\}$ reduce to the same subset of $\Z/(p-1)$.
  It follows
  that the only possibilities for  $(x,y,z)$ (up to translation by  $\Z(p-1,p-1,p-1)$)
  are 
 $$(a,b,c),\,(b,c,a-p+1),\,(c+p-1,a,b) ,$$ $$ (c+p-1,b,a-p+1),\, (a,c,b-p+1),\,(b+p-1,a,c)$$
and therefore $$\Wobv(\rhobar) = \{F(a-2,b-1,c),\,F(b-2,c-1,a-p+1),\,F(c+p-3,a-1,b) ,$$ $$
 F(c+p-3,b-1,a-p+1),\, F(a-2,c-1,b-p+1),\,F(b+p-3,a-1,c)\}.$$
We see from this example that we \emph{cannot} expect to have
$\Wobv(\rhobar) = \Wcrisexists(\rhobar)$: this is because (at least if
$\rhobar$ is semisimple) the weights 
$F(c+p-2,b-1,a-p)$, $F(a-1,c-1,b-p)$, and $F(b+p-2,a-1,c-1)$ also belong
to $\Wcrisexists(\rhobar)$.  We explain this for 
$F(c+p-2,b-1,a-p)$; the others are similar.  We need to exhibit a lift $\rho'$
of $\rhobar$ with Hodge--Tate weights $\{c+p, b, a-p\}$.
Since $p+1 < 2p-(a-c) < 2p$, for example by
\cite[Thm.~3.2.1(3)]{ber05}, we
can take $\rho'$ to be the sum of an unramified twist of
$\varepsilon^{b}$ and an unramified twist of $\varepsilon^{a-p}
\otimes V$ where $V$ is a suitable 
crystalline representation with irreducible generic fibre and Hodge--Tate weights $\{2p-(a-c),0\}$
(one of the representations $V_{2p+1-(a-c),a_p}$ considered in \cite{ber05}).
\end{example} 

\begin{example}
  \label{ex:unramified-3-example}  Next, we determine the obvious
  weights of an unramified representation $\rhobar : G_{\Qp} \to
  \GL_3(\Fpbar)$.
  The reader can
  verify that the family of obvious lifts $\psi^{\Qp}_{\{p-1\}} \oplus
  \psi^{\Qp}_{\{0\}} \oplus \psi^{\Qp}_{\{-p+1\}}$ witness the weight
  $F(p-3,-1,-p+1)$; the obvious lifts $\rho^{\Qp}_{\{-1,p\}} \oplus
  \psi^{\Qp}_{\{0\}}$ witness the weight $F(p-2,-1,-1)$; the
  obvious lifts $\rho^{\Qp}_{\{-1,p\}} \oplus
  \psi^{\Qp}_{\{p-1\}}$ witness the weight $F(p-2,p-2,-1)$; and
  that these are the only weights in $\Wobv(\rhobar)$ when $p > 2$.
  When $p = 2$, it is easy to check that we have $\Wobv(\rhobar) = W(\F_2,3)$ (so there are four
  weights in this case).
  This
  example illustrates two points.  First, although $\rhobar$ is a
  sum of characters, there are obvious weights of $\rhobar$ that 
  cannot be witnessed by sums of characters.  Second,
  we remark that many unramified representations $\rhobar : G_{\Qp} \to
  \GL_3(\Fpbar)$ do not have literal lifts of the form  $\psi^{\Qp}_{\{p-1\}} \oplus
  \psi^{\Qp}_{\{0\}} \oplus \psi^{\Qp}_{\{-p+1\}}$  (or of the other
  two shapes above).  For instance if $\rhobar : G_{\Qp} \to
  \GL_3(\Fpbar)$ has a lift of the form $\psi^{\Qp}_{\{p-1\}} \oplus
  \psi^{\Qp}_{\{0\}} \oplus \psi^{\Qp}_{\{-p+1\}}$ then
  $\rhobar(\Frob_{\Qp})$ will be semisimple.  Similarly, it may be 
  the case that $\rhobar^{\ss}$
  may not have a literal lift of the form $\rho^{\Qp}_{\{-1,p\}} \oplus
  \psi^{\Qp}_{\{0\}}$ or $\rho^{\Qp}_{\{-1,p\}} \oplus
  \psi^{\Qp}_{\{p-1\}}$, since possessing such a lift imposes
  restrictions on the eigenvalues of   $\rhobar(\Frob_{\Qp})$.  
 \end{example}

\subsection{Shadow and obscure weights}
\label{sec:shad-weights:-inert}

Now we would like to address the observation (from
Example~\ref{ex:niveau-one-3-dimensional}) that in general we need not
have $\Wobv(\rhobar) = \Wcrisexists(\rhobar)$.  To begin to account
for this,
Conjecture~\ref{conj:closure-operation} motivates the following definition.

\begin{defn}
  \label{defn:closure-operation}
  If $\W$ is a set of Serre weights, we define $\cC(\W)$ to be the
  smallest set of weights with the properties:
  \begin{itemize}
  \item  $\W \subset \cC(\W)$, and
  \item if $\cC(\W) \cap \JH_{\GL_n(k)} (L_{\lambda} \otimes_{\Zpbar} \Fpbar) \ne \varnothing$ for some lift $\lambda$ of the
    Serre weight $a$, then $a \in \cC(\W)$.
  \end{itemize}
For instance, Conjecture~\ref{conj:closure-operation} asserts that we
should have $\Wcrisexists(\rhobar) = \cC(\Wcrisexists(\rhobar) )$.
\end{defn}

\begin{example} 
  \label{ex:closure-for-niveau-1-gl3}
Return to the case of $\GL_3$ over $\Qp$. 
  If $F(x,y,z)$ is a Serre weight such that $\cC(\{F(x,y,z)\})
  \supsetneq \{F(x,y,z)\}$, then $x-z < p-2$ and $\cC(\{F(x,y,z)\}) = \{F(x,y,z),F(z+p-2,y,x-p+2)\}$.
Indeed, if $x-z < p-2$  then by Proposition 3.18 of~\cite{herzigthesis} there is a short exact sequence
$$ 0 \to F(\lambda)  \to L_{\lambda} \otimes_{\Zpbar} \Fpbar \to
F(x,y,z) \to 0$$
where $\lambda = (z+p-2,y,x-p+2)$, so that $F(\lambda) \in
\cC(\{F(x,y,z)\})$, and these give all the instances of reducible $L_{\lambda} \otimes_{\Zpbar} \Fpbar$ with $\lambda \in X_1^{(3)}$.

For instance, in the setting of
Example~\ref{ex:niveau-one-3-dimensional} we see that $F(c+p-2,b-1,a-p) \in \cC(\{F(a-2,b-1,c)\})$, that 
$F(a-1,c-1,b-p) \in \cC(\{F(b-2,c-1,a-p+1)\})$, and that
$F(b+p-2,a-1,c-1) \in \cC(\{F(c+p-3,a-1,b)\})$.  In fact one can check in
this setting that $\cC(\Wobv(\rhobar))$ is precisely
$\Wobv(\rhobar)$ together with these three extra weights.  (We note
that this same prediction can be found in the discussion immediately following
\cite[Def.~3.5]{bib:ADP}.)
\end{example}

If one believes Conjecture~\ref{conj:closure-operation}, then one
might hope that also $\cC(\Wobv(\rhobar)) = \Wcrisexists(\rhobar)$, and
indeed we will show in Section~\ref{sec:herzig-comparison} that this
is a reasonable expectation when $K/\Qp$ is unramified and $\rhobar$ is sufficiently
generic in a precise sense. However, the following generalisation of
the principle behind Conjecture~\ref{conj:closure-operation} will show
that this cannot be true in all cases.

Suppose that $\rhobar|_{I_K} \cong (\oplus_{j=1}^r
\rhobar^{(j)})|_{I_K}$ with $\rhobar^{(j)} : G_K \to \GL_{n_j}(\Fpbar)$ not necessarily irreducible. Write
$\eta_{m} = (m-1,\ldots,1,0)$ for any $m \ge 1$. Let $a$ be a Serre
weight, and suppose that $\lambda$ is some lift of $a$.
Suppose that $\lambda^{(j)}$ (for each $1\le j \le r$) are Hodge types in
$(\Z_+^{n_j})^{\SK}$ such that the $\lambda^{(j)}_\emb + \eta_{n_j}$ for
each $\emb$ are obtained by partitioning $\lambda_{\emb}+\eta_n$ into
$r$ decreasing subsequences of length $n_j$. (We will say that the
$\lambda^{(j)}$ are an \emph{$\eta$-partition} of~$a$.)
If $\Wcrisexists(\rhobar^{(j)}) \cap \JH_{\GL_{n_j}(k)} (L_{\lambda^{(j)}} \otimes_{\Zpbar} \Fpbar) \ne \varnothing$,
then Conjecture~\ref{conj:closure-operation} entails that $\rhobar^{(j)}$ has a
    crystalline lift of Hodge type $\lambda^{(j)}$. The direct sum of
    these lifts would be a crystalline lift of $\oplus_j
    \rhobar^{(j)}$ of Hodge type $\lambda$, in which case $a\in \Wcrisexists(\oplus_j \rhobar^{(j)})$. Since we
    expect that $\Wcrisexists(\rhobar)$ depends only on
    $\rhobar|_{I_K}$, we then also expect to have $a \in
    \Wcrisexists(\rhobar)$.

We are thus led to the following definition. 
 
 \begin{defn} 
   \label{defn:explicit-weights-ss} 
   Suppose that $\rhobar|_{I_K}$ is semisimple.   
 We recursively define 
   $\Wexpl(\rhobar)$, the \emph{explicit predicted weights for $\rhobar$}, to be the smallest set 
   containing $\Wobv(\rhobar)$ and satisfying the expectation described 
   in the previous paragraph:\ that is,
   $a \in \Wexpl(\rhobar)$ for any  
  Serre weight $a$ such that there exists 
  a decomposition  
    $\rhobar|_{I_K} \cong \oplus_{j=1}^r \rhobar^{(j)}|_{I_K}$
 and 
 an $\eta$-partition $\lambda^{(j)}$ of $a$ such that
   $\Wexpl(\rhobar^{(j)}) \cap \JH_{\GL_{n_j}(k)} (L_{\lambda^{(j)}} \otimes_{\Zpbar} \Fpbar) \ne \varnothing$
   for each $j$. 
 
   Taking $r = 1$ in this definition we see that $\cC(\Wobv(\rhobar)) \subset \Wexpl(\rhobar)$.
  We say that 
   an element of $\cC(\Wobv(\rhobar))\setminus \Wobv(\rhobar)$ is a 
   \emph{shadow weight}, while an element of $\Wexpl(\rhobar) \setminus 
   \cC(\Wobv(\rhobar))$ is an \emph{obscure weight}.
 \end{defn}

\begin{example} 
   \label{ex:n2operatortrivial} 
   If $n\le 2$ it is easily checked that $\Wexpl(\rhobar) =
   \Wobv(\rhobar)$. It is shown in \cite[Thm.~4.1.6]{gls13} that when
   $n=2$ and $ p > 2$ we have $\Wcrisforall(\rhobar) =
   \Wcrisexists(\rhobar)$, and that these sets agree with the
   prediction of Schein~\cite{MR2430440}.
It follows that if $n = 1$, or $n = 2$ with $p > 2$, then $\Wexpl(\rhobar) = \Wobv(\rhobar) = \Wcrisforall(\rhobar) = \Wcrisexists(\rhobar)$.

   Explicitly, if $n = 1$, Lemma~\ref{lem:existenceofcrystallinechars} implies that $a \in \Wexpl(\rhobar)$
   for a Serre weight $a$ if and only if $\rhobar|_{I_K} =\prod_{\sigma\in S_{k}}\omega_{\sigma}^{a_{\sigma}}$.
   If $n = 2$, Example~\ref{ex:2-dimensional} shows that
   $\Wobv(\rhobar)$  coincides with the
   set of weights predicted by Schein~\cite{MR2430440}. 
   Since
   $\Wobv(\rhobar)=\Wexpl(\rhobar)$ in this setting, the claim follows
   from the above  results of \cite{gls13}.

 \end{example}

\begin{example}
  \label{ex:exceptional-weight-gl3} The existence of shadow weights
  in the case of $\GL_3$ over $\Qp$ was discussed in
  Example~\ref{ex:closure-for-niveau-1-gl3}. We now classify the
  obscure weights in this case (showing, in particular, that they
  sometimes exist). We will repeatedly make use of our knowledge
  of $\Wexpl(\rhobar)$ for $n \le 2$, see Example~\ref{ex:n2operatortrivial}. Suppose that $\rhobar :
  G_{\Qp} \to \GL_3(\Fpbar)$ is a representation such that
  $\rhobar|_{I_{\Qp}}$ is semisimple.

 Since $\Sym^r\,
   \Fpbarx{2}$ is irreducible as a $\GL_2(\Fp)$-representation for $r
   \le p-1$, it is straightforward to see that 
   the weight $F(x,y,z)$ can be obscure for $\rhobar$ only 
   if  we have:
   \begin{itemize}
   \item     $\rhobar|_{I_\Qp} \cong (\rhobar^{(1)} \oplus \rhobar^{(2)})|_{I_\Qp}$ 
    with $\dim \rhobar^{(i)} = i$, 
\item $F(y+1) \in \Wexpl(\rhobar^{(1)})$, i.e.\ $\rhobar^{(1)}|_{I_\Qp} \cong \omega^{y+1}$, and
\item $\Wexpl(\rhobar^{(2)}) \cap \JH_{\GL_{2}(\Fp)} (L_{(x+1,z)} \otimes_{\Zpbar} \Fpbar) \ne \varnothing$.
   \end{itemize}
Moreover, as $F(x,y,z)$ is not obvious, the restriction
   $\rhobar^{(2)}|_{I_{\Qp}}$ does not have the form $\omega^{x+2}
   \oplus \omega^{z}$ or $\omega_{\sigma_1}^{(x+2)+pz} \oplus \omega_{\sigma_2}^{(x+2)+pz}$, where
   $S_{\F_{p^2}} = \{\sigma_1,\sigma_2\}$. Hence $x-z \ge p-1$. A calculation shows that the irreducible constituents of 
   $L_{(x+1,z)} \otimes_{\Zpbar} \Fpbar$ are $F(x-p+2,z)$, $F(x-p+1,z+1)$, $F(z+p-1,x-p+2)$ if $p-1 \le x-z < 2p-2$
   (where the second weight is omitted if $x-z = p-1$)
   and $F(z+p-1,z+1)$ (twice), $F(z+1,z)$ if $x-z = 2p-2$. Hence $\rhobar^{(2)}|_{I_{\Qp}}$ is either $\omega^{x+1}
   \oplus \omega^{z+1}$ or $\omega_{\sigma_1}^{(x+3)+p(z-1)} \oplus \omega_{\sigma_2}^{(x+3)+p(z-1)}$ (the latter only if $x-z\neq 2p-2$).
   (This is of course compatible with \cite[Thm.~3.2.1]{ber05}
   computing the reduction of crystalline representations
   with Hodge--Tate weights $\{x+2,z\}$.)

In the first case, one finds that $\rhobar|_{I_\Qp} \cong \omega^{x+1}
\oplus \omega^{y+1} \oplus \omega^{z+1}$. If $x-y,y-z < p-1$, then
$F(z+p-2,y,x-p+2)$ is an obvious weight for $\rhobar$, and $F(x,y,z)$ is
its shadow (so in particular is not obscure). Suppose on the other
hand that $x-y=p-1$ or $y-z=p-1$.  Once again $F(x,y,z)$ cannot be a shadow
weight (as a shadow weight $F(x,y,z)$ always has $x-y,y-z < p-1$) but
sometimes it is an obvious weight. 

The weight $F(y+p-1,y,z)$ is straightforwardly checked to be obvious precisely when $p=2$, or else $p > 2$ and $y-z \in
\{0,p-2\}$. (When $y-z=p-2$ the obvious lift is a sum of characters,
while when $y=z$ the obvious lift has the shape $\rho^{\Qp}_{\{y+p+1,z\}}
\oplus \psi^{\Qp}_{\{y+1\}}$.)  Thus the weight $F(y+p-1,y,z)$ is an
obscure weight for $\rhobar|_{I_\Qp} \cong \omega^{y+1}
\oplus \omega^{y+1} \oplus \omega^{z+1}$ exactly when $p > 2$ and $y-z
\not\in \{0,p-2\}$. By a similar analysis the weight $F(x,y,y-p+1)$ is
an obscure weight for $\rhobar|_{I_\Qp} \cong \omega^{x+1}
\oplus \omega^{y+1} \oplus \omega^{y+1}$ exactly when $p > 2$ and $x-y
\not\in \{0,p-2\}$.

Now suppose instead that  $x-z \neq 2p-2$ and $\rhobar|_{I_{\Qp}} \cong
\omega^{y+1} \oplus \omega_{\sigma_1}^{(x+3)+p(z-1)} \oplus
\omega_{\sigma_2}^{(x+3)+p(z-1)}.$  If $x-y,y-z < p-1$ then again the
weight $F(z+p-2,y,x-p+2)$ is obvious (the obvious lift has the shape
$\psi^{\Qp}_{\{y+1\}} \oplus \rho^{\Qp}_{\{x-p+2,z+p\}}$) and
$F(x,y,z)$ is its shadow.  Suppose on the other
hand that $x-y=p-1$ or $y-z=p-1$.  Then once again $F(x,y,z)$ cannot be a shadow
weight, while sometimes it is an obvious weight. 

The weight $F(y+p-1,y,z)$ with $y-z\neq p-1$  can be checked to be obvious precisely when $y-z=p-2$; in this
case the
obvious lift has the shape $\psi^{\Qp}_{\{z\}} \oplus
\rho^{\Qp}_{\{y+p+1,y+1\}}$.   (Note that $y-z=p-1$ is excluded because $x-z\neq
2p-2$.)   Thus the weight $F(y+p-1,y,z)$ is an
obscure weight for $\rhobar|_{I_{\Qp}} \cong
\omega^{y+1} \oplus \omega_{\sigma_1}^{(y+2)+pz} \oplus
\omega_{\sigma_2}^{(y+2)+pz}$ precisely when $y-z
\not\in \{p-2,p-1\}$. By a similar argument the weight $F(x,y,y-p+1)$ is an obscure
weight for $\rhobar|_{I_{\Qp}} \cong
\omega^{y+1} \oplus \omega_{\sigma_1}^{(x+2)+py} \oplus
\omega_{\sigma_2}^{(x+2)+py}$  exactly when  $x-y
\not\in \{p-2,p-1\}$. This completes our analysis of obscure weights
for $\GL_3(\Qp)$.

\end{example}

One might optimistically hope that there is an equality
$\Wexpl(\rhobar) = \Wcrisexists(\rhobar)$;
for example this is known
to be the case when $n \le 2$ (except for $n=2$ and $p=2$)  thanks to \cite{gls13}. Unfortunately
we do not expect this to be true in general; for example, in 
Example~\ref{ex::gl3-shifted-weights} we give some explicit
examples in the case of~$\GL_3(\Qp)$ of weights  which are not
in~$\Wexpl(\rhobar)$ but which we suspect are in~$\Wcrisexists(\rhobar)$. 
 Furthermore we remark that the sets $\Wcrisexists(\rhobar)$ must also be  
   compatible  with other functorial operations, such as suitable tensor products 
   and inductions, and it is far from clear whether or not the sets 
   $\Wexpl(\rhobar)$ satisfy these compatibilities.
  
 On the other hand, in the unramified setting we are prepared to conjecture that 
 these two weight sets are equal at least for sufficiently generic
 $\rhobar$.

\begin{conj} 
   \label{conj:Wexpl-equals-Wcris-ss} Suppose that $K/\Qp$ is
   unramified and that $\rhobar|_{I_K}$ is 
   semisimple and sufficiently generic. Then $\Wexpl(\rhobar) = \Wcrisexists(\rhobar)$.
 \end{conj}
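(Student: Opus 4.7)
The plan is to reduce to proving $\cC(\Wobv(\rhobar)) = \Wcrisexists(\rhobar)$ using Theorem~\ref{thm:main-result}, which already identifies $\Wexpl(\rhobar)$ with $\cC(\Wobv(\rhobar))$ in the generic unramified setting. The two inclusions then require rather different techniques, and I would address them separately.

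For the inclusion $\cC(\Wobv(\rhobar)) \subseteq \Wcrisexists(\rhobar)$, I would first establish $\Wobv(\rhobar) \subseteq \Wcrisexists(\rhobar)$ directly. Given an obvious lift $\rho' = \bigoplus_j \rho^K_{\Lambda_j}$ of Hodge type $\lambda_a$ with $\rhobar'|_{I_K} \cong \rhobar|_{I_K}$, genericity together with semisimplicity forces $\rhobar$ to be an unramified twist of $\rhobar'$ on each isotypic component. Twisting each summand $\rho^K_{\Lambda_j}$ by a corresponding unramified crystalline character, which exists by Lemma~\ref{lem:existenceofcrystallinechars}, produces a genuine crystalline lift of $\rhobar$ of Hodge type $\lambda_a$. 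For the closure step, I would prove Conjecture~\ref{conj:closure-operation} in this generic setting, either by direct local computation of the possible reductions of crystalline representations of given Hodge type $\lambda$, or by globalising $\rhobar$ via potential automorphy in the style of \cite{gs-fllifts} and transferring crystalline lifts between Serre weights lying in a common Jordan--H\"older block.

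For the harder inclusion $\Wcrisexists(\rhobar) \subseteq \cC(\Wobv(\rhobar))$, the strategy is to route through the Breuil--M\'ezard formalism. If $a \in \Wcrisexists(\rhobar)$ is witnessed by a crystalline lift of Hodge type $\lambda$ lifting $a$, then $e(R^\lambda_{\rhobar}/\varpi) > 0$; assuming the generalised Breuil--M\'ezard conjecture (Conjecture~\ref{conj: generalised BM conjecture}) for $\rhobar$, this forces $b \in \WBM(\rhobar)$ for some Jordan--H\"older factor $b$ of $L_\lambda \otimes_{\Zpbar} \Fpbar$. By the definition of the closure operation $\cC$, it then suffices to verify $\WBM(\rhobar) \subseteq \cC(\Wobv(\rhobar))$. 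The natural route is via the patching formalism of Section~\ref{sec:patching, BM and serre weights}: construct a patching functor $M_\infty$ whose set of Serre weights $\{\sigmabar : M_\infty(\sigmabar) \neq 0\}$ is precisely $\cC(\Wobv(\rhobar))$, and then apply Proposition~\ref{prop: patching functors imply BM weights} to identify $\WBM(\rhobar)$ with this set. Producing such a patching functor in turn requires a global automorphic input supplying the desired set of accessible Serre weights, together with explicit control over potentially diagonalisable lifts constructed from the obvious lifts of the summands of $\rhobar$.

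The principal obstacle is thus establishing the Breuil--M\'ezard conjecture for generic unramified $\rhobar$ in arbitrary rank, along with the potential diagonalisability statements needed to run the Taylor--Wiles--Kisin method. These ingredients are available for $n \le 2$ via \cite{gls13} and for $n=3$, $K = \Qp$ via \cite{LLLM, MR3079258}, and it is from this evidence that we draw our confidence. A more geometric alternative, which might circumvent the most intricate integral $p$-adic Hodge-theoretic calculations, would use the stack $\cXbar$ of Section~\ref{sec: the picture}: one would identify the components of $\cXbar$ containing generic $\rhobar$, label each by its set of Serre weights, which should be controlled by the tame inertial parameter at the generic point, and then extract crystalline lift data from the specialisation morphisms $\pi : \cX_{\lambda,\mathrm{triv}} \to \cXbar$. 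Genericity should make the component geometry tractable and separate the contributions of distinct weights, but the execution requires substantial new input from the foundations of these stacks.
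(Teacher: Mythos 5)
The statement you are addressing is a \emph{conjecture} in the paper (Conjecture~\ref{conj:Wexpl-equals-Wcris-ss}); the paper offers no proof of it, and your proposal does not close the gaps that keep it conjectural. Every hard step in your outline is conditional on another open conjecture. The closure step of your first inclusion invokes Conjecture~\ref{conj:closure-operation}, which is itself open (the paper only notes that special cases can be obtained by potential automorphy, cf.\ \cite{gs-fllifts}). Your second inclusion is explicitly contingent on the generalised Breuil--M\'ezard conjecture (Conjecture~\ref{conj: generalised BM conjecture}) and then on producing a patching functor whose support is \emph{exactly} $\cC(\Wobv(\rhobar))$; but as Remark~\ref{rem: patching functors from patching} explains, even the existence of patching functors for general Breuil--M\'ezard systems is open for $n>2$ (it hinges on potential diagonalisability, equivalently on automorphy lifting in the style of the Fontaine--Mazur conjecture), and pinning down the support of $M_\infty(F_a)$ is precisely the weight part of Serre's conjecture that one is trying to prove. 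So the argument is circular at its core: the assertion $\WBM(\rhobar)\subset\cC(\Wobv(\rhobar))$ --- equivalently, that a generic semisimple $\rhobar$ admits \emph{no} crystalline lifts of Hodge types beyond those predicted --- is exactly the ``mysterious part'' the paper identifies after Conjecture~\ref{conj:crystalline-version}, and neither the patching route nor the appeal to the stacks $\cXbar$ supplies it.

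One smaller point worth flagging: your claim that $\Wobv(\rhobar)\subset\Wcrisexists(\rhobar)$ follows from unramified twisting is essentially correct \emph{under genericity} (the tame inertial characters of the irreducible summands are then pairwise distinct, so each summand of $\rhobar$ is an unramified twist of the corresponding induced representation), but note that without genericity this inclusion is itself only conjectural in the paper --- see Example~\ref{ex:unramified-3-example}, where obvious lifts exist only up to inertial isomorphism and impose genuine constraints on Frobenius eigenvalues. In short, what you have written is a reasonable road map consistent with the paper's philosophy, but it is a reduction of one open conjecture to several others, not a proof.
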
 
 
 \begin{conj} 
   \label{conj:explicit-version-ss} Suppose that for each $v|p$, 
   the extension $F_v/\Qp$ is unramified and $\rbar|_{I_{F_v}}$ is 
   semisimple and sufficiently generic. Then the weight part of Serre's 
   conjecture \upl Conj.~\ref{conj:basic-conjecture}\upr\ holds with 
   $\W_v(\rbar) = \Wexpl(\rbar|_{G_{F_v}})$. 
 \end{conj}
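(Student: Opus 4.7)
The plan is to split the equality $\W_v(\rbar) = \Wexpl(\rbar|_{G_{F_v}})$ into two containments, attacking each by a combination of the patching functor formalism of Section~\ref{sec:patching, BM and serre weights} and the explicit representation-theoretic comparison with Herzig's weight set (Theorem~\ref{thm:main-result}). First, I would globalise the situation by replacing $F$ with a totally real or CM extension (in which $\rbar$ becomes automorphic for a definite unitary group that splits at all places above $p$), so that the Taylor--Wiles--Kisin method produces a patching functor $M_\infty$ in the sense of Definition~\ref{defn:patching functor}. By Proposition~\ref{prop: patching functors imply BM weights}, the set of global Serre weights coincides with the set of weights $F_a$ for which $M_\infty(F_a)\neq 0$, and this set depends only on $\rbar|_{G_{F_v}}$ for $v\mid p$. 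Hence both containments are purely local problems about $\rhobar := \rbar|_{G_{F_v}}$.

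For the inclusion $\Wexpl(\rhobar) \subset \W_v(\rbar)$, I would first use the construction of obvious crystalline lifts (Definition~\ref{defn:obvious-lift-ss}) to show that every obvious weight of $\rhobar$ lies in the automorphic weight set: one chooses a globalisation whose local behaviour at $v$ already has the prescribed Hodge type, uses the fact that inductions of crystalline characters are automatically potentially diagonalisable (hence amenable to the change-of-weight results of \cite{BLGGT}), and applies Proposition~\ref{prop: patching functors imply BM weights}. Shadow and obscure weights would then be obtained by combining Conjecture~\ref{conj:closure-operation}-type arguments (a Jordan--H\"older factor of $L_\lambda\otimes\Fpbar$ appearing in the automorphic weight set forces $\lambda$ itself to be realised by an automorphic lift) with the block-diagonal construction behind Definition~\ref{defn:explicit-weights-ss}. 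The genericity hypothesis enters here to guarantee that the required mod $p$ representation-theoretic identities are actually available, and that the patching functor correctly sees all the components of the associated potentially crystalline deformation rings.

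For the converse inclusion $\W_v(\rbar) \subset \Wexpl(\rhobar)$, I would invoke Theorem~\ref{thm:main-result} to reduce to showing $\W_v(\rbar) \subset W^?(\rhobar)$, since in the generic unramified case the two target sets coincide. This reduction is valuable because $W^?(\rhobar)$ is defined in terms of the reductions mod $p$ of tamely ramified Deligne--Lusztig representations, and one can try to exploit the fact that if $F_a$ is an automorphic Serre weight, then by the inertial local Langlands correspondence and the existence of a potentially crystalline lift of some small Hodge type $\lambda$ with inertial type $\tau$ such that $F_a$ is a Jordan--H\"older factor of $L_{\lambda,\tau}\otimes\Fpbar$, one obtains combinatorial constraints on $a$ in terms of $\rhobar|_{I_{F_v}}$. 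Making these constraints sharp enough to force $a\in W^?(\rhobar)$ is the most delicate step, and would rely on careful modular representation theory in the spirit of Appendix~\ref{sec:wangs-result-uparrow} together with the alcove analysis used in Section~\ref{sec:herzig-comparison}.

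The main obstacle, and the reason this remains a conjecture rather than a theorem in the generality stated, is precisely the requirement in the definition of a patching functor that $M_\infty(L_{\lambda,\tau})$ be supported on the entirety of $X_\infty(\lambda,\tau)$ (Remark~\ref{rem: patching functors from patching}): this is equivalent to a Fontaine--Mazur-type potential automorphy statement for all potentially crystalline representations of the given type, and currently only known under potential diagonalisability. For $n\geq 3$ and arbitrary Hodge--Tate weights this is genuinely open. A complete proof would therefore require either a substantial advance on potential diagonalisability (so that the patching argument of \cite{geekisin} applies verbatim) or a direct representation-theoretic input that bypasses it, for instance by using the generic nature of $\rhobar|_{I_{F_v}}$ to show that all relevant potentially crystalline deformation rings are already irreducible of the expected dimension, so that $M_\infty$ is automatically supported everywhere.
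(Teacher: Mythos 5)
The statement you are trying to prove is stated in the paper as a \emph{conjecture} (Conjecture~\ref{conj:explicit-version-ss}); the paper offers no proof of it, so there is nothing to compare your argument against except the surrounding discussion. Your text is best read as a strategy outline, and to your credit it correctly identifies the machinery the paper itself points to (globalisation to a compact unitary group, the patching functor formalism of Proposition~\ref{prop: patching functors imply BM weights}, and the identification $\Wexpl = \W^?$ of Theorem~\ref{thm:main-result}) as well as the central obstruction, namely that the support condition on $M_\infty(L_{\lambda,\tau})$ amounts to a potential-diagonalisability statement that is open for $n\ge 3$. But as a proof it has genuine gaps beyond the one you flag.

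Concretely: (1) the globalisation step is itself conditional --- producing an automorphic globalisation of $\rbar$ on a unitary group requires either a weak form of Serre's conjecture for that group or the potential automorphy results of \cite{BLGGT}, and even then the conclusion lives on the unitary group, so transferring it back to $\GL_n/F$ uses the conjectural mod $p$ local-global compatibility of Remark~\ref{rem:mod p Langlands philosophy and different groups}; (2) your treatment of shadow and obscure weights invokes ``Conjecture~\ref{conj:closure-operation}-type arguments,'' but that conjecture is itself unproven in general (the paper only notes that special cases follow from potential automorphy techniques), so this step is circular as written; (3) the reverse inclusion $\W_v(\rbar)\subset\W^?(\rhobar)$ is where essentially all the difficulty lies, and your sketch contains no actual mechanism for it --- knowing that $F_a$ is a Jordan--H\"older factor of some $L_{\lambda,\tau}\otimes\Fpbar$ for which $\rhobar$ has a potentially crystalline lift of type $(\lambda,\tau)$ does not by itself constrain $a$ to lie in $\W^?(\rhobar)$; making that implication precise is exactly the hard integral $p$-adic Hodge theory carried out for $n=2$ in \cite{gls13} and for generic $\GL_3$ in \cite{MR3079258} and \cite{LLLM}, and no general substitute is known. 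So while your outline is a faithful rendering of the paper's philosophy, it does not constitute a proof, and the statement remains a conjecture.
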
 
 
The general definition of ``sufficiently generic'' will be given in
Definition~\ref{defn:suff-generic}, but to give the reader a sense of
the meaning of this term, we spell it out in the case where
$\rhobar$ is a direct sum of characters.

\begin{example}
  \label{sufficiently-generic-niveau-1}
Suppose that $K$ is an unramified extension of $\Qp$ and that
$\rhobar$ is a sum of characters, so that $\rhobar|_{I_K} \cong \oplus_{i=1}^n \prod_{\sigma \in
  S_k} \omega_{\sigma}^{\mu_{\sigma,i}}$ for integers $\mu_{\sigma,i}$
(very much not uniquely defined).
Fix $\delta > 0$. We say that $\rhobar$ is $\delta$-generic if it is
possible to choose the integers $\mu_{\sigma,i}$ such that
$\mu_{\sigma,i}-\mu_{\sigma,i+1} \ge \delta$ for all $1 \le i < n$ and
all $\sigma$,
and furthermore $\mu_{\sigma,1}-\mu_{\sigma,n} \le p-n-\delta$.
We say that a statement is true for sufficiently generic $\rhobar$
if there exists $\delta > 0$ (independent of $p$) such that the statement is true for all
$\delta$-generic $\rhobar$.
\end{example}

 We 
 will prove in Theorem~\ref{thm:main-result} that Conjecture~\ref{conj:explicit-version-ss} agrees with all 
 other conjectures in the literature, in particular that of 
 \cite{herzigthesis} (hence our willingness to make the conjecture,
 even though it is stronger than what is entailed by the
 generalised Breuil--M\'ezard formalism and by Conjecture~\ref{conj:crystalline-version}). 
 In fact we   
  will show in Theorem~\ref{thm:main-result} that for sufficiently generic 
  $\rhobar|_{I_K}$ and $K/\Qp$ unramified we have
   $\Wexpl(\rhobar) = \cC(\Wobv(\rhobar))$ (that is, 
   there are no obscure weights), so that   
  in the context of Conjecture~\ref{conj:explicit-version-ss} the  construction of the set $\Wexpl(\rbar|_{G_{F_v}})$ is somewhat simplified.  
 
 We stress that for any fixed $\rhobar$ the set of weights $\Wexpl(\rhobar)$ is quite 
 explicit in principle, at least for $p$ large:\ the calculation of $\Wobv(\rhobar)$ is a 
 combinatorial exercise (as in 
 Examples~\ref{ex:niveau-one-3-dimensional} 
 and~\ref{ex:unramified-3-example}), and then  
 the shadow and obscure weights are determined by the Jordan--H\"older 
 decompositions of the representations $L_\lambda \otimes_{\Zpbar} 
 \Fpbar$. As for the computability of those decompositions, consider
 first the case $k=\Fp$. One needs to decompose $\GL_n$-modules $L_\lambda \otimes_{\Zpbar} 
 \Fpbar$ with $\lambda$ dominant and
 $\|\lambda\| < Np$ (for some $N$ independent of $p$, and with $\| \cdot
 \|$ as in Definition~\ref{defn: norm on weights}) into simple
 $\GL_n(\Fp)$-modules. For $p \gg 0$, Lusztig's conjecture allows one
 to recursively decompose $L_\lambda \otimes_{\Zpbar} 
 \Fpbar$ into simple $\GL_n$-modules when
 $\lambda$ is $p$-regular (\cite[II.8.22]{MR2015057},
 \cite{MR2999126}).  For the remaining $\lambda$ one uses
 \cite[II.7.17(b)]{MR2015057}. For decomposing simple $\GL_n$-modules as
 representations of $\GL_n(\Fp)$, see for example
 \cite[\S1.5]{MR0933356}. For general $k$ one follows the same strategy, replacing $\GL_n$ with the algebraic group 
$G = \Res_{W(k)/\Zp} \GL_n$ and $L_\lambda \otimes_{\Zpbar} \Fpbar$
with the dual Weyl module $W(\lambda)$ as defined in Sections~\ref{sec:unramified
   groups}--\ref{sec:herzig-comparison}.

\begin{example}
  \label{ex:FH-ramified-example}
As remarked above,
 we   
  will show in Theorem~\ref{thm:main-result} that for sufficiently generic 
  $\rhobar|_{I_K}$ and $K/\Qp$ unramified we have
   $\Wexpl(\rhobar) = \cC(\Wobv(\rhobar))$. In this example, we show
   that this statement does not extend to the case where $K/\Qp$ is
   ramified.

Suppose that $K/\Qp$ is ramified quadratic and $\rhobar : G_K \to
\GL_3(\Fpbar)$ is such that $\rhobar|_{I_K} \cong\omega^{a+3} \oplus
\omega^{b+2} \oplus \omega^{c+1}$, where $a > b > c$ and $a-c <
p-4$. We claim that $F(a,b,c)$ is an obscure weight of $\rhobar$.

If we had $F(a,b,c) \in \cC(\Wobv(\rhobar))$, then $F(a,b,c) \in
\Wobv(\rhobar)$, as $(a,b,c)$ lies in the lowest alcove. As
$\rhobar|_{I_K}$ is a sum of distinct characters, any obvious
crystalline lift of $\rhobar|_{I_K}$ is a sum of characters. From
Lemma~\ref{lem:existenceofcrystallinechars} we would get that $\rhobar|_{I_K}
\cong \omega^r \oplus \omega^s \oplus \omega^t$ with $(r,s,t) =
(a+2,b+1,c) + w(2,1,0)$ for some permutation $w\in S_3$.
By the bounds on $(a,b,c)$ we get a contradiction.

To show that in fact $F(a,b,c) \in \Wexpl(\rhobar)$, note that we can
find $\rhobar^{(i)} : G_K \to \GL_i(\Fpbar)$ ($i=1,2$) with
$\rhobar^{(1)}|_{I_K} \cong \omega^{b+2}$ and $\rhobar^{(2)}|_{I_K}
\cong \omega^{a+3} \oplus \omega^{c+1}$. By
Lemma~\ref{lem:existenceofcrystallinechars} we have $F(b+2) \in
\Wobv(\rhobar^{(1)})$ and $F(a+2,c) \in \Wobv(\rhobar^{(2)})$. 
Let $S_K
= \{\sigma_1,\sigma_2\}$. We define an $\eta$-partition of $F(a,b,c)$
as follows:
\begin{align*}
\lambda_{\sigma_1} &= (a,b,c),  &\lambda_{\sigma_2} &= 0,\\
\lambda_{\sigma_1}^{(1)}& = (b+1), & \lambda_{\sigma_2}^{(1)} &= (1),\\
\lambda_{\sigma_1}^{(2)} &= (a+1,c), & \lambda_{\sigma_2}^{(2)} &= (1,0).
\end{align*}
Then $L_{\lambda^{(1)}} \otimes \Fpbar \cong F(b+2)$ and
$L_{\lambda^{(2)}} \otimes \Fpbar \cong \Sym^{a-c+1} \Fpbarx{2} \otimes
\Sym^1 \Fpbarx{2} \otimes \det^c.$
We see that $F(a+2,c)$ is a Jordan--H\"older factor of
$L_{\lambda^{(2)}} \otimes \Fpbar$, for example by Brauer's
formula~\cite[II.5.8(b)]{MR2015057}. (The only other factor is
$F(a+1,c+1)$.) From Definition~\ref{defn:explicit-weights-ss} we see
that indeed $F(a,b,c) \in \Wexpl(\rhobar)$.
\end{example}

\subsection{Remarks on the general (non-semisimple) case}\label{sec:obvious-general}

Now let us drop our assumption that $\rhobar|_{I_K}$ is semisimple,
and consider what we might say about explicit weights for $\rhobar$.  
As mentioned in Section~\ref{subsec: crystalline lifts in the
  picture}, one expects that the Serre weights of $\rhobar$ should be a
subset of the Serre weights of $\rhobar^{\ss}$.
However, we hesitate to make any sort of precise conjecture:\ evidence is
scant beyond the two-dimensional case, and the limited information
that we do possess suggests that there are serious
complications that arise already in the three-dimensional case.

We begin with a brief review of  the two-dimensional case (for
$p > 2$ and general $K/\Qp$) as studied in \cite{gls13}.  It is shown
that $\Wcrisexists(\rhobar)$ depends only on
$\rhobar|_{I_K}$ (\cite[Prop.~6.3.1]{gls13}), and that
$\Wcrisforall(\rhobar) = \Wcrisexists(\rhobar) \subset
\Wcrisexists(\rhobar^{\ss})$.
Suppose now that $\rhobar : G_K \to \GL_2(\Fpbar)$ is an extension of
characters $\chi_1$ by $\chi_2$. By Example~\ref{ex:2-dimensional} any weight $a \in
\Wcrisexists(\rhobar^{\ss})$ is witnessed by a sum of
characters.
Let
$L(\chi_1,\chi_2,a)$ be the subset of $H^1(G_K,\chi_2 \chi_1^{-1})$
obtained by taking the union, over all literal lifts
$\psi_1,\psi_2$ of $\chi_1,\chi_2$ such that $\psi_1 \oplus \psi_2$
witnesses $a\in
\Wcrisexists(\rhobar^{\ss})$, of the image in $H^1(G_K,\chi_2 \chi_1^{-1})$ of
$H^1_f(G_K,\Zpbar(\psi_2 \psi_1^{-1}))$. Then $a \in
\Wcrisexists(\rhobar)$ if and only if the extension class corresponding
to $\rhobar$ lies in $L(\chi_1,\chi_2,a)$.

In fact it is almost always true that  if $\psi_1,\psi_2$ as above  are chosen
so that the dimension of  $H^1_f(G_K,\Zpbar(\psi_2 \psi_1^{-1}))$ is as large as possible, then
the image of that space in $H^1(G_K,\chi_2
\chi_1^{-1})$ is actually equal to $L(\chi_1,\chi_2,a)$.  The lone
exception occurs when $\chi_2 \chi_1^{-1} $ is the cyclotomic character
and $a$ is represented by $(x_{\sigma},y_{\sigma})_{\sigma \in S_k}$ with $x_{\sigma}-y_\sigma = p-1$ for all $\sigma \in S_k$. In that
case, if the $\psi_i$ as above are chosen so that the dimension of
$H^1_f(G_K,\Zpbar(\psi_2 \psi_1^{-1}))$ is as large as possible, then
the images of the spaces $H^1_f(G_K,\Zpbar(\lambda \psi_2
\psi_1^{-1}))$ cover $L(\chi_1,\chi_2,a)$ as $\lambda$ varies over all
unramified characters with trivial reduction mod $p$ (cf.\
\cite[Thm.~5.4.1, Thm.~6.1.8]{gls13} and their proofs).

In three dimensions, the situation appears to be
considerably more complicated.  In addition to the discussion of Section~\ref{subsec: crystalline lifts in the
  picture}, we have the following example.

\begin{example}\label{ex:herzig-morra}
  Suppose that $\rhobar : G_{\Qp} \to \GL_3(\Fpbar)$ is such that
  \begin{eqnarray*}
    \rhobar \sim
    \begin{pmatrix}
      \chi_1 & * & * \\ &\chi_2 &* \\ && \chi_3
    \end{pmatrix}.
  \end{eqnarray*}
  Suppose moreover that $\chi_1|_{I_{\Qp}} = \omega^{a+1}$, $\chi_2|_{I_{\Qp}} = \omega^{b+1}$, $\chi_3|_{I_{\Qp}} =
  \omega^{c+1}$ with integers $a > b > c > a-(p-1)$, where all gaps in the inequalities are at least
  3, and that $\rhobar$ is maximally non-split (i.e.\ $\chi_1$ is the unique subrepresentation and
  $\chi_3$ the unique quotient representation). When the $\chi_i$ are fixed, the isomorphism class
  of $\rhobar$ is determined by an invariant $\FL(\rhobar) \in \Pr^1(\Fpbar) \setminus \{\chi_2(p)\}$.
  In the global setting of a suitable compact unitary group the Serre weights of $\rhobar$ are almost
  completely determined in \cite{HerzigMorra}: with the possible addition of the shadow weight $F(c+p-1,b,a-p+1)$,
  the set of Serre weights equals
  \begin{equation*}
    \begin{cases}
      \{F(a-1,b,c+1)\} & \text{if $\FL(\rhobar) \not\in \{0,\infty\}$}, \\
      \{F(a-1,b,c+1), F(b+p-1,a,c)\} & \text{if $\FL(\rhobar) = 0$}, \\
      \{F(a-1,b,c+1), F(a,c,b-p+1)\} & \text{if $\FL(\rhobar) = \infty$}. \\
    \end{cases}
  \end{equation*}
 That is, the set of Serre weights consists of one element of
 $\Wobv(\rhobar^{\ss})$, namely the obvious weight coming from the
 diagonal characters of $\rhobar$ in their given order, together with
 a set of shadow weight(s) depending on the parameter $\FL(\rhobar)$.
  The occasional presence of the weights $F(b+p-1,a,c)$ and $F(a,c,b-p+1)$ suggests that there is no naive explicit conjecture for
  non-semisimple $\rhobar$.  We make two further remarks. First,
  \cite{HerzigMorra} verify that in
  this setting there exists an ordinary crystalline lift of $\rhobar$
  that witnesses the containment $F(a-1,b,c+1) \in \Wcrisexists(\rhobar)$.
  Second, when the maximal non-splitness assumption above is dropped, an upper bound on the set
  of Serre weights of $\rhobar$ was obtained by Morra--Park \cite{MorraPark}.
\end{example}

\subsection{Shifted weights}
 \label{sec:shifted-weights}

We continue to consider $\rhobar$ such that $\rhobar|_{I_K}$ may not
be semisimple. 
Recall from Section~\ref{subsec:the picture} that when $n=2$ and $K=\Qp$, every component
of~$\cXbar$ labeled by $1$ is also labeled by $\Sym^{p-1}\Fpbarx{2}$;
equivalently, every $\rhobar$ with $1$ as a Serre weight also has
$\Sym^{p-1}\Fpbarx{2}$ as a Serre weight. This can be viewed as the
first instance of the following more general question:\ for which pairs of
Serre weights $F,F'$ does $F \in \W_{\BM}(\rhobar)$ imply that one
must have $F' \in
\W_{\BM}(\rhobar)$ as well?  In this case we say that the weight
$F$ {\em entails} the weight~$F'$.

The geometric perspective explained in Section~\ref{subsec:the picture} (combined with the
Breuil--M\'ezard conjecture) allows a significant
reduction to this question. The weight $F$ will entail the weight
$F'$ if and only if every component of~$\cXbar$ labeled by $F$ is also
labeled by $F'$; to check the latter it suffices to check that every
generic $\Fpbar$-point (of some component) that has $F$ as a Serre
weight also has $F'$ as a Serre weight. In particular, if one believes
that the Breuil--M\'ezard conjecture holds, then one should believe that $F$ entails $F'$
for arbitrary $\rhobar$ as long as the same holds for maximally
non-split upper-triangular~$\rhobar$ (or even those that are
sufficiently generic to lie on just one component of~$\cXbar$).  

In the remainder of this section we will discuss the following
specific instance of the weight entailment question.

\begin{defn}
  If $a,b$ are Serre weights, we say that $b$ is a
\emph{shift} of $a$ if there exists $1 \le i_0  < n$ such that
$$ b_{\sigma,i} - a_{\sigma,i} = \begin{cases} p-1 & \text{if $i \le i_0$} \\ 0 &
  \text{if $i > i_0$}\end{cases}$$ for all $\sigma \in S_k$. 

Note that this definition only depends on $i_0$ but not on the choice of representative $a \in
(X_1^{(n)})^{\Sk}/\sim$, and that we must have
$a_{\sigma,i_0} = a_{\sigma,i_0+1}$ for all $\sigma\in S_k$ in order
for any shift of $a$ to exist.
\end{defn}

\begin{question}\label{weight-shift-Q}
 If the weight $b$ is a shift
 of the weight $a$, does $a \in \W_{\BM}(\rhobar)$ entail 
$b \in \W_{\BM}(\rhobar)$ for representations $\rhobar : G_K \to
 \GL_n(\Fpbar)$?

We equally well ask the same question with $\W_{\BM}(\rhobar)$
replaced by any set that is conjecturally the same as it, such as
$\Wcrisforall(\rhobar)$, $\Wcrisexists(\rhobar)$, or $\W_{\cS}(\rhobar)$ for any Breuil--M\'ezard system $\cS$.
 \end{question}

 \begin{remark}
   This question was suggested to us by the work of Ash--Pollack--Soares \cite{MR2103328} and
   Doud \cite{doud-supersingular}:\ the weight sets conjectured for $\rhobar : G_{\Qp} \to \GL_3(\o\F_2)$ in \cite[\S2]{MR2103328},
   resp.\ for irreducible $\rhobar : G_{\Qp} \to \GL_n(\Fpbar)$ in 
   \cite[Conj.~2.10]{doud-supersingular} are by definition closed under
   shifts (cf.\ also \cite[Def.~2.7]{doud-supersingular}).
 \end{remark}

 \begin{example}\ Suppose that $n=2$ and $p > 2$. Twisting by a suitable character,
   Question~\ref{weight-shift-Q} when $n=2$ can be reduced to the case
  where $a=0$ and $b_{\sigma} = (p-1,0)$ for all $\sigma \in
  \Sk$. Since one knows (even if $\rhobar|_{I_K}$ is not semisimple) that $\WBT(\rhobar) = \Wcrisexists(\rhobar) =
  \Wcrisforall(\rhobar)$ in this setting by the work of \cite{gls13}, an affirmative
  answer to Question~\ref{weight-shift-Q} for any of these sets is
  equivalent to the statement that if $\rhobar:
  G_K \to \GL_2(\Fpbar)$ has a regular Barsotti--Tate lift
  then it also has a
  crystalline lift with Hodge type some lift of $b$, which is well
  known (and can be proved for example via the techniques
  of~\cite{gls13}, or by using the corresponding fact for automorphic
  forms and the potential modularity techniques
  of~\cite[App.\ A]{geekisin}).
 \end{example}

 \begin{example}\label{ex::gl3-shifted-weights}
   We now give an extended discussion of the case $\GL_3/\Qp$ which
   suggests to us that Question~\ref{weight-shift-Q} may have an
   affirmative answer in this setting as well.  Computational evidence
   for this
   (due to \cite{bib:ADP,MR2103328,doud-supersingular})  will be
   reviewed in Section~\ref{sec:computational}. Our discussion
    will be heuristic; in particular we will assume  the
    Breuil--M\'ezard conjecture, and will extrapolate the labelling of
    the irreducible components of~$\cXbar$ from the case
    $n=2$ in a speculative fashion. In particular, note that for
    $n=2$, the labeling of the irreducible components of~$\cXbar$ is
    dictated by the restrictions to inertia of the characters of
    generic reducible~$\rhobar$ on those components, with the subtlety 
that in the ambiguous case that these weights could either be one-dimensional
or twists of the Steinberg representation, we always predict the
twist of the Steinberg representation, and only predict the
one-dimensional representation in the case that (twists of) these
generic~$\rhobar$ admit a crystalline lift of Hodge type~$0$. 

In particular, every component labeled by a one-dimensional weight is
also labeled by the corresponding twist of the Steinberg
representation, and this fact is reflected by the fact that a generic
reducible representation admitting a crystalline lift of Hodge type 0
also necessarily admits one of Hodge type corresponding to the
Steinberg representation. We will now assume that similar
considerations apply for $n=3$, and see what is implied.

  We first suppose that $\rhobar|_{I_\Qp}$ is semisimple and observe that
   the set $\Wexpl(\rhobar)$
   described in Section~\ref{sec:shad-weights:-inert} is \emph{not} necessarily
   closed under shifts, so that a positive answer to
   Question~\ref{weight-shift-Q} means that $\Wexpl(\rhobar)$ is at
   best a proper subset of $\Wcrisexists(\rhobar)$. We leave it as an
   exercise to the reader to check the following. If
   $\rhobar$ is reducible, then
   $\Wexpl(\rhobar)$ is closed under shifts. (Use that weights
   $F(x,y,y)$ or $F(y,y,z)$ are either obvious or obscure.)  On the
   other hand if 
  $$ \rhobar|_{I_{\Qp}} \cong \oplus_{\sigma \in S_{\F_{p^3}}}
  \omega_{\sigma}^{(y+2) + p(y+1) + p^2 z} $$ with $0 \le y-z \le p-2$
  then $F(y,y,z) \in \Wexpl(\rhobar)$ but $F(y+p-1,y,z) \not\in
  \Wexpl(\rhobar)$ (this can be checked by hand, or seen 
from the tables in Proposition~\ref{prop:gl3-non-regular}), and dually if
 $$ \rhobar|_{I_{\Qp}} \cong \oplus_{\sigma \in S_{\F_{p^3}}}
   \omega_{\sigma}^{y + p(y+1) + p^2(x+2)} $$ with $0 \le x-y\le p-2$
   then $F(x,y,y) \in \Wexpl(\rhobar)$ but $F(x,y,y-p+1) \not\in
   \Wexpl(\rhobar)$; and moreover these are the only shifts missing from
   $\Wexpl(\rhobar)$ for irreducible $\rhobar$. (It is perhaps worth
      pointing out that shifts do not account for all of the obscure
      weights of  Example~\ref{ex:exceptional-weight-gl3}, so that 
 neither shifts nor obscure weights alone can
   account for the difference between $\cC(\Wobv(\rhobar))$ and the
   full set of weights.)

    Let us now consider the weight entailment problem for weights of
    the form $F = F(y,y,z)$ and $F'=F(y+p-1,y,z)$; the case of $F(x,y,y)$ and
    $F(x,y,y-p+1)$ will be dual.  Recall (e.g.\ from
    Example~\ref{ex:closure-for-niveau-1-gl3}) that $L_{\lambda}
    \otimes_{\Zpbar} \Fpbar = F(\lambda)$ for both $\lambda = (y,y,z)$
    and $\lambda=(y+p-1,y,z)$, so that we expect that $F(y,y,z)$
    (resp.\ $F(y+p-1,y,z)$) is a
    weight for $\rhobar$ if and only if $\rhobar$ has a crystalline
    lift of Hodge type $(y,y,z)$ (resp.\ $(y+p-1,y,z)$).
   Suppose that a component $\cZ$
    of $\cXbar$ has $F(y,y,z)$ among its labels, so that a generic $\Fpbar$-point on
    $\cZ$ corresponds to $\rhobar$ that
    has a crystalline lift of Hodge type $(y,y,z)$. We
    wish to know whether $\rhobar$ also has a crystalline lift of Hodge type
    $(y+p-1,y,z)$.

  If $y-z \le p-3$, Fontaine--Laffaille theory
  implies that a generic $\Fpbar$-point on $\cZ$ corresponds to
  $\rhobar$ having  the shape
  \begin{eqnarray}\label{eq:rhobar-shape-shift}
    \rhobar|_{I_{\Qp}} \sim
    \begin{pmatrix}
      \omega^{y+2} & * & * \\ &\omega^{y+1} &* \\ && \omega^{z}
    \end{pmatrix}.
  \end{eqnarray}
The same conclusion seems likely to hold if $y-z=p-2$:\ an argument as
in \cite[Prop.~7.8]{gls12} shows at least  that $\rhobar$ has the same
semisimplification as the representation \eqref{eq:rhobar-shape-shift}, and it seems quite plausible that the
order of the characters on the diagonal will be correct. Suppose this
is so.

Let $\chi_1,\chi_2,\chi_3$ be the characters on the diagonal of $\rhobar$
(in the same order as given in \eqref{eq:rhobar-shape-shift}). Then as long as none of $\chi_i/\chi_j$ with $i < j$ are cyclotomic it is
straightforward to show that a crystalline lift of $\rhobar$ with Hodge type
$(y+p-1,y,z)$ exists. One can even take this lift to be
upper-triangular;  see for example \cite[Lem.~3.1.5]{gg}. Even if
some $\chi_i/\chi_j$ is cyclotomic, it is reasonable to imagine that the
same conclusion holds; e.g.\ when $y-z \le p-3$ this is immediate
from~\cite[Cor.\ 2.3.5]{gs-fllifts}.

Alternately, it is plausible
that any $\rhobar$ having the shape \eqref{eq:rhobar-shape-shift} and
having a crystalline lift of Hodge type $(y,y,z)$ has an
\emph{ordinary} such lift, with characters down the diagonal having
Hodge--Tate weights $y+2$, $y+1$, $z$ (in that order); cf.\ the first
remark at the end of Example~\ref{ex:herzig-morra}, as well as the
discussion of the case $n=2$ and $K/\Qp$ arbitrary in Section~\ref{sec:obvious-general}.
Write this lift
as an extension of a two-dimensional crystalline representation $V$
(with Hodge--Tate weights $\{y+1, z\}$) by a character $W$. One may then hope
to produce the desired lift of $\rhobar$ of Hodge type $(y+p-1,y,z)$ by considering extensions of $V$ by
unramified twists of $W \otimes \varepsilon^{p-1}$.

We remark that the above arguments are agnostic regarding the
case $y-z=p-1$. However, it is at least the case for
$\rhobar|_{I_\Qp}$  semisimple that the set $\Wexpl(\rhobar)$ contains
$F(y+p-1,y,y-p+1)$ whenever it contains $F(y+p-1,y,y)$.
 \end{example}

When $n > 3$ the heuristic arguments in
Example~\ref{ex::gl3-shifted-weights} at least make it plausible that
Question~\ref{weight-shift-Q} has an affirmative answer for
shifts of weights $F = F(a_1,\ldots,a_n)$ with $a_1-a_n$ small (e.g.\ when
$a_1-a_n \le p-n$, so that Fontaine--Laffaille theory still determines
the shape of $\rhobar|_{I_{\Qp}}$ for $\rhobar$ corresponding to a
generic $\Fpbar$-point on a component of $\cXbar$ labeled by $F$).

\subsection{Summary}
\label{sec:summary}

We briefly summarize the Serre weight conjectures that we have
explained in this section.

\begin{defn}
  \label{defn:various-serre-weight-sets} Let $\rhobar : G_K \to
  \GL_n(\Fpbar)$ be a  representation.
\begin{itemize}
\item If the generalised Breuil--M\'ezard conjecture holds, we define
  $\WBM(\rhobar)$ to be the set of Serre weights $a$ such that
  $\mu_a(\rhobar) > 0$.

\item We define $\Wcrisexists(\rhobar)$ to be the set of Serre weights $a$ such
  that $\rhobar$ has a crystalline lift of Hodge type $\lambda_a$ for
  some lift $\lambda_a$ of $a$.
\item We define $\Wcrisforall(\rhobar)$
  to be the set of Serre weights $a$ such
  that $\rhobar$ has a crystalline lift of Hodge type $\lambda_a$ for
 every lift $\lambda_a$ of $a$.  
\item If $\rhobar|_{I_K}$ is semisimple, we define a non-empty set of 
obvious
  weights $\Wobv(\rhobar)$ in
  Definition~\ref{defn:obvious-lift-ss}, and a set of explicit weights
  $\Wexpl(\rhobar) \supset \cC(\Wobv(\rhobar))$ in Definition~\ref{defn:explicit-weights-ss}.
\end{itemize}
\end{defn}

\begin{conj}
  \label{conj:various-serre-weight-conjectures-local}  Let $\rhobar : G_K \to
  \GL_n(\Fpbar)$ be a  representation. Assume that
  $\rhobar|_{I_K}$ is semisimple.
  \begin{enumerate}
  \item We have $\cC(\Wcrisexists(\rhobar)) = \Wcrisexists(\rhobar)$.
  \item The sets $\Wcrisexists(\rhobar)$ and $\Wcrisforall(\rhobar)$ depend
    only on $\rhobar|_{I_K}$, as does 
    $\WBM(\rhobar)$ if it is defined \upl i.e.\ if the generalised
    Breuil--M\'ezard conjecture holds\textup).
  \item We have $\Wexpl(\rhobar) \subset \Wcrisexists(\rhobar) = \Wcrisforall(\rhobar)$.
  \item If the generalised Breuil--M\'ezard conjecture holds then
    $\WBM(\rhobar) = \Wcrisexists(\rhobar)$.
  \item If $K/\Qp$ is unramified, and $\rhobar|_{I_K}$ is
    sufficiently generic,
    then  $    \Wcrisexists(\rhobar)=\Wexpl(\rhobar) = \cC(\Wobv(\rhobar))$.
  \end{enumerate}
\end{conj}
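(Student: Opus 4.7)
The plan is to address the five parts in increasing order of depth, and to emphasise up front that parts (i), (ii) and (iv) lie at the core of the conjectural framework while (iii) is conditional on them and (v) is the one part susceptible to a direct proof via combinatorial methods. I will also need to be careful that arguments for (iii) and (iv) do not implicitly invoke the very statements they are supposed to supply.

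For (i), closure of $\Wcrisexists(\rhobar)$ under the operation $\cC$, I would argue as follows. Suppose $\lambda$ is a lift of $a$ and some $F_b \in \JH_{\GL_n(k)}(L_\lambda \otimes_{\Zpbar} \Fpbar)$ belongs to $\Wcrisexists(\rhobar)$, so $\rhobar$ has a crystalline lift of Hodge type $\lambda_b$ for some lift $\lambda_b$ of $b$. Under the generalised Breuil--M\'ezard conjecture this forces $\mu_b(\rhobar)>0$, hence $e(R^{\lambda,\tau=\mathrm{triv}}_{\rhobar}/\varpi)\ge n_{\lambda,\mathrm{triv}}(b)\mu_b(\rhobar)>0$, and therefore $R^\lambda_{\rhobar}\ne 0$, giving a crystalline lift of Hodge type $\lambda$. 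So (i) really is a consequence of (iv) together with the BM conjecture; an unconditional proof seems to require input from the geometry of the stack $\cXbar$ of Section~\ref{sec: the picture}. For (ii), the characters case is just Lemma~\ref{lem:existenceofcrystallinechars}, and for $n=2$ it is proved in~\cite{gls13}; in general one expects it as a consequence of the (conjectural) mod $p$ local Langlands correspondence and of Fontaine--Mazur-style rigidity, and I do not see a direct attack.

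For (iii), I would argue inductively on $n$, assuming (i) and (ii). Given $a\in\Wexpl(\rhobar)$, Definition~\ref{defn:explicit-weights-ss} furnishes a decomposition $\rhobar|_{I_K}\cong \bigoplus_j \rhobar^{(j)}|_{I_K}$ and an $\eta$-partition $\lambda^{(j)}$ with $\Wexpl(\rhobar^{(j)})\cap \JH_{\GL_{n_j}(k)}(L_{\lambda^{(j)}}\otimes_{\Zpbar}\Fpbar)\neq\varnothing$. By the inductive hypothesis this intersection is contained in $\Wcrisexists(\rhobar^{(j)})$, and by (i) applied to each $\rhobar^{(j)}$ we obtain a crystalline lift of $\rhobar^{(j)}$ of Hodge type $\lambda^{(j)}$ itself. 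The direct sum is then a crystalline representation of Hodge type $\lambda$ whose reduction has the same semisimplified restriction to $I_K$ as $\rhobar$, and by (ii) this shows $a\in \Wcrisexists(\rhobar)$. The equality $\Wcrisexists(\rhobar)=\Wcrisforall(\rhobar)$ then follows from twisting a given crystalline lift by a crystalline character of $G_K$ with prescribed $\emb$-labelled Hodge--Tate weights and trivial reduction, which exists by Lemma~\ref{lem:existenceofcrystallinechars}. For (iv), the inclusion $\WBM(\rhobar)\subset \Wcrisforall(\rhobar)$ is Corollary~\ref{cor:weight-implies-lift}; conversely, if $a\in \Wcrisexists(\rhobar)$ with witness $\lambda_a$ then Lemma~\ref{lem:weight-implies-lift} (using the BM conjecture) gives $\WBM(\rhobar)\cap \JH_{\GL_n(k)}(L_{\lambda_a}\otimes \Fpbar)\ne\varnothing$, and (i) applied to the BM variant (Remark~\ref{rem: equivalence of crystalline lifts and shadow weights}) then yields $a\in\WBM(\rhobar)$.

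Part (v) is the only part susceptible to a direct attack, and I would prove it by reducing to the comparison with Herzig's set $\W^?(\rhobar)$ from~\cite{herzigthesis}. The strategy is: (a) import the setting of unramified groups $G$ over $\Qp$ from Section~\ref{sec:unramified groups} (so $\Res_{K/\Qp}\GL_n$) and transcribe $\Wobv$, $\Wexpl$, $\cC(\Wobv)$, $\W^?$ in this language; (b) show by delicate alcove geometry, using the Ye--Wang $\uparrow$-theorem of Appendix~\ref{sec:wangs-result-uparrow} to decompose dual Weyl modules into simple modules, that for $\delta$-generic $\rhobar|_{I_K}$ (in the sense of Example~\ref{sufficiently-generic-niveau-1}) one has $\W^?(\tau)=\cC(\Wobv(\tau))$, where $\tau$ is the inertial $L$-parameter of $\rhobar$; (c) show by a direct analysis of $\eta$-partitions in the generic regime (no Jordan--H\"older factors of $L_{\lambda^{(j)}}\otimes\Fpbar$ can produce a weight outside $\cC(\Wobv(\tau))$) that there are no obscure weights, hence $\Wexpl=\cC(\Wobv)$; and (d) combine with the known containment $\Wexpl\subset \Wcrisexists$ from (iii) and the opposite containment (which, for sufficiently generic $\rhobar$, follows from a direct computation of all possible reductions mod $p$ of crystalline representations with bounded Hodge--Tate weights in the Fontaine--Laffaille range $a_{\sigma,1}-a_{\sigma,n}\le p-n-\delta$) to conclude. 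The main obstacle is (ii), because without it parts (iii)--(v) remain conditional; within (v), the genuinely hard input is the combinatorial step (b), which is where the hypothesis that $\rhobar|_{I_K}$ be sufficiently generic is used in an essential way to avoid singular weights in the alcove picture.
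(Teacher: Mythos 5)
This statement is a conjecture, not a theorem: Section~\ref{sec:summary} merely collects the conjectures made earlier in the paper, and no proof of any of parts (i)--(iv), nor of the first equality in (v), exists in the paper or (to date) anywhere else. Your proposal is therefore best read as a map of logical dependencies rather than a proof, and as such it largely reproduces what the paper already records: (i) follows from (iv) plus the generalised Breuil--M\'ezard conjecture via Lemma~\ref{lem:weight-implies-lift} (cf.\ Remark~\ref{rem: equivalence of crystalline lifts and shadow weights}); $\WBM\subset\Wcrisforall=\Wcrisexists$ under BM is Corollary~\ref{cor:weight-implies-lift} and the paragraph following Definition~\ref{defn:crystalline-weight-sets}; and the equality $\Wexpl(\tau)=\cC(\Wobv(\tau))=\W^?(\tau)$ for sufficiently generic $\tau$ is Theorem~\ref{thm:main-result}, which your steps (b) and (c) correctly sketch. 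But none of this amounts to a proof of the conjecture.

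Two steps in your argument would genuinely fail even granting the parts you take as input. First, in (iii) you invoke ``(i) applied to each $\rhobar^{(j)}$'' to produce a crystalline lift of $\rhobar^{(j)}$ of Hodge type $\lambda^{(j)}$ \emph{itself}; but the $\lambda^{(j)}$ occurring in an $\eta$-partition are arbitrary Hodge types, not lifts of Serre weights (see Example~\ref{ex:FH-ramified-example}), so statement (i) --- which only concerns lifts $\lambda$ of Serre weights and only asserts membership of $a$ in $\Wcrisexists$, i.e.\ a lift of \emph{some} Hodge type lifting $a$ --- does not apply. The passage from ``some Jordan--H\"older factor of $L_{\lambda^{(j)}}\otimes\Fpbar$ lies in $\Wcrisexists(\rhobar^{(j)})$'' to ``$R^{\lambda^{(j)}}_{\rhobar^{(j)}}\neq 0$'' is exactly Lemma~\ref{lem:weight-implies-lift}, which is conditional on the generalised Breuil--M\'ezard conjecture; so your (iii) is conditional on BM, not merely on (i) and (ii). Second, and more seriously, step (d) of your attack on (v) asserts that the containment $\Wcrisexists(\rhobar)\subset\Wexpl(\rhobar)$ in the generic unramified case ``follows from a direct computation of all possible reductions mod $p$ of crystalline representations'' in the Fontaine--Laffaille range. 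The Hodge types $\lambda$ lifting Serre weights have $\lambda_{\emb,1}-\lambda_{\emb,n}+n-1$ as large as roughly $n(p-1)$, far outside the Fontaine--Laffaille bound, and determining the reductions of crystalline representations in that range is a wide-open problem (already for $n=2$ it required the delicate analysis of \cite{ber05,gls12,gls13}). This is precisely why the paper leaves $\Wcrisexists(\rhobar)=\Wexpl(\rhobar)$ as Conjecture~\ref{conj:Wexpl-equals-Wcris-ss} even under the genericity and unramifiedness hypotheses.
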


\begin{conj}
  \label{conj:serre-weight-conjectures-global}
  If each $\rbar|_{I_{F_v}}$ is semisimple, then the weight part of Serre's conjecture \upl
  Conj.~\ref{conj:basic-conjecture}\upr\ holds with $\W_v(\rbar) = \Wcrisexists(\rbar|_{G_{F_v}})$.
\end{conj}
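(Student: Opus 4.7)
The plan is to reduce the conjecture to a purely local statement about crystalline lifts, then attack that local statement via the Taylor--Wiles--Kisin patching formalism of Section~\ref{sec:patching, BM and serre weights}. Since we assume $\rbar|_{I_{F_v}}$ is semisimple for every $v\mid p$, by Conjecture~\ref{conj:crystalline-version}(i) the target is exactly $\W_v(\rbar) = \Wcrisexists(\rbar|_{G_{F_v}})$, and both containments need to be established. The easy direction --- that every Serre weight of $\rbar$ lies in $\Wcrisexists(\rbar|_{G_{F_v}})$ --- follows the template of Example~\ref{example: crystalline lifts for modular forms and more generally}:\ a Hecke eigenclass of weight $W = \otimes_v W_v$ lifts (after passing to characteristic zero, using for instance the torsion-vanishing expected in Remark~\ref{rem: trivial coefficients}) to an automorphic representation whose associated $p$-adic Galois representation is crystalline at each $v\mid p$ with Hodge type lifting the weight $a_v$ encoding $W_v$.

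For the reverse containment, first reduce to the setting of a compact-at-infinity unitary group over a suitable CM extension of $F$ via base change and a mod $p$ Langlands-philosophy invariance (Remark~\ref{rem:mod p Langlands philosophy and different groups}), using the globalisation techniques of \cite{frankII} and \cite{BLGGT} recalled in the discussion following Conjecture~\ref{conj:crystalline-version}. Next, given $a_v \in \Wcrisexists(\rbar|_{G_{F_v}})$ for every $v\mid p$, choose local crystalline lifts of Hodge type $\lambda_{a_v}$; we would like to produce a global lift $\rho$ of $\rbar$ whose restriction at each $v\mid p$ is precisely our chosen local lift, and then deduce automorphy of $\rho$ and hence of $\rbar$ in weight $W=\otimes_v W_v$. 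The production of such a $\rho$ is carried out by the potentially diagonalisable potential automorphy machinery of \cite{BLGGT}, provided the local lifts can be chosen to be potentially diagonalisable, which in turn is guaranteed (under the semisimplicity hypothesis on $\rbar|_{I_{F_v}}$) by \cite[Lem.\ 2.2]{EGPatching}.

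The decisive step is then to run a patching functor $M_\infty$ in the sense of Definition~\ref{defn:patching functor}, attached to spaces of automorphic forms on the unitary group and localised at the maximal ideal cut out by $\rbar$, and to apply Proposition~\ref{prop: patching functors imply BM weights}. Concretely, one shows that $M_\infty(L_{\lambda_{a_v}}) \ne 0$ --- because $\rho$ contributes a closed point to its support --- and then, using Lemma~\ref{lem:correction to EG 4.1.1.} together with Conjecture~\ref{conj:closure-operation} (whose validity for semisimple $\rbar|_{I_{F_v}}$ is part of what must be checked), argues that $M_\infty(F_{a_v}) \ne 0$, yielding automorphy of $\rbar$ in Serre weight $a_v$. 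The local--global compatibility built into the patching functor identifies $M_\infty(F_{a_v}) \ne 0$ with automorphy of $\rbar$ in weight~$a_v$, as discussed in Remark~\ref{rem: patching functors and GK}.

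The main obstacle is verifying the third axiom of a patching functor: namely, that the support of $M_\infty(L_{\lambda,\tau})$ exhausts the entire component $X_\infty(\lambda,\tau)$, rather than merely a union of irreducible components. As explained in Remark~\ref{rem: patching functors from patching}, this is essentially the potential diagonalisability question for all potentially crystalline representations of the relevant Hodge and inertial types, and is widely open beyond the classical Fontaine--Laffaille range and the potentially Barsotti--Tate setting of \cite{geekisin}. In practice, one expects to make progress in two regimes:\ the generic unramified case, where Theorem~\ref{thm:main-result} identifies $\Wcrisexists(\rhobar)$ with the much more tractable set $\cC(\Wobv(\rhobar))$, reducing the problem to constructing obvious lifts and their shadows; and the low-dimensional or low-weight case, where the explicit integral $p$-adic Hodge theory of \cite{gls13} and its generalisations can be brought to bear on Conjecture~\ref{conj:closure-operation}, in tandem with the change-of-weight techniques of \cite{BLGGT}.
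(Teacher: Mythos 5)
The statement you are addressing is stated in the paper as Conjecture~\ref{conj:serre-weight-conjectures-global}: it is a conjecture, not a theorem, and the paper offers only motivation and partial evidence for it. Your proposal is essentially a restatement of that motivation (the patching formalism of Section~\ref{sec:patching, BM and serre weights} together with the crystalline-lifts heuristic of Section~\ref{sec:cryst-lifts-serre}), and it does not close any of the gaps that keep the statement conjectural. Two of these gaps are decisive. First, the existence of a patching functor in the sense of Definition~\ref{defn:patching functor} requires that the support of $M_\infty(L_{\lambda,\tau})$ be \emph{all} of $X_\infty(\lambda,\tau)$; as Remark~\ref{rem: patching functors from patching} explains, this amounts to potential diagonalisability of all potentially crystalline representations of the relevant types, which is open outside the Fontaine--Laffaille and potentially Barsotti--Tate ranges. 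You acknowledge this, but then defer it to "two regimes" where "one expects to make progress" --- that is a research programme, not a proof.

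Second, and more subtly, even granting a patching functor your key deduction fails as written: from $M_\infty(L_{\lambda_{a_v}})\ne 0$ and exactness one concludes only that $M_\infty(F_b)\ne 0$ for \emph{some} Jordan--H\"older factor $F_b$ of $L_{\lambda_{a_v}}\otimes_{\Zpbar}\Fpbar$, not for $F_{a_v}$ itself (this is exactly the content of Lemma~\ref{lem:weight-implies-lift}). Upgrading "some factor" to "the factor $F_{a_v}$" is precisely the assertion that $\Wcrisexists(\rhobar)\subset\W_{\cS}(\rhobar)$, which the paper identifies as the "mysterious part" of Conjecture~\ref{conj:crystalline-version} and which is \emph{false} without the semisimplicity hypothesis by the counterexamples of \cite{LLLM}; under the semisimplicity hypothesis it is equivalent to Conjecture~\ref{conj:closure-operation}, which you invoke as an ingredient but which is of essentially the same depth as the statement you are trying to prove. (Your "easy direction" also silently assumes the construction of Galois representations attached to torsion Hecke eigenclasses for $\GL_n$ over a general number field and the vanishing expected in Remark~\ref{rem: trivial coefficients}, neither of which is available unconditionally.) In short, the proposal correctly reproduces the paper's reasons for believing the conjecture, but it is not a proof, and the statement remains open.
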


Finally (assuming again that~$\rhobar|_{I_K}$ is semisimple), we recall that $\Wcrisforall(\rhobar) \subset
\Wcrisexists(\rhobar)$ by definition; that if the generalised
Breuil--M\'ezard conjecture holds then we have $\WBM(\rhobar)
\subset \Wcrisforall(\rhobar) = \Wcrisexists(\rhobar)$
(cf.~Lemma~\ref{lem:weight-implies-lift}); and that if
the set
$\Wcrisexists(\rhobar)$ depends only on $\rhobar|_{I_K}$, then
$ \Wobv(\rhobar) \subset \Wcrisexists(\rhobar)$ (and similarly for
$\Wcrisforall(\rhobar)$).

\section{Existing  conjectures in the literature}\label{sec:evidence} In this
section we review the theoretical and computational evidence
for our conjectures, beyond the case $n=2$ which was discussed in detail
above. We also make comparisons with other conjectures in the literature.

\subsection{The case of \texorpdfstring{$\GL_3(\Qp)$}{GL(3)/Qp}}\label{subsec: GL3}

Take $K = \Qp$ and fix an odd and irreducible representation $\rbar : G_{\Q} \to \GL_n(\Fpbar)$ such
that $\rbar|_{I_{\Qp}}$ is semisimple.  The first Serre weight conjectures in this context were made by
Ash, Doud, Pollack, and Sinnott \cite{bib:ASinn}, \cite{bib:ADP}. We will discuss their work in Section~\ref{sec:conjecture-ash-doud} below.
  Later in the paper (Section~\ref{sec:herzig-comparison}), we will show that
Conjecture~\ref{conj:explicit-version-ss} agrees 
with the Serre weight conjecture made by the second author
in \cite{herzigthesis}.  (In fact we will ultimately work in a somewhat
more general context than this.)  Recall, however, that in
Conjecture~\ref{conj:explicit-version-ss} the representation $\rbar|_{I_{\Qp}}$
is assumed to be sufficiently generic. In this next section we will check
that in the $3$-dimensional case the conjecture of \cite{herzigthesis} 
agrees \emph{completely} with the explicit set of weights described
in the previous section.

\subsection{The conjecture of \texorpdfstring{\cite{herzigthesis}}{[Her09]} for \texorpdfstring{$\GL_3$}{GL(3)}
  over \texorpdfstring{$\Qp$}{Q\_p}}   

Recall that \cite[Conj.~6.9]{herzigthesis} predicts the 
set of \emph{regular} Serre weights for which a given irreducible, odd representation $\rbar : G_{\Q} \to
\GL_n(\Fpbar)$ is automorphic.  Regular Serre weights are defined as
follows.\footnote{We caution the reader that the term regular as
  applied to Serre weights is unrelated to the term
  regular as applied to Hodge--Tate weights in Section~\ref{subsec:notation}.}

\begin{defn}
  \label{defn:regular-weight}
  A weight $a \in \SW(k,n)$ is said to be \emph{regular}
if $a_{\sigma,i} - a_{\sigma,i+1} < p-1$ for all $\sigma,i$, and
\emph{irregular} otherwise.  Let $\SW_{\reg}
\subset \SW(k,n)$ be the set of regular weights.
\end{defn}

The set of Serre weights predicted in \cite{herzigthesis} is denoted $\W^{?}(\rbar|_{I_{\Qp}})$,
so we want to check  that
$\W^{?}(\rhobar|_{I_{\Qp}}) = \Wexpl(\rhobar) \cap \SW_{\reg}$ for a
local representation $\rhobar : G_{\Qp} \to \GL_3(\Fpbar)$ such that
$\rhobar|_{I_{\Qp}}$ is semisimple.
To describe the set $\W^{?}(\rhobar|_{I_{\Qp}})$, we begin with the following
definitions.

\begin{defn}
  \label{defn:tau-and-good}
  Suppose that $(w,\mu) \in S_n \times \Z^n$. Let $w = w_1\cdots w_m$
  be the unique decomposition of the permutation $w$ into
  disjoint cycles (including trivial cycles), and write $\mu = (\mu_1,\ldots,\mu_n)$. 
  \begin{enumerate}

\item If $w_i =
  (c_0\,\cdots\, c_{d_i-1})$ we set $N_i =
  \sum_{j=0}^{d_i-1} p^j \mu_{c_j}$, write $k_i = \F_{p^{d_i}}$,
 and define $\tau_{d_i}(w_i,\mu)$ to be the isomorphism class of the
 inertial Galois representation
  $\oplus_{\sigma \in S_{k_i}} \omega_{\sigma}^{N_i}$ of dimension $d_i$.

\item We define $\tau(w,\mu)$ to be the isomorphism class of the
  inertial Galois representation $\oplus_{i=1}^m \tau_{d_i}(w_i,\mu)$ of dimension $n$. 

\item
 We say that the pair $(w,\mu)$ is \emph{good} if for all $1 \le i \le
 m$ and  for all $d \mid
  d_i$, $d \neq d_i$
  we have $(p^d - 1)N_i \not\equiv 0 \pmod{p^{d_i}-1}$.
  \end{enumerate}
It is straightforward to verify that these definitions do not depend on
any of the choices involved.
\end{defn}

\begin{remark}
  \label{rem:canonical-defs}
  The above definitions are concrete instances of the more general and
  more canonical \cite[(6.15)]{herzigthesis} and
  \cite[Def.~6.19]{herzigthesis}. We will recall this more canonical
  definition, and extend it to other groups, in   Proposition~\ref{prop:l-parameter}.
\end{remark}

If $\rhobar|_{I_{\Qp}} \cong \tau(w,\mu)$, the condition
that the pair $(w,\mu)$ is good means, concretely, that the dimensions of the Jordan--H\"older
factors of $\rhobar$ correspond to the cycle type of $w$.

\begin{example}\label{ex:w13}
 Suppose that $w$ is the transposition swapping $i$ and $j$. Then the pair $(w,\mu)$
 is good if and only if $p+1 \nmid \mu_i + p\mu_j$, or equivalently $p+1
 \nmid \mu_i - \mu_j$. In particular this is
 always the case if $|j-i|=1$ and $\mu = \lambda + (n-1,\ldots,1,0)$ where
 $\lambda$ is the lift of a Serre weight.
\end{example}

\begin{example}\label{ex:w123}
  Suppose that $n=3$ and $w = (i\, j\, k)$ is a $3$-cycle. If $\mu =
  \lambda + (2,1,0)$ where $\lambda$ represents a regular Serre
  weight, it is a straightforward exercise to verify that the pair
  $(w,\mu)$ is always good.
\end{example}

Suppose for the remainder of
this section that $n=3$, so that $\W_{\reg}$ refers to the regular
weights in $\W(\Fp,3)$, and write $\eta = (2,1,0)$.

\begin{defn}
  \label{defn:Xsets}  We define $X_{\reg}^{(3)} \subset X_1^{(3)}$ to
  be the set of triples such that
$a\ge b \ge c$ and $a-b,b-c \le p-2$.
Note that $X_{\reg}^{(3)}/{\sim} \cong \W_{\reg}$.

If $(x,y,z) \in \Z^3$, let $\reg(x,y,z)$ be the unique
element of $\W_{\reg}$ represented by some  $(x',y',z') \in X_{\reg}^{(3)}$ with $(x',y',z')
\equiv (x,y,z)$ modulo $(p-1)\Z^3$.
\end{defn}

\begin{defn}
  \label{defn:A-sets}
Following \cite[Prop.~3.18]{herzigthesis} (see also 
 Example~\ref{ex:closure-for-niveau-1-gl3}), we define the function
 (from weights to sets of weights)
 $$r\big(F(x,y,z)\big) =  \begin{cases} \{F(x,y,z)\} & \text{if } x-z \ge 
   p-2, \\ 
 \{F(x,y,z),\ F(z+p-2,y,x-p+2) \} & \text{if } x-z <
   p-2. \end{cases} $$ 
Following \cite[Def.~7.3]{herzigthesis} we set $\HA(\mu) = r(\reg(\mu-\eta))$
for each $\mu   \in X_1^{(3)}$. Note that if $F \in \W_{\reg}$ then
$r(F) \subset \W_{\reg}$, and so $\HA(\mu) \subset \W_{\reg}$
for any $\mu \in X_1^{(3)}$.
\end{defn}

The set  $\W^{?}(\rhobar|_{I_{\Qp}})$ in \cite{herzigthesis} is
defined in terms of a Deligne--Lusztig representation associated to
$\rhobar|_{I_{\Qp}}$.  In the three-dimensional case we have the
following explicit description of this set, which is all we will need for the purposes of this section.

\begin{prop} \textup{(\cite[Prop.~7.4]{herzigthesis})}
  \label{prop:def-of-W-herzig}  Let  $\rhobar: G_{\Qp} \to \GL_3(\Fpbar)$ be a  representation
  such that $\rhobar|_{I_{\Qp}}$ is semisimple. 
  Set $$\HC(\rhobar|_{I_{\Qp}}) = \{ \mu \in X_1^{(3)} : \textrm{there
    exists } w \in S_3 \textrm{ with } (w,\mu) \textrm{ good and }
  \rhobar|_{I_{\Qp}} \cong \tau(w,\mu)\}.$$
Then $$\W^{?}(\rhobar|_{I_{\Qp}}) = \bigcup_{\mu \in
  \HC(\rhobar|_{I_{\Qp}})} \HA(\mu).$$
  \end{prop}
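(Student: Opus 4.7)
The plan is to unpack the definition of $\W^?(\rhobar|_{I_{\Qp}})$ given in \cite{herzigthesis} and verify that it coincides, in the three-dimensional case, with the explicit description asserted by the proposition. Recall that $\W^?(\rhobar|_{I_{\Qp}})$ is defined as a union $\bigcup_V \JH(\overline V)^{\cR}$, where $V$ runs over certain Deligne--Lusztig representations of $\GL_3(\F_p)$ whose inertial $L$-parameter matches~$\rhobar|_{I_{\Qp}}$, the bar denotes reduction modulo~$p$, and $(\,\cdot\,)^{\cR}$ denotes a certain involution (defined via Jantzen's translation functors / alcove geometry) on the set of regular Serre weights. The main content of the proposition is that this construction can be rewritten in the clean form $\bigcup_{\mu \in \HC(\rhobar|_{I_{\Qp}})}\HA(\mu)$.

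First, I would match up the parameterisations. Deligne--Lusztig representations of $\GL_3(\F_p)$ are parameterised by conjugacy classes of pairs $(w,\mu)$ with $w \in S_3$ and $\mu$ a character of the corresponding maximal torus, which in our setting can be identified with elements $\mu \in \Z^3$. The definition of $\tau(w,\mu)$ as a direct sum of inertial characters over the cycles of~$w$ is precisely the inertial $L$-parameter of the associated Deligne--Lusztig representation, so the condition $\rhobar|_{I_{\Qp}} \cong \tau(w,\mu)$ correctly selects those $V$ relevant to $\rhobar|_{I_{\Qp}}$. The ``good'' condition of Definition~\ref{defn:tau-and-good} is then exactly the condition that the decomposition of $\rhobar|_{I_{\Qp}}$ into irreducible summands matches the cycle structure of~$w$; equivalently, the tame character $\omega_\sigma^{N_i}$ on a cycle of length $d_i$ is genuinely $d_i$-dimensional rather than a sum of lower-dimensional pieces. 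This goodness condition is imposed in \cite[Defn.~6.18]{herzigthesis} as a non-degeneracy requirement on the Deligne--Lusztig representation.

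Next, I would compute $\JH(\overline{V(w,\mu)})^{\cR}$ for each good pair~$(w,\mu)$. For $\GL_3$, Deligne--Lusztig representations are small enough that one can identify the Jordan--H\"older factors of their reductions mod~$p$ explicitly. Combined with the description of the involution $\cR$ via \cite[Prop.~3.18]{herzigthesis} (recalled in Definition~\ref{defn:A-sets} above), one checks directly that $\JH(\overline{V(w,\mu)})^{\cR}$ is precisely the regular part of $r(\reg(\mu-\eta))$; that is, it equals $\HA(\mu)$. The two cases in the definition of~$r$ correspond respectively to the cases where $L_{\lambda}\otimes\Fpbar$ is irreducible or has length two.

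The main obstacle, if any, is the translation between the intrinsic Deligne--Lusztig / involution description used in the definition of $\W^?$ and the combinatorial description given by $\HA$; but this is already essentially accomplished in Chapter~3 of \cite{herzigthesis} via the analysis of restriction of algebraic representations to $\GL_3(\F_p)$, so the remaining work is a small bookkeeping exercise confirming that $\HC(\rhobar|_{I_{\Qp}})$ enumerates exactly the good pairs contributing to~$\W^?(\rhobar|_{I_{\Qp}})$ and that each contributes $\HA(\mu)$. Taking the union over $\mu \in \HC(\rhobar|_{I_{\Qp}})$ then yields the claimed identification.
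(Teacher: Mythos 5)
The paper itself gives no proof of this statement: it is quoted verbatim from \cite[Prop.~7.4]{herzigthesis}, so there is nothing internal to compare against except the surrounding definitions (Definitions~\ref{defn:tau-and-good} and~\ref{defn:A-sets}, and the general machinery of Section~\ref{sec:defin-W-questionmark}). Your overall strategy --- unpack $\W^?$ as the $\cR$-image of the regular Jordan--H\"older constituents of the reduction of the Deligne--Lusztig representation attached to $\rhobar|_{I_{\Qp}}$, and identify the ``good'' condition with the maximally split condition matching cycle type to the irreducible decomposition --- is indeed the right one and is what Herzig does. But note that $\W^?$ is built from a \emph{single} representation $V_\phi(\rhobar|_{I_{\Qp}})$ (cf.\ Definition~\ref{defn:herzigwts}), not a union over several $V$'s.

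The concrete gap is your central claim that $\JH(\overline{V(w,\mu)})^{\cR}$ ``equals $\HA(\mu)$'' for each good pair $(w,\mu)$. This cannot be correct: the left-hand side depends only on the isomorphism class of $\tau(w,\mu)$, i.e.\ on the orbit of $(w,\mu)$ under the extended affine Weyl group action, whereas $\HA(\mu)=r(\reg(\mu-\eta))$ has at most two elements and genuinely varies as $\mu$ runs through $\HC(\rhobar|_{I_{\Qp}})$. For instance, when $\rhobar|_{I_{\Qp}}$ is a generic sum of three distinct niveau-one characters, $\HC$ has six elements and $\W^?$ has nine regular weights (the six obvious ones plus three shadows, cf.\ Examples~\ref{ex:niveau-one-3-dimensional} and~\ref{ex:closure-for-niveau-1-gl3}), so no single $\HA(\mu)$ can exhaust $\JH(\overline{V})^{\cR}$. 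Had your identity been true, the union in the proposition would be redundant and $\W^?$ would equal any one $\HA(\mu)$, which is false. The actual content of the proof is precisely the statement you have elided: one must compute the full decomposition of $\overline{R(w,\mu)}$ for $\GL_3(\F_p)$ (via Jantzen's decomposition formula in the generic case and direct computation otherwise), observe that its regular constituents are indexed by the $X_1^{(3)}$-representatives $\mu'$ of the orbit of $(w,\mu)$, and check that applying $\cR$ to the constituent indexed by $\mu'$ yields exactly $\HA(\mu')$; the union over $\HC$ then assembles these contributions. As written, your argument assumes this matching rather than proving it.
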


\begin{ex}
  \label{ex:unramified-herzig}
  Suppose that $\rhobar|_{I_{\Qp}}$ is unramified.  Then 
$\Wexpl(\rhobar)$ consists of the four weights $F(p-3,-1,-p+1)$, $F(p-2,-1,-1)$,
  $F(p-2,p-2,-1)$, $F(p-2,-1,-p)$
  by Examples~\ref{ex:unramified-3-example} and \ref{ex:exceptional-weight-gl3}.
  On the other hand,   
  $\HC(\rhobar|_{I_{\Qp}})$ consists of all $\mu \in X_1^{(3)} \cap (p-1) \Z^3$, and  $\reg(\mu-\eta) =
  F(p-3,-1,-p+1)$ for all $\mu \in \HC(\rhobar|_{I_{\Qp}})$.  We
  therefore have
  $\W^{?}(\rhobar|_{I_{\Qp}}) = \{ F(p-3,-1,-p+1) \}$.  Hence we confirm that
  $\W^{?}(\rhobar|_{I_{\Qp}}) = \Wexpl(\rhobar) \cap \SW_{\reg}$,
  since the other three weights in $\Wexpl(\rhobar)$ are irregular.
\end{ex}

We can now prove the main result of this section.

\begin{prop}
  \label{prop:gl3-comparison}
  Let  $\rhobar: G_{\Qp} \to \GL_3(\Fpbar)$ be a  representation
  such that $\rhobar|_{I_{\Qp}}$ is semisimple.  Then we have
  $\Wexpl(\rhobar) \cap \SW_{\reg} = \W^{?}(\rhobar|_{I_{\Qp}})$. 
\end{prop}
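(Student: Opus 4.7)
The plan is to argue by comparing the two weight sets explicitly, using the detailed analyses already performed for $\GL_3/\Qp$ in earlier parts of the paper.

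First, I would reduce the statement to a claim purely about obvious weights. By the classification of obscure weights for $\GL_3(\Qp)$ carried out in Example~\ref{ex:exceptional-weight-gl3}, every obscure weight has adjacent entries differing by $p-1$, so is irregular; hence $\Wexpl(\rhobar) \cap \SW_{\reg} = \cC(\Wobv(\rhobar)) \cap \SW_{\reg}$. Next, the only non-trivial shadow weights for $\GL_3(\Qp)$ (Example~\ref{ex:closure-for-niveau-1-gl3}) are given by the operation $F \mapsto r(F)$ of Definition~\ref{defn:A-sets}, and one checks directly that $r$ sends regular weights to sets of regular weights, while irregular weights (those with an adjacent difference equal to $p-1$, so $x-z \ge p-1$) have no shadows. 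Therefore $\cC(\Wobv(\rhobar)) \cap \SW_{\reg} = \bigcup_{F \in \Wobv(\rhobar) \cap \SW_{\reg}} r(F)$. Since $\W^?(\rhobar|_{I_{\Qp}}) = \bigcup_{\mu \in \HC(\rhobar|_{I_{\Qp}})} r(\reg(\mu-\eta))$, and $r(\reg(\mu-\eta))$ only depends on $F := \reg(\mu-\eta)$, it suffices to prove the equality of regular weight sets
\[
\Wobv(\rhobar) \cap \SW_{\reg} \;=\; \{\reg(\mu-\eta) : \mu \in \HC(\rhobar|_{I_{\Qp}})\}.
\]

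Second, I would establish this equality by constructing an explicit correspondence between obvious lifts and elements of $\HC$. Given an obvious lift $\rho' = \bigoplus_{j=1}^m \rho^{\Qp}_{\Lambda_j}$ witnessing a regular Serre weight $a \in X_{\reg}^{(3)}$ via Hodge type $\lambda = a$, set $\mu := \lambda + \eta \in X_1^{(3)}$; the partition of $\mu$'s entries into the subsets $\Lambda_j$ determines a permutation $w \in S_3$ with cycle decomposition of type $(|\Lambda_j|)_j$. Corollary~\ref{lem:HT wts of induction} gives $\tau(w,\mu) \cong \rhobar'|_{I_{\Qp}} \cong \rhobar|_{I_{\Qp}}$, and goodness of $(w,\mu)$ is automatic: $\id$ is trivially good, Example~\ref{ex:w13} handles transpositions (using $\mu = a + \eta$ with $a$ regular, so that adjacent differences in $\mu$ are at most $p-1$), and Example~\ref{ex:w123} handles the 3-cycle case. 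Hence $\mu \in \HC(\rhobar|_{I_{\Qp}})$ and $\reg(\mu-\eta) = a$. Conversely, given any $(w,\mu) \in \HC$, the cycles of $w$ group the entries of $\mu$ into tuples $\Lambda_j$, and $\rho' := \bigoplus_j \rho^{\Qp}_{\Lambda_j}$ has $\rho'|_{I_{\Qp}} \cong \tau(w,\mu) \cong \rhobar|_{I_{\Qp}}$; when $\mu - \eta \in X_{\reg}^{(3)}$, this $\rho'$ is an obvious lift witnessing $\reg(\mu-\eta) = F(\mu-\eta)$.

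Third, I would work through the cases where $\mu \in \HC$ but $\mu - \eta \notin X_{\reg}^{(3)}$, which happens when $\mu$ has an adjacent difference of $0$ or $p-1$. In such cases $\mu - \eta$ must be translated by an element of $(p-1)\Z^3$ (not just $(p-1)(1,1,1)$) to land in $X_{\reg}^{(3)}$. The key observation is that such translation corresponds to replacing the Hodge--Tate weights of $\rho'$ in a controlled way: adding $(p-1)$ to an entry $\mu_i$ preserves the reduction mod $p$ of the corresponding crystalline character or induced representation, and within a single cycle $w_j$ the entries of $\Lambda_j$ can be cyclically permuted (using $\Frob_{\Qp}$-conjugation, Lemma~\ref{lem:HT wts under Frobenius conjugation}) without changing $\rhobar'|_{I_{\Qp}}$. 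Using these symmetries one produces, for each $(w,\mu) \in \HC$, a new obvious lift whose Hodge type equals the canonical regular representative $\reg(\mu-\eta)$. The matching can be verified case-by-case according to the cycle type of $w$: the trivial case is illustrated by the $\Wobv$-computation in Example~\ref{ex:unramified-3-example} (where $\mu \in (p-1)\Z^3$ and all such $\mu$ give the single regular weight $F(p-3,-1,-p+1)$), and the transposition and 3-cycle cases are handled similarly.

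The main obstacle will be carefully handling the boundary cases in the previous paragraph, where $\mu \in \HC$ has repeated entries modulo $p-1$ or adjacent entries differing by exactly $p-1$: here one must verify that the obvious-lift construction still produces a witness for the unique regular weight $\reg(\mu-\eta)$, and that no spurious regular obvious weights arise from non-standard Hodge types. This requires a somewhat tedious but finite enumeration within each cycle type, using the explicit form of $\tau(w,\mu)$ together with the identification of $\reg$ with the Weyl-group action on cocharacters modulo $(p-1)X^*(T)$. Once this bookkeeping is done, the bijection described in paragraph two extends (with non-unique but consistent choices) to a surjection onto $\Wobv(\rhobar) \cap \SW_{\reg}$ from $\HC(\rhobar|_{I_{\Qp}})$, whose image under $\mu \mapsto \reg(\mu-\eta)$ is the desired set, completing the proof.
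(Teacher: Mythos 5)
Your opening reductions (obscure weights are irregular; the closure $\cC$ only adds the shadows $r(F)$, which stay regular) are fine, but the equality you then reduce to,
$\Wobv(\rhobar)\cap\SW_{\reg}=\{\reg(\mu-\eta):\mu\in\HC(\rhobar|_{I_{\Qp}})\}$,
is false, so the rest of the plan cannot be carried out. The error enters with your claim that goodness is automatic for obvious lifts witnessing regular weights: Example~\ref{ex:w13} gives automatic goodness only for transpositions of \emph{adjacent} indices; for $w=(1\,3)$ the pair $(w,\mu)$ is good iff $p+1\nmid\mu_1-\mu_3$, and with $\mu=\lambda+\eta$, $\lambda$ regular, one can perfectly well have $\mu_1-\mu_3=p+1$. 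Concretely, take $\rhobar|_{I_{\Qp}}\cong\omega^{a-1}\oplus\omega^{a-1}\oplus\omega^{b}$ with $2\le a-b\le p-1$ and $\mu=(a,b,a-p-1)\in X_{\reg}^{(3)}+\eta$. The pair $((1\,3),\mu)$ is not good, yet it comes from an obvious lift (a two-dimensional induction plus a character), so $F(a-2,b-1,a-p-1)\in\Wobv(\rhobar)\cap\SW_{\reg}$. On the other hand every good pair for this $\rhobar$ has $w=1$, and one checks that $\{\reg(\mu''-\eta):\mu''\in\HC\}=\{F(a+p-4,a-2,b),\,F(a-3,b-1,a-p),\,F(b+p-3,a-2,a-p)\}$, which never contains $F(a-2,b-1,a-p-1)$. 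That weight does lie in $\W^{?}$, but only as the \emph{shadow} of $F(a-3,b-1,a-p)$, i.e.\ as the second element of $\HA\big((a-1,b,a-p)\big)$.

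The proposed equality also fails in the other direction: for $\rhobar$ irreducible with $\mu=(a,a,c)$ and $w$ a $3$-cycle, $\reg(\mu-\eta)=F(a+p-3,a-1,c)$ appears in $\W^{?}$ only as the shadow of the obvious weight $F(c+p-2,a-1,a-1)$ and is generically not itself obvious. So the true statement is only an equality of the $r$-saturations, $\bigcup_{F\in\Wobv\cap\SW_{\reg}}r(F)=\bigcup_{\mu\in\HC}\HA(\mu)$, not of the sets of base points, and proving it requires exactly the case analysis your plan is designed to avoid: matching each non-good pair $(w,\mu)$ with $\mu-\eta$ regular against a good pair $(w',\mu')$ whose $\mu'-\eta$ may be non-regular, and checking in each case that one $\HA$-set contains the other (and dually for good pairs with $\mu-\eta$ non-regular). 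Your third and fourth paragraphs, which only permute and translate Hodge--Tate weights within a fixed cycle type to normalise $\mu$, cannot produce these containments, because the two sides are related by the shadow operation $r$ rather than by $\reg$.
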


\begin{rem}\label{rem:non-regular} In
  Proposition~\ref{prop:gl3-non-regular} we will describe the
  irregular weights in $\Wexpl(\rhobar)$.

\end{rem}

\begin{proof}  Note that obscure weights for $\GL_3/\Qp$ were analysed
  completely in 
  Example~\ref{ex:exceptional-weight-gl3}, and were all found to be
  irregular, so that $\Wexpl(\rhobar) \cap \W_\reg$ consists entirely
  of obvious and shadow weights. It is
  then easy to see from the definition of
  $\Wexpl(\rhobar)$ (or alternatively from
  Proposition~\ref{prop: obvious weights of tau} in the next section), together with the discussion of
  Examples~\ref{ex:closure-for-niveau-1-gl3}, that 
$$ \Wexpl(\rhobar) \cap \SW_{\reg} = \bigcup_{\mu \in
  \HC'(\rhobar|_{I_{\Qp}})} \HA(\mu)$$
where
$$\HC'(\rhobar|_{I_{\Qp}}) = \{ \mu \in X_{\reg}^{(3)} + \eta : \textrm{there
    exists } w \in S_3 \textrm{ with }  \rhobar|_{I_{\Qp}} \cong \tau(w,\mu)\}.$$
To show the inclusion ``$\subset$'' in the Proposition, we have to
consider $\mu \in X_{\reg}^{(3)}+\eta$ such that there exists $w \in S_3$ with
$(w,\mu)$ not good and $\rhobar|_{I_{\Qp}} \cong \tau(w,\mu)$.  By
Examples~\ref{ex:w13} and~\ref{ex:w123} this only happens when $w =
(1\, 3)$, $\mu = (a,b,a-p-1)$. The condition $\mu \in X_{\reg}^{(3)}+\eta$ forces
$2 \le a-b \le p-1$. Then $\tau(w,\mu) \cong \omega^{a-1} \oplus
\omega^{a-1} \oplus \omega^b$ and so $\tau(w,\mu) \cong \tau(1,\mu')$
with $\mu' = (a-1,b,a-p)$. Since $(1,\mu')$ is good and $\mu' \in
X_1^{(3)}$, we have $\mu' \in \HC(\rhobar|_{I_{\Qp}})$.  Directly from
Definition~\ref{defn:A-sets} one calculates that $\HA(\mu) =
\{F(a-2,b-1,a-p-1)\}$ and $\HA(\mu') =
\{F(a-3,b-1,a-p),F(a-2,b-1,a-p-1)\}$. In particular $\HA(\mu) \subset
\HA(\mu') \subset  \W^{?}(\rhobar|_{I_{\Qp}})$, as required.

For the reverse inclusion, one must consider $\mu \in X_1^{(3)} \setminus
(X_{\reg}^{(3)}+\eta)$ such that there exists $w\in S_3$ with $(w,\mu)$ good and
$\rhobar|_{I_{\Qp}}\cong \tau(w,\mu)$.  There are three cases. First,
if $\mu=(a,a,a)$, then $w = 1$ and $\rhobar|_{I_{\Qp}}$ is a sum of
three copies of $\omega^a$.  After a twist we can reduce to the
unramified case, which we have already considered in
Example~\ref{ex:unramified-herzig}. (Alternately, just note that $\mu'
= (a+p-1,a,a-p+1) \in X_{\reg}^{(3)}+\eta$ with $\tau(1,\mu') \cong \tau(1,\mu)$
and $\HA(\mu') = \HA(\mu)$.)  

Second, suppose $\mu = (a,a,c)$ with $0 < a-c
\le p-1$, so that $\HA(\mu) =\{ F(a+p-3,a-1,c)\}$. Without loss of generality we may assume $w \in \{1,(1\,
3),(1\, 2 \, 3)\}$ (note that the pair $((1\, 2),\mu)$ is not
good). For each of these three possibilities it is easy to check that there exists $w' \in S_3$ such
that $\tau(w,\mu) \cong \tau(w',\mu')$ and $\mu' \in \{ (a+p-1,a,c),
(c+p,a,a-1)\} \subset X_{\reg}^{(3)}+\eta$; for instance if $w = (1\, 2\, 3)$ we take $w' = (1\,
3\, 2)$ and $\mu' = (c+p,a,a-1)$.  In particular $\mu' \in
\HC'(\rhobar|_{I_{\Qp}})$ and either $\HA(\mu) = \HA(\mu')$ (if $\mu'
= (a+p-1,a,c)$) or $\HA(\mu) \subset \HA(\mu')$ (if $\mu' =
(c+p,a,a-1)$), as required.

Finally, if $\mu = (a,c,c)$ with $0 < a-c \le p-1$ then one can
argue as in the previous case; alternately we can reduce
to the previous case by duality using the following lemma, valid for
$\GL_n$ (cf.~\cite[Prop.~6.23(ii)]{herzigthesis}), whose
proof is straightforward.
\end{proof}   

\begin{lemma}
  \label{lem:weights-dual} We have $\W_{\expl}(\rhobar^{\vee}) = \{
  F^\vee \otimes \det^{1-n} : F \in \W_{\expl}(\rhobar)\}.$
 \end{lemma}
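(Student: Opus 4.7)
The map $F \mapsto F^\vee \otimes \det^{1-n}$ is an involution on Serre weights, sending $F_a$ to $F_{a^\star}$ where $a^\star_{\sigma,i} := -a_{\sigma,n+1-i} + 1 - n$; a direct verification shows that this descends to an involution on $(\Xn)^{S_k}/{\sim}$. Since $\rhobar \mapsto \rhobar^\vee$ is also an involution and $(\rhobar^\vee)|_{I_K} = (\rhobar|_{I_K})^\vee$, the claim is invariant under swapping $(\rhobar, a)$ with $(\rhobar^\vee, a^\star)$, so it suffices to prove the single containment $a \in \Wexpl(\rhobar) \Rightarrow a^\star \in \Wexpl(\rhobar^\vee)$. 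I plan to argue this by induction on the recursive construction of $\Wexpl$ in Definition~\ref{defn:explicit-weights-ss}.

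For the base case, suppose $a \in \Wobv(\rhobar)$ is witnessed by an obvious lift $\rho' = \bigoplus_j \rho^K_{\Lambda_j}$ of Hodge type $\lambda$ lifting $a$ with respect to embeddings $(\emb_\sigma)_\sigma$. The dual $(\rho')^\vee = \bigoplus_j \rho^K_{-\Lambda_j}$ is still a direct sum of inductions of crystalline characters (since duality commutes with induction for open Galois subgroups), and reduces on $I_K$ to $\rhobar^\vee|_{I_K}$. A direct calculation using Corollary~\ref{lem:HT wts of induction} shows its Hodge type $\mu$ satisfies $\mu_{\emb_\sigma,i} = a^\star_{\sigma,i}$ while $\mu_{\emb,i} = 1 - n$ for all $i$ at embeddings $\emb \neq \emb_\sigma$ above $\sigma$. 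In the unramified case no such extraneous embeddings exist, so $\mu$ is itself a lift of $a^\star$ and hence $a^\star \in \Wobv(\rhobar^\vee)$. For the inductive step (in the unramified case), if $a$ is witnessed through a decomposition $\rhobar|_{I_K} \cong \bigoplus_j \rhobar^{(j)}|_{I_K}$ with $\eta$-partition $\lambda^{(j)}$, take the dualized data $((\rhobar^{(j)})^\vee, (\lambda^{(j)})^\star)$ where $(\lambda^{(j)})^\star_{\emb,i} := -\lambda^{(j)}_{\emb, n_j+1-i} + 1 - n_j$; then $(\lambda^{(j)})^\star$ is an $\eta$-partition of $a^\star$ because $-w_0$ preserves decreasing subsequences and partition structure. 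Combining the inductive hypothesis $\Wexpl((\rhobar^{(j)})^\vee) = \{c^\star : c \in \Wexpl(\rhobar^{(j)})\}$ with the identity $\JH(L_{(\lambda^{(j)})^\star} \otimes_{\Zpbar} \Fpbar) = \{c^\star : c \in \JH(L_{\lambda^{(j)}} \otimes_{\Zpbar} \Fpbar)\}$ (which holds because both sides are Jordan--H\"older constituents of highest-weight modules of the same highest weight, related by algebraic duality and the $\det^{1-n_j}$ twist) then yields $a^\star \in \Wexpl(\rhobar^\vee)$, completing the induction.

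The chief technical obstacle, which is absent when $K/\Qp$ is unramified, is handling the shift $\mu_{\emb,i} = 1-n$ at non-chosen embeddings in the base case (and the analogous failure of $(\lambda^{(j)})^\star$ to satisfy the $\eta$-partition condition at such embeddings in the inductive step). I would address this by exploiting the flexibility in choosing the individual values $\lambda'_{j,\emb'}$ for a modified dual lift, constrained only by the total Hodge-type multisets at each $\emb$ and by the sums $\sum_{\emb'|\tau} \lambda'_{j,\emb'}$ modulo the orders of the fundamental characters $\omega_\tau$; verifying that a suitable choice always exists is a combinatorial matter analogous to the argument underlying the non-emptiness of $\Wobv(\rhobar)$ in Appendix~\ref{sec:wobvrhbar-nonempty}.
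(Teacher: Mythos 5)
Your reduction to a single containment, your dualisation of obvious lifts in the base case, and your propagation of duality through the Levi recursion via the identity $\JH(L_{\mu^\star}\otimes_{\Zpbar}\Fpbar)=\{c^\star:c\in\JH(L_{\mu}\otimes_{\Zpbar}\Fpbar)\}$ are all correct when $K/\Qp$ is unramified, and this is surely the ``straightforward'' argument the paper has in mind. Note that the lemma is stated and invoked only within Section~\ref{subsec: GL3}ff., where the standing hypothesis is $K=\Qp$; the parenthetical ``valid for $\GL_n$'' refers to the rank $n$, not to the base field. So the unramified argument is all that is actually needed.

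The final paragraph of your proposal, however, contains a genuine gap: no combinatorial rearrangement of the dual Hodge--Tate data can repair the base case for ramified $K$, because the statement itself fails there. Concretely, take $K/\Qp$ ramified quadratic, $n=2$, $p=7$, and $\rhobar=\chi_1\oplus\chi_2$ with $\rhobar|_{I_K}\cong\omega^{5}\oplus\omega^{0}$. By Example~\ref{ex:2-dimensional} (with $e=2$, $f=1$, $m_\sigma=1$) one has $F(3,0)\in\Wobv(\rhobar)$, and $\Wobv(\rhobar)=\Wexpl(\rhobar)$ by Example~\ref{ex:n2operatortrivial}. On the other hand $F(3,0)^\vee\otimes\det^{-1}=F(-1,-4)=F(5,2)$, while $\rhobar^\vee|_{I_K}\cong\omega^{1}\oplus\omega^{0}$; running Example~\ref{ex:2-dimensional} for $\rhobar^\vee$ shows that $F(5,2)\in\Wobv(\rhobar^\vee)$ would force $\{1,0\}\equiv\{0,3\}$ or $\{1,0\}\equiv\{1,2\}\pmod{6}$, so $F(5,2)\notin\Wobv(\rhobar^\vee)=\Wexpl(\rhobar^\vee)$. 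The structural reason is exactly the obstacle you isolated: at each of the $e-1$ non-distinguished embeddings above a given $\sigma$, the dual of an obvious lift has Hodge--Tate weights $\{0,-1,\dots,1-n\}$ rather than the required $\{n-1,\dots,0\}$, and the crystalline character correcting this discrepancy has reduction $\prod_{\sigma}\omega_\sigma^{(e-1)(n-1)}$ on $I_K$, which is non-trivial once $e>1$. Thus the honest duality statement in the ramified case relates $\Wexpl(\rhobar)$ to $\Wexpl(\rhobar^\vee\otimes\chibar)$ for this non-trivial twist $\chibar$, not to $\Wexpl(\rhobar^\vee)$, and your proof should simply be restricted to unramified $K$.
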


We now describe the irregular weights in
$\Wexpl(\rhobar)$. As a preliminary, we observe that the possibilities
for $\rhobar|_{I_{\Qp}}$ are given by the following alternatives.

\begin{lemma}
  \label{lem:shape-alternatives}
  Suppose that $\tau : I_{\Qp} \to \GL_3(\Fpbar)$ is semisimple and
  extends to a  representation of $G_{\Qp}$. Then precisely
  one of the following alternatives holds:
  \begin{enumerate}
  \item $\tau \cong \tau(1,(a,b,c))$ where $a \ge b \ge c$ and $a-c
    \le p-1$,
\item $\tau \cong \tau((2\, 3),(a,b,c))$ where $a \ge b > c$ and $a-c
  \le p-1$,
\item $\tau \cong \tau((1\, 2\, 3),(a,b,c))$ where $a > b \ge c$ and
  $a-c \le p$,
\item $\tau^{\vee} \cong \tau((1\, 2\, 3),(a,b,c))$ where $a > b \ge c$ and
  $a-c \le p$.
  \end{enumerate}
Moreover, in (i) the triple $(a,b,c)$ is unique up to the equivalence
relation generated by $(a,b,c)\sim(c+p-1,a,b)$; in (ii) the triple
$(a,b,c)$ is unique up to translation by $(p-1,p-1,p-1)\Z$; in (iii) and
(iv), the triple $(a,b,c)$ is unique up to the equivalence relation
generated by $(a,b,c)\sim (c+p,a-1,b)$.
\end{lemma}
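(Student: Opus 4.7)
The plan is to argue by case analysis on the dimensions of the irreducible summands of $\tau$. Any semisimple representation of $I_{\Qp}$ extending to $G_{\Qp}$ decomposes uniquely as a direct sum of absolutely irreducible such summands, and a semisimple niveau-$d$ summand that extends to $G_{\Qp}$ is determined up to isomorphism by the (necessarily size-$d$) $\Gal(\F_{p^d}/\Fp)$-orbit of an integer $N \in \Z/(p^d - 1)$. So I would begin by enumerating the partitions of $n = 3$, namely $1+1+1$, $1+2$, and $3$; these correspond respectively to cases (i), (ii), and the combined case (iii)/(iv), and mutual exclusivity between cases of different partition type is then immediate.

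I expect cases (i) and (ii) to be straightforward, and would handle them first. For (i), the normalization $a \ge b \ge c$ with $a - c \le p - 1$ picks a representative of each unordered triple $\{a,b,c\} \subset \Z/(p-1)$, unique up to the equivalence $(a,b,c) \sim (c+p-1, a, b)$, which just encodes a cyclic rotation of the exponents. For (ii), one must additionally encode the irreducible $2$-dimensional niveau-$2$ summand by its tame weight $b + pc \pmod{p^2-1}$, which is well-defined up to multiplication by $p$; the condition $b > c$ (equivalent to the goodness of the pair $((2\,3), (b,c))$, cf.\ Example~\ref{ex:w13}) ensures that the Galois orbit of $b + pc$ has size $2$, and then the stated uniqueness up to translation by $(p-1,p-1,p-1)\Z$ is a short bounded calculation using $a - c \le p - 1$.

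The main obstacle is cases (iii) and (iv), which together must exhaust all niveau-$3$ representations. The key starting observation is that the equivalence $(a, b, c) \sim (c+p, a-1, b)$ corresponds to the Galois action $N \mapsto pN$ on tame inertia weights modulo $p^3 - 1$:
\[ p(a + pb + p^2 c) \ \equiv \ c + pa + p^2 b \ = \ (c+p) + p(a-1) + p^2 b \pmod{p^3 - 1}, \]
so the $\sim$-equivalence class of $(a, b, c)$ corresponds bijectively to the Galois orbit of $N$. Writing $N = a_0 + p b_0 + p^2 c_0$ in base $p$ with $(a_0, b_0, c_0) \in \{0, \ldots, p-1\}^3$ (not all equal, since the orbit has size $3$), I would then verify by a direct case analysis on the three cyclic rotations of $(a_0, b_0, c_0)$ that exactly one of them, after a suitable $\sim$-shift, lies in the region $\{a > b \ge c,\ a - c \le p\}$ if and only if the rotation placing the maximum first has a weakly decreasing second and third coordinate. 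When this fails, the complementary condition does hold for the base-$p$ digits of $-N$, producing a representative for $\tau^\vee$ in the region and placing $\tau$ in case (iv); mutual exclusivity of (iii) and (iv) follows immediately from this complementarity.
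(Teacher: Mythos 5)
Your overall strategy---decompose by niveau, encode each irreducible summand by the Galois orbit of its tame inertia weight, and look for a representative of each orbit in the prescribed region---is the right one, and is essentially what the paper does (it simply cites \cite[Lem.~5.2.2]{MR3079258} for (iii) and (iv) and leaves (i) and (ii) to the reader); your treatment of (i) and (ii) is fine. But your handling of (iii)/(iv) has a genuine gap: the claim that ``the $\sim$-equivalence class of $(a,b,c)$ corresponds bijectively to the Galois orbit of $N$'' is false. The operation $(a,b,c)\mapsto(c+p,a-1,b)$ does induce $N\mapsto pN$, so the map from $\sim$-classes to Galois orbits is well defined, but it is far from injective: the fibre of $(a,b,c)\mapsto a+pb+p^2c$ over a fixed residue is a coset of a rank-three lattice, whereas a $\sim$-class modulo the translation $\sigma^3=+(p-1,p-1,p-1)$ has only three elements. (For $p=5$ the triples $(1,3,2)$ and $(3,2,1)$ give the same $\tau$ but are not $\sim$-equivalent.) So the uniqueness assertion of the lemma does not follow from pinning down the orbit; you must actually verify that any two triples in $\{a>b\ge c,\ a-c\le p\}$ with the same Galois orbit are $\sim$-related (for the orbit of $66$ when $p=5$ the in-region triples are exactly $(3,2,1)$, $\sigma(3,2,1)=(6,2,2)$ and $\sigma^2(3,2,1)=(7,5,2)$ up to $(p-1,p-1,p-1)\Z$, and this is the pattern one must establish in general).

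Your combinatorial criterion is also wrong when the maximal digit is repeated. Take $p=5$ and digits $(4,4,1)$: the rotation $(4,4,1)$ places a maximum first and has weakly decreasing tail, yet no $\sim$-shift of any rotation lands in the region (the region demands the strict inequality $a>b$, and the shifts $(6,3,4)$, $(9,5,3)$, $(5,1,-1)$ all fail one of the constraints), so this $\tau$ falls in case (iv), not (iii). The criterion that works is: some cyclic rotation $(a_0',b_0',c_0')$ of the digits satisfies $a_0'>b_0'\ge c_0'$. One checks (normalising $c=0$, so $0\le b<a\le p$) that every in-region triple has a digit rotation of this form, and that for a non-constant cyclic triple exactly one of ``some rotation is strictly-then-weakly decreasing'' and ``some rotation is strictly-then-weakly increasing'' holds, the latter condition being the former applied to the digits of $-N$; this yields both exhaustion and the exclusivity of (iii) and (iv). Finally, for exclusivity against (i) and (ii) you should also record that every triple in the regions of (ii) and (iii) gives a \emph{good} pair (e.g.\ in (iii), after normalising $c=0$ one has $0<a+pb\le p^2<p^2+p+1$), so that a reducible $\tau$ cannot also be written in those forms.
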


\begin{proof}
Parts (iii) and (iv) follow from \cite[Lem.~5.2.2]{MR3079258}. The rest
of the proof is left to the reader.
\end{proof}

\begin{prop}
  \label{prop:gl3-non-regular}
  Let  $\rhobar: G_{\Qp} \to \GL_3(\Fpbar)$ be a  representation
  such that $\rhobar|_{I_{\Qp}}$ is semisimple.  Then the
  weights in 
  $\Wexpl(\rhobar) \setminus \SW_{\reg}$ are described as
  follows.
  \begin{enumerate}\setlength{\itemsep}{0.2cm}
  \item Suppose that $\rhobar|_{I_{\Qp}} $ is as in
    Lemma~\ref{lem:shape-alternatives}(i).
    The set
    $\Wexpl(\rhobar) \setminus \SW_{\reg}$ consists of the weights $F(\mu-\eta)$ for
    triples $\mu$ as in the second column of the following table,
    under the conditions as in the first column.
    \begin{longtable}[c]{l|l}
      condition & $\mu$\\
\hline 
\endhead 
$a-b=1$, $b-c\neq 0$ & $(b+p,b,c)$ \\ 
$a-b=1$, $b-c \le 1$ & $(b+p,b,c-p+1)$\\
$b-c=1$, $a-c \neq p-1$ & $(c+p,c,a-p+1)$\\
$b-c=1$, $a-c \ge p-2$ & $(c+p,c,a-2p+2)$\\
$a-c=p-2$, $a-b\neq 0$ & $(a+p,a,b)$ \\
$a-c=p-2$, $a-b \le 1$ & $(a+p,a,b-p+1)$\\
$b-c=1$, $a-b \neq 0$ & $(a,b,b-p)$\\
$b-c=1$, $a-b \le 1$ & $(a+p-1,b,b-p)$\\
$a-b=1$, $a-c \neq p-1$ & $(c+p-1,a,a-p)$ \\
$a-b=1$, $a-c \ge p-2$ & $(c+2p-2,a,a-p)$ \\
$a-c=p-2$, $b-c\neq 0$ & $(b,c,c-p)$ \\
$a-c=p-2$, $b-c \le 1$ & $(b+p-1,c,c-p)$\\
$a-b = 0$
& $(b+p,b,c-1)$, $(c+p,a,a-p)$\\
$b-c = 0$
& $(c+p,c,a-p)$,  $(a+1,b,b-p)$\\
$a-c=p-1$
& $(a+p,a,b-1)$, $(b+1,c,c-p)$\\

\end{longtable}

\item   Suppose that $\rhobar|_{I_{\Qp}} $ is as in
    Lemma~\ref{lem:shape-alternatives}(ii).  The set
    $\Wexpl(\rhobar) \setminus \SW_{\reg}$ consists of the weights $F(\mu-\eta)$ for
    triples $\mu$ as in the second column of the following table,
    under the conditions as in the first column.
  \begin{longtable}[c]{l|l}
      condition & $\mu$ \\
\hline 
\endhead 
 $b-c=1$, $a-c \neq p-1$ & $(c+p,c,a-p+1)$\\
 $a-b=1$ & $(b+p,b,c)$\\
 $a-c=2$ & $(c+p+1,c+1,b-p)$\\
 $a-b=0$ & $(b+p,b,c-1)$, $(c+p,a,a-p)$\\
 $b-c=1$, $a-b\neq 0$ & $(a,b,b-p)$ \\
 $a-c=p-2$ & $(b,c,c-p)$\\
 $a-b=p-3$ & $(c+p,b-1,b-p-1)$\\
 $a-c=p-1$ & $(b+1,c,c-p)$, $(a+p,a,b-1)$  \\
\end{longtable}

\item   Suppose that $\rhobar|_{I_{\Qp}} $ is as in
    Lemma~\ref{lem:shape-alternatives}(iii). The set
    $\Wexpl(\rhobar) \setminus \SW_{\reg}$ consists of the weights $F(\mu-\eta)$ for
    triples $\mu$ as in the second column of the following table,
    under the conditions as in the first column.
 \begin{longtable}[c]{l|l}
      condition & $\mu$\\
\hline 
\endhead 
$a-b=2$ & $(c+p,a-1,a-p-1)$\\
 $a-b=1$, $a-c\neq 1$ & $(c+p+1,a-1,a-p-1)$\\
 $a-c=p-1$ & $(b+1,c,c-p)$ \\
 $a-c=p$, $b-c \neq p-1$ & $(b+2,c,c-p)$ \\
 $b-c=1$ & $(a,b,b-p)$ \\
 $b-c=0$, $a-b \neq p$ & $(a+1,b,b-p)$
    \end{longtable}
  \end{enumerate}
\end{prop}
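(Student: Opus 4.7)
The strategy is to compute $\Wexpl(\rhobar) \setminus \SW_{\reg}$ directly in each case by separating it into its obvious and obscure pieces. First I would observe that no irregular weight in $\Wexpl(\rhobar)$ is a shadow weight:\ by Example~\ref{ex:closure-for-niveau-1-gl3}, every shadow weight arises as $F(z+p-2, y, x-p+2)$ from some $F(x, y, z)$ with $x - y \leq p-1$, $y - z \leq p-1$ and $x - z < p - 2$, and thus has both neighbour-differences at most $p-2$. Hence $\Wexpl(\rhobar) \setminus \SW_{\reg} \subseteq \Wobv(\rhobar) \cup \bigl(\Wexpl(\rhobar) \setminus \cC(\Wobv(\rhobar))\bigr)$, and the two terms on the right can be analysed separately.

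The second term was already classified completely in Example~\ref{ex:exceptional-weight-gl3}:\ obscure weights are precisely $F(y+p-1, y, z)$ when two consecutive inertial characters of $\rhobar$ agree at a position labelled $\omega^{y+1}$ under suitable numerical conditions, together with their duals $F(x, y, y-p+1)$ via Lemma~\ref{lem:weights-dual}. Both families consist of irregular weights by construction. Translating the allowed numerical conditions into the $(a, b, c)$-coordinates of Lemma~\ref{lem:shape-alternatives} produces all rows of the three tables involving $a=b$, $b=c$, or $a-c=p-1$ in case~(i), together with the appropriate subsets in cases~(ii) and~(iii).

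For the irregular obvious weights I would systematically enumerate obvious lifts $\rho'$ of $\rhobar$ using Corollary~\ref{lem:HT wts of induction}:\ such a lift corresponds to a partition $3 = n_1 + \cdots + n_d$ together with, for each part $n_j$, a choice of $n_j$ integers (the Hodge--Tate labels of a crystalline character of $G_{\Q_{p^{n_j}}}$), subject to the constraint that the resulting multiset of tame inertial characters matches $\rhobar|_{I_{\Qp}}$ via the formula in Corollary~\ref{lem:HT wts of induction}. In case~(i) the lift is a sum of three characters (and, in degenerate subcases where $a=b$, $b=c$, or $a-c = p-1$, also possibly a character plus a $2$-dimensional induction); in case~(ii) it is a character plus a $2$-dimensional induction; in case~(iii) it is a single $3$-dimensional induction. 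For each such lift, sorting its Hodge--Tate weights as $\alpha > \beta > \gamma$ with $\alpha - \gamma \leq p$ yields the Hodge type $\lambda = (\alpha - 2, \beta - 1, \gamma)$ and the witnessed Serre weight $F(\lambda)$, which is irregular exactly when $\alpha - \beta = p$ or $\beta - \gamma = p$. Pairing these congruence-and-sort constraints with the possible $(a, b, c)$ in each case of Lemma~\ref{lem:shape-alternatives} produces precisely the remaining entries in the three tables.

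The main obstacle is bookkeeping rather than any conceptual difficulty:\ the case analysis is lengthy, especially in case~(i), where many degenerate configurations of $(a, b, c)$ coexist and must be combined carefully with the ambiguity of the parametrisation noted in Lemma~\ref{lem:shape-alternatives}, and where one must avoid double-counting Serre weights witnessed by several different lifts. Two useful reductions are available:\ Lemma~\ref{lem:weights-dual} relates $\Wexpl(\rhobar^\vee)$ to $\Wexpl(\rhobar)$ and in particular identifies case~(iv) of Lemma~\ref{lem:shape-alternatives} with case~(iii), symmetrising the tables and serving as a consistency check; and the same duality pairs rows within case~(i), so that essentially only half of the degenerate configurations need be analysed by hand.
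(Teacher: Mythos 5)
Your proposal follows essentially the same route as the paper: both observe that shadow weights are automatically regular, quote the classification of obscure weights from Example~\ref{ex:exceptional-weight-gl3}, and reduce the remainder to an enumeration of irregular obvious weights via Corollary~\ref{lem:HT wts of induction} (equivalently, Proposition~\ref{prop: obvious weights of tau}), using the duality of Lemma~\ref{lem:weights-dual} to halve the work. One small correction: the condition for a sorted triple $\alpha>\beta>\gamma$ of Hodge--Tate weights to yield a restricted weight $(\alpha-2,\beta-1,\gamma)\in X_1^{(3)}$ is $\alpha-\beta\le p$ \emph{and} $\beta-\gamma\le p$, not $\alpha-\gamma\le p$; taken literally your constraint would exclude rows such as $(b+p,b,c-p+1)$, though your subsequent irregularity criterion makes clear you intend the correct condition.
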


\begin{remark}
  \label{rem:case4}
   Suppose that $\rhobar|_{I_{\Qp}} $ is as in
    Lemma~\ref{lem:shape-alternatives}(iv).  The description of set
    $\Wexpl(\rhobar) \setminus \SW_{\reg}$ can be extracted from
    Proposition~\ref{prop:gl3-non-regular}(iii) by duality using
    Lemma~\ref{lem:weights-dual}.  

Also, we remind the reader that in part (iii), the list of
weights given here does not include the shifted weights described at the
beginning of Example~\ref{ex::gl3-shifted-weights}.
\end{remark}

\begin{proof}
 Note that if $F(x,y,z)$ is a weight with $x-z < p-2$, then the weight
 $F(z+p-2,y,x-p+2)$ as in Example~\ref{ex:closure-for-niveau-1-gl3} is
 regular, so that all the weights in these tables must either be
 obvious or obscure.  The obscure weights are analyzed in
 Example~\ref{ex:exceptional-weight-gl3}, and are listed in the final three
 rows of the table in (i), and the second half of rows 4 and 8 of the table in
 (ii). (These rows also contain some obvious weights.)

Suppose, then, that the irregular weight $F\big((b+p,b,c)-\eta\big)$ lies in
$\Wobv(\rhobar)$, with $0 < b-c \le p$.  It follows from the definitions that
  $\rhobar|_{I_{\Qp}} \cong \tau(w,(b+p,b,c))$ for
  some $w \in S_3$.
  For each $w$ one then expresses $\rhobar|_{I_{\Qp}}$ in the
  form of Lemma~\ref{lem:shape-alternatives} (in all possible ways) to generate the lines of
  the above tables containing triples of the form $\mu =
  (\mu_1,\mu_2,\mu_3)$ in the second column with
  $\mu_1-\mu_2=p$ (relabelling as necessary, as well as keeping in
  mind the equivalence relations in
  Lemma~\ref{lem:shape-alternatives}). For example, if $p > 2$ then rewriting
  $\tau(1,(b+p,b,c))$ as $\tau(1,(b+1,b,c))$ when $0 < b-c \le p-2$
  and as $\tau(1,(b+1,b,c+p-1))$ when $p-1 \le b-c \le p$ gives the
  first two lines of the table in (i); then the next four lines come from
  the first two lines via the equivalence relation of
  Lemma~\ref{lem:shape-alternatives}.
 We leave the rest of the details as an exercise for the
  reader (for which we suggest considering the case $p=2$
  separately, at least in the cases when $\rhobar$ is reducible). Dualising using Lemma~\ref{lem:weights-dual}  we obtain a
  similar list for $\rhobar|_{I_{\Qp}}$ for the weight
  $F\big((a,b,b-p)-\eta\big)$.  
\end{proof}

\subsection{The results of \texorpdfstring{\cite{MR3079258}}{[EGH13]}}
\label{sec:results-egh}
The paper~\cite{MR3079258} considers the weight part of Serre's conjecture for
Galois representations $\rbar:G_F\to\GL_3(\Fpbar)$, where $F$ is a totally
real field in which $p$ splits completely, and $\rbar|_{G_{F_v}}$ is
irreducible for each $v|p$. The main results are proved with respect to some
abstract axioms, which are in particular satisfied for the cohomology of forms
of $\mathrm U(3)$ which are compact at infinity, and show that if each
$\rbar|_{G_{F_v}}$ satisfies a mild genericity condition, then the set of
weights in which $\rbar$ is automorphic contains the set of weights predicted
by the conjecture of~\cite{herzigthesis}, and that any other weight for which
$\rbar$ is automorphic is non-generic. (Here a weight is generic if it is
sufficiently far away from the walls of any alcove. As with the
definition of genericity
for a Galois representation, this will be made precise in
Section~\ref{sec:herzig-comparison} below.)

In the light of the discussion of Section~\ref{subsec: GL3}, these results are completely
consistent with our conjectures.

\subsection{The conjecture of Ash, Doud, Pollack, and Sinnott for \texorpdfstring{$\GL_n$}{GL(n)} over \texorpdfstring{$\Q_p$}{Qp}}
\label{sec:conjecture-ash-doud}

Let $\rbar : G_{\Q} \to
\GL_n(\Fpbar)$ be odd and irreducible. The first Serre
weight conjectures for such $\rbar$ with $n > 2$ were made by Ash, Doud, Pollack, and Sinnott \cite{bib:ASinn},
\cite{bib:ADP}.  When $n = 3$ a detailed comparison between their
conjecture and  the conjecture of \cite{herzigthesis} can
be found in \textit{ibid.}, \S7.  The purpose of this section is
to note the following result.

\begin{prop}\label{prop:adp-comparison}
Let $\rbar$ be as above and suppose that $\rbar|_{I_{\Qp}}$ is
semisimple and sufficiently generic. Then the Serre weights
predicted in \cite[Conj.~3.1]{bib:ADP} are a subset of $\Wexpl(\rbar|_{G_{\Qp}})$.
\end{prop}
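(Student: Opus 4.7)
The plan is to unwind the combinatorial recipe of \cite[Conj.~3.1]{bib:ADP} for the predicted weights, and to show that each such weight is witnessed by a specific obvious crystalline lift of $\rbar|_{G_{\Qp}}$. Consequently the ADP-predicted set will in fact already lie in $\Wobv(\rbar|_{G_{\Qp}}) \subset \Wexpl(\rbar|_{G_{\Qp}})$.

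Since $\rbar|_{I_{\Qp}}$ is semisimple and extends to $G_{\Qp}$, we may write it as $\bigoplus_j \tau_j$, where each $\tau_j$ is irreducible of some niveau $n_j$, and hence of the form $\tau_j \cong \bigoplus_{\sigma \in S_{\F_{p^{n_j}}}} \omega_\sigma^{a_j}$ for an integer $a_j$ well-defined modulo $p^{n_j}-1$. The recipe in~\cite{bib:ADP} proceeds by choosing, for each $j$, a decomposition $a_j \equiv \sum_{i=0}^{n_j-1} p^i \mu_{j,i} \pmod{p^{n_j}-1}$, and then assembling the multiset $\bigsqcup_j \{\mu_{j,i}\}_{i=0}^{n_j-1}$ into a decreasing $n$-tuple; subtracting $\eta_n = (n-1,\dots,0)$ from this tuple produces the predicted Serre weight, provided the resulting element of $\Z^n$ lies in $X_1^{(n)}$.

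For each such choice of digit data I would construct the obvious lift $\rho' := \bigoplus_j \rho^{\Qp}_{\Lambda_j}$, where $\rho^{\Qp}_{\Lambda_j} = \Ind_{G_{\Q_{p^{n_j}}}}^{G_{\Qp}} \psi^{\Q_{p^{n_j}}}_{\Lambda_j}$ with $\Lambda_j = \{\lambda_{j,\emb'}\}_{\emb' \in S_{\Q_{p^{n_j}}}}$ chosen so that if $\emb'_0,\dots,\emb'_{n_j-1}$ is any labelling of $S_{\Q_{p^{n_j}}}$ compatible with the Frobenius action, then $\lambda_{j,\emb'_i} = \mu_{j,i}$. Corollary~\ref{lem:HT wts of induction}(ii) then gives $\rhobar^{\Qp}_{\Lambda_j}|_{I_{\Qp}} \cong \tau_j$, so $\rhobar'|_{I_{\Qp}} \cong \rbar|_{I_{\Qp}}$; meanwhile Corollary~\ref{lem:HT wts of induction}(i) identifies the multiset of Hodge--Tate weights of $\rho'$ (for the unique embedding $\Qp \hookrightarrow \overline\Qp$) with $\bigsqcup_j\{\mu_{j,i}\}_i$. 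Thus the Hodge type of $\rho'$, after subtracting $\eta_n$, is precisely the Serre weight assigned by the ADP recipe. This exhibits the predicted weight as an obvious weight of $\rbar|_{G_{\Qp}}$, completing the proof.

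The main obstacle is bookkeeping: one must carefully match the ``base-$p$ digit'' formulation of~\cite{bib:ADP} with our parametrisation of crystalline lifts via their labeled Hodge--Tate weights, and verify that the various permutation choices in the ADP recipe correspond bijectively to the choices one makes in selecting the $\Lambda_j$. The sufficient genericity of $\rbar|_{I_{\Qp}}$ is used to ensure both that a valid ADP datum always produces a tuple strictly inside $X_1^{(n)}$ (so that it does represent a genuine Serre weight), and that the combinatorics has no degenerations forcing us outside the obvious-lift construction; as in Section~\ref{subsec: GL3} above, the case $n=3$ is essentially the content of~\cite[\S7]{herzigthesis} combined with Proposition~\ref{prop:gl3-comparison}, and the argument for general $n$ is conceptually identical.
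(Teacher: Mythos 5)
Your argument breaks down at the step ``the Hodge type of $\rho'$, after subtracting $\eta_n$, is precisely the Serre weight assigned by the ADP recipe,'' and the conclusion you draw from it --- that every ADP-predicted weight is an \emph{obvious} weight --- is false; see Remark~\ref{rem:proper}, which records that the ADP set is a mix of some obvious weights and some \emph{shadow} weights, the latter by definition lying outside $\Wobv(\rbar|_{G_{\Qp}})$. The source of the problem is the congruence built into \cite[Conj.~3.1]{bib:ADP}: the recipe produces a restricted weight $\lambda\in X_1^{(n)}$ whose $\eta$-partition pieces $\lambda^{(i)}$ are only congruent to the ``digit data'' $\mu^{(i)}$ modulo $(p-1)\Z^{n_i}$, i.e.\ $\lambda^{(i)}=\mu^{(i)}+(p-1)\nu^{(i)}$ with $\nu^{(i)}$ possibly nonzero. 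An obvious lift witnessing $F(\lambda)$ must have Hodge--Tate weights $\lambda^{(i)}+\eta_{n_i}$ on the $i$-th induced block, and by Corollary~\ref{lem:HT wts of induction} its reduction on that block is $\oplus_\sigma\omega_\sigma^{N'_i}$ with $N'_i=\sum_j p^j(\lambda^{(i)}+\eta_{n_i})_{c_j}$, whereas the block of $\rbar|_{I_{\Qp}}$ is $\tau(w_i,\mu^{(i)}+\eta_{n_i})=\oplus_\sigma\omega_\sigma^{N_i}$ with $N_i=\sum_j p^j(\mu^{(i)}+\eta_{n_i})_{c_j}$. For niveau $n_i\ge 2$ the difference $N'_i-N_i=(p-1)\sum_j p^j\nu^{(i)}_{c_j}$ is in general nonzero modulo $p^{n_i}-1$, so the reduction of your $\rho'$ is \emph{not} isomorphic to $\rbar|_{I_{\Qp}}$. (Genericity does not rescue this: it does not force $\nu^{(i)}=0$. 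Already for $\GL_3/\Qp$ the shadow weights $F(c+p-2,b-1,a-p)$, etc., of Example~\ref{ex:closure-for-niveau-1-gl3} are predicted by \cite{bib:ADP} and are not obvious.)

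The correct route, which is the one the paper takes, is to accept that only the congruence $\lambda^{(i)}\equiv\mu^{(i)}\pmod{(p-1)\Z^{n_i}}$ is available and to convert it into the statement $\Wexpl(\tau(w_i,\mu^{(i)}+\eta_{n_i}))\cap\JH_{\GL_{n_i}(\Fp)}W(\lambda^{(i)})\ne\varnothing$ for each block. Concretely: rewrite $\tau(w_i,\mu^{(i)}+\eta_{n_i})$ using the extended affine Weyl group action~\eqref{eq:extendedaffineweylaction} as $\tau$ of a pair whose weight is $\sigma\cdot((\lambda^{(i)}-p\nu^{(i)})+pw_i^{-1}\nu^{(i)})+\eta_{n_i}$, show via Lemma~\ref{lem: lambda mu relation on bounded epsilon} that this weight lies $\uparrow$-below $\lambda^{(i)}$, and invoke Proposition~\ref{prop:fh-jan15-4} and Theorem~\ref{thm:main-result}. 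One then concludes $F(\lambda)\in\Wexpl(\rbar|_{G_{\Qp}})$ from the recursive Levi clause of Definition~\ref{defn:explicit-weights-ss} applied to the $\eta$-partition $(\lambda^{(i)})$ --- not from membership in $\Wobv$. Your construction of the lifts $\rho^{\Qp}_{\Lambda_j}$ is fine as far as it goes, but it only witnesses the ADP weights with $\nu^{(i)}=0$ for all $i$; the remaining ones require the shadow/Levi mechanism.
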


For the term ``sufficiently generic'' we once again refer the reader to 
Definition~\ref{defn:suff-generic} (but see also 
Example~\ref{sufficiently-generic-niveau-1} for the case when
$\rbar|_{G_{\Qp}}$ is a sum of characters). 

\begin{rem}\label{rem:proper}
It turns out that the subset of Serre weights predicted in \cite[Conj.~3.1]{bib:ADP} consists of a mix of some (but not all) obvious weights
for $\rbar|_{G_{\Qp}}$ and some (but very far from all) shadow
weights. In any case we stress that \cite{bib:ADP} do not claim
to predict the full set of weights for $\rbar$. 
\end{rem}

Since the proof of
Proposition~\ref{prop:adp-comparison} will make use of terminology and
results from Sections~\ref{sec:unramified
  groups}--\ref{sec:herzig-comparison} we defer the proof until
Section~\ref{sec:adp-comp}. (We may safely do so because nothing in the paper depends logically on Proposition~\ref{prop:adp-comparison}.)

We make two further remarks. First, \cite{bib:ADP} still give a conjectural
set of Serre weights even when $\rbar|_{I_{\Qp}}$ is not semisimple, a
context in which we do not make an explicit prediction; we have no heuristic by
which to predict whether or not all the Serre weights conjectured by
\cite{bib:ADP} in this context are indeed weights of $\rbar$. Second,
we note that when $\rbar|_{G_{\Qp}}$ is irreducible, Doud
\cite[Conj.~2.10]{doud-supersingular} predicts precisely the set
$\W_{\obv}(\rbar|_{G_{\Qp}})$ together with all weight shifts as
described in Section~\ref{sec:shifted-weights}.

\subsection{The results of \texorpdfstring{\cite{blggUn}}{[BLGG14]}}The
article~\cite{blggUn} applies the machinery of the paper~\cite{BLGGT} to the problem of the
weight part of Serre's conjecture for unitary groups over CM fields. As
explained in Section~\ref{sec:patching, BM and serre weights}, a lack of general
results on the potential diagonalisability of crystalline representations limits
the scope for proving general comprehensive results. However, under mild
conditions the paper shows that when $n=3$, $p$ splits completely in
an imaginary CM field $F$, and
$\rbar:G_F\to\GL_3(\Fpbar)$ is such that $\rbar|_{G_{F_v}}$ is semisimple
for each $v|p$, then $\rbar$ is automorphic for every obvious predicted weight in
the sense of Definition~\ref{defn:obvious-lift-ss} above. This is, of course,
consistent with our conjectures.

\subsection{\texorpdfstring{$\GSp_4$}{GSp(4)}}\label{subsec:GSp_4} 
The paper~\cite{herzigtilouine} formulates a
version of the weight part of Serre's conjecture for irreducible representations
$\rbar:G_\Q\to\GSp_4(\Fpbar)$, under the assumption that $\rbar|_{G_\Qp}$ is
a sum of characters, and under a mild regularity condition on the weights. The
formulation follows that of~\cite{herzigthesis}, and is a special case of the
more general conjectures that we formulate in Sections~\ref{sec:unramified
  groups} and~\ref{sec:herzig-comparison}, which show that these conjectures are
also consistent with the philosophy of this paper.
\subsection{Computational evidence}\label{sec:computational} The paper~\cite{MR2887610} carried out
computations for the weight part of Serre's conjecture (Conj.~\ref{conj:basic-conjecture}) in the case that
$F=\Q(i)$ and $n=2$. In this setting the Taylor--Wiles method is not available
in anything like the generality required to make arguments along the lines of
those explained in Remark~\ref{rem: patching functors and GK} for totally real
fields, and so there are no theoretical results on the weight part of Serre's
conjecture. However, the computations of~\cite{MR2887610} are all consistent
with the expectation that the weight part of Serre's conjecture will behave
identically in this case, and are thus completely consistent with the
conjectures of this paper. (It is explained in Section 7 of~\cite{MR2887610}
that there was one example where two expected weights were not found; however
Mehmet Haluk \c{S}eng\"{u}n has independently reproduced the calculations
of~\cite{MR2887610} in unpublished work, and has found complete agreement,
\emph{except} that the two ``missing'' weights were also obtained.)

The paper~\cite{bib:ADP} explicitly carries out calculations for
the weight part of Serre's conjecture (Conj.~\ref{conj:basic-conjecture}) for the case $F=\Q$ and $n=3$. These
calculations, and some additional calculations that Doud and Pollack carried out
at the request of the second author, are all consistent with the conjecture
of~\cite{herzigthesis}, and thus with our conjectures; see Section 8
of~\cite{herzigthesis} for more details of this.

\begin{remark}
  \label{rem:special-position}
In fact, the calculations in~\cite{bib:ADP} would have been consistent
with Conjectures~\ref{conj:crystalline-version}
and~\ref{conj:closure-operation} even without the hypothesis that
$\rhobar|_{I_K}$ is semisimple. To be precise, set $F = F(x,y,z)$ with
$x-z < p-2$, and set $F' = F(z+p-2,y,x-p+2)$; then in the calculations
of~\cite{bib:ADP}, one finds that whenever $F$ is a Serre weight for some
$\rhobar$, so is $F'$. 

On the other hand, as we have explained in
Section~\ref{subsec: crystalline lifts in the
  picture}, Conjectures~\ref{conj:crystalline-version}
and~\ref{conj:closure-operation} are now known to be false if one omits the hypothesis that
$\rhobar|_{I_K}$ is semisimple. One thus expects that every $\rhobar$
with Serre weight $F$ considered by~\cite{bib:ADP} happens to lie on
the (codimension one) intersection between the two components of the stack $\cXbar$ labeled by
the Serre weights $F$ and $F'$ (cf.\ the discussion in
Section~\ref{sec: the picture}); while at the time of writing we do
not know for certain that this is the case, an examination of the
explicit representations considered by~\cite{bib:ADP} suggests that
they are indeed in a rather special position.
\end{remark}

\subsection{Computational evidence for irregular Serre
  weights}\label{subsec:irregular weight computations} Consider 
a representation $\rbar:G_{\Q}\to\GL_3(\Fpbar)$ that is odd and irreducible, and such that
$\rbar|_{I_p}$ is semisimple. 
Since our conjectures cover more weights than those of~\cite{herzigthesis}
(namely, the weights which are irregular), we now give computational evidence
for such weights.

\subsubsection{Examples from \cite{bib:ADP}} The weight predictions in~\cite[Conj.\
3.1]{bib:ADP} are ambiguous for irregular weights: if $x\equiv y$ or
$y\equiv z\pmod{p-1}$, then their weight prediction of $F(x,y,z)'$
means that $\rbar$ occurs in \emph{at least one} weight $F(x',y',z')$
such that $(x',y',z')\equiv(x,y,z)\pmod{(p-1)\Z^3}$ (so there are
either two or four such weights, the latter precisely when $x\equiv
y\equiv z\pmod{p-1}$).

In~\cite{bib:ADP} there are six examples with $\rbar|_{G_{\Qp}}$ of
length two (see~\cite[Table 10]{bib:ADP}) and two examples with
$\rbar|_{G_{\Qp}}$ irreducible (see~\cite[\S7.2]{bib:ADP}) where ambiguous weight predictions
occur. In each case, except for the second entry of \cite[Table 10]{bib:ADP} which lies outside the
scope of his program, Doud has checked for us that $\rbar$ appears in
\emph{both} weights implied by the ambiguous notation (testing Hecke
eigenvalues for all $l\le 47$, as in~\cite{bib:ADP}).
This is consistent with our conjecture, as all weights in
question are obvious.

\subsubsection{Examples from \cite{doud-supersingular}}The
paper~\cite{doud-supersingular} provided computational evidence for
several~$\rbar$ with~$\rbar|_{G_{\Qp}}$ occurring in irregular
weights. Recall that for such~$\rbar$ his predicted weight set is
obtained by adjoining all weight shifts
to~$\Wexpl(\rbar|_{G_{\Qp}})$. In most of his examples the irregular
weights are obvious for~$\rbar|_{G_{\Qp}}$; see Section~\ref{subsubsec:weight shifts} below for the
remaining cases.

\subsubsection{Obscure weights}Consider the irreducible polynomial
$f(x)=x^4-x^3+5x^2-4x+3$ over~$\Q$ with Galois group~$A_4$, as
in~\cite[Ex.\ 5.4]{bib:ADP}. By taking the unique 3-dimensional
irreducible representation of~$A_4$ over~$\Fbar_{13}$, we obtain a
Galois representation~$\rbar$ as above with
$\rbar|_{I_{13}}\cong\tau(1,(6,6,0))$. We have
$\Wexpl(\rbar|_{G_{\Qp}})=\{F(16,5,0),F(16,11,6),F(22,17,6),F(17,11,5),F(29,17,11),F(23,17,5)\}$,
where the last two weights are obscure. Doud could provide for us
computational evidence that~$\rbar$ is automorphic in each of these
weights, and showed that it doesn't occur in any other irregular
weight~$F(a,b,c)$ with $a-c\notin\{21,24\}$. (Note that the central
character forces $a-c\equiv 0\pmod{3}$ for irregular weights.)

\subsubsection{Weight shifts}\label{subsubsec:weight shifts}In the literature we found evidence for
shifted weights that are not contained in $\Wexpl(\rbar|_{G_{\Qp}})$
in the following cases. First, when $p=2$, \cite[Table 3]{MR2103328}
contains three examples where $\rbar|_{I_2}\cong\tau((1\, 2\, 3),(1,0,0))$
(or its dual). We have that
$\Wexpl(\rbar|_{G_{\Qp}})=\{F(0,0,0),F(1,1,0),F(2,1,0)\}$ (all are
obvious), and $F(1,0,0)$ is a shift of $F(0,0,0)$. In each
case~\cite{MR2103328} gives computational evidence that~$\rbar$ occurs
in all four weights.

Second, when $p=3$, \cite[\S5.3]{doud-supersingular} considers an
example in which $\rbar|_{I_3}\cong\tau((1\, 2\, 3),(2,0,0))$ (or its dual;
these are denoted by $m=2$, $m=8$ in~\cite[Table
2]{doud-supersingular}). In this case
$$\Wexpl(\rbar|_{G_{\Qp}})=\{F(1,1,1),F(1,0,0),F(3,2,0),F(3,3,1),F(5,3,1),F(2,1,0)\}$$
(all but the last weight being obvious), and $F(3,1,1)$ is a shift of $F(1,1,1)$.
Doud \cite{doud-supersingular} gives computational evidence
for all seven weights. (Note that $F(3,1,1)$ is missing
from~\cite[Table 2]{doud-supersingular}, but Doud confirmed to us that
this is just a typo.)

\section{Unramified groups}\label{sec:unramified groups}
We now explain how to extend the definition of
the set of weights $\Wexpl(\rhobar)$, as well as the set of weights
$\W^{?}(\rhobar)$ defined in \cite{herzigthesis}, to the more general
setting of unramified groups over $\Qp$.  In this section and the
next, we will use $\Gamma_K$ instead of $G_K$ to denote the absolute
Galois group of $K$, to avoid confusion with our notation for
algebraic groups.

\subsection{\texorpdfstring{$L$}{L}-groups and \texorpdfstring{$L$}{L}-parameters}
\label{sec:l-group-l}

Let $G$ be a connected reductive group over $\Zp$, i.e.\ a smooth affine algebraic
group whose geometric fibres are connected reductive.
Then $G\times \Qp$ is unramified (i.e.\ quasisplit and split over an unramified extension
of $\Qp$), and conversely every unramified group over $\Qp$ arises in this way (by choosing
a hyperspecial point in the building).
Let $B$ be a Borel subgroup of $G$ with Levi subgroup $T\subset B$, so $T$ is a maximal torus
of $G$.
Note that we have a canonical identification of character groups $X(T\times \Qpbar) \cong X(T\times \Fpbar)$, which is
compatible with the Galois action of $\Gamma_\Qp \onto \Gamma_\Fp$.
We sometimes write just $X(T)$ for this Galois module and similarly
$Y(T)$ for the co-character group
$Y(T\times\Qpbar) \cong Y(T\times\Fpbar)$.
Let $W := \big( N(T) / T\big) (\Qpbar) \cong \big( N(T) / T\big) (\Fpbar)$ denote the Weyl group.
Let $\Delta = \Delta (B,T) \subset X(T)$, respectively $\Delta^\vee = \Delta^\vee (B,T)\subset Y(T)$,
denote the simple roots (respectively coroots) defined by $B$.
Then $\Gamma_\Qp$ naturally acts on the based root datum $\Psi_0 (G,B,T) := \big( X(T) , \Delta , Y(T),\Delta^\vee
\big)$.
Let $L\subset \Qpbar$ denote the splitting field of $G$, i.e.\ the finite unramified extension of $\Qp$ cut out by
the $\Gamma_\Qp$-action on $\Psi_0 (G,B,T)$.

A \emph{dual group} of $G$ is a quadruple $(\Ghat, \Bhat, \That , \{ x_\alpha \}_{\alpha\in
\Delta (\Bhat,\That)})$, where $\Ghat$ is a split connected reductive group over $\Zp$, $\Bhat$ a Borel
of $\Ghat$, $\That\subset\Bhat$ a Levi subgroup, $x_\alpha : \G_a \isoto \Uhat_\alpha$ isomorphisms
of algebraic groups (where $\Delta(\Bhat,\That)$ is the set of simple roots
determined by $\Bhat$, and $\Uhat_\alpha$ is the root
subgroup of $\alpha$), together with an isomorphism
$\phi:\Psi_0 (\Ghat, \Bhat, \That) \isoto \Psi_0 (G,B,T)^\vee$.
This isomorphism induces an action  of $\Gamma_\Qp$ on $(\Ghat,\Bhat,\That,
\{ x_\alpha\})$ that factors through $\Gal (L/\Qp)$, and we define the $L$-group $\LG := \Ghat \rtimes \Gal (L/\Qp)$, a 
reductive group over $\Zp$.
The Weyl group of $\That$ is naturally identified with $W$ via the duality isomorphism.
We remark that any two pinnings $(\Ghat,\Bhat,\That,\{x_\alpha\})$
of $\Ghat$ are $\Ghat (\Zp)$-conjugate provided that $Z(\Ghat)$ is connected.
(This is equivalent to $G^\der$ being simply connected, which we
will assume in a moment.)
We also remark that our definition of the $L$-group is compatible
with that of \cite{MR757954}
and
\cite{BuzzardGee} who work with canonical based root data.
(The reason is that $(B,T)$ and $(\Bhat,\That)$ are defined over $\Zp$.)

From now on we suppose the following.

\begin{hyp}\label{hyp:gp}
Assume that the group $G^\der$ is simply connected, that $Z(G)$ is connected,
and that $G$ has a local twisting element $\eta$, which by definition means that 
$\eta\in X(T)^{\Gamma_\Qp}$
and $\langle \eta , \alpha^\vee \rangle = 1$ for all $\alpha \in 
\Delta$. (Twisting elements are defined in the same way for groups over number 
fields in~\cite[\S5.2]{BuzzardGee}; they are a key part of the general 
conjectures made in~\cite{BuzzardGee} on the association of Galois 
representations to automorphic representations.) 
 \end{hyp}

In the following definitions, $A$ is a topological $\Zp$-algebra, i.e.\ a $\Zp$-algebra
that is also a topological ring.

\begin{defn}
An \emph{$L$-parameter} is a continuous homomorphism $\Gamma_\Qp \to \LG (A)$
that is compatible with the projections to $\Gal (L/\Qp)$.
\end{defn}

\begin{defn}
An \emph{inertial $L$-parameter} is a continuous homomorphism $I_\Qp \to \Ghat (A)$ 
that admits an extension to an $L$-parameter $\Gamma_\Qp \to \LG (A)$.
\end{defn}

We say that (inertial) $L$-parameters $\rho_1$, $\rho_2$ are \emph{equivalent} if they are $\Ghat(A)$-conjugate,
and we write $\rho_1 \cong \rho_2$.

\begin{defn}\label{defn: Serre weight general group}
A \emph{Serre weight} is an isomorphism class of irreducible
$\Fpbar$-repres\-entations of $G(\Fp)$. (Just as for $\GL_n$, we will sometimes abuse terminology and refer to an individual irreducible
representation as a Serre weight.)
\end{defn}

Given a tamely ramified inertial $L$-parameter $\tau : I_\Qp \to \Ghat (\Fpbar)$
we will define below sets of Serre weights $\W^? (\tau)$ and $\Wexpl (\tau)$.
These generalise respectively the construction in \cite{herzigthesis} and the construction
in Section~\ref{sec:expl-weight-conj}. (To be precise, in the latter
case we will only
generalise the case of $\GL_n$ over unramified extensions of $\Qp$.)
Our main result, Theorem~\ref{thm:main-result}, will establish that the two sets are
equal for generic $\tau$.

\subsection{Definition of \texorpdfstring{$\W^? (\tau)$}{W\_?(tau)}}\label{sec:defin-W-questionmark}

In this section we generalise \cite[\S\S6.3--6.4]{herzigthesis}.
To simplify notation, let $(\uG, \uB, \uT) := (G,B,T) \times \Fpbar$ and
$(\uG^*,\uB^*, \uT^*) := (\Ghat,\Bhat,\That) \times\Fpbar$.
Let $F:\uG \to \uG$ denote the relative Frobenius, so
$\uG^F = G(\Fp)$.
Let $F^*:\uG^* \to \uG^*$ denote the composite $\Fr \circ \varphi = \varphi \circ \Fr$, where
$\Fr$ denotes the relative Frobenius on $\uG^*$ and $\varphi\in\Gamma_\Qp$ denotes from
now on a geometric Frobenius element.
Then $F^*$ is the relative Frobenius for a different $\Fp$-structure on $\uG^*$,
as $\varphi$ has finite order on $\uG^*$.

Recall that we fixed an isomorphism $\phi:\Psi_0 (\uG^*, \uB^*, \uT^*) \isoto \Psi_0
(\uG,\uB,\uT)^\vee$ above that is by definition $\Gamma_\Qp$-equivariant.
Our conventions are that $\Gamma_\Qp$ and the Weyl group act
on the left on $X(\uT)$ and $Y(\uT)$; 
so $\gamma(\mu) = \mu\circ \gamma^{-1}$ and $w (\mu) = \mu\circ w^{-1}$ for
$\mu\in X(\uT)$, $\gamma\in \Gamma_\Qp$, $w\in W$.
However, $F$ is not invertible, so we set $F(\mu) = \mu\circ F$.
Similar comments apply to $\uG^*$.
With these conventions we have $F\circ \phi = \phi\circ F^*$ on $Y(\uT^*)$, 
as\footnote{In order not to get confused about actions on $X(\uT)$, it helps to think in terms of the actions on $\uT$. For example, $F = p\varphi^{-1}$ on $\uT$.}
 $F=p\varphi$
on $X(\uT)$ and $F^*=p\varphi$ on $Y(\uT^*)$.
Thus $\phi$ is a duality between $(\uG,F)$ and $(\uG^*, F^*)$ in the
sense of Deligne--Lusztig~\cite{bib:DL}.
(Note once again that $\Psi_0 (\uG,\uB,\uT)$, $\Psi_0 (\uG^*,\uB^*,\uT^*)$
are canonically isomorphic to the canonical based data, as
$(\uB,\uT)$ is $F$-stable and $(\uB^*,\uT^*)$ is $F^*$-stable.)

Fix from now on a generator $(\zeta_{p^i-1}) \in \plim{i\ge 1} \F_{p^i}^*$.
Recall the following facts from \cite[\S5]{bib:DL}.

(i) The (canonical) Weyl group $W = N(\uT)/\uT$ is canonically identified with
$N(\uT^*) / \uT^*$ such that $w\circ \phi = \phi \circ w$ for all $w\in W$.
The actions of $F$ and $F^*$ on $W$ are inverse to each other.

(ii) There is a canonical bijection between $\uG^F$-conjugacy classes
of pairs $(\T,\theta)$ consisting of an $F$-stable maximal torus $\T\subset \uG$ and
a character $\theta:\T^F \to \Qpbarx\times$ and ${\uG^*}^{F^*}$-conjugacy classes of
pairs $(\T^*,s)$ consisting of an $F^*$-stable maximal torus $\T^* \subset \uG^*$ and
a semisimple element $s\in {\T^*}^{F^*}$.

(iii) If the classes of $(\T^*,s)$, $(\T,\theta)$ are in bijection in (ii), then they
are both said to be \emph{maximally split} if $\T^* \subset Z_{\uG^*} (s)$ is a
maximally split torus (i.e.\ contained in an $F^*$-stable Borel subgroup of $Z_{\uG^*}(s)$).

\begin{prop}\label{prop:cd}
We have the following commutative diagram
\begin{displaymath}
    \xymatrix{
        {\left\{ { \myatop{\text{\rm maximally split}}{(\T^* , s)}}\right\}_{/{\uG^*}^{F^*}}}  \ar@{<->}[d] \ar@{<->}[r]^{\text{\rm duality}} & {\left\{ \myatop{\text{\rm maximally split}}{(\T , \theta)}\right\}_{/\uG^F}} \ar@{^{(}->}[d] \\
        {\left\{ \myatop{\text{\rm tame inertial $L$-parameters}}{I_\Qp \to \Ghat (\Fpbar ) }\right\}_{/\cong}}  \ar@{-->}[r]^{\ \ V_\phi}    &  {\left\{ \myatop{\text{\rm representations of}}{ G(\Fp)\  \text{\rm over} \ \Qpbar }\right\}_{/\cong}} }
\end{displaymath}
defining the map $V_\phi$.
\end{prop}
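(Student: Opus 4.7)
The plan is to construct the left vertical map explicitly, and then to define $V_\phi$ as the composition along the top and right: the left vertical map, followed by the Deligne--Lusztig duality of (ii)--(iii), followed by the virtual representation construction on the right.

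Given a tame inertial $L$-parameter $\tau: I_\Qp \to \Ghat(\Fpbar)$ with extension $\rho: \Gamma_\Qp \to \LG(\Fpbar)$, the image of $\tau$ is finite of order prime to $p$, so $s := \tau(t)$ is semisimple in $\Ghat(\Fpbar)$, where $t$ is the topological generator of prime-to-$p$ tame inertia determined by our fixed element $(\zeta_{p^i-1})$. The extension is encoded by $\dot\varphi := \rho(\varphi) \in \Ghat(\Fpbar)\cdot\varphi \subset \LG(\Fpbar)$, and the tame relation $\varphi t \varphi^{-1} = t^p$ translates to $\dot\varphi \cdot s \cdot \dot\varphi^{-1} = s^p$ in $\LG(\Fpbar)$. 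My first step is to assign to this data a maximally split pair $(\T^*, s)$: since $Z(\Ghat)$ is connected (by Hypothesis~\ref{hyp:gp} together with duality of root data), the centralizer $Z_{\uG^*}(s)$ is a connected reductive subgroup of $\uG^*$; conjugation by $\dot\varphi$ preserves $Z_{\uG^*}(s)$ because it sends $s$ to the power $s^p$, and combined with $F^* = \varphi\circ\Fr$ it defines an $F^*$-twisted Frobenius on $Z_{\uG^*}(s)$. Applying Lang's theorem inside this connected reductive group, I extract an $F^*$-stable maximal torus $\T^*$ that is maximally split in $Z_{\uG^*}(s)$; it is then a maximal torus of $\uG^*$ containing the central element $s$, with $s\in \T^{*F^*}$.

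The next step is to verify that this assignment factors through equivalence on the source and $\uG^{*F^*}$-conjugacy on the target, and is a bijection. Independence of the choice of extension $\rho$ follows because any two extensions differ by $Z_{\Ghat}(s)(\Fpbar)$-conjugation, and the choice of $F^*$-stable maximally split torus in $Z_{\uG^*}(s)$ is unique up to $Z_{\uG^*}(s)^{F^*}$-conjugacy (another application of Lang). Surjectivity is straightforward: from $(\T^*,s)$ one recovers $\tau$ by setting $\tau(t)=s$ and extends to $\rho$ via any element $\dot\varphi$ inducing $F^*$ on $\T^*$; the tame relation holds because $F^*(s)=s$ and $\Fr$ acts on $\T^*$ by the $p$-th power after passing to a suitable $\F_{p^n}$-model.

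With the left vertical map in hand, $V_\phi$ is defined as the composition $\tau \mapsto (\T^*, s) \mapsto (\T, \theta) \mapsto R_{\T,\theta}^G$, where the middle arrow is the Deligne--Lusztig duality of (ii)--(iii), and $R_{\T,\theta}^G$ denotes the associated Deligne--Lusztig (virtual) representation; in the maximally split case it is, up to sign, an honest representation of $G(\Fp)=\uG^F$, giving the right vertical inclusion of the diagram. The main technical point will be tracking the interplay of $F$, $F^*$, $\varphi$, and $\Fr$ under the duality $\phi$, using the compatibility $F\circ\phi = \phi\circ F^*$ recorded earlier in the section, to verify that the Deligne--Lusztig bijection of (ii) preserves the maximally-split locus and hence that the diagram commutes.
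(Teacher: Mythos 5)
Your overall strategy is the one the paper takes (following the proof of Prop.~6.14 of \cite{herzigthesis}): identify a tame inertial $L$-parameter with the $\Ghat(\Fpbar)$-conjugacy class of the semisimple element $s=\tau(g_\can)$, use extendability to $\Gamma_\Qp$ to see that this class is $F^*$-stable, pass to a rational representative and a maximally split torus, and define $V_\phi$ by composing with Deligne--Lusztig duality and $(\T,\theta)\mapsto \ve_{\uG}\,\ve_{\T}R_{\T}^{\theta}$, which is genuine for maximally split pairs by \cite[Prop.~10.10]{bib:DL}. However, two of your justifications are false as stated. First, the connectedness of $Z_{\uG^*}(s)$ does \emph{not} follow from the connectedness of $Z(\Ghat)$. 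Steinberg's connectedness theorem needs the derived group of the \emph{ambient} group to be simply connected, i.e.\ $\Ghat^{\der}$ simply connected, which is dual to $Z(G)$ being connected --- the other half of Hypothesis~\ref{hyp:gp}. (For instance $\PGL_2$ has connected centre, yet the centralizer of the image of $\diag(1,-1)$ is disconnected.) Connectedness of $Z(\Ghat)$ is dual to $G^{\der}$ being simply connected and is not the relevant input here.

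Second, it is not true that any two extensions of $\tau$ to $\Gamma_\Qp$ differ by $Z_{\Ghat}(s)(\Fpbar)$-conjugation: already for $G=\GL_1$ the extensions of the trivial inertial parameter are all unramified characters, which are pairwise non-conjugate. The correct reason your assignment is independent of the extension is that it factors through data depending only on $\tau$: the geometric class of $s$ is determined by $\tau$ alone, the mere \emph{existence} of an extension is used only to conclude that this class is $F^*$-stable, and the rational representative (unique up to ${\uG^*}^{F^*}$-conjugacy once centralizers are connected) together with a maximally split torus then depends on nothing further. Relatedly, you should make the rational-representative step explicit: your $s=\tau(t)$ is not itself $F^*$-fixed --- only its geometric class is $F^*$-stable --- so you must first replace $s$ by an $F^*$-fixed element of its class (Lang's theorem in the connected centralizer) before a pair $(\T^*,s)$ with $s\in{\T^*}^{F^*}$ makes sense; as written you assert $s\in{\T^*}^{F^*}$ for the original $s$. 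Finally, the paper's $\varphi$ is a \emph{geometric} Frobenius, so the tame relation is $\varphi^{-1}t\varphi=t^p$ rather than $\varphi t\varphi^{-1}=t^p$; this only flips some formulas, but it must be kept consistent with the identity $F^*=p\varphi$ on $Y(\uT^*)$ that you invoke.
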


\begin{proof}
We proceed as in the proof of \cite[Prop.~6.14]{herzigthesis}.
Recall that $(\zeta_{p^i-1})$ gives rise to a generator $g_\can\in I_\Qp / I_\Qp^w$, where $I_\Qp^w$ is the
wild ramification subgroup.
Since $\Gamma_\Qp / I_\Qp^w = I_\Qp / I_\Qp^w \rtimes \overline{\langle \varphi\rangle}$, where
$\varphi^{-1} g \varphi = g^p$ for $g\in I_\Qp / I_\Qp^w$, the bottom left-hand corner of the diagram
is in bijection with $\Ghat (\Fpbar)$-conjugacy classes of semisimple elements $s'\in \Ghat (\Fpbar)=
\uG^* (\Fpbar)$ satisfying that $\varphi^{-1} (s')$ is $\uG^* (\Fpbar)$-conjugate to $(s')^p$.
Equivalently, by conjugating $s'$ to $\uT^* (\Fpbar)$ and using that $F^* = p\varphi$ on $\uT^*$,
these are $F^*$-stable $\uG^*(\Fpbar)$-conjugacy classes of semisimple elements of $\uG^*(\Fpbar)$.
Using that $Z(\uG)$ is connected, every such conjugacy class has a representative in ${\uG^*}^{F^*}$,
unique up to ${\uG^*}^{F^*}$-conjugacy.
We then get the bijection on the left as in \cite{herzigthesis}.
The map on the right is given by $(\T,\theta)\mapsto \ve_{\uG} \, \ve_\T R_\T^\theta$,
with notation as in \cite[\S4.1]{herzigthesis} and \cite[Lem.~4.2]{herzigthesis}.
It is a genuine representation of $\uG^F = G(\Fp)$ by \cite[Prop.~10.10]{bib:DL}.
\end{proof}

The explicit description of $V_\phi$ in \cite{herzigthesis}
generalises, as we now explain.
Recall that for $w\in W$ we choose $g_w \in \uG (\Fpbar)$ such that $g_w^{-1} F (g_w)\in N(\uT)(\Fpbar)$
represents $w$ and define $\uT_w := g_w \uT g_w^{-1}$.
Then $\uT_w$ is an $F$-stable maximal torus.
Define $\theta_{w,\mu} : \uT_w^F \to \Qpbarx\times$ for $\mu\in X(\uT)$ by $\theta_{w,\mu} (t) :=
\tilde{\mu}(g_w^{-1} tg_w)$, where tilde denotes the Teichm\"uller lift.
Any pair $(\T, \theta)$ consisting of an $F$-stable maximal torus $\T$ and a
character $\theta : \T^F \to \Qpbarx\times$ is $\uG^F$-conjugate to $(\uT_w, \theta_{w,\mu})$ for
some $(w,\mu)$.

\begin{defn}\label{def:deligne-lusztig}
Let $R (w,\mu) := \ve_{\uG} \ve_{\uT_w} R_{\uT_w}^{\theta_{w,\mu}}$ be 
defined as in the proof of Proposition~\ref{prop:cd}. 
(It may be virtual if $(w,\mu)$ is not maximally split.) 
 \end{defn}

For $d\ge 1$ let $\omega_d : I_\Qp \to \Fpbarx{\times}$ be the character $\omega_{\sigma}$,
where $\sigma : \F_{p^d} \to \Fpbar$ denotes the inclusion of the unique subfield of $\Fpbar$ of degree $d$ over $\Fp$.
Let $\tau (w,\mu) : I_\Qp \to \That (\Fpbar)$ denote the tame representation
\[
\tau (w,\mu) := N_{(F^*\circ w^{-1})^d / F^* \circ w^{-1}} \big( \mu (\omega_d)\big) ,
\]
where $d\ge 1$ is chosen such that
$(F^* \circ w^{-1})^d = p^d$ on $Y(\uT^*)$, $\mu$ is considered as element of $Y(\uT^*)$ via $\phi$, and $N_{A^d / A} = \prod\limits_{i=0}^{d-1} A^i$.

\begin{prop}\label{prop:l-parameter}
The representation $\tau(w,\mu)$ is an inertial $L$-parameter. If $(\uT_w, \theta_{w,\mu})$
is maximally split then it corresponds to $\tau(w,\mu)$ under the bijections of Proposition~\ref{prop:cd}
and we have $V_\phi \big( \tau (w,\mu)\big) \cong R(w,\mu)$.
In particular, $V_\phi$ is independent of the choice of $(\zeta_{p^i-1})_i$.
\end{prop}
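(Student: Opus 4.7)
The plan is to model the argument on the proof of \cite[Prop.~6.18]{herzigthesis}, which establishes the analogous statement in the case $G=\GL_n$; the task is to identify which steps rely on features special to $\GL_n$ and which are formal consequences of the Deligne--Lusztig duality formalism set up preceding the statement.

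First I would verify that $\tau(w,\mu)$ is actually an inertial $L$-parameter by writing down an explicit extension to $\Gamma_\Qp$. Choose $d \geq 1$ minimal with $(F^*\circ w^{-1})^d=(F^*)^d$; equivalently, $d$ is such that $w^{-1}$ lies in the kernel of the $(F^*)^d$-action on $W$. Restricting along the surjection $\Gamma_\Qp/I_\Qp^w \onto \langle \varphi\rangle$, one defines the extension by sending $\varphi$ to a representative $\dot w \in N(\That)(\Fpbar)$ of $w^{-1}$ times the canonical $\varphi$ acting in $\LG$. The verification that this is a homomorphism compatible with projection to $\Gal(L/\Qp)$ reduces to checking, on the image of $g_{\can} \in I_\Qp/I_\Qp^w$, the relation $\dot w\cdot \varphi\cdot \tau(w,\mu)(g_{\can})\cdot (\dot w \cdot \varphi)^{-1}=\tau(w,\mu)(g_{\can})^p$, and this is exactly what the norm construction $N_{(F^*\circ w^{-1})^d/F^*\circ w^{-1}}$ is designed to ensure (using $F^* = p\varphi$ on $Y(\uT^*)$).

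Next I would trace the diagram of Proposition~\ref{prop:cd} on the pair $(\uT_w,\theta_{w,\mu})$, assuming it is maximally split. Under the duality, it corresponds to a pair $(\T^*,s)$ where $\T^*\subset \uG^*$ is the $F^*$-stable maximal torus obtained from $\uT^*$ by twisting with $w^{-1}$, and $s \in (\T^*)^{F^*}$ is the semisimple element whose character values match $\theta_{w,\mu}$ under $\phi$. Passing down the left column, the conjugacy class of $s$ corresponds to an inertial $L$-parameter $\tau$ whose value on $g_{\can}$ is $F^*$-conjugate to $s$ (by the recipe recalled in the proof of Proposition~\ref{prop:cd}), and chasing the Teichm\"uller definition of $\theta_{w,\mu}$ together with $\phi$ and the Frobenius-twisted $\F_{p^d}$-structure on $\T^*$ yields exactly the norm expression defining $\tau(w,\mu)$. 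Finally, since $R(w,\mu)$ is independent of the choice of $(\zeta_{p^i-1})_i$ (the generator appears in $g_w$ and $\theta_{w,\mu}$ in a way that cancels in the virtual Deligne--Lusztig character), and since every tame inertial $L$-parameter arises as $\tau(w,\mu)$ for some maximally split $(w,\mu)$, the independence of $V_\phi$ from the generator follows.

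The main obstacle will be the bookkeeping in the second step: keeping straight the distinction between $F$ and $F^*$ (in particular the identity $F\circ\phi = \phi\circ F^*$ on cocharacters), the difference between left and right actions of $W$ and Frobenius on $X(\uT)$, $Y(\uT^*)$, and the twist by $w^{-1}$ in passing from $\uT^*$ to $\T^*$ on the dual side corresponding to the twist by $w$ on the geometric side. Once this dictionary is fixed, both assertions reduce to the formal identities already used in Deligne--Lusztig's proof and no feature specific to $\GL_n$ beyond $Z(\uG)$ being connected (which is part of Hypothesis~\ref{hyp:gp}) enters the argument.
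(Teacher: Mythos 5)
Your proposal is correct and takes essentially the same route as the paper, whose entire proof is the citation ``this is the same as in \cite[Prop.~6.14]{herzigthesis}'' (note the reference is Prop.~6.14, not 6.18): one extends $\tau(w,\mu)$ to $\Gamma_{\Qp}$ by $\varphi\mapsto\varphi\dotw^{-1}$ and chases the duality diagram of Proposition~\ref{prop:cd}, exactly as you outline. The only quibble is your parenthetical about the generator ``cancelling'' in $g_w$ and $\theta_{w,\mu}$ --- in fact $(\zeta_{p^i-1})_i$ does not enter the definition of $R(w,\mu)$ at all, which is precisely why the displayed formula gives independence of $V_\phi$.
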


\begin{proof}
This is the same as in \cite[Prop.~6.14]{herzigthesis}.
\end{proof}

Let $X(\uT)_+$ denote the subset of $X(\uT)$ consisting of dominant weights, and let
$X_1 (\uT) := \{ \mu\in X (\uT): 0\le \langle
\mu,\alpha^\vee\rangle \le p-1\ \textrm{for all } \alpha\in\Delta\}$,
$X^0 (\uT) := \{ \mu\in X(\uT) : \langle \mu,\alpha^\vee\rangle = 0\ \textrm{for all }  \alpha \in \Delta\}$.
For $\mu\in X(\uT)_+$ let $F(\mu)$ denote the irreducible algebraic $\uG$-representation
of highest weight $\mu$. (As the referee points out, this notation is ambiguous, since $F$ is
also the Frobenius. However, below $F(\nu)$ for $\nu \in X(\uT)$ will always mean the
algebraic representation and never the weight $\nu \circ F$.)

\begin{lemma}\label{lem:bijection of weights}
The map \[\frac{X_1(\uT)}{(F-1)X^0(\uT)}\to \big\{ {\text{\rm Serre
    weights of}} \ G(\Fp)=\uG^F\big\}_{/\cong}\] \[\mu \mapsto   F(\mu) |_{\uG^F} \]
is a \upl well-defined\upr\ bijection.
\end{lemma}

\begin{proof}
We claim first that there exists a finite order automorphism $\pi$ of $(\uG,\uB,\uT)$
that commutes with $F$ and that induces the action $\varphi^{-1}$ on $\Psi_0 (\uG,\uB,\uT)$.
To see this, note first that we can choose a pinning $x_\alpha : \G_a \isoto U_\alpha$ for
$\alpha\in\Delta$ such that $F\circ x_{\varphi\alpha} = x_\alpha \circ F_a$ for all
$\alpha\in\Delta$, where $F_a$ is the relative Frobenius on $\G_a$ and
$U_\alpha$ is the root subgroup of $\alpha$.
Then let $\pi$ be the unique automorphism of $(\uG,\uB,\uT,\{ x_\alpha\}_\Delta)$ inducing
$\varphi^{-1}$ on $\Psi_0 (\uG,\uB,\uT)$.
To see that $F$ and $\pi$ commute, note that both maps send $U_\alpha$ onto
$U_{\varphi^{-1}\alpha}$.

Then $F\circ\pi^{-1}$ is the relative Frobenius of a \emph{split} $\Fp$-structure on $\uG$,
since $F\circ\pi^{-1} = F\varphi=p$ on $\uT$.
The lemma now follows from Proposition 1.3 in the appendix to \cite{herzigthesis}, noting that $F = p \pi^{-1}$ on $X(\uT)$.
\end{proof}

Let $X_\reg (\uT) := \{ \mu\in X(\uT): 0\le \langle
\mu,\alpha^\vee\rangle < p-1  \textrm{ for all }  \alpha\in\Delta\}
\subset X_1 (\uT)$.
Then $\mu\mapsto w_0 \cdot (\mu-p\eta)$ defines a self-bijection of $X_\reg (\uT)$ (where $w_0\in W$
is the longest element), which passes to the quotient
$\frac{X_\reg (\uT)}{(F-1)X^0(\uT)}$.
Via Lemma~\ref{lem:bijection of weights} this quotient is identified with a subset $\W_\reg$ of all (isomorphism classes of)
Serre weights of $G(\Fp)$.
We write
\[
\begin{matrix}
\mathcal R: & \W_\reg & \to &\W_\reg \\
     &F(\mu) &\mapsto &F\big( w_0 \cdot (\mu - p\eta)\big)
\end{matrix}
\]
for the induced bijection.

\begin{defn}\label{defn:herzigwts}
For a tame inertial $L$-parameter $\tau : I_\Qp \to \Ghat (\Fpbar)$ let
\[
\W^? (\tau) := \big\{ \mathcal R(F) :  \ F\in \W_\reg\ \text{an irreducible constituent of}\ \overline{V_\phi (\tau)}\big\} .
\]
\end{defn}

\begin{remark}
Note that this set does not agree completely with the set $W^?(\tau)$
defined in \cite[\S6]{herzigthesis} when $G = \GL_n$. The only 
discrepancy occurs for weights $F(\lambda)$ with $\langle 
\lambda,\alpha^\vee \rangle = p-2$ for some $\alpha \in \Delta$, which 
is irrelevant for our main result (Theorem~\ref{thm:main-result}).   
\end{remark}

\subsection{Definition of \texorpdfstring{$\W_\expll (\tau)$}{W\_expl(tau)}}\label{sec:defin-Wexpl}
We will now define $\W_\expll (\tau)$ for a tame inertial $L$-parameter
$\tau : I_{\Qp} \to \Ghat(\Fpbar)$.
When $G=\Res_{K/\Qp} \GL_n$ with $K/\Qp$ finite unramified we will
recover the set of weights given in
Definition~\ref{defn:explicit-weights-ss} (this will be Proposition~\ref{prop:Wexpl}).

\subsubsection{Hodge--Tate co-characters}\label{sec:hodge-tate-co}
Suppose for the moment that $K/\Qp$ is finite, or more generally that $K/\Qp$ is algebraic with finite
ramification index, and that $H$ is a (not necessarily connected)
algebraic group over $\Qpbar$.
Recall that a continuous homomorphism $\rho : \Gamma_K \to H(\Qpbar)$
is said to be Hodge--Tate (resp.\ crystalline) if for some faithful (and hence any) 
representation $H \to \GL_N$ over $\Qpbar$, the resulting
$N$-dimensional Galois representation is Hodge--Tate (resp.\ crystalline).
Given any $\rho : \Gamma_K \to H(\Qpbar)$ that is Hodge--Tate, and any
homomorphism $j : \barK \to \Qpbar$, it is explained in
\cite[\S2.4]{BuzzardGee} that there is an $H^\circ (\Qpbar)$-conjugacy class $\HT_j (\rho)$ of
co-characters $\G_m \to H$ over $\Qpbar$ (or equivalently, an element of $Y(\That)/W$, where $\That$
is a fixed maximal torus and $W$ its Weyl group in $H^\circ$). 
These classes satisfy the relation
\begin{equation}\label{eq:4}
\HT_{j\circ\gamma^{-1}} (\rho) = \rho(\gamma) \HT_j (\rho) \rho (\gamma)^{-1} \qquad
\textrm{for all } \gamma\in \Gamma_K .
\end{equation}
Our normalisation is such that $\HT_j(\varepsilon) = 1$ for all
$j$.
 If $\mu: \G_m \to H$ is a co-character we let $[\mu]$ denote its
class in $Y(\That)/W$. The following lemma is elementary.

\begin{lemma}\label{lem:HT}
Suppose $\rho : \Gamma_K \to H(\Qpbar)$ is Hodge--Tate.
\begin{enumerate}[label={\rm (\roman*)}]
\item
If $f : H\to H'$ is a map of algebraic groups over $\Qpbar$, then for $j:\barK\to \Qpbar$,
\[
f\circ \HT_j (\rho) = \HT_j (f\circ \rho) .
\]
\item
If $K'\subset \barK$ contains $K$, then for $j : \barK\to \Qpbar$,
\[
\HT_j (\rho |_{\Gamma_{K'}}) = \HT_j (\rho) .
\]
\item
Suppose that $L$ is another field and that $\gamma : \barK \isoto \Lbar$ is an
isomorphism sending $K$ onto $L$.
Then for $j : \Lbar \to \Qpbar$,
\[
\HT_{j\circ\gamma} (\rho) = \HT_j \big( \rho\circ (\gamma^{-1} (-) \gamma)\big) .
\]
\end{enumerate}
\end{lemma}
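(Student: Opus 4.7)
The approach rests on the fact, reviewed in \cite[\S2.4]{BuzzardGee}, that $\HT_j(\rho)$ is defined by choosing any faithful representation $\iota : H \to \GL_N$ of algebraic groups over $\Qpbar$ and extracting from the graded vector space $\bigoplus_i (V \otimes_{K,j} \widehat{\overline{K}}(i))^{\Gamma_K}$ (where $V$ is the underlying space of $\iota \circ \rho$) a cocharacter of $\GL_N$, which is then shown to factor (up to $H^\circ(\Qpbar)$-conjugation) through $H^\circ$. The plan is to reduce all three parts to classical properties of the Hodge--Tate decomposition for $\GL_N$-valued representations; the task is then essentially just bookkeeping.

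For (i), I would pick faithful embeddings $\iota: H \to \GL_N$ and $\iota': H' \to \GL_{N'}$, and observe that the product $(\iota, \iota' \circ f): H \to \GL_N \times \GL_{N'}$ is again faithful. Projecting to the second factor shows that $f \circ \rho$ is Hodge--Tate (so that $\HT_j(f\circ\rho)$ is defined); one then checks the chain of equalities $\iota' \circ f \circ \HT_j(\rho) = \HT_j(\iota' \circ f \circ \rho) = \iota' \circ \HT_j(f \circ \rho)$ as cocharacters of $\GL_{N'}$. The first and last equalities are instances of the definition of $\HT_j$ applied to $H$ and $H'$ respectively (using the faithful representations $(\iota,\iota' \circ f)$ and $\iota'$), while the middle equality is the naturality of the $\GL_N$-level Hodge--Tate cocharacter with respect to the linear map induced by $\iota'$. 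Since $\iota'$ is a closed immersion, the desired equality follows in $H'(\Qpbar)$.

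For (ii), I would fix a faithful $\iota: H \to \GL_N$ and let $V$ be the underlying space of $\iota \circ \rho$. The classical identity $(V \otimes_{K,j} \widehat{\overline{K}}(i))^{\Gamma_K} = (V \otimes_{K',j} \widehat{\overline{K}}(i))^{\Gamma_{K'}}$, which holds because the Hodge--Tate decomposition is preserved under restriction to an algebraic subextension of finite ramification, gives equality of the associated Hodge--Tate cocharacters in $\GL_N$, hence in $H$.

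Finally, (iii) is transport of structure via $\gamma : \barK \isoto \Lbar$. For faithful $\iota : H \to \GL_N$ with $V$ the underlying space of $\iota \circ \rho$, the map $\gamma$ induces a $\Qpbar$-linear isomorphism $V \otimes_{K,j \circ \gamma} \widehat{\overline{K}}(i) \cong V \otimes_{L,j} \widehat{\overline{L}}(i)$ intertwining the $\Gamma_K$-action arising from $\iota \circ \rho$ with the $\Gamma_L$-action arising from $\iota \circ \rho \circ (\gamma^{-1}(-)\gamma)$; the induced cocharacters of $\GL_N$ therefore coincide. The only mild obstacle in the whole lemma is keeping track of how the $\Gamma$-actions transport through $\gamma$ here, and there is no substantive analytic content beyond the classical facts recalled in the first paragraph.
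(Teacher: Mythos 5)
The paper offers no argument for this lemma (it is dispatched with ``The following lemma is elementary''), so your proposal has to stand on its own. The overall strategy --- reduce each part to classical Hodge--Tate theory for $\GL_N$-valued representations by composing with algebraic representations of $H$ --- is the right one, and the classical inputs you invoke for (ii) and (iii) (invariance of $(V\otimes_{K,j}\widehat{\overline{K}}(i))^{\Gamma_K}$ under restriction to $\Gamma_{K'}$, and transport of structure along $\gamma$) are correct. The gap is in the final deduction step, most visible in (i): from the equality of $\GL_{N'}(\Qpbar)$-conjugacy classes $\iota'\circ f\circ \HT_j(\rho)=\iota'\circ\HT_j(f\circ\rho)$ you conclude equality of $H'$-conjugacy classes ``since $\iota'$ is a closed immersion''. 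That implication is false in general: two cocharacters of a closed subgroup $H'\subset\GL_{N'}$ can become conjugate in $\GL_{N'}$ without being conjugate in $H'$ (take $H'=\G_m^2$ embedded diagonally in $\GL_2$ and the cocharacters $t\mapsto\diag(t,1)$ and $t\mapsto\diag(1,t)$). For the same reason a single faithful representation does not by itself pin down the conjugacy class $\HT_j(\rho)$ inside $H$, so your opening description of the construction of \cite[\S2.4]{BuzzardGee} is slightly off: the definition is Tannakian, i.e.\ $\HT_j(\rho)$ is the unique conjugacy class of cocharacters $\mu$ of $H$ such that for \emph{every} algebraic representation $r$ of $H$ the cocharacter $r\circ\mu$ induces the Hodge--Tate grading of $r\circ\rho$. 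The same gap propagates to (ii) and (iii), where you again compare cocharacters of $H$ only after composing with one chosen faithful $\iota$. (There is also a small slip in (i): your chain has two equalities, but you refer to a ``first'', ``last'' and ``middle'' one.)

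The repair is short and stays entirely within your framework. For (i), for every representation $r'$ of $H'$ one has $r'\circ(f\circ\HT_j(\rho)) = (r'\circ f)\circ\HT_j(\rho) = \HT_j(r'\circ f\circ\rho) = r'\circ\HT_j(f\circ\rho)$, by the defining property of $\HT_j$ applied first to the $H$-representation $r'\circ f$ and then to the $H'$-representation $r'$; since a cocharacter of $H'$ is determined up to conjugacy by the gradings it induces on all of $\mathrm{Rep}(H')$, the claim follows. For (ii) and (iii), the classical identities you use hold for every representation $V$ of $H$, not just the chosen faithful one, so the two graded fibre functors on $\mathrm{Rep}(H)$ agree and the corresponding cocharacters are conjugate in $H$. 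With these adjustments the proof is complete.
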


Now specialize to an $L$-parameter $\rho : \Gamma_{\Qp} \to \, {}^L G (\Qpbar)$ that is
crystalline.
Then $\HT_1 (\rho) \in Y(\That)/W = X(T)/W$ associated to $\id: \Qpbar \to \Qpbar$ determines
all Hodge--Tate co-characters by \eqref{eq:4}. Also, $\HT_j (\rho)$ depends only on $j|_L$.

Suppose that $\tau : I_\Qp \to \Ghat (\Fpbar)$ is a tame inertial $L$-parameter.

\begin{defn}\label{def5}
We say that an $L$-parameter $\rho : \Gamma_\Qp \to \, {}^L G(\Zpbar)$ is an \emph{obvious
crystalline lift of} $\tau$ if
\begin{enumerate}[label=(\roman*)]
\item $\overline{\rho |_{I_{\Qp}}}$ is $\Ghat (\Fpbar)$-conjugate to $\tau$;
\item $\rho$ is crystalline;
\item there is a maximal torus $T^* \subset \, {}^L G _{/\Zpbar}$ such that
\[
\rho (\Gamma_\Qp ) \subset N_{^L G_{/\Zpbar}} (T^*) (\Zpbar)\ \text{and}\ \rho (I_\Qp) \subset T^* (\Zpbar) .
\]
\end{enumerate}
\end{defn}

\begin{remark}
As any two maximal tori of ${}^L G_{/\Zpbar}$ are $\Ghat (\Zpbar)$-conjugate, we may assume
without loss of generality that $T^* = \That _{/\Zpbar}$ in (iii). We also remark that
$N_{^L G} (\That) = N_{\Ghat }(\That) \rtimes \Gal (L/\Qp)$.
\end{remark}

\begin{remark}
Note that (iii) implies that there exists $K\subset \Qpbar$ with $K/\Qp$ finite unramified
such that $\rho  (\Gamma_K) \subset T^* (\Zpbar)$.
\end{remark}

\begin{defn}\label{defn:obv-tau} We define 
\begin{align*}
\Wobv(\tau) := \big\{ F(\mu) :\ &\mu \in X_1 (T) ,\ \tau\ \text{admits an obvious crystalline lift}\ \rho \\
                                                       &\text{with}\ \HT_1 (\rho) = [\mu+\eta] \in X (T) / W \big\}.
\end{align*}
\end{defn}

\begin{prop}\label{prop: obvious weights of tau}
There holds the equality $\Wobv (\tau) = \{ F(\mu) : \mu\in X_1(T),\ \tau\cong \tau(w,\mu+\eta)\ \text{for some}\ w\in W\}$.
\end{prop}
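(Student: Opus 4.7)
The plan is to prove both inclusions by a direct computation that relates an obvious crystalline lift $\rho$ of $\tau$ to the tame parameter $\tau(w,\mu+\eta)$, exploiting the fact that such a $\rho$ factors through the normaliser of a torus. This is the abstract analogue of the argument of \cite[Prop.~6.14]{herzigthesis} and of the $\GL_n$ computations in Section~\ref{sec:obvious-semisimple}.

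Common setup. Given an obvious crystalline lift $\rho$ of $\tau$, after conjugating by an element of $\Ghat(\Zpbar)$ we may arrange that $\rho(\Gamma_\Qp) \subset N_{^LG}(\That)(\Zpbar)$ and $\rho(I_\Qp) \subset \That(\Zpbar)$; the image of $\rho(\varphi)$ in $W \rtimes \Gal(L/\Qp)$ is then of the form $w \rtimes \varphi|_L$ for a uniquely determined $w \in W$. I would choose $d \ge 1$ with $(F^* \circ w^{-1})^d = (F^*)^d$ on $Y(\That)$, equivalently with $w\varphi|_L$ of order dividing $d$ in $W \rtimes \Gal(L/\Qp)$, and let $K/\Qp$ denote the unramified extension of degree $d$, so that $\rho(\Gamma_K) \subset \That(\Zpbar)$.

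For the $(\subseteq)$ direction, I would compute $\overline{\rho|_{I_\Qp}}$ explicitly. Fix a representative $\nu \in Y(\That) = X(T)$ of $\HT_1(\rho) = [\mu+\eta]$. Since conjugation by $\rho(\varphi)$ on $\That$ is the action of $w \circ \varphi|_L$, the relation \eqref{eq:4} and Lemma~\ref{lem:HT} imply that the Hodge--Tate cocharacters $\HT_{j_i}(\rho|_{\Gamma_K}) \in Y(\That)$ at the embeddings $j_i = \varphi^{-i}$ ($i=0,\ldots,d-1$) are $(w\varphi|_L)^{-i}(\nu)$. Applying a torus-valued version of Lemma~\ref{lem:existenceofcrystallinechars}(ii) (obtained by composing $\rho|_{\Gamma_K}$ with each character $\alpha \in X(\That)$, or equivalently by applying the scalar-valued lemma factor-by-factor in a basis of $X(\That)$) yields
\[
\overline{\rho|_{I_\Qp}} \;=\; \prod_{i=0}^{d-1} \bigl((w\varphi|_L)^{-i}(\nu)\bigr)(\omega_d) \;=\; N_{(F^*\circ w^{-1})^d / F^*\circ w^{-1}}\bigl(\nu(\omega_d)\bigr) \;=\; \tau(w,\nu),
\]
where the middle equality uses $F^* = p\varphi$ on $Y(\That)$ together with $\omega_d \circ \varphi = \omega_d^p$. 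Hence $\tau \cong \tau(w,\mu+\eta)$.

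For the $(\supseteq)$ direction, I would reverse this construction: given $w$ and $\mu$ with $\tau \cong \tau(w,\mu+\eta)$, set $\nu := \mu+\eta$, and use Lemma~\ref{lem:existenceofcrystallinechars} factor-by-factor to build a crystalline character $\chi : \Gamma_K \to \That(\Zpbar)$ with $\HT_{j_i}(\chi) = (w\varphi|_L)^{-i}(\nu)$ for each $i$. Extend $\chi$ to $\rho : \Gamma_\Qp \to N_{^LG}(\That)(\Zpbar)$ by taking $\rho(\varphi)$ to be any preimage of $w \rtimes \varphi|_L$ with $\rho(\varphi)^d = \chi(\varphi^d)$, which can be arranged after multiplying $\chi$ by an unramified twist with trivial reduction (this affects neither the Hodge--Tate cocharacters nor the mod-$p$ reduction on inertia). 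By the computation of the first direction $\rho$ is an obvious crystalline lift with $\HT_1(\rho) = [\mu+\eta]$ and $\overline{\rho|_{I_\Qp}} \cong \tau$, so $F(\mu) \in \Wobv(\tau)$. The main obstacle throughout is bookkeeping: one must carefully reconcile the paper's conventions $\gamma(\mu) = \mu \circ \gamma^{-1}$, $F(\mu) = \mu \circ F$, and $F^* = p\varphi$ on $Y(\That)$ with the Hodge--Tate theory of $\That$-valued crystalline characters, in order to verify that the reduction formula obtained matches the definition of $\tau(w,\nu)$ term by term.
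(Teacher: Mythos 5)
Your proposal is correct and follows essentially the same route as the paper's proof: in the forward direction one computes $\overline{\rho|_{I_\Qp}}$ by restricting to an unramified $\Gamma_K$ where $\rho$ is $\That$-valued, pairing with characters of $\That$ to reduce to Lemma~\ref{lem:existenceofcrystallinechars}, and using the twisted-conjugation relation $\mu_{j\circ\varphi^{-1}}=\varphi w^{-1}\mu_j$ to identify the result with $\tau(w,\cdot)$; the converse reverses this by building the torus-valued crystalline character and adjoining a lift of $w\rtimes\varphi|_L$ as the image of Frobenius. The only points to tighten are the bookkeeping you already flag (the representative $\nu$ must be taken to be the actual cocharacter $\HT_1(\rho|_{\Gamma_K})$, with the residual $W$-ambiguity absorbed by the equivalence $\tau(\sigma wF(\sigma)^{-1},\sigma\mu)\cong\tau(w,\mu)$), and the observation that the paper avoids your normalisation $\rho(\varphi)^d=\chi(\varphi^d)$ entirely by only requiring $\rho|_{I_\Qp}=\rho'|_{I_\Qp}$, since an unramified discrepancy on $\Gamma_K$ is harmless.
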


\begin{proof}
More generally we will show that
\begin{align*}
\{ \HT_1 (\rho) &: \rho \ \text{an obvious crystalline lift of}\ \tau\} \\
&= \{ [\mu] : \mu\in X(T)\ \text{such that}\ \tau\cong \tau(w,\mu)\ \text{some}\ w\in W\} .
\end{align*}
(Note that $\tau (\sigma w F(\sigma)^{-1} , \sigma\mu)\cong \tau(w,\mu)$ for $\sigma\in W$.)

First suppose that $\rho : \Gamma_\Qp \to N_{^L G} (\That) (\Zpbar)$ is an obvious crystalline
lift of $\tau$.
Fix $K\subset\Qpbar$ with $K/\Qp$ finite unramified such that $\rho (\Gamma_K)\subset \That (\Zpbar)$.
Then for all $j : \Qpbar \to \Qpbar$, the co-character $\mu_j := \HT_j (\rho|_{\Gamma_K}) \in Y(\That)$
is a lift of $\HT_j (\rho) \in Y(\That)/W$ (by Lemma~\ref{lem:HT}).
Note that $\mu_j$ depends only on $j|_K$.
Also, $\conjj(\rho (\varphi))\circ \rho |_{\Gamma_K} = \rho |_{\Gamma_K}\circ \big( \varphi (-) \varphi^{-1}\big)$
(where $\conjj(g)$ denotes conjugation by $g$),
so by Lemma~\ref{lem:HT},
$\conjj(\rho (\varphi)) \circ \mu_j = \mu_{j\circ\varphi^{-1}}.$
Writing $\rho (\varphi) = \varphi \dotw^{-1} \in N_{^L G} (\That)(\Zpbar)$, for some
$\dotw \in N_{\Ghat} (\That) (\Zpbar)$ lifting $w\in W$, we get $\mu_{j\circ \varphi^{-1}} = \varphi w^{-1} \mu_j$.
For $\nu \in X(\That)$, $\nu\circ \rho |_{\Gamma_K} : \Gamma_K \to \Zpbarx\times$ is crystalline
with Hodge--Tate co-characters $\HT_j (\nu\circ\rho |_{\Gamma_K}) = \langle \nu , \mu_j \rangle \in \Z$,
hence by Lemma \ref{lem:existenceofcrystallinechars},
\[
\overline{\nu\circ\rho |_{I_\Qp}} = \prod_{j \in \Gamma_\Qp /\Gamma_K} \jbar (\omega_d)^{\langle\nu,\mu_j\rangle} ,
\]
where $d := [K: \Qp]$. It follows that
\begin{equation}\label{eq:1}
\overline{\rho |_{I_\Qp}} = \prod_{s=0}^{d-1} \mu_{\varphi^{-s}} (\omega_d^{p^s}) = \omega_d^
{\sum p^s (\varphi w^{-1})^s\mu_1} = \tau (w,\mu_1) ,
\end{equation}
as $F^* = p\varphi$ on $Y(\That) = X(T)$.

Conversely, given $(w,\mu)\in W\times X(T)$, choose $d\ge 1$ such that $(\varphi w^{-1})^d$
acts trivially on $\mu$.
Let $K\subset \Qpbar$ be unramified over $\Qp$ of degree $d$.
Let $\rho' : \Gamma_K \to \That (\Zpbar)$ be crystalline such that $\HT_{\varphi^{-s}} (\rho') = (\varphi w^{-1})^s
\mu \in Y(\That)$ for all $s\in\Z$.
(Note that $\HT_j (\rho')$ only depends on $j|_K$.
To construct $\rho'$ write $\That \cong \G_m^r$ and use Lemma~\ref{lem:existenceofcrystallinechars}.)
It follows by Lemma~\ref{lem:existenceofcrystallinechars} that $\varphi w^{-1} \circ \rho' = \rho'\circ
\big( \varphi (-) \varphi^{-1}\big)$ holds on $I_\Qp$.
Therefore we may define an $L$-parameter $\rho : \Gamma_\Qp \to \, {}^L G (\Zpbar)$ by
(i) $\rho |_{I_\Qp} = \rho' |_{I_\Qp}$ and (ii) $\rho (\varphi) = \varphi \dotw^{-1}$ where
$\dotw \in N_{\Ghat} (\That)(\Zpbar)$ is any fixed lift of $w$.
Then $\rho$ is crystalline, as $\rho |_{I_\Qp}$ is crystalline, and $\HT_1 (\rho) = [\mu]$ by Lemma~\ref{lem:HT}.
Also, $\overline{\rho |_{I_\Qp}} = \tau (w,\mu)$ by \eqref{eq:1}.
\end{proof}

For $\nu \in X(\uT)_+$ let $W(\nu)$ denote the $\uG$-module $\Ind_{\uB}^{\uG} (w_0 \nu)$ defined in \cite[II.2]{MR2015057}.
It has unique highest weight $\nu$ and $\uG$-socle $F(\nu)$.

\begin{defn}\label{def:closure-C-for-general-G}
If $\W$ is a set of Serre weights of $G(\Fp)$,  we define $\CC (\W)$ to be the smallest set of Serre weights
with the properties:
\begin{itemize}
\item $\W\subset\CC (\W)$, and
\item if $\W \cap \JH_{G(\Fp)} W(\nu) \ne \varnothing$, where $\nu\in X_1 (T)$, then $F(\nu)\in \CC (\W)$.
\end{itemize}
\end{defn}

\subsubsection{Levi predictions}
\label{sec:levi-predictions}

The Levi subgroups $M \subset G$ that contain $T$ are in bijection with
the $\Gamma_{\Qp}$-stable subsets of $\Delta$, by sending $M$ to
$\Delta_M$.  Note that each $M$ satisfies
Hypothesis~\ref{hyp:gp} with the same twisting element $\eta$ as
$G$. For each $M$ fix a dual group $(\Mhat, \Bhat_M, \That_M, \{
x_{\alpha,M}\}_{\alpha \in \Delhat_M})$. Then there is a
unique $\Gamma_{\Qp}$-equivariant homomorphism $i : (\Mhat,\Bhat_M,
\That_M) \to (\Ghat,\Bhat,\That)$ such that $i^* : X(\That) \to
X(\That_M)$ corresponds to $\id_{Y(T)}$ and $i \circ x_{\alpha,M} =
x_\alpha$ for all $\alpha \in \Delhat_M$. In fact, $i$ is a closed
immersion, so we can and will think of $\Mhat$ as the Levi subgroup of
$\Ghat$ containing $\That$ defined by $\Delta_M^{\vee}$, with induced
structures.

 We have variants of the definitions of
 Sections~\ref{sec:l-group-l}--\ref{sec:defin-Wexpl} with  $M$ replacing $G$, and we will indicate these by decorating notation with an
 $M$: in particular $X^M_1(\uT)$, $W^M(\mu)$ for $\mu \in
 X(\uT)_{+,M}$, $[\, \cdot \,]_M$, and
 in Section~\ref{sec:herzig-comparison} also $\Phi_M$, $\Phi_M^{+}$, $\| \cdot \|_M$, $\uparrow_M$, $\tau^M(w,\mu)$ for $w \in W_M, \mu\in X(\uT)$.

\begin{defn}\label{def:wexpl-general}
We recursively define $\Wexpl (\tau)$ to be the smallest set
containing $\Wobv(\tau)$ that is closed under the following operation:
whenever $\tau:I_{\Qp} \to \Ghat(\Fpbar)$ factors (perhaps after conjugation) through an inertial
$L$-parameter $\tau^M : I_{\Qp} \to \Mhat(\Fpbar)$ with $M$ as above,
and $\Wexpl(\tau^M)\cap \JH_{M(\Fp)} W^M(w\cdot \nu) \ne \varnothing$
(where $\nu \in X_1(\uT)$, $w\in W$ such that $w
\cdot \nu \in X(\uT)_{+,M})$, then $F(\nu) \in \Wexpl(\tau)$.
\end{defn}

\begin{remark}\label{rem:cl-wobv}
  If we take $M=G$ in the recursive step of this definition, we obtain
  that $\cC(\Wobv(\tau)) \subset \Wexpl(\tau)$.
\end{remark}

 The motivation for
  Definition~\ref{def:wexpl-general} is as in Section~\ref{sec:shad-weights:-inert}:\ we
  expect that $\tau^M$ has a crystalline lift of Hodge--Tate
  co-character $[w\cdot \nu + \eta]_M$, hence that $\tau$ has a
  crystalline lift of Hodge--Tate co-character $[\nu + \eta]$.

\begin{lem}\label{lem:wexpl-torus}
  If $G = T$ is a torus and $\tau$ a tame inertial $L$-parameter, then 
  $\Wexpl(\tau) = \Wobv(\tau) = \{ F(\mu-\eta) \}$ for any 
  $\mu \in X(\uT)$ such that $\tau \cong \tau(1,\mu)$.
\end{lem}

\begin{proof}
  The equality $\Wexpl(\tau) = \Wobv(\tau)$ is clear by Remark~\ref{rem:cl-wobv}.  By
  Proposition~\ref{prop:l-parameter}, there exists $\mu \in X(\uT)$ such that $\tau \cong
  \tau(1,\mu)$ (as $W = 1$), so $F(\mu-\eta) \in \Wobv(\tau)$ by Proposition~\ref{prop: obvious weights of tau}.
  By the same result, if we take any weight $F(\mu'-\eta) \in \Wobv(\tau)$, then $\tau \cong
  \tau(1,\mu')$. It follows that $(1+F^*+ \cdots + (F^*)^{d-1})(\mu-\mu') \equiv 0
  \pmod{(p^d-1) Y(\uT^*)}$, where $d$ is chosen as in
  \S\ref{sec:defin-W-questionmark}. Equivalently, $(1+F+ \cdots + F^{d-1})(\mu-\mu') \equiv 0
  \pmod{(p^d-1) X(\uT)}$. As $F^d-1 = p^d-1$ is injective on $X(\uT)$
  we see that $\mu \equiv \mu' \pmod{(F-1)X(\uT)}$, i.e.\ $F(\mu-\eta) \cong F(\mu'-\eta)$.
\end{proof}

\begin{remark}\label{rem:wexpl-torus}
  For general $G$ it follows from Lemma~\ref{lem:wexpl-torus} that if $M = T$ in the recursive step
  of Definition~\ref{def:wexpl-general}, then the non-emptiness of the intersection implies that
  $F(\nu) \in \Wobv(\tau)$, so no new weights are obtained.  (To see this, note that the
  non-emptiness implies that $\tau \cong \tau(1,w\cdot \nu + \eta)$ for some $w \in W$, by applying
  Proposition~\ref{prop: obvious weights of tau} to $\tau^T$. Hence $\tau \cong
  \tau(w^{-1} F(w), \nu + \eta)$, which implies the claim.)
\end{remark}

\subsection{Restriction of scalars}\label{sec:restriction-scalars}
Suppose for the rest of this section that $K\subset \Qpbar$ with $K/\Qp$ finite unramified.
In the following, if $X$ is a set (resp.\  group, resp.\  group scheme) with a smooth
left action of $\Gamma_K$, then we denote by $\Ind_{\Gamma_K}^{\Gamma_\Qp} X$ the induced
set (resp.\  group, resp.\  group scheme) consisting of functions $\Gamma_\Qp \to X$
that are $\Gamma_K$-equivariant.
For $\gamma\in\Gamma_\Qp$ let $\ev_\gamma : \Ind_{\Gamma_K}^{\Gamma_\Qp} X \to X$
denote the evaluation map at $\gamma$.
If $Y$ is a set of representatives of $\Gamma_K \backslash \Gamma_\Qp$, the $(\ev_y)_{y \in Y}$ provide
a non-canonical isomorphism $\Ind_{\Gamma_K}^{\Gamma_\Qp} X \isoto X^{[K:\Qp]}$ of sets
(resp.\  groups, resp.\  group schemes).

Suppose that $H$ is a connected reductive group over $\cO_K$ with Borel $B_H$, Levi $T_H\subset B_H$
and simple roots $\Delta_H \subset X (T_H)$.
Suppose that $H$ satisfies Hypothesis~\ref{hyp:gp} (or rather its analogue over $K$) with local twisting element $\eta_H\in X(T_H)^{\Gamma_K}$.
We may then obtain a group $G$ as in Section~\ref{sec:l-group-l} by restriction of scalars:
\[
(G,B,T) := \Res_{\cO_K / \Zp } (H,B_H,T_H) .
\]
Note that $G\times\Qpbar \cong \prod\limits_{\kappa:\, K\to\Qpbar} H\times_{K,\kappa} \Qpbar$, so
$G^\der$ is simply connected and $Z(G)$ is connected.
In particular, 
\[
X(T) \cong \bigoplus_{\kappa:\, K\to \Qpbar} X (T_H \times_{\kappa} \Qpbar) \cong
\Ind_{\Gamma_K}^{\Gamma_\Qp} X(T_H).
\]
It follows that $\Psi_0 (G,B,T) \cong \Ind_{\Gamma_K}^{\Gamma_\Qp} \Psi_0 (H,B_H,T_H)$
(where strictly speaking $\Delta$ consists of those functions in
$\Ind_{\Gamma_K}^{\Gamma_{\Qp}} \Delta_H$ that are supported on a single coset of $\Gamma_K$, and similarly
for $\Delta^\vee$) and that $\eta\in X(T)^{\Gamma_\Qp} \cong X(T_H)^{\Gamma_K}$ defined by $\eta_H$
is a local twisting element of $G$. Hence $G$ satisfies Hypothesis~\ref{hyp:gp}.
Let $L\subset \Qpbar$ denote the splitting field of $H$, so
$L/\Qp$ is finite unramified, $L \supset K$, and $G \times L$ is also split.

Let $(\Hhat,\Bhat_H ,\That_H , \{ x_{\alpha'}\} )$ be a dual group of $H$ as in Section~\ref{sec:l-group-l}, and define
\[
(\Ghat, \Bhat,\That) := \Ind_{\Gamma_K}^{\Gamma_{\Qp}} (\Hhat,\Bhat_H,\That_H) .
\]
Then $\Gamma_\Qp$ preserves a pinning $\{ x_\alpha\}$ of $\Ghat$ that is naturally induced from
$\{ x_{\alpha'}\}$.
We also see that $\Psi_0 (\Ghat, \Bhat,\That)\cong \Ind_{\Gamma_K}^{\Gamma_\Qp} \Psi_0
(\Hhat,\Bhat_H,\That_H)$ (with the same proviso as above), so via the induced isomorphism $\Psi_0 (\Ghat,\Bhat,\That)\isoto \Psi_0
(G,B,T)^\vee$ we can consider $(\Ghat,\Bhat,\That,\{ x_\alpha\})$ as a dual group of $G$. We let
$\LH := \Hhat \rtimes \Gal (L/K)$.

In the following, note that the notions of (inertial) $L$-parameter and obvious crystalline
lift carry over to representations of $\Gamma_K$ (repectively $I_K$). Also note that $\ev_1 : \Ghat (A) \to \Hhat (A)$ is
$\Gamma_K$-equivariant hence extends to a homomorphism $\ev_1 : \Ghat (A)\rtimes \Gal (L/K) \to \LH(A)$.

\begin{lemma}\label{lem:L-param}
Suppose that $A$ is a topological $\Zp$-algebra.
Then we have a bijection
\begin{equation}\label{eq:2}
\left\{ \myatop{ L\text{\rm -parameters} }{ \Gamma_\Qp \isorho \, {}^L G(A)} \right\}_{/\Ghat (A)} \isoto
\left\{ \myatop{L\text{\rm -parameters}}{\Gamma_K \isorhoK \, {}^L H(A)}\right\}_{/\Hhat (A)}.
\end{equation}
sending $\rho$ to $\rho_K = \ev_1 (\rho |_{\Gamma_K})$.
If $A=\Qpbar$ then $\rho$ is crystalline if and only if $\rho_K$ is crystalline.
\end{lemma}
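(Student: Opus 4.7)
The plan is to reduce the bijection to Shapiro's lemma for continuous non-abelian cocycles, and to deduce the crystalline equivalence from the compatibility of induction with $p$-adic Hodge theory for the finite unramified extension $K/\Qp$.

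First I would reformulate $L$-parameters as $1$-cocycles. An $L$-parameter $\rho\colon\Gamma_\Qp\to\LG(A)$ is equivalent to a continuous $1$-cocycle $c_\rho\colon\Gamma_\Qp\to\Ghat(A)$ for the $\Gamma_\Qp$-action (through $\Gal(L/\Qp)$) on $\Ghat(A)$, via the formula $\rho(\gamma)=c_\rho(\gamma)\cdot\bar\gamma$, where $\bar\gamma$ denotes the image of $\gamma$ in $\Gal(L/\Qp)$; two such $\rho$ are $\Ghat(A)$-conjugate iff the corresponding cocycles are cohomologous. The analogous dictionary holds for $\Gamma_K$ and $\Hhat(A)$. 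Since $\Ghat(A)=\Ind_{\Gamma_K}^{\Gamma_\Qp}\Hhat(A)$ as a $\Gamma_\Qp$-group, Shapiro's lemma for continuous non-abelian cohomology supplies a bijection between cocycles $\Gamma_\Qp\to\Ghat(A)$ and cocycles $\Gamma_K\to\Hhat(A)$ via post-composition with $\ev_1$, respecting coboundaries. Unwinding the dictionary, this bijection is precisely $\rho\mapsto\rho_K=\ev_1(\rho|_{\Gamma_K})$ after passage to $\Ghat(A)$- and $\Hhat(A)$-conjugacy classes.

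For the crystalline equivalence at $A=\Qpbar$, I would fix a faithful algebraic representation $r_K\colon\LH\to\GL(V_K)$ over $\Qpbar$ with $V_K$ finite-dimensional, and construct a corresponding faithful algebraic representation $r$ of $\LG$ whose composition with $\rho$ is isomorphic to $\Ind_{\Gamma_K}^{\Gamma_\Qp}(r_K\circ\rho_K)$ as a $\Gamma_\Qp$-representation (such an $r$ exists by the standard construction of faithful representations of $L$-groups combined with a Frobenius-reciprocity computation). Since $\rho$ is crystalline iff $r\circ\rho$ is, and $\rho_K$ is crystalline iff $r_K\circ\rho_K$ is, the problem reduces to showing that a finite-dimensional continuous $\Gamma_K$-representation $V$ on a $\Qpbar$-vector space is crystalline iff $\Ind_{\Gamma_K}^{\Gamma_\Qp}V$ is crystalline as a $\Gamma_\Qp$-representation. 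This is well known: a $\Gamma_\Qp$-representation is crystalline iff its restriction to the finite index subgroup $\Gamma_K$ is crystalline; and by the standard Mackey decomposition, $\bigl(\Ind_{\Gamma_K}^{\Gamma_\Qp}V\bigr)|_{\Gamma_K}$ is a direct sum of conjugates of $V$ by representatives of $\Gamma_\Qp/\Gamma_K$, each of which is crystalline iff $V$ is, since crystallinity is preserved under conjugation of the Galois representation by an element of $\Gamma_\Qp$.

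The main obstacle, though largely bookkeeping, is verifying compatibility with the semidirect-product decompositions. One must check that $\rho|_{\Gamma_K}$ actually lands in $\Ghat(A)\rtimes\Gal(L/K)$ so that $\ev_1$ extends (by its $\Gamma_K$-equivariance) to a group homomorphism $\Ghat(A)\rtimes\Gal(L/K)\to\LH(A)$; and conversely, one must verify that the Shapiro inverse of a cocycle $\Gamma_K\to\Hhat(A)$ assembles into a genuine $L$-parameter valued in $\LG(A)$, using that the $\Gamma_\Qp$-action on $\Ghat(A)$ factors through the finite quotient $\Gal(L/\Qp)$.
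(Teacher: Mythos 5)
Your argument is correct and follows the paper's own proof: the bijection is obtained exactly as in the paper by splitting off the $\Gal(L/\Qp)$-component of $\rho$ to get a $1$-cocycle valued in $\Ghat(A)\cong\Ind_{\Gamma_K}^{\Gamma_\Qp}\Hhat(A)$ and applying the non-abelian Shapiro lemma (the paper does the discrete case first and then checks continuity, and likewise crystallinity, via its explicit formula for the inverse). For the crystalline statement the paper argues slightly more directly---restricting to $\Gamma_L$, where $\Ghat(\Qpbar)$ becomes a product of copies of $\Hhat(\Qpbar)$ whose components are $\Gamma_\Qp$-conjugates of $\rho_K|_{\Gamma_L}$---but your detour through a faithful representation of ${}^L G$ induced from one of ${}^L H$ rests on the same Mackey-type decomposition and the same unramifiedness of $K/\Qp$, so it is equally valid.
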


\begin{proof}
For the moment let us consider $A$ with the discrete topology.
By writing $\rho (g) = \rho^0 (g)\rtimes g$ we see that $\rho^0$ defines a 1-cocycle $\Gamma_\Qp \to \Ghat (A)$,
and in this way we get a bijection between $\Ghat (A)$-conjugacy classes of $L$-parameters $\rho$
and the pointed set $H^1 \big( \Gamma_\Qp , \Ghat (A)\big)$.
As $\Ghat (A) \cong \Ind_{\Gamma_K}^{\Gamma_\Qp} \Hhat (A)$, the non-abelian Shapiro lemma
(\cite[Prop.~8]{MR2589844}) shows that $H^1\big( \Gamma_\Qp,\Ghat (A)\big) \cong H^1 \big( \Gamma_K , \Hhat (A)\big)$
where $\rho^0$ is sent to $\ev_1 (\rho^0 |_{\Gamma_K})$.
This proves \eqref{eq:2} if $A$ is discrete. From
the description $\rho_K = \ev_1 (\rho|_{\Gamma_K})$ and \eqref{eq:8} it follows in general that $\rho$ is continuous iff
$\rho |_{\Gamma_L}$ is continuous iff $\rho_K$ is
continuous, and 
similarly for the crystalline condition when $A = \Qpbar$.
For later reference we recall from \cite{MR2589844} a description of a representative
$\rho$ in the inverse image of $\rho_K$.
Let $Y$ be a set of representatives of $\Gamma_K \backslash \Gamma_\Qp$ with $1 \in Y$.
Then $\rho$ is defined by
\begin{equation}\label{eq:3}
\ev_\gamma \rho^0 (\gamma') = \rho_K^0 (\delta)^{-1}\cdot\rho_K^0 (\delta') \in \Hhat (A),
\end{equation}
where $\gamma = \delta y$, $\gamma \gamma' = \delta' y'$ with $\delta,\delta' \in \Gamma_K$,
$y,y'\in Y$.
\end{proof}

Note that in the context of Lemma~\ref{lem:L-param} for any $\gamma\in \Gamma_\Qp$ we have $\gamma\circ \rho |_{\Gamma_L}
\cong \rho |_{\Gamma_L}\circ \big( \gamma (-) \gamma^{-1}\big)$, so
\begin{equation}\label{eq:8}
  \ev_\gamma (\rho |_{\Gamma_L}) \cong \rho_K|_{\Gamma_L} \circ \big (\gamma (-) \gamma^{-1}\big) \ :\ \Gamma_L \to \Hhat (A).
\end{equation}

\begin{lemma}\label{lem:L-param2}
Suppose that $A$ is a topological $\Zp$-algebra.
Then we have a bijection
\[
\left\{ \myatop {\text{\rm inertial}\ L\text{\rm -parameters} }  { I_\Qp \isotau \Ghat (A)} \right\}_{/\Ghat (A)} \isoto
\left\{  \myatop {\text{\rm inertial}\ L\text{\rm -parameters}} { I_K \isotauK  \Hhat (A)}\right\}_{/\Hhat (A)}
\]
sending $\tau$ to $\tau_K = \ev_1 (\tau)$.
\end{lemma}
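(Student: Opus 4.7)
The plan is to deduce Lemma~\ref{lem:L-param2} from Lemma~\ref{lem:L-param} by restricting from $\Gamma_\Qp$ (respectively $\Gamma_K$) to $I_\Qp$ (respectively $I_K$). A key point is that $K/\Qp$ is unramified, so $I_K = I_\Qp$ as subgroups of $\Gamma_\Qp$. First I would check that $\tau \mapsto \ev_1(\tau)$ is well-defined on conjugacy classes of inertial $L$-parameters: if $\tau$ extends to an $L$-parameter $\rho$, then Lemma~\ref{lem:L-param} produces $\rho_K : \Gamma_K \to \L H(A)$ extending $\ev_1(\tau)$, which makes $\ev_1(\tau)$ an inertial $L$-parameter, and $\Ghat(A)$-conjugation descends to $\Hhat(A)$-conjugation since $\ev_1$ is a group homomorphism. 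Surjectivity is immediate: given $\tau_K : I_K \to \Hhat(A)$ inertial, pick any extension $\rho_K : \Gamma_K \to \L H(A)$, lift to $\rho : \Gamma_\Qp \to \L G(A)$ via Lemma~\ref{lem:L-param}, and set $\tau := \rho|_{I_\Qp}$; then $\ev_1(\tau) = \tau_K$ by construction.

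For injectivity, the crucial observation is that formula~\eqref{eq:3} evaluated on $I_\Qp$ depends only on $\rho_K|_{I_K}$. Indeed, for $\gamma' \in I_\Qp$ and $\gamma = \delta y$ with $\delta \in \Gamma_K$ and $y \in Y$, one has $\gamma\gamma' = \delta(y\gamma' y^{-1})\cdot y$ with $y\gamma' y^{-1} \in I_\Qp = I_K$; the cocycle relation for $\rho_K^0$ combined with $\rho_K^0|_{I_K} = \tau_K$ then yields
\[ \ev_\gamma\,\rho^0(\gamma') \,=\, \rho_K^0(\delta)^{-1}\,\rho_K^0\big(\delta(y\gamma' y^{-1})\big) \,=\, \delta \cdot \tau_K(y\gamma' y^{-1}). \]
Given inertial $L$-parameters $\tau_1, \tau_2 : I_\Qp \to \Ghat(A)$ with $\ev_1(\tau_2) = h\,\ev_1(\tau_1)\,h^{-1}$ for some $h \in \Hhat(A)$, I would lift $h$ to $g_h \in \Ghat(A)$ satisfying $\ev_1(g_h) = h$ (possible since $\ev_1 : \Ghat(A) = \Ind_{\Gamma_K}^{\Gamma_\Qp}\Hhat(A) \to \Hhat(A)$ is surjective) and replace $\tau_1$ by its conjugate $g_h \tau_1 g_h^{-1}$, reducing to the case $\ev_1(\tau_1) = \ev_1(\tau_2) = \tau_K$. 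By Lemma~\ref{lem:L-param}, any chosen extension $\rho_i$ of $\tau_i$ is $\Ghat(A)$-conjugate to one in the explicit form~\eqref{eq:3} built from $\rho_{i,K}$; applying this conjugation on each side and invoking the displayed identity then forces the two restrictions to $I_\Qp$ to coincide, so the original $\tau_1$ and $\tau_2$ were already $\Ghat(A)$-conjugate.

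The main obstacle will be verifying the displayed identity, which requires carefully unwinding formula~\eqref{eq:3} on $I_\Qp$ and exploiting the unramifiedness of $K/\Qp$ (so that $y\gamma' y^{-1}$ stays in $I_K$, the domain of $\tau_K$). Once this identity is in place, the rest of the argument is a formal consequence of Lemma~\ref{lem:L-param} together with the surjectivity of $\ev_1$ at the level of the underlying topological groups.
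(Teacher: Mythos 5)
Your proposal is correct. Well-definedness and surjectivity are handled exactly as in the paper (both are immediate from Lemma~\ref{lem:L-param}), and your key displayed identity is right: for $\gamma'\in I_{\Qp}$ one has $\gamma\gamma' = \delta(y\gamma'y^{-1})y$ with $y\gamma'y^{-1}\in I_{\Qp}=I_K$ by normality of inertia and unramifiedness of $K/\Qp$, and the cocycle relation for $\rho_K^0$ collapses \eqref{eq:3} to ${}^{\delta}\tau_K(y\gamma'y^{-1})$, which depends only on $\tau_K$. The injectivity step is organized differently from the paper's, though both rest on the same underlying computation. You first normalise so that $\ev_1(\tau_1)=\ev_1(\tau_2)$ literally (using surjectivity of $\ev_1$ on points of $\Ghat=\Ind_{\Gamma_K}^{\Gamma_{\Qp}}\Hhat$) and then show both $\tau_i$ are conjugate to the single explicit model cut out by \eqref{eq:3}. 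The paper instead skips the normalisation: from \eqref{eq:8} it deduces that $\ev_y(\tau_1)\cong\ev_y(\tau_2)$ for every coset representative $y$, chooses a conjugator $h_y\in\Hhat(A)$ for each $y$, and assembles these into a single element $g\in\Ghat(A)$ with $g(y)=h_y$, which conjugates $\tau_2$ to $\tau_1$ coordinate-wise. The paper's version is slightly more economical (no appeal to a preferred representative, no reduction step), while yours makes the dependence of the induced parameter on $\tau_K$ alone completely explicit; both are valid.
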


\begin{remark}
Note that $I_K = I_\Qp$.
\end{remark}

\begin{proof}
By Lemma~\ref{lem:L-param} the map is well defined and surjective.
Suppose that $\tau_1,\tau_2$ are inertial $L$-parameters such that $\tau_{1,K} \cong \tau_{2,K}$.
As the $\tau_i$ extend to $L$-parameters, from (\ref{eq:8}) we get that 
$\ev_\gamma (\tau_1) \cong \ev_\gamma (\tau_2)$ for any $\gamma\in \Gamma_\Qp$.
Let $Y$ be a set of representatives of $\Gamma_K \backslash \Gamma_\Qp$, and choose $h_y\in \Hhat (A)$ ($y\in Y$) such that
\[
\ev_y (\tau_1) = h_y \cdot \ev_y (\tau_2) \cdot h_y^{-1} ,
\qquad \textrm{for all } y \in Y .
\]
Then $\tau_1 = g\cdot \tau_2 \cdot g^{-1}$, where $g\in\Ghat (A)$ is defined by $g(y) = h_y$ for
$y\in Y$.
\end{proof}

\begin{lemma}\label{lem:obvious}
Suppose that $\tau : I_{\Qp} \to \Ghat (\Fpbar)$ is a tame inertial $L$-parameter and that
$\rho : \Gamma_\Qp \to \, {}^L G (\Zpbar)$ is an $L$-parameter.
Then $\rho$ is an obvious crystalline lift of $\tau$ iff $\rho_K$ is an obvious crystalline lift of $\tau_K$.
\end{lemma}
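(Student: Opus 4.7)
The plan is to verify conditions (i), (ii), (iii) of Definition~\ref{def5} one at a time, in each case matching the condition for $\rho$ with the analogous condition for $\rho_K$ by means of the bijections and formulas of Lemmas~\ref{lem:L-param} and~\ref{lem:L-param2}. Condition (ii) is already the content of the second sentence of Lemma~\ref{lem:L-param}, so there is nothing to do. For condition (i), reduction modulo $p$ is compatible with restriction to $\Gamma_K$ and with $\ev_1$, hence $(\overline{\rho})_K = \overline{\rho_K}$; applying Lemma~\ref{lem:L-param2} then gives the equivalence of (i) for $\rho$ and for $\rho_K$.

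The heart of the proof is condition (iii). After conjugating, I may assume throughout that the torus appearing in (iii) is the standard one: $T^* = \That_{/\Zpbar}$ on the ${}^L G$-side and $\That_{H,/\Zpbar}$ on the ${}^L H$-side. The key observation is that $\ev_1 : \Ghat \to \Hhat$ is a homomorphism of group schemes sending $\That = \Ind_{\Gamma_K}^{\Gamma_\Qp} \That_H$ onto $\That_H$, and therefore sends $\Nhat := N_{\Ghat}(\That)$ onto $\Nhat_H := N_{\Hhat}(\That_H)$; moreover, checking fiberwise via the $\ev_\gamma$ gives the identification $\Nhat = \Ind_{\Gamma_K}^{\Gamma_\Qp} \Nhat_H$. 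The forward direction of (iii) is then immediate from functoriality of $\ev_1$.

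For the reverse direction I will use the explicit description (\ref{eq:3}) of $\rho$ in terms of $\rho_K$ from the proof of Lemma~\ref{lem:L-param}. Fix a set of coset representatives $Y$ for $\Gamma_K\backslash\Gamma_\Qp$ containing $1$. For $\gamma' \in \Gamma_\Qp$ and $\gamma = \delta y$ with $\delta\in\Gamma_K$, $y\in Y$, write $\gamma\gamma' = \delta' y'$; then $\ev_\gamma\rho^0(\gamma') = \rho_K^0(\delta)^{-1}\rho_K^0(\delta')$ lies in $\Nhat_H(\Zpbar)$ because both $\rho_K^0(\delta)$ and $\rho_K^0(\delta')$ do (using $\rho_K(\Gamma_K)\subset N_{^L H}(\That_H)(\Zpbar)$). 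Since this holds for every $\gamma$, we conclude $\rho^0(\Gamma_\Qp)\subset \Nhat(\Zpbar)$, hence $\rho(\Gamma_\Qp)\subset N_{^L G}(\That)(\Zpbar)$. When $\gamma' \in I_\Qp$, use that $K/\Qp$ is unramified so $I_K = I_\Qp$ is normal in $\Gamma_\Qp$, forcing $y' = y$ and $\delta' = \delta\cdot(y\gamma' y^{-1})$; by the cocycle identity $\rho_K^0(\delta (y\gamma' y^{-1})) = \rho_K^0(\delta)\cdot\delta\big(\rho_K^0(y\gamma' y^{-1})\big)$, so $\ev_\gamma\rho^0(\gamma') = \delta\big(\rho_K^0(y\gamma' y^{-1})\big)$, which lies in $\That_H(\Zpbar)$ because $\rho_K(I_\Qp)\subset\That_H(\Zpbar)$ and $\Gamma_K$ acts on $\Hhat$ through $\Gal(L/K)$ preserving $\That_H$. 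This gives $\rho(I_\Qp) \subset \That(\Zpbar)$, completing (iii).

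The only step I expect to require genuine care is the reverse direction of (iii), specifically the bookkeeping with formula (\ref{eq:3}) and the cocycle identity for $\rho_K^0$; everything else is formal from the previously established lemmas and the compatibility of $\ev_1$ with the standard pinnings.
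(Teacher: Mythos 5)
Your proof is correct and follows essentially the same route as the paper: the forward direction via functoriality of $\ev_1$, condition (i) via Lemma~\ref{lem:L-param2} after evaluating at $1$, condition (ii) via Lemma~\ref{lem:L-param}, and the converse of condition (iii) via the explicit formula \eqref{eq:3}, with the key observation that for $\gamma'\in I_{\Qp}$ one has $y'=y$ and $\delta'\in\delta I_K$. Your treatment just spells out the cocycle manipulation and the identification $N_{\Ghat}(\That)=\Ind_{\Gamma_K}^{\Gamma_{\Qp}}N_{\Hhat}(\That_H)$ more explicitly than the paper does.
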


\begin{proof}
The ``only if'' implication is immediate from Lemma~\ref{lem:L-param}.
Conversely, if $\rho_K$ is an obvious crystalline lift of $\tau_K$, by Lemma~\ref{lem:L-param2}, $\overline{\rho |_{ I_\Qp}} \cong \tau$,
as it is true after evaluating at $1$.
Also, $\rho$ is crystalline by Lemma~\ref{lem:L-param}.
Now assume without loss of generality that $\rho_K$ takes values in
$N_{^L H} (\That_H)(\Zpbar)$. Then $\rho$ (or rather a representative
of $\rho$ in its conjugacy class) is obtained from $\rho_K$ by formula \eqref{eq:3},
which shows that $\rho$ takes values in $N_{^L G} (\That)
(\Zpbar)$. Now if $\gamma' \in I_\Qp$, then $\rho (\gamma')\in\That
(\Zpbar)$ follows from \eqref{eq:3}, as $y=y'$ and $\delta' \in \delta I_K$.
\end{proof}

\subsection{The case of \texorpdfstring{$\GL_n$}{GL(n)}}\label{sec:gln}
Suppose that $H=\GL_n$, $B_H$ the upper-triangular Borel subgroup and $T_H$ the diagonal torus, all
over $\cO_K$.
Define $(\Hhat, \Bhat_H, \That_H)$ likewise but over $\Zp$.
Identify $X(T_H)\isoto Y(\That_H)$ by sending $\diag (x_1 , \ldots, x_n)\mapsto \prod x_i^{a_i}$ to
$x\mapsto \diag (x^{a_1}, \ldots, x^{a_n})$.
Note that $L= K$ and $^L H = \Hhat$.
Let $\eta_H \in X(T_H)$ be the local twisting element $\diag (x_1,\ldots, x_n)\mapsto \prod x_i^{n-i}$.

\begin{lemma}
  \label{lem:obvious-lifts}
  The representation $\rho_K : \Gamma_K \to \GL_n (\Zpbar)$ is an obvious crystalline lift of $\tau_K$:
$I_K\to \GL_n (\Fpbar)$ in the sense of the current section if and only
if it is an obvious crystalline lift in the sense of Definition \ref{defn:obvious-lift-ss}.
\end{lemma}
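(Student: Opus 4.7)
The plan is to establish the equivalence by matching the data of a representation landing in the normaliser of a torus with that of a direct sum of inductions of crystalline characters. Without loss of generality, take the torus $T^\ast$ in Definition~\ref{def5}(iii) to be the diagonal torus $\That \subset \GL_{n/\Zpbar}$, so $N(\That)$ consists of the monomial matrices.

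Assume first that $\rho_K$ is an obvious crystalline lift in the sense of Definition~\ref{def5}. Composing $\rho_K$ with the projection $N(\That) \twoheadrightarrow S_n$ yields a permutation action of $\Gamma_K$ on $\{1,\dots,n\}$, which factors through $\Gamma_K/I_K \cong \Gal(K^{\mathrm{ur}}/K)$ by condition (iii). Let $O_1,\dots,O_d$ be the orbits and set $n_i := |O_i|$; since a closed subgroup of index $n_i$ in $\Gal(K^{\mathrm{ur}}/K)$ corresponds to $K_{n_i}/K$, the stabiliser in $\Gamma_K$ of any $j \in O_i$ equals $\Gamma_{K_{n_i}}$. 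The subspace $V_i := \bigoplus_{j \in O_i} \Zpbar e_j$ is $\Gamma_K$-stable, each line $\Zpbar e_j$ carrying a character $\chi_{i,j}$ of $\Gamma_{K_{n_i}}$; a direct computation (using that $\Gamma_{K_{n_i}}$ is normal in $\Gamma_K$) shows that the $\chi_{i,j}$ are precisely the Frobenius conjugates of $\chi_{i,j_0}$ for any fixed $j_0 \in O_i$, whence $V_i \cong \Ind_{\Gamma_{K_{n_i}}}^{\Gamma_K} \chi_{i,j_0}$. Since $\rho_K$ is crystalline, so is $\rho_K|_{\Gamma_{K_{n_i}}}$, and $\chi_{i,j_0}$ is a direct summand of the latter, hence crystalline. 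By Lemma~\ref{lem:existenceofcrystallinechars}(i), $\chi_{i,j_0} = \psi^{K_{n_i}}_{\Lambda_i}$ where $\Lambda_i$ collects its Hodge--Tate weights. The resulting decomposition $\rho_K \cong \bigoplus_i \rho^K_{\Lambda_i}$ is an obvious lift in the sense of Definition~\ref{defn:obvious-lift-ss}, the condition on the reduction modulo $p$ being common to both definitions.

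Conversely, suppose $\rho_K \cong \bigoplus_{i=1}^d \rho^K_{\Lambda_i}$. In the natural basis of $\rho^K_{\Lambda_i} = \Ind_{\Gamma_{K_{n_i}}}^{\Gamma_K} \psi^{K_{n_i}}_{\Lambda_i}$ indexed by a set of coset representatives of $\Gamma_{K_{n_i}} \backslash \Gamma_K$ (equivalently by $\Gal(K_{n_i}/K)$, since $K_{n_i}/K$ is unramified), the group $\Gamma_K$ acts by monomial matrices while $I_K \subset \Gamma_{K_{n_i}}$ acts diagonally via the Frobenius conjugates of $\psi^{K_{n_i}}_{\Lambda_i}$. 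Concatenating these bases exhibits $\rho_K(\Gamma_K) \subset N(\That)(\Zpbar)$ and $\rho_K(I_K) \subset \That(\Zpbar)$, while the crystallinity of $\rho_K$ is part of Corollary~\ref{lem:HT wts of induction}. Thus $\rho_K$ satisfies Definition~\ref{def5}. The only delicate point is the identification of stabilisers as $\Gamma_{K_{n_i}}$ in the forward direction, which relies crucially on condition (iii) forcing the permutation action to be unramified.
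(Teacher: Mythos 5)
Your proof is correct and is in essence the paper's own argument, written out in full: the paper compresses both directions into the single observation that either condition is equivalent to $\Zpbarx{n}$ decomposing as a direct sum of $n$ free rank-one $\Zpbar$-submodules permuted by $\Gamma_K$ with each summand preserved by $I_K$, and your orbit/stabiliser analysis (identifying stabilisers with $\Gamma_{K_{n_i}}$ via the unramified permutation action) is exactly the content of that claim. No gaps.
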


\begin{proof}
It suffices to show that $\rho_K : \Gamma_K \to \GL_n (\Zpbar)$ satisfies condition (iii)
in Definition~\ref{def5} iff it is isomorphic to $\bigoplus \Ind_{\Gamma_{K_i}}^{\Gamma_K} \Zpbar (\chi_i)$ for some
$K_i \subset \Qpbar$ with $K_i / K$ finite unramified and characters $\chi_i : \Gamma_{K_i}\to
\Zpbarx\times$.
This is clear, as either condition is equivalent to $\Zpbarx n$ being a direct sum of $n$ rank $1$
free $\Zpbar$-submodules that are permuted by $\Gamma_K$, with $I_K$ preserving each summand.
\end{proof}

Combining the previous lemma with Lemma~\ref{lem:obvious}, we obtain
the following.

\begin{cor}
$\rho : \Gamma_\Qp \to \, {}^L G (\Zpbar)$ is an obvious crystalline lift of $\tau : I_\Qp \to \Ghat (\Fpbar)$
if and only if $\rho_K : \Gamma_K \to \GL_n (\Zpbar)$ is an obvious crystalline lift of $\tau_K$:
$I_K\to \GL_n (\Fpbar)$ in the sense of Definition \ref{defn:obvious-lift-ss}.
\end{cor}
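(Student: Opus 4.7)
The plan is to chain together the two lemmas that immediately precede the Corollary, which reduce the statement to a purely tautological comparison.

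First, I would apply Lemma~\ref{lem:obvious} to transfer the ``obvious crystalline lift'' property for the $L$-parameter $\rho$ (in the sense of Definition~\ref{def5}) to the same property for $\rho_K : \Gamma_K \to {}^L H(\Zpbar) = \Hhat(\Zpbar) = \GL_n(\Zpbar)$ considered as an $L$-parameter for $H = \GL_n$. This lemma gives a precise equivalence: $\rho$ is an obvious crystalline lift of $\tau$ if and only if $\rho_K$ is an obvious crystalline lift of $\tau_K$, where both notions are taken in the sense of Definition~\ref{def5}.

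Next, I would invoke Lemma~\ref{lem:obvious-lifts}, which says that for $H = \GL_n$ the notion of obvious crystalline lift from Definition~\ref{def5} (existence of a maximal torus $T^*$ containing $\rho_K(I_K)$ and normalised by $\rho_K(\Gamma_K)$) is equivalent to the notion from Definition~\ref{defn:obvious-lift-ss} (being a direct sum of inductions of crystalline characters from finite unramified extensions). The bridge is that either condition amounts to $\Zpbar^n$ decomposing as a direct sum of $n$ rank-one $\Zpbar$-submodules permuted transitively by $\Gamma_K$ in a way that $I_K$ preserves each summand.

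The combination of these two equivalences yields the Corollary directly, with no further work. I do not expect any obstacle: the Corollary is essentially a bookkeeping statement that the two independently-developed notions of ``obvious lift'' (one via $L$-group/torus conditions adapted to arbitrary $G$, the other via explicit sums of inductions for $\GL_n$) are compatible under restriction of scalars from $K$ to $\Qp$.
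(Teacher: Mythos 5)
Your proof is correct and is exactly the paper's argument: the Corollary is obtained by combining Lemma~\ref{lem:obvious} with Lemma~\ref{lem:obvious-lifts}, precisely as you describe. (One immaterial slip in your paraphrase of Lemma~\ref{lem:obvious-lifts}: the $n$ rank-one summands are permuted by $\Gamma_K$, but not necessarily \emph{transitively}, since an obvious lift may be a direct sum of several inductions.)
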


We can now show that when $G = \Res_{K/\Qp} \GL_n$, the set
$\Wexpl(\tau)$ recovers the collection of Serre weights given by
Definition~\ref{defn:explicit-weights-ss}. More precisely, we have the following.

\begin{prop}\label{prop:Wexpl}
For $\tau : I_\Qp \to \Ghat (\Fpbar)$ a tame inertial $L$-parameter, $\Wexpl (\tau) = \Wexpl (\tau_K)$,
where the latter set is computed according to Definition~\ref{defn:explicit-weights-ss}.
\end{prop}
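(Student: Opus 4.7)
The plan is to prove the identity by induction on $n$. The Serre weights of $G(\F_p) = \GL_n(k)$ on the left match Serre weights of $\GL_n(k)$ on the right, and under the bijection $X_1(\uT) \cong \bigoplus_\kappa X_1^{(n)} = (X_1^{(n)})^{S_K}$ (which uses $S_K \cong S_k$ since $K/\Qp$ is unramified), an element $\nu$ corresponds exactly to the Serre weight class represented by $(\nu_\kappa)_{\kappa \in S_K}$. The proof reduces to two assertions: that $\Wobv(\tau) = \Wobv(\tau_K)$, and that the Levi closure operation in Definition~\ref{def:wexpl-general} matches the direct-sum closure operation in Definition~\ref{defn:explicit-weights-ss}.

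The equality $\Wobv(\tau) = \Wobv(\tau_K)$ follows by combining Lemma~\ref{lem:obvious} and Lemma~\ref{lem:obvious-lifts}, which identify obvious crystalline lifts of $\tau$ with obvious crystalline lifts of $\tau_K$ in the sense of Definition~\ref{defn:obvious-lift-ss}. The first step is to verify that if $\rho : \Gamma_{\Qp} \to \,{}^L G(\Zpbar)$ is a crystalline $L$-parameter with $\HT_1(\rho) = [\nu + \eta] \in X(T)/W$, then under the canonical decomposition $X(T) = \Ind_{\Gamma_K}^{\Gamma_{\Qp}} X(T_H)$ writing $\nu = (\nu_\kappa)_\kappa$, the associated $\rho_K : \Gamma_K \to \GL_n(\Zpbar)$ has $\HT_\kappa(\rho_K) = [\nu_\kappa + \eta_H]$ for each $\kappa \in S_K$. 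This is an application of Lemma~\ref{lem:HT} together with the fact that $\HT_j(\rho)$ depends only on $j|_L$ and the explicit description of $\rho_K$ by evaluation at~$1$. Since $K/\Qp$ is unramified the canonical lift of the Serre weight represented by $(\nu_\kappa)_\kappa$ is precisely the Hodge type $(\nu_\kappa + \eta_H)_\kappa$, confirming the match.

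For the Levi step, the Levi subgroups $M$ of $G$ containing $T$ correspond to $\Gamma_{\Qp}$-stable subsets of $\Delta$; because $G \times K$ is already split, these subsets are of the form $\Delta_{M_H} \times S_K$, yielding $M = \Res_{K/\Qp}M_H$ with $M_H = \prod_j \GL_{n_j}$ a standard Levi of $\GL_n$. By Lemma~\ref{lem:L-param2} applied to $M$, the parameter $\tau^M$ corresponds to $\tau_K^M \cong \oplus_j \tau_K^{(j)}$ with $\tau_K^{(j)} : I_K \to \GL_{n_j}(\Fpbar)$, and each $\tau_K^{(j)}$ extends to $G_K$ since $\tau^{(j)}$ extends to an $L$-parameter, so this decomposition is admissible in Definition~\ref{defn:explicit-weights-ss}. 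A pair $(\nu, w) \in X_1(\uT) \times W$ with $w \cdot \nu \in X(\uT)_{+,M}$ amounts to choosing, for each $\kappa$, a permutation $w_\kappa \in S_n$ such that $w_\kappa(\nu_\kappa + \eta_n)$ is the concatenation of sequences $\lambda^{(j)}_\kappa + \eta_{n_j}$ with each $\lambda^{(j)}_\kappa$ being $\GL_{n_j}$-dominant; this is exactly the data of an $\eta$-partition $\lambda^{(j)}$ of the Serre weight $\nu$. Under the identification $M(\F_p) = \prod_j \GL_{n_j}(k)$ the dual Weyl module factors as $W^M(w \cdot \nu) \cong \bigotimes_j L_{\lambda^{(j)}} \otimes_{\Zpbar} \Fpbar$, so its Jordan--H\"older factors are tensor products $\bigotimes_j F^{(j)}$ with $F^{(j)} \in \JH_{\GL_{n_j}(k)}(L_{\lambda^{(j)}} \otimes \Fpbar)$.

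The final ingredient is the factorization $\Wexpl(\tau^M) = \prod_j \Wexpl(\tau^{(j)})$ (as a set of tensor-product weights), where $\tau^{(j)}$ is the inertial $L$-parameter for $\Res_{K/\Qp}\GL_{n_j}$ corresponding to $\tau_K^{(j)}$. This is established by an auxiliary induction: the obvious and Levi closure steps for a product group are visibly compatible with the product structure on Serre weights and on Levi data, and $\Wexpl(\tau^{(j)}) = \Wexpl(\tau_K^{(j)})$ by the outer induction since $n_j < n$. Combining everything, the LHS membership $F(\nu) \in \Wexpl(\tau)$ witnessed by a pair $(M, w)$ matches the RHS membership $\nu \in \Wexpl(\tau_K)$ witnessed by the corresponding decomposition $\oplus_j \tau_K^{(j)}$ and $\eta$-partition $\lambda^{(j)}$, and conversely each RHS witness arises from some LHS pair. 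The main obstacle is the careful bookkeeping needed in the first step to transport the Hodge--Tate cocharacter of $\rho$ to the Hodge type of $\rho_K$ (component by component), as the intermediate identifications of Lemmas~\ref{lem:L-param}--\ref{lem:obvious-lifts} are not quite stated in the explicit cocharacter form required here; once that is in hand the rest is essentially combinatorial matching of two recursive definitions.
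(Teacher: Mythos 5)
Your proposal follows essentially the same route as the paper: first transport the Hodge--Tate cocharacter of an obvious lift $\rho$ of $\tau$ to the labelled Hodge types of $\rho_K$ component by component (giving $\Wobv(\tau)=\Wobv(\tau_K)$), then match the Levi closure of Definition~\ref{def:wexpl-general} with the $\eta$-partition closure of Definition~\ref{defn:explicit-weights-ss} via $M=\Res_{\cO_K/\Zp}\prod_j\GL_{n_j}$ and the factorisation of $W^M(w\cdot\nu)$. The one place your writeup is too quick is the normalisation of twisting elements in the Levi step: the dot action in Definition~\ref{def:wexpl-general} uses the twisting element $\eta$ of $G$ restricted to $M$, whose $j$-th block differs from the standard $\eta_{n_j}$ by a constant, so $W^M(w\cdot\nu)$ is isomorphic to $\bigotimes_j L_{\lambda^{(j)}}\otimes_{\Zpbar}\Fpbar$ only up to determinant twists on each block, and correspondingly your claimed factorisation $\Wexpl(\tau^M)=\prod_j\Wexpl(\tau^{(j)})$ only holds once one fixes compatible twisting elements. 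These two discrepancies cancel, but that needs to be said; the paper handles it with the observation that $\mathrm{W}_{\mathrm{expl},\eta}(\tau)=\mathrm{W}_{\mathrm{expl},\eta'}(\tau)\otimes F(\eta'-\eta)$ for any two local twisting elements $\eta,\eta'$ (a consequence of Proposition~\ref{prop: obvious weights of tau}). With that point added, your argument is complete and matches the paper's.
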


\begin{remark}
Note that we have canonical isomorphisms $G(\Fp) \cong H(k) = \GL_n (k)$, where $k$ is the residue field of $K$.
\end{remark}

\begin{proof}
Let $r : \cO_K \to k$ be the reduction map and $\sigma_0 : k\to \Fpbar$ the inclusion.
We have canonical isomorphisms
\begin{equation}\label{eq:5}
G \times \Fpbar \cong \prod_{\sigma: k\to \Fpbar} H \times_{\sigma r} \Fpbar ,
\end{equation}
\begin{equation}\label{eq:6}
X(T\times \Fpbar) \cong \bigoplus_{\sigma: k\to \Fpbar} X (T_H \times_{\sigma r} \Fpbar  ) \cong
\Ind_{\Gamma_K}^{\Gamma_\Qp} X (T_H \times_{\sigma_0 r} \Fpbar ) .
\end{equation}
For $\mu = (\mu_\sigma) \in X (T\times \Fpbar)$ we can write $\mu_\sigma = \nu_\sigma \times_\sigma \Fpbar$
with $\nu_\sigma \in \Hom_k (T_H \times_r k,\G_m)$, as $T_H$ is split.
Then in \eqref{eq:6}, for $\gamma\in \Gamma_\Qp$,
\[
\ev_{\gamma^{-1}} (\mu ) = \ev_1 (\gamma^{-1} \mu ) = (\gamma^{-1} \mu )_{\sigma_0} = \gamma^{-1}
\mu_{\gammabar \circ\sigma_0} = \nu_{\gammabar\circ\sigma_0} \times \Fpbar .
\]
In particular, as $\eta\in X(T)^{\Gamma_\Qp}$, $\ev_{\gamma^{-1}} (\eta)  = \eta_H = \eta_{H,0} \times \Fpbar$,
where $\eta_{H,0} = (n-1 , n-2 , \ldots, 0) \in \Hom_k (T_H \times_r k, \G_m )$.

Suppose that $\rho$ is an obvious crystalline lift of $\tau$ with $\HT_1 (\rho) = [\mu + \eta]$, where
$\mu\in X_1 (T)$.
Then from Lemma~\ref{lem:HT} and \eqref{eq:8} we get for any $\gamma\in \Gamma_\Qp$,
\begin{equation} \label{eq:7}
  \begin{split}
    \HT_\gamma (\rho_K) &= \HT_1 \big(\rho_K \circ (\gamma^{-1} (-) \gamma)\big) = \ev_{\gamma^{-1}} \big( \HT_1 (\rho)\big) \\ &= [\ev_{\gamma^{-1}} (\mu+\eta)]
    = \big[ (\nu_{\gammabar \circ\sigma_0} + \eta_{H,0}) \times \Fpbar \big] .
  \end{split}
\end{equation}
With respect to the decomposition \eqref{eq:5}, the Serre weight $F(\mu) \in \Wobv (\tau)$ is isomorphic to
$\bigotimes\nolimits_\sigma F (\mu_\sigma)$ as representations of $G(\Fp) \cong H(k)$, so
$F(\mu) \cong \bigotimes F_k (\nu_\sigma) \otimes_\sigma \Fpbar$, where $F_k (\nu_\sigma)$ denotes the irreducible
algebraic $H_{/k}$-representation with highest weight $\nu_\sigma$. Then $F_k (\nu_\sigma) \cong N_{\nu_\sigma}$ (cf.\ Section~\ref{subsec:Serre weights})
as representations of $G(\Fp) \cong H(k)$. By \eqref{eq:7}, $F(\mu)$ is the Serre weight
associated to the obvious crystalline lift $\rho_K$ by Definition \ref{defn:obvious-lift-ss}.
Hence $\Wobv (\tau) = \Wobv (\tau_K)$.

Similarly, with the above notation we get an isomorphism $W(\mu) \cong \bigotimes W_k(\nu_\sigma) \otimes_{\sigma}
\Fpbar$, where $W_k(\nu_\sigma)$ denotes the $H_{/k}$-module $\Ind_{{B_H}_{/k}}^{H_{/k}} (w_0 \nu_\sigma)$, so
$W_k(\nu_\sigma) \cong M'_{\nu_\sigma} \otimes_{\cO_K} k$ (cf.\ Section~\ref{subsec:notation}).
We deduce that $\cC(\Wobv(\tau)) = \cC(\Wobv(\tau_K))$, where the latter is
computed according to Definitions~\ref{defn:obvious-lift-ss}
and~\ref{defn:closure-operation}.

To compare explicit predicted weights, note first that any Levi $M$ of
$G$ containing $T$ is of the form $M = \Res_{\cO_K/\Zp} M_H$ with
$M_H$ a Levi of $H$ containing $T_H$, so  that $M_H \cong \prod_{j=1}^r
\GL_{n_j}$ for some $r$ and $n_j$'s; then $\tau$ factors through
$\tau^M$ if and only if there is an isomorphism $\tau_K \cong \oplus_j \tau_K^{(j)}$ with
$\dim \tau_K^{(j)} = n_j$ for all $j$.  In general, whenever $(G,B,T)
\cong \prod_j (G_j, B_j, T_j)$ factors as a product of pinned groups,
then $\eta = \sum_j \eta_j$, where $\eta_j$ is a local twisting element of
$G_j$, and $\mathrm{W}_{\mathrm{expl},\eta}(\tau_1 \times \cdots
\times \tau_r) = \{ \boxtimes_j F_j : F_j \in
\mathrm{W}_{\mathrm{expl},\eta_j}(\tau_j) \}$ where the subscripts
$\eta$ and $\eta_j$ indicate the dependence on the local twisting
element. Moreover, if $\mu = \sum \mu_j$ with $\mu_j \in X(T_j)_+$,
then $F(\mu) \cong \boxtimes_j F^{G_j}(\mu_j)$ and $W(\mu) \cong
\boxtimes_j W^{G_j}(\mu_j)$. From Proposition~\ref{prop: obvious weights
    of tau} we get that $\mathrm{W}_{\mathrm{expl},\eta}(\tau) =
  \mathrm{W}_{\mathrm{expl},\eta'}(\tau) \otimes F(\eta'-\eta)$
  whenever $\eta'$ is another local twisting element. Putting these
  observations together, we see that $\Wexpl(\tau) = \Wexpl(\tau_K)$,
  where the latter is computed according to Definition~\ref{defn:explicit-weights-ss}.
\end{proof}

\subsection{A unitary group example}\label{sec:unitary-example}

\makeatletter
\def\Ddots{\mathinner{\mkern1mu\raise\p@
\vbox{\kern7\p@\hbox{.}}\mkern2mu
\raise4\p@\hbox{.}\mkern2mu\raise7\p@\hbox{.}\mkern1mu}}
\makeatother

We work out another example of the constructions in this section. Suppose that $n \ge 1$ and let
$J =
\bigg(\begin{smallmatrix}
  && 1 \\ & \Ddots \\ 1
\end{smallmatrix}\bigg)$. For any $\Zp$-algebra $A$ define
$G(A) = \{ g \in \GL_n(A \otimes_{\Zp} \Z_{p^2}) : {}^t g \cdot J \cdot \o g = J \}$, where
conjugation $g \mapsto \o g$ is trivial on $A$ and the non-trivial Galois automorphism on
$\Z_{p^2}$.  Then $G$ is a connected reductive group over $\Zp$ with generic fibre the unramified
unitary group over $\Qp$ of absolute rank $n$. 

We consider the upper-triangular Borel $B$ and
diagonal maximal torus $T$. Then the splitting field is $\Q_{p^2}$ and $\varphi \in
\Gal(\Q_{p^2}/\Qp)$ acts as $(a_1,\dots,a_n) \mapsto -(a_n,\dots,a_1)$ on $X(T) \cong \Z^n$, i.e.\ as $-w_0$.  We
assume that $n$ is \emph{odd} so that $G$ has a local twisting element, namely $\eta =
(\frac{n-1}2,\frac{n-3}2,\dots,-\frac{n-1}2)$. As dual group we take $\wh G = \GL_n$ over $\Zp$
with upper-triangular Borel $\wh B$ and diagonal maximal torus $\wh T$, with pinning given by 
the isomorphisms sending $a \in \G_a$ to the upper-triangular unipotent matrix having unique
off-diagonal element $a$ in the $(i,i+1)$-entry (for $1 \le i < n$) and obvious identification $\phi$
(as in \S\ref{sec:gln}). With these choices, ${}^L G = \GL_n \rtimes \Gal(\Q_{p^2}/\Qp)$ with
$\varphi$ acting as $g \mapsto J' \cdot {}^t g^{-1} \cdot (J')^{-1}$, where
$J' = \bigg(\begin{smallmatrix}
  && 1 \\ & -1 \\ \Ddots
\end{smallmatrix}\bigg)$ (with alternating signs along the diagonal).

We can identify $\underline G$ with ${\GL_n}_{/\Fpbar}$ via the inclusion $\F_{p^2} \into
\Fpbar$, and then $F = -p w_0$ on $X(\underline T)$. Serre weights are identified with equivalence
classes $X_1^{(n)}/{\sim}$, where $a \sim a'$ if and only if $a-a' \in (p+1,\dots,p+1)\Z$.

To finish, here is an explicit example with $n = 3$. Consider $\mu = (a,b,c)$ sufficiently generic
in the lowest alcove and suppose that the inertial $L$-parameter $\tau : I_{\Qp} \to \GL_3(\Fpbar)$ is given by 
$\tau \cong \tau(1,\mu)$, i.e.\ $\tau \cong \bigg(\begin{smallmatrix}
  \omega_2^{a-pc} \\ & \omega_2^{b(1-p)} \\ && \omega_2^{c-pa}
\end{smallmatrix}\bigg)$. Then 
\begin{gather*}
  \Wexpl(\tau) = \{ F((a,b,c)-\eta), F((b,c-1,a-p)-\eta), F((c+p,a+1,b)-\eta),\\
  F((c+p-1,b,a-p+1)-\eta), F((a,c,b-p-1)-\eta), F((b+p+1,a,c)-\eta),\\
  F((c+p,b,a-p)-\eta), F((a,c-1,b-p)-\eta), F((b+p,a+1,c)-\eta)
  \},
\end{gather*}
where the first six weights are obvious and the last three are shadows.
For example, the second weight is obvious by Proposition~\ref{prop: obvious weights of tau}
since $\tau \cong \tau((1\,2\,3), (b,c-1,a-p))$. Note that there are no obscure
weights by Remarks~\ref{rem:cl-wobv} and \ref{rem:wexpl-torus}, as $G$ and $T$ are 
the only Levi subgroups of $G$ that contain $T$.

\section[sec8]{Comparison with \texorpdfstring{\cite{herzigthesis}}{[Her09]}}\label{sec:herzig-comparison} 
 
In this section we will prove that for $L$-parameters $\tau : I_{\Qp}
\to \Ghat(\Fpbar)$ that are sufficiently generic, the sets $\W^?(\tau)$
and $\Wexpl(\tau)$ are equal. This establishes in particular
that
Conjecture~\ref{conj:explicit-version-ss} is in
agreement with the Serre weight conjecture of \cite{herzigthesis}.

\subsection{The weight set \texorpdfstring{$\W^?(\tau)$}{W\_?(tau)} in the generic case}\label{subsec: weight
set in the generic case} We begin by giving an alternate
characterization of the set $\W^?(\tau)$ (for $\tau$ sufficiently
generic in a sense to be made precise below) in terms of the
$\uparrow$ relation on alcoves.  We refer the reader to
\cite[II.6.5]{MR2015057}
for the definition of the $\uparrow$
relation; see also \cite[Def.~3.15]{herzigthesis}.

We use the same notation as in
Section~\ref{sec:defin-W-questionmark}. Recall from the proof of Lemma~\ref{lem:bijection of weights}
that there is a finite order automorphism $\pi$ of $(\uG,\uB,\uT)$
that induces the action of $\varphi^{-1}$
on $\Psi_0(\uG,\uB,\uT)$. In particular, $F=p\pi^{-1}$ on $X(\uT)$.

Let $\Phi\subset X(\uT)$ denote the set of roots, $\Phi^+$ the subset of
positive roots, $W_p:=p\Z\Phi\rtimes W$ the affine Weyl group, and
$\tW_p:=pX(\uT)\rtimes W$ the extended affine Weyl group. We refer
to~\cite[II.6]{MR2015057} for the definition of alcoves and for the basic facts
about them.
We denote by $C_0$ the lowest (or fundamental) alcove. We say that a weight
$\lambda\in X(\uT)$ is \emph{$p$-regular} if it does not lie on any alcove walls; equivalently,  $\Stab_{W_p}(\lambda)=1$.

\begin{lem}
  \label{lem: stabiliser in alcove} Suppose $\lambda\in X(\uT)$ is $p$-regular. Then $\Stab_{\tW_p}(\lambda)=1$.
\end{lem}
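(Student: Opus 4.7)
The plan is to reduce to the trivial-stabiliser statement for $W_p$ that defines $p$-regularity. So I would take an arbitrary $w \in \Stab_{\tW_p}(\lambda)$ and aim to show $w \in W_p$. Writing $w = t_{p\mu}v$ with $v \in W$ and $\mu \in X(\uT)$, the condition $w\cdot\lambda=\lambda$ translates into an equality of the form $p\mu = (1-v)(\lambda+\rho)$ for the dot action (or $p\mu = (1-v)\lambda$ for the linear action, with the same analysis).

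Next, I would observe that $(1-v)(\lambda+\rho) \in \Z\Phi$, since each simple reflection $s_\alpha$ sends $x \in X(\uT)$ to $x - \langle x,\alpha^\vee\rangle\alpha$, so $W$ acts trivially on $X(\uT)/\Z\Phi$. Consequently $p\mu \in \Z\Phi \cap pX(\uT)$.

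The crucial step is to upgrade this to $\mu \in \Z\Phi$, which amounts to showing $X(\uT)/\Z\Phi$ has no $p$-torsion. This is where Hypothesis~\ref{hyp:gp} enters: taking character groups of the short exact sequence $1 \to Z(G) \to T \to T/Z(G) \to 1$ yields an identification $X(\uT)/\Z\Phi \cong X(Z(G))$, using that the maximal torus $T/Z(G)$ of the adjoint group has character lattice $\Z\Phi$. Since $Z(G)$ is connected and of multiplicative type it is a torus, so $X(Z(G))$ is free abelian and in particular $p$-torsion free. Therefore $p\mu \in \Z\Phi$ forces $\mu \in \Z\Phi$, giving $w \in W_p$; the defining property of $p$-regularity then yields $w=1$. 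The main obstacle is precisely this last step: absent the hypothesis on $Z(G)$ the lemma genuinely fails, as one already sees for $\SL_2$ at $p=2$, where $X(\uT)/\Z\Phi \cong \Z/2\Z$ and the non-trivial coset produces an element of $\tW_p\setminus W_p$ that stabilises the $p$-regular weight $\lambda=0$.
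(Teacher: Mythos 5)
Your proof is correct and is essentially the paper's: the paper simply cites the proof of \cite[Lemma 5.6]{herzigthesis}, noting it applies because $Z(\uG)$ is connected, and that proof is exactly your reduction — write $w=t_{p\mu}v$, deduce $p\mu=(1-v)(\lambda+\eta)\in\Z\Phi$, and use that $X(\uT)/\Z\Phi$ is torsion-free (equivalent to $Z(G)$ connected) to conclude $w\in W_p$, whence $w=1$ by $p$-regularity. Your $\SL_2$, $p=2$ example correctly illustrates why the hypothesis on $Z(G)$ is needed.
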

\begin{proof}
  The proof of~\cite[Lemma 5.6]{herzigthesis} applies, as $Z(\uG)$ is connected.
\end{proof}

Recall from~\cite[\S 5.2]{herzigthesis} the definition of $\mu\in X(\uT)$ lying
\emph{$\delta$-deep} in an alcove. We say that a statement is true for
$\mu$ lying
\emph{sufficiently deep} in some alcove $C$ if there is a $\delta>0$ depending
only on the based root datum $\Psi_0(\uG,\uB,\uT)$ together with its automorphism $\pi$ (and in particular not on
$p$) such that the statement holds for all $\mu$ which are $\delta$-deep in $C$.

Recall from~\cite[II.9]{MR2015057} the definitions of $\uG_1\uT$-modules
$\widehat{Z}_1(\lambda)$ and $\widehat{L}_1(\lambda)$ for $\lambda\in X(\uT)$, where
the group scheme $\uG_1$ is the kernel of $F : \uG \to \uG$. Supposing
there exists $\mu\in C_0\cap X(\uT)$ (equivalently, $p > \langle\eta,\alpha^\vee\rangle 
\ \forall \alpha \in \Phi^+$), let \[D_1:=\{u\in \tW_p:u\cdot\mu\in
X_1(\uT)\}.\] This set is independent of the choice of $\mu$, and it is a finite union of $pX^0(\uT)$-cosets.

\begin{prop}
  \label{prop:reduction of R for mu sufficiently deep} For $\mu$ lying
  sufficiently
  deep in the alcove $C_0$, we have \[\overline{R(w,\mu+\eta)}=\sum_{u\in
    D_1/pX^0(\uT)}\sum_{\nu\in
    X(\uT)}[\widehat{Z}_1(\mu+p\eta):\widehat{L}_1(p\nu+u\cdot\mu)]\, F(u\cdot(\mu+w\pi\nu))\]
  in the Grothendieck group of finite-dimensional $\Fpbar[G(\Fp)]$-modules.
\end{prop}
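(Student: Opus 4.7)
The plan is to generalise the $\GL_n$ computation from \cite[\S7]{herzigthesis} to the present setting of a general unramified group $G$ satisfying Hypothesis~\ref{hyp:gp}, incorporating the Frobenius twist $\pi$ that was trivial in that earlier setting. The essential inputs are: a Brauer-character formula for the Deligne--Lusztig virtual character $R(w,\mu+\eta)$ on $p$-regular semisimple elements of $G(\Fp)$; the structure of $\widehat{Z}_1(\lambda)$ as a $\uG_1\uT$-module via the linkage principle; and the Steinberg tensor product theorem for restricting $\widehat{L}_1(\lambda)$-twists to $G(\Fp)$.

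First I would express $\overline{R(w,\mu+\eta)}$, in the Grothendieck group, as the restriction to $G(\Fp)$ of a suitable $\uG_1\uT$-module built from $\widehat{Z}_1(\mu+p\eta)$. The hypothesis that $\mu$ lies sufficiently deep in $C_0$ ensures that $\mu+p\eta$ is $p$-regular in the sense of Lemma~\ref{lem: stabiliser in alcove}, so the linkage principle puts all composition factors $\widehat{L}_1(p\nu+u\cdot\mu)$ of $\widehat{Z}_1(\mu+p\eta)$ in a single linked class parameterised by the cosets $u\in D_1/pX^0(\uT)$ and $\nu\in X(\uT)$. Second, I would decompose this $\uG_1\uT$-module into its simple constituents $\widehat{L}_1(p\nu+u\cdot\mu)$, each of which factors via Steinberg as $F(u\cdot\mu)\otimes F(\nu)^{[1]}$, where $^{[1]}$ is the Frobenius twist built from $F=p\pi^{-1}$ on $X(\uT)$.

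Third, I would restrict to $G(\Fp)$: on $G(\Fp)$ the Frobenius twist identifies $F(\nu)^{[1]}$ with $F(\pi\nu)$, and the Weyl-group element $w$ arising from $R(w,\mu+\eta)$ converts this to a twist by $w\pi\nu$. Collecting the summands then yields the stated identity, with the multiplicity $[\widehat{Z}_1(\mu+p\eta):\widehat{L}_1(p\nu+u\cdot\mu)]$ recording precisely the number of times $F(u\cdot(\mu+w\pi\nu))$ appears.

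The hard part will be the bookkeeping for the non-split case, that is, tracking how $\pi$ interacts with the $\uG_1\uT$-structure and with restriction to $G(\Fp)$: one must verify that the composite of the $w$-twist coming from the Deligne--Lusztig induction and the $\pi$-twist coming from $F=p\pi^{-1}$ is exactly $w\pi$ on the Frobenius-lifted weights, and that no extra signs or sign characters $\ve_{\uG}\ve_{\uT_w}$ interfere once one passes to $p$-regular elements. The ``sufficiently deep'' hypothesis is used repeatedly to guarantee that the linkage class is contained strictly inside the interior of the region where Steinberg's theorem and the standard $\uG_1\uT$-machinery apply cleanly, so that no boundary or alcove-wall contributions spoil the count.
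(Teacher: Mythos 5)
There is a genuine gap, and it sits at the very first step. You propose to ``express $\overline{R(w,\mu+\eta)}$ as the restriction to $G(\Fp)$ of a suitable $\uG_1\uT$-module built from $\widehat{Z}_1(\mu+p\eta)$'' and then decompose that module by linkage and Steinberg. But no such module exists in any formal sense: for $w\ne 1$ the virtual representation $R(w,\mu+\eta)=\ve_{\uG}\ve_{\uT_w}R_{\uT_w}^{\theta_{w,\mu+\eta}}$ is a Deligne--Lusztig induction from a \emph{twisted} torus $\uT_w$, and it is not the restriction of any $\uG_1\uT$-module (nor of any algebraic $\uG$-module); even in the split principal-series case $w=1$, the comparison between $\Ind_{B(\Fp)}^{G(\Fp)}$ and $\widehat Z_1(\cdot)$ is a genuinely non-formal theorem valid only generically. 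The signature of this is visible in the statement itself: the multiplicities $[\widehat{Z}_1(\mu+p\eta):\widehat{L}_1(p\nu+u\cdot\mu)]$ are independent of $w$, while $w$ enters only through the weight $u\cdot(\mu+w\pi\nu)$; a restriction-and-Steinberg argument could never produce that asymmetry. So your step one is essentially the conclusion you are trying to prove, and steps two and three (linkage, Steinberg factorisation $F(u\cdot\mu)\otimes F(\nu)^{[1]}$, restriction to $G(\Fp)$ — note also that a $\uG_1\uT$-constituent does not restrict to $G(\Fp)$ in any naive way, since the $p\nu$-part is a character of $\uT$, not of $G(\Fp)$) cannot repair it.

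The actual content of the proposition is Jantzen's generic decomposition formula~\cite[Satz~4.3]{bib:Jan-DL}, whose proof runs along entirely different lines: one computes the Brauer character of $\overline{R(w,\mu+\eta)}$ on the $p$-regular (equivalently, semisimple) classes of $G(\Fp)$ using the explicit Deligne--Lusztig character formula, expresses both sides in terms of Weyl characters $\chi_p$, and then inverts a unipotent upper-triangular ``decomposition matrix'' relating the families $\{\overline{R(w,u\cdot\mu)}\}$ and the irreducible Brauer characters; the depth hypothesis on $\mu$ is what makes this matrix the same on both sides. None of this appears in your outline, and the sign characters $\ve_{\uG}\ve_{\uT_w}$ are not a nuisance to be waved away but are needed (via \cite[Prop.~10.10]{bib:DL}) to know that $R(w,\mu+\eta)$ is a genuine representation for maximally split pairs. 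The proof the paper gives does not attempt a self-contained argument either: it carefully verifies that Jantzen's \S4, as already generalised in \S\S1--3 by the appendix to~\cite{herzigthesis} to reductive groups with simply connected derived group, goes through with a specific list of modifications (replacing $\mu+\rho$ by $\mu+\pi\rho'$, summing over $D_1/pX^0(\uT)$, working modulo $X^0(\uT)$ at several points), and then specialises by choosing the split form with Frobenius $F\circ\pi^{-1}$, $\rho'=\eta$, $n=1$, invoking Lemma~\ref{lem: stabiliser in alcove}. If you want to complete your write-up, you should either reproduce that character-theoretic argument or cite it; the module-theoretic shortcut you describe is not available.
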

\begin{remark}
  Recall that the Deligne--Lusztig representation $R(w,\mu)$ was defined in Definition~\ref{def:deligne-lusztig}.
  The notation $[\wh Z_1(\lambda): \wh L_1(\mu)]$ signifies the multiplicity of the simple
  $\uG_1\uT$-module $\wh L_1(\mu)$ as Jordan--H\"older factor of $\wh Z_1(\lambda)$.
\end{remark}
\begin{remark}
  \label{rem: inner sum depends only on coset of u}By Lemma~\ref{lem:bijection of
    weights} the inner sum depends only on the coset of $u$ in $D_1/pX^0(\uT)$.
\end{remark}
\begin{proof}
  This proposition is a generalisation of Jantzen's generic decomposition formula for
  Deligne--Lusztig representations~\cite[Satz~4.3]{bib:Jan-DL}. For a
  generalisation of~\cite[\S1--3]{bib:Jan-DL} to reductive groups with simply
  connected derived subgroups, see the appendix to~\cite{herzigthesis}. We now
  explain how~\cite[\S4]{bib:Jan-DL} generalises to the same context. We only
  leave aside the part of~\cite[\S4.1]{bib:Jan-DL}  that follows equation
  4.1(2). Without further comment, any reference in the remainder of this
  paragraph will be to~\cite{bib:Jan-DL}, and we keep the same notation and
  conventions as in the appendix of~\cite{herzigthesis}. For example, any
  occurrence of $\rho,\rho_w,\varepsilon_w,\gamma_{w_1,w_2}$ should be replaced by
  $\rho',\rho'_w,\varepsilon'_w,\gamma'_{w_1,w_2}$. In addition, in \S4 any 
  occurence of the term ``$\mu+\rho$'' should be replaced by
  ``$\mu+\pi\rho'$''. We let
  $h:=\max\{\langle\rho',\alpha^\vee\rangle+1:\alpha\in R^+\}$. Furthermore, any
  occurence of $D_n$ as the index of a sum should be replaced by (a fixed set of
  representatives of) $D_n/p^nX^0(T)$, which is finite. In particular, Satz 4.3
  says that \[\tR_w(n,\mu+\pi\rho')=\Psi\sum_{\genfrac{}{}{0pt}{}{u\in
    D_n/p^nX^0(T)}{\nu\in X(T)}}
  [\widehat{Z}(n,\mu+p^n\rho'):\widehat{L}(n,p^n\nu+u\cdot\mu)]\,
  \chi_p(u\cdot(\mu+w\pi\nu)).\] In \S 4.3 and \S 4.4, $\alpha_0^\vee$ denotes any
  choice of highest coroot. The inequality in line~$-3$ of page~472 is no longer true
  in general, but the following line still holds. In line~$-1$ of page~472 the second
  occurrence of $\nu_1$ should be $\nu_2$ (a typo). In the proof of Lemma~4.4,
  $-w'\varepsilon'_{w_0w'}=\rho'-\rho'_{w'}$ only holds modulo $X^0(T)$, but this
  is sufficient. The diagonal elements of the upper-triangular matrix now lie in
  $X^0(T)\subset\Z[X(T)]^W$, and so the terms $\chi$ likewise need to be
  multiplied by elements of $X^0(T)$. Similar comments apply to the following
  two displayed equations.

To deduce our proposition, we choose Jantzen's split $G/\Fp$ such that
$G\times\Fpbar\cong\uG$ with relative Frobenius $F\circ\pi^{-1}$ (see the proof
of Lemma~\ref{lem:bijection of weights}). We then choose $\rho'=\eta$ (noting that
$\pi\eta=\eta$), take $n=1$, and use Lemma~\ref{lem: stabiliser in alcove}.
\end{proof}
\begin{lem}
  \label{lem:condition for Z:L nonzero}Suppose $p\ge
  2\max\{\langle\eta,\alpha^\vee\rangle:\alpha\in \Phi^+\}$. For 
  weights $\lambda\in
  X(\uT)$, $\mu\in X_1(\uT)$ we
  have \begin{equation}\label{eqn:condition for Z:L
      nonzero}[\widehat{Z}_1(\lambda):\widehat{L}_1(\mu)]\ne 0\iff
  \sigma\cdot(\lambda-p\eta)\uparrow w_0\cdot(\mu-p\eta)  \textrm{ for all }  \sigma\in W.\end{equation}
\end{lem}
\begin{rem}
  Here we do not need to assume that $Z(\uG)$ is connected or that
  $\eta$ is $\Gal(L/\Qp)$-invariant. We remark that 
  \cite[Cor.~2.7]{DS} relies on Corollary~\ref{cor:ye-wang}(ii).
\end{rem}
\begin{proof}
  First suppose that $\uG=\uG^\der$. Then \cite[Cor.~2.7]{DS} shows that
  $[\widehat{Z}_1(\lambda):\widehat{L}_1(\mu)]\ne 0$ if and only if $\mu\in\cap_{y\in
    W_v}I^{-1}_{y,1}\cdot\operatorname{SL}(y\cdot\lambda)$ in their notation,
  where $v\in-\eta+pX(\uT)$ is arbitrary. Taking $v=-\eta$ and $y\in W_{-\eta} = W$, we find
  that this is equivalent to $\mu+p(y\eta-\eta)\uparrow y\cdot\lambda$
  for all
  $y\in W$, or equivalently $w_0y\cdot(\lambda-p\eta)\uparrow
  w_0\cdot(\mu-p\eta)$ for all $y\in W$.

In the general case, note that both sides of~(\ref{eqn:condition for Z:L
      nonzero}) imply that $\mu\in W_p\cdot\lambda$. For $\mu\in
    W_p\cdot\lambda$ we have
    $[\widehat{Z}_1(\lambda):\widehat{L}_1(\mu)]_{\uG}=[\widehat{Z}_1(\lambda):\widehat{L}_1(\mu)]_{\uG^\der}$
    and
    $\mu\uparrow\lambda$ if and only if
    $\mu|_{\uT\cap\uG^\der}\uparrow\lambda|_{\uT\cap\uG^\der}$. (Note that $W_p\cdot\lambda \subset \lambda + \Z \Phi$.
    The restriction map $X(\uT) \onto X(\uT\cap\uG^\der)$ induces a bijection $\lambda + \Z \Phi \isoto \lambda|_{\uT\cap\uG^\der} + \Z \Phi$,
    which identifies $\le$, $W_p$-actions, and hence $\uparrow$, on both sides. Also, $\widehat{Z}_1(\lambda)$, $\widehat{L}_1(\mu)$ restrict
    to corresponding objects for $\uG^\der$, see the proof of~\cite[Prop.~5.7]{herzigthesis}.)
    This reduces
    the claim to the case $\uG=\uG^\der$.
\end{proof}
\begin{prop}
  \label{prop: mu sufficiently deep condition on JH const of Rbar} For $\mu$ lying
  sufficiently deep in the alcove $C_0$, and for $\lambda\in
  X_1(\uT)$, we have that  $F(\lambda)$ is
  a Jordan--H\"older constituent of $\overline{R(w,\mu+\eta)}$ if and only if
  there exists $\nu\in X(\uT)$ such that \[\sigma\cdot(\mu+(w\pi-p)\nu)\uparrow
  w_0\cdot(\lambda-p\eta)\qquad  \textrm{for all } \sigma\in W.\]
\end{prop}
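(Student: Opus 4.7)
\medskip
\noindent\emph{Proof plan.} The plan is to combine the explicit formula of Proposition~\ref{prop:reduction of R for mu sufficiently deep} with the Jantzen sum criterion in Lemma~\ref{lem:condition for Z:L nonzero}, and then to eliminate the parameter $u$ by an explicit change of variables. Throughout we work with $\mu$ deep enough in $C_0$ that the hypotheses of both results apply, and that all weights appearing with non-zero multiplicity below are $p$-regular; the required lower bound on the depth depends only on $\Psi_0(\uG,\uB,\uT)$ and $\pi$.

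First I would invoke Proposition~\ref{prop:reduction of R for mu sufficiently deep} together with Lemma~\ref{bijection of weights} to rephrase the question as follows: $F(\lambda)$ is a Jordan--H\"older constituent of $\overline{R(w,\mu+\eta)}$ if and only if there exist $u\in D_1/pX^0(\uT)$ and $\nu\in X(\uT)$ such that $u\cdot(\mu+w\pi\nu)\equiv\lambda\pmod{(F-1)X^0(\uT)}$ and the multiplicity $[\widehat{Z}_1(\mu+p\eta):\widehat{L}_1(p\nu+u\cdot\mu)]$ is non-zero. Using that $\mu$ is sufficiently deep, Lemma~\ref{lem:condition for Z:L nonzero} then converts the multiplicity condition into
\[
\sigma\cdot\mu\ \uparrow\ w_0\cdot(p\nu+u\cdot\mu-p\eta)\qquad\text{for all }\sigma\in W.
\]

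Next I would eliminate $u$. Writing $u=t_{p\xi}\tau$ with $\tau\in W$ and $\xi\in X(\uT)$, so that $u\cdot x=\tau\cdot x+p\xi$ for every $x\in X(\uT)$, the equality $u\cdot(\mu+w\pi\nu)\equiv\lambda$ determines $p\xi$ (up to $(F-1)X^0(\uT)$) as $\lambda-\tau\cdot(\mu+w\pi\nu)$. Substituting this back in, the displayed condition becomes
\[
\sigma\cdot\mu\ \uparrow\ w_0\cdot\bigl(\lambda+(p-\tau w\pi)\nu-p\eta\bigr)\qquad\text{for all }\sigma\in W.
\]
Because $\uparrow$ is preserved under translating both sides by a common element of $X(\uT)$ (keeping the $W_p$-orbit intact), translating by $-w_0(p-\tau w\pi)\nu$ gives the equivalent form
\[
\sigma\cdot\mu+w_0(\tau w\pi-p)\nu\ \uparrow\ w_0\cdot(\lambda-p\eta)\qquad\text{for all }\sigma\in W.
\]

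The heart of the argument is to show that the existence of a pair $(\tau,\nu)$ satisfying this condition is equivalent to the existence of a single $\nu\in X(\uT)$ satisfying the condition stated in the proposition, namely $\sigma\cdot(\mu+(w\pi-p)\nu)\uparrow w_0\cdot(\lambda-p\eta)$ for all $\sigma\in W$. The implication $\Leftarrow$ is the easier direction: one takes $\tau=1$ and notes that after translation the left-hand side above equals $\sigma\cdot(\mu+(w\pi-p)\nu)+(w_0-\sigma)(w\pi-p)\nu$, and one checks that the discrepancy $(w_0-\sigma)(w\pi-p)\nu$ can be absorbed into a change of $W_p$-orbit representative. For the implication $\Rightarrow$ one uses that the assumption "$\sigma\cdot\mu\ \uparrow\ x$ for every $\sigma\in W$" forces $x$ to lie in a specific cone of alcoves above the $W$-orbit of $C_0$ (a consequence of Jantzen \cite[II.6]{MR2015057} in the regular range), and this in turn allows one to replace $\tau$ by $1$ after a further change of the variable $\nu$ using the invertibility of $w\pi-p$ on $X(\uT)\otimes\Q$ for $p$ large. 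The main obstacle will be making this last substitution rigorously while simultaneously matching the $(F-1)X^0(\uT)$-indeterminacy on the Serre-weight side with the choice of $W_p$-orbit representative on the alcove side; this is precisely the place where the "$\mu$ sufficiently deep" hypothesis is indispensable, as it guarantees that all relevant weights remain $p$-regular and in the correct linkage class throughout the manipulation.
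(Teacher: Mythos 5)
Your overall strategy --- combine Proposition~\ref{prop:reduction of R for mu sufficiently deep} with Lemma~\ref{lem:condition for Z:L nonzero} and then unwind the parameter $u$ --- is the right starting point, but two concrete errors derail the argument. First, you apply Lemma~\ref{lem:condition for Z:L nonzero} with second argument $p\nu+u\cdot\mu$, which is not in $X_1(\uT)$; the lemma requires a restricted second argument. One must first rewrite $[\widehat{Z}_1(\mu+p\eta):\widehat{L}_1(p\nu+u\cdot\mu)]=[\widehat{Z}_1(\mu+p(\eta-\nu)):\widehat{L}_1(u\cdot\mu)]$ and only then apply the lemma, obtaining $\sigma\cdot(\mu-p\nu)\uparrow w_0\cdot(u\cdot\mu-p\eta)$ for all $\sigma\in W$. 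This is not equivalent to your condition $\sigma\cdot\mu\uparrow w_0\cdot(p\nu+u\cdot\mu-p\eta)$: translating the correct condition by $pw_0\nu$ turns its left side into $\sigma\cdot\mu+p(w_0-\sigma)\nu$, so the correct statement carries a $\sigma$-dependent perturbation that yours discards --- and that dependence is precisely what produces the $\sigma\cdot(\mu+(w\pi-p)\nu)$ in the proposition. Second, your assertion that $\uparrow$ is preserved under translating both sides by a common element of $X(\uT)$ is false: only translations in $pX(\uT)$ preserve alcoves and the $\uparrow$ order (cf.\ Lemma~\ref{lm:translate}), and general translations do not even preserve $W_p$-orbits. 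The translation by $-w_0(p-\tau w\pi)\nu$ that you invoke is not of the allowed form.

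Beyond these, the step you defer to the ``heart of the argument'' is exactly where the sufficiently-deep hypothesis does its real work, and your sketch replaces it with unverified assertions. What is needed (and what the paper does, following the proof of \cite[Prop.~5.7]{herzigthesis}) is a perturbation statement: starting from $\sigma\cdot(\mu-p\nu)\uparrow w_0\cdot(u\cdot\mu-p\eta)$ for all $\sigma$, one adds $\sigma(w\pi\nu)$ to the left side and $w_0\overline{u}(w\pi\nu)$ to the right side (with $\overline{u}$ the linear part of $u$) to deduce $\sigma\cdot(\mu+(w\pi-p)\nu)\uparrow w_0\cdot(u\cdot(\mu+w\pi\nu)-p\eta)$. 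This is legitimate because only finitely many $\nu$ modulo $X^0(\uT)$ can occur, independently of $p$, so the perturbations are bounded while $\mu$ is arbitrarily deep in its alcove, and the relevant alcove relations are unchanged; the converse direction reverses the same perturbation. Your proposal never proves or cites this stability, and instead attributes the role of the depth hypothesis to $p$-regularity and linkage bookkeeping, which is not where the difficulty lies.
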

\begin{proof}
  For $\mu$  lying sufficiently deep in $C_0$ we have $p\ge
  2\max\{\langle\eta,\alpha^\vee\rangle:\alpha\in\Phi^+\}$, so we can
  (and do) assume this inequality. From
  Proposition~\ref{prop:reduction of R for mu sufficiently deep} we know that \[\overline{R(w,\mu+\eta)}=\sum_{u\in
    D_1/pX^0(\uT)}\sum_{\nu\in
    X(\uT)}[\widehat{Z}_1(\mu+p(\eta-\nu)):\widehat{L}_1(u\cdot\mu)]\, F(u\cdot(\mu+w\pi\nu)).\]By
  Lemma~\ref{lem:condition for Z:L nonzero}, the $(u,\nu)$ term of the
  double sum is non-zero if
  and only if
  \begin{equation}
    \label{eq:condfornonzero}
    \sigma\cdot(\mu-p\nu)\uparrow w_0\cdot(u\cdot\mu-p\eta)\qquad  \textrm{for all } \sigma\in W.
  \end{equation}
As in the proof of~\cite[Prop.~5.7]{herzigthesis}, for $\mu$ sufficiently
deep in $C_0$ we obtain $\sigma\cdot(\mu+(w\pi-p)\nu)\uparrow
w_0\cdot(u\cdot(\mu+w\pi\nu)-p\eta)$ for all $\sigma\in W$, which proves the ``only
if'' part of the proposition. (Note that in~(\ref{eq:condfornonzero}) there are
only finitely many possibilities for $\nu$ modulo $X^0(\uT)$, independent of
$\mu$.)

Conversely, if $\sigma\cdot(\mu+(w\pi-p)\nu)\uparrow
w_0\cdot(\lambda-p\eta)$ for all $\sigma\in W$, we may reverse the above argument,
as explained in the proof of~\cite[Prop.~5.7]{herzigthesis}.
\end{proof}
\begin{lem}
  \label{lem:suff deep implies maximally split} For $\mu$ lying sufficiently deep
  in the alcove $C_0$, we have that $(\uT_w,\theta_{w,\mu})$ is maximally split for all $w\in W$.
\end{lem}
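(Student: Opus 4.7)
The notion of being maximally split is, by definition, the condition that under the Deligne--Lusztig bijection (ii) recalled just before Proposition~\ref{prop:cd}, the pair $(\uT_w, \theta_{w,\mu})$ corresponds to a pair $(\uT^*_*, s)$ such that $\uT^*_*$ sits inside an $F^*$-stable Borel of the centralizer $Z_{\uG^*}(s)$. A sufficient condition for this to hold is that $s$ be regular, in which case $Z_{\uG^*}(s) = \uT^*_*$ and there is nothing to check. So the strategy is to show that, for $\mu$ sufficiently deep in $C_0$ and every $w \in W$, the semisimple element $s$ attached to $(\uT_w,\theta_{w,\mu})$ is regular.

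First I would identify (the $\uG^*(\Fpbar)$-conjugacy class of) $s$ in terms of $(w,\mu)$. Chasing through the commutative diagram of Proposition~\ref{prop:cd}, the class of $s$ coincides with that of the image of a generator $g_{\mathrm{can}}$ of tame inertia under the $L$-parameter $\tau(w,\mu)$. Writing this out using the formula for $\tau(w,\mu)$ and conjugating into $\uT^*$, one obtains (up to $\uG^*(\Fpbar)$-conjugacy)
\[
s_0 = \nu(\zeta_{p^d-1}) \in \uT^*(\Fpbar), \qquad \nu := \sum_{i=0}^{d-1} (F^* w^{-1})^i \mu \in Y(\uT^*),
\]
where $d\ge 1$ is such that $(F^*w^{-1})^d = (F^*)^d$ on $Y(\uT^*)$ and $\mu$ is viewed in $Y(\uT^*)$ via the duality $\phi$. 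Regularity of $s_0$ is then equivalent to the concrete statement that, for every root $\alpha^* \in \Phi(\uG^*,\uT^*)$,
\[
\alpha^*(s_0) = \zeta_{p^d-1}^{\langle\alpha^*,\nu\rangle} \ne 1, \quad \text{i.e.} \quad (p^d-1) \nmid \langle\alpha^*,\nu\rangle.
\]

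Finally I would verify this non-divisibility for $\mu$ sufficiently deep in $C_0$. The integer $d$ depends only on $w$ and the (finite) $\Gamma_{\Qp}$-action on $\Psi_0(\uG,\uB,\uT)$, so only finitely many values of $d$ and of the orbit data arise as $(w,\mu)$ varies. Using $F^* = p\varphi$ on $Y(\uT^*)$, one finds that $\langle\alpha^*,\nu\rangle$ admits a base-$p$-like expansion with digits of the form $\langle(w\varphi^{-1})^i\alpha^*,\mu\rangle$; for $\mu$ lying $\delta$-deep in $C_0$ these digits are uniformly bounded (by a constant depending only on the root datum and $\delta$), and they are non-zero because $\mu$ is then $p$-regular. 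The case $d \ge 2$ is easy: both $|\langle\alpha^*,\nu\rangle|$ is $O(p)$ while $p^d - 1 \ge p^2 - 1$, so non-divisibility is automatic for $p$ greater than a bound depending only on the root datum. The genuinely delicate case is $d = 1$, where $w$ acts trivially on $Y(\uT^*)$, $\nu = \mu$, and one must rule out $(p-1) \mid \langle\alpha^*,\mu\rangle$; here I would use that $\mu$ being sufficiently deep in $C_0$ forces $0 < \langle\alpha^*,\mu\rangle < p-1$ for every root $\alpha^*$ (the upper bound uses the fact that coefficients of $\alpha^*$ in simple coroots are bounded by the height of the highest root, and the shift by $\eta$ is absorbed into the depth condition). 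This is the main technical obstacle, but it is a bounded combinatorial check depending only on $\Psi_0(\uG,\uB,\uT)$.
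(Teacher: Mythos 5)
Your overall strategy is the same as the paper's: identify the dual semisimple element $s$ as $\nu(\zeta_{p^d-1})$ with $\nu=\sum_{i=0}^{d-1}(F^*w^{-1})^i\mu$, reduce maximal splitness to $Z_{\uG^*}(s)=\uT^*$ (the paper gets there by noting, via Deligne--Lusztig Thm.~5.13 and connectedness of $Z(\uG)$, that $\Stab_W(s)$ is generated by reflections, so it suffices to show no reflection fixes $s$ --- the same congruence condition on roots that you write down), and then rule out the congruence $\langle\alpha^*,\nu\rangle\equiv 0\pmod{p^d-1}$ using depth in $C_0$.

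However, your final quantitative step contains a genuine error. You claim the digits $c_i=\langle(w\varphi^{-1})^i\alpha^*,\mu\rangle$ are ``uniformly bounded by a constant depending only on the root datum and $\delta$'' and hence that for $d\ge 2$ one has $|\langle\alpha^*,\nu\rangle|=O(p)<p^d-1$, making non-divisibility automatic. This is false: for $\mu$ deep in $C_0$ the pairings $\langle\mu,\beta^\vee\rangle$ are only bounded by (roughly) $p$, not by a constant, so
\[
\langle\alpha^*,\nu\rangle=\sum_{i=0}^{d-1}p^i c_i
\]
can have absolute value as large as about $p^d-1$, i.e.\ comparable to the modulus. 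Size alone therefore does not rule out divisibility when $d\ge 2$, and the case you dismiss as easy in fact requires the same digit analysis you carry out for $d=1$. The correct argument (and the one in the paper) runs uniformly in $d$: since $|c_i|<p-1$ for $\mu$ deep in $C_0$, the sum has absolute value strictly less than $p^d-1$, so divisibility forces it to vanish; reducing modulo $p$ then gives $c_0=\langle\mu,\alpha^\vee\rangle\equiv 0\pmod p$, hence $c_0=0$, which contradicts $\mu$ lying $(h-1)$-deep in $C_0$ (where $h=\max\{\langle\eta,\beta^\vee\rangle+1\}$). With that correction your $d=1$ reasoning extends to all $d$ and the proof closes.
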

\begin{proof}
  The dual pair is $(\T^*,s)$ with $\T^*=\uT^*_{F^*(w^{-1})}$ and
  $s=g^*_{F^*(w^{-1})}s'(g^*_{F^*(w^{-1})})^{-1}$, and where $s':=N_{(F^*\circ
    w^{-1})^d/F^*\circ w^{-1}}{\mu}(\zeta_{p^d-1})$ with $d > 0$ chosen such that $(F^*\circ
  w^{-1})^d=p^d$. We can define
  $\hat{s}:X(\T^*)\to\Qpbartimes$ by $\hat s(\mu) := \widetilde{\mu(s)}$. Then
  $\widehat{w(s)} = w(\hat s)$ for $w \in N(\T^*)/\T^*$, so 
  $\Stab_{N(\T^*)/\T^*}(\hat{s})\cong\Stab_{N(\T^*)/\T^*}(s)\cong\Stab_W(s')$. By \cite[Thm.~5.13]{bib:DL},
  this group is generated by reflections,
  as $Z(\uG)$ is connected. A reflection $s_\alpha\in W$ fixes $s'\in \uT^*$
  if and only if
  \begin{align*}
    & (1-s_\alpha)\left(\sum_{i=0}^{d-1}(F^*\circ
      w^{-1})^i\mu\right)(\zeta_{p^d-1})=1\\ \iff & (1-s_\alpha)\left(\sum_{i=0}^{d-1}(F^*\circ
      w^{-1})^i\mu\right)\equiv 0\pmod{(p^d-1)X(\uT)}\\ \iff & \biggr\langle \sum_{i=0}^{d-1}(p\pi^{-1}
      w^{-1})^i\mu,\alpha^\vee\biggr\rangle\equiv 0 \pmod{p^d-1},
  \end{align*}
where we used that $\langle\alpha,Y(\uT)\rangle=\Z$, as $Z(\uG)$ is
connected. Equivalently, \[
\sum_{i=0}^{d-1}p^i\langle\mu,(w\pi)^i\alpha^\vee\rangle\equiv 0 \pmod{p^d-1}.\]
If $\mu\in C_0$, the left-hand side has to be zero, so
$\langle\mu,\alpha^\vee\rangle\equiv 0\pmod{p}$, and this is impossible if $\mu$
lies $(h-1)$-deep in $C_0$, where
$h=\max\{\langle\eta,\beta^\vee\rangle+1:\beta\in\Phi^+\}$. Thus for $\mu$ lying
sufficiently deep in $C_0$, the Weyl group of $\T^*$ in the connected reductive
group $Z_{\uG^*}(s)$ is trivial, so $Z_{\uG^*}(s)=\T^*$, which implies that
$(\T^*,s)$ is maximally split.
\end{proof}
The group $X(\uT)\rtimes W$ acts on the set $W\times
X(\uT)$ by
\begin{equation}
  \label{eq:extendedaffineweylaction}
  {}^{(\nu,\sigma)}(w,\mu)=(\sigma w\pi\sigma^{-1}\pi^{-1},\sigma\mu+(p-\sigma w\pi\sigma^{-1})\nu),
\end{equation}see~\cite[\S3.1]{bib:Jan-DL}. This action has the same orbits as
the action considered in~\cite[\S4.1]{herzigthesis}, as $F=p\pi^{-1}$ on
$X(\uT)$ and $F(\sigma)=\pi\sigma\pi^{-1}$ in $\Aut(\uT)$ for $\sigma\in W$. In particular,
\cite[Lem.~4.2]{herzigthesis} still applies. Note also that $\tau(w,\mu)$
depends only on the orbit of $(w,\mu)$. 

\begin{defn}\label{defn:suff-generic}
We say that a tame inertial $L$-parameter $\tau$ is \emph{$\delta$-generic} 
if $\tau \cong \tau(w,\mu)$  for some $w\in W$ and $\mu$ lying
$\delta$-deep in $C_0$.  As in~\cite[\S6.5]{herzigthesis}  we say that a statement is true for all \emph{sufficiently generic} tame inertial
$L$-parameters $\tau$ if it holds for all $\tau\cong\tau(w,\mu)$ with $w\in W$
and for $\mu$ lying sufficiently deep in $C_0$; in other words,  if there exists $\delta > 0$ depending only in
the based root datum $\Psi_0(\uG,\uB,\uT)$ together with its
automorphism $\pi$ (and in particular not on $p$) such that the
statement holds for all $\tau$ that are $\delta$-generic.
\end{defn}

We remark that this  definition of $\delta$-generic differs slightly
from the one given at~\cite[Def.~6.27]{herzigthesis}, as we do not
require that the pair $(w,\mu)$ occurring in the definition be
``good'' (cf.\ \cite[Def.~6.19]{herzigthesis}). On the other
hand, this change does not affect what it means for a statement to be true
for all sufficiently generic $\tau$:\ indeed, \cite[Lem.~6.24]{herzigthesis} (whose analogue in this paper is
Lemma~\ref{lem:suff deep implies maximally split}) shows that the pair
$(w,\mu)$ is automatically good for $\mu$ lying sufficiently deep in
$C_0$.

\begin{prop}
  \label{prop: sufficiently generic tame inertial L-param}
  For 
  all sufficiently generic tame inertial $L$-parameters $\tau:I_{\Qp}\to \Ghat(\Fpbar)$
 and  for all $\lambda\in X_1(\uT)$, we have that $F(\lambda)\in
  \W^?(\tau)$ if and only if $\tau\cong\tau(w,\lambda'+\eta)$ for some dominant
  $\lambda'\uparrow\lambda$ and some $w\in W$.
\end{prop}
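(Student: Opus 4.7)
The plan is to translate the condition $F(\lambda) \in \W^?(\tau)$ into a concrete alcove-geometric condition via Proposition~\ref{prop: mu sufficiently deep condition on JH const of Rbar}, and then apply the theorem of Ye--Wang from Appendix~\ref{sec:wangs-result-uparrow}. First, using $w_0\eta = -\eta$ one checks directly that $\mathcal{R}$ is its own inverse on $\W_\reg$ and that $w_0 \cdot (w_0 \cdot (\lambda - p\eta) - p\eta) = \lambda$, so $F(\lambda) \in \W^?(\tau)$ if and only if $F(w_0 \cdot (\lambda - p\eta))$ is a Jordan--H\"older constituent of $\overline{V_\phi(\tau)}$. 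By the sufficient genericity assumption, I fix a representative $\tau \cong \tau(w, \mu+\eta)$ with $\mu$ sufficiently deep in $C_0$; Lemma~\ref{lem:suff deep implies maximally split} ensures that $(\uT_w, \theta_{w, \mu+\eta})$ is maximally split, so Proposition~\ref{prop:l-parameter} gives $V_\phi(\tau) \cong R(w, \mu+\eta)$. Applying Proposition~\ref{prop: mu sufficiently deep condition on JH const of Rbar} to $R(w, \mu+\eta)$, with $w_0 \cdot (\lambda - p\eta)$ in place of the ``$\lambda$'' there, yields the equivalent condition: there exists $\nu \in X(\uT)$ such that $\sigma \cdot (\mu + (w\pi - p)\nu) \uparrow \lambda$ for all $\sigma \in W$.

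Setting $\lambda'' := \mu + (w\pi - p)\nu$, the extended affine Weyl group action \eqref{eq:extendedaffineweylaction} applied at $(-\nu, 1)$ shows that $\tau \cong \tau(w, \lambda''+\eta)$. The theorem of Ye--Wang (Appendix~\ref{sec:wangs-result-uparrow}) then asserts that for $\lambda \in X_1(\uT)$ the condition ``$\sigma \cdot \lambda'' \uparrow \lambda$ for all $\sigma \in W$'' is equivalent to the conjunction ``$\lambda''$ is dominant and $\lambda'' \uparrow \lambda$'' (the converse direction being essentially immediate, while the non-trivial direction is supplied by Ye--Wang). Combined with the first paragraph, this gives the ``only if'' direction of the proposition, with the required pair taken to be $(w, \lambda'')$.

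For the ``if'' direction, suppose $\tau \cong \tau(w', \lambda' + \eta)$ with $\lambda'$ dominant and $\lambda' \uparrow \lambda$. Since the pairs $(w, \mu+\eta)$ and $(w', \lambda'+\eta)$ lie in the same $X(\uT) \rtimes W$-orbit (by the analogue of \cite[Lem.~4.2]{herzigthesis}), I write $(w', \lambda' + \eta) = {}^{(\nu', \sigma)}(w, \mu + \eta)$; a short calculation using \eqref{eq:extendedaffineweylaction} yields $\sigma^{-1} \cdot \lambda' = \mu + (w\pi - p)\nu$ with $\nu := -\sigma^{-1}\nu'$. Consequently, for all $\sigma'' \in W$ one has $\sigma'' \cdot (\mu + (w\pi - p)\nu) = (\sigma''\sigma^{-1}) \cdot \lambda'$, and the reindexing $\tilde\sigma := \sigma''\sigma^{-1}$ reduces the desired condition to ``$\tilde\sigma \cdot \lambda' \uparrow \lambda$ for all $\tilde\sigma \in W$'', which holds by Ye--Wang since $\lambda'$ is dominant and $\lambda' \uparrow \lambda$. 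The main obstacle in the whole argument is the theorem of Ye--Wang itself; the intervening translations are formal manipulations of constructions and results already established in this section.
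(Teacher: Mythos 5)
Your overall strategy is the paper's: use Lemma~\ref{lem:suff deep implies maximally split} and Proposition~\ref{prop:l-parameter} to identify $\overline{V_\phi(\tau)}$ with $\overline{R(w,\mu+\eta)}$, apply Proposition~\ref{prop: mu sufficiently deep condition on JH const of Rbar} (undoing $\mathcal R$) to reduce the membership $F(\lambda)\in\W^?(\tau)$ to the condition ``there exists $\nu$ with $\sigma\cdot(\mu+(w\pi-p)\nu)\uparrow\lambda$ for all $\sigma\in W$'', and then translate with the action \eqref{eq:extendedaffineweylaction}. That part is fine (modulo the unaddressed but harmless case $\lambda\in X_1(\uT)\setminus X_\reg(\uT)$, where both sides are false for generic $\tau$).

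The gap is in your key combinatorial equivalence, and Ye--Wang is not the relevant tool. You assert that ``$\sigma\cdot\lambda''\uparrow\lambda$ for all $\sigma\in W$'' is equivalent to ``$\lambda''$ is dominant and $\lambda''\uparrow\lambda$''. The forward implication is false: take any dominant $\lambda'$ with $\lambda'\uparrow\lambda$ and set $\lambda''=\sigma_1\cdot\lambda'$ with $\sigma_1\neq 1$; then $\sigma\cdot\lambda''=(\sigma\sigma_1)\cdot\lambda'\uparrow\lambda'\uparrow\lambda$ for every $\sigma$, yet $\lambda''$ is not dominant. Moreover Theorem~\ref{thm:dominant-sequence} and Corollary~\ref{cor:ye-wang} concern refining an $\uparrow$-chain between two \emph{dominant} alcoves into reflection steps through dominant alcoves; they say nothing about forcing dominance of $\lambda''$, and they are not needed in this proposition (they enter later, e.g.\ in Lemma~\ref{lem:fh-jan15-lemma3} and Proposition~\ref{prop:fh-jan15-4}). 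The correct statement is that the ``for all $\sigma$'' condition is equivalent to ``$\sigma_0\cdot\lambda''\uparrow\lambda$'' for the unique $\sigma_0$ making $\sigma_0\cdot\lambda''$ dominant: one direction is the trivial specialisation $\sigma=\sigma_0$, and the other follows from $\sigma\cdot\lambda'\uparrow\lambda'$ for dominant $\lambda'$ (\cite[Lem.~9.4]{herzigthesis}, or \cite[II.6.8]{MR2015057}) plus transitivity. Consequently your ``only if'' direction ends with the witness $(w,\lambda'')$, whose second entry need not be dominant, so it does not establish the statement as written; you must pass to $\lambda'=\sigma_0\cdot\lambda''$ and replace $w$ by $\sigma_0 w\pi\sigma_0^{-1}\pi^{-1}$ via \eqref{eq:extendedaffineweylaction}, exactly the manipulation you already perform in the ``if'' direction. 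With that repair (and the same correction of the citation in the ``if'' direction, which likewise needs only the elementary fact above, not Ye--Wang) the argument goes through.
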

\begin{rem}
  \label{rem: equiv for lambda suff deep}Alternatively the equivalence holds for
  $\lambda$ lying sufficiently deep in a restricted alcove. (See~\cite[Prop.~6.28]{herzigthesis}.)
\end{rem}
\begin{proof}
  Write $\tau\cong\tau(w,\mu+\eta)$. For $\mu$ lying sufficiently deep in $C_0$,
  Proposition~\ref{prop: mu sufficiently deep condition on JH const of
    Rbar} gives that
  $\W^?(\tau)$ consists of the Serre weights $F(\lambda)$ for $\lambda\in
  \Xreg(\uT)$ such that there exists $\nu\in X(\uT)$
  with \[\sigma\cdot(\mu+(w\pi-p)\nu)\uparrow\lambda\qquad  \textrm{for all } \sigma\in W.\]
  Equivalently, this relation holds for the unique $\sigma$ making the left-hand side dominant. The
  proof concludes as in~\cite[Prop.~6.28]{herzigthesis}, using
  Lemma~\ref{lem:suff deep implies maximally split} and the formula~\eqref{eq:extendedaffineweylaction}.
\end{proof}

\subsection{The main result}\label{subsec: the main result}

The main result of this section is Theorem~\ref{thm:main-result}, which shows that for
all sufficiently generic tame inertial $L$-parameters $\tau$  the sets $W^?(\tau)$ and
$\Wexpl(\tau)$ coincide, and moreover that the Levi predictions of
Definition~\ref{def:wexpl-general} do not produce any new weights
beyond those already in $\cC(\Wobv(\tau))$.

Define $\|\lambda\|:=\sum_{\alpha>0}\langle 
\lambda,\alpha^\vee\rangle$ for $\lambda\in X(\uT)$. Then we have:
\begin{enumerate}
\item $\lambda<\mu\implies \|\lambda\|<\|\mu\|$. 
\item $\lambda\in X(\uT)_+\implies \|\lambda\|\ge 0$, with equality if and only if 
  $\lambda\in X^0(\uT)$. 
\item $\|\lambda\|=\|\pi(\lambda)\|$ for all $\lambda\in X(\uT)$. 
\end{enumerate}

\begin{lemma}\label{lem:fh-jan15-one}
Fix $N \in \Z_{> 0}$. Suppose that $\lambda \in X(\uT)_{+}$ with
$\|\lambda\| < Np$.
\begin{enumerate}
\item For all sufficiently generic $\tau$, if $\tau \cong
  \tau(w,\lambda)$ then $\lambda$ is as deep as we like in its
  alcove.

\item For $\lambda' \in X_1(\uT)$ lying sufficiently deep in a restricted
  alcove,  if $F(\lambda') \in \JH_{G(\Fp)} W(\lambda)$ then
  $\lambda$ is as deep as we like in its alcove.
\end{enumerate}
\end{lemma}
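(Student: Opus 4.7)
The plan is to deduce both parts from the observation that the hypothesis $\|\lambda\|<Np$, together with properties (i)--(iii) of $\|\cdot\|$ listed before the lemma, forces $\lambda$ to lie within a bounded neighbourhood (independent of $p$) of a reference weight that is itself arbitrarily deep in its alcove.

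For part (i), the genericity of $\tau$ provides a pair $(w_0,\mu_0)$ with $\mu_0$ arbitrarily deep in $C_0$ such that $\tau\cong\tau(w_0,\mu_0+\eta)$, so $(w,\lambda)$ and $(w_0,\mu_0+\eta)$ lie in the same $X(\uT)\rtimes W$-orbit under the action~\eqref{eq:extendedaffineweylaction}. This yields $\sigma\in W$ and $\nu\in X(\uT)$ with
\[
\lambda \;=\; \sigma(\mu_0+\eta) \;+\; (p-\sigma w_0\pi\sigma^{-1})\nu.
\]
I would then show that $\|\lambda\|<Np$ forces $\|\nu\|$ to be bounded by a constant depending only on $N$ and the based root datum:\ pairing the identity with positive coroots and using the dominance of $\lambda$, the contribution $p\nu$ cannot have large coroot pairings that are not almost cancelled by the uniformly-bounded shift $\sigma w_0\pi\sigma^{-1}\nu$, lest $\|\lambda\|$ exceed $Np$. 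Since a bounded perturbation of the arbitrarily deep weight $\sigma(\mu_0+\eta)$ remains arbitrarily deep in the same alcove, $\lambda$ is as deep as we like in its alcove.

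For part (ii), the strong linkage principle implies that each Jordan--H\"older factor $F(\mu)$ of the dual Weyl module $W(\lambda)$ as an algebraic $\uG$-module satisfies $\mu\uparrow\lambda$, and in particular $\mu\le\lambda$ in the dominance order, so $\|\mu\|\le\|\lambda\|<Np$. Steinberg's tensor product theorem then expresses $F(\mu)|_{G(\Fp)}$ in terms of the base-$p$ expansion of $\mu$ into $X_1(\uT)$-digits; the bound on $\|\mu\|$ restricts the number and sizes of the nonrestricted Steinberg digits uniformly in $p$, so every simple $G(\Fp)$-constituent $F(\lambda')$ of $W(\lambda)$ lies within a $\tW_p$-translate of $\lambda$ by a bounded amount, independently of $p$. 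The depth hypothesis on $\lambda'$ then transfers to $\lambda$. The main obstacle, in both parts, is to make the word ``bounded'' genuinely uniform in $p$:\ this requires careful bookkeeping with the pairings $\langle\,\cdot\,,\alpha^\vee\rangle$, with the $W$- and $\pi$-invariance of $\|\cdot\|$, and in part (ii) with how the Steinberg digits combine with the $\uparrow$-relation in the presence of the Frobenius twist.
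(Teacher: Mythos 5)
Your proposal follows essentially the same route as the paper's proof. Two places where the paper supplies uniformity that your sketch elides: in (i), $\lambda$ is not a bounded perturbation of $\sigma(\mu_0+\eta)$ itself --- the term $p\nu$ in $(p-\sigma w_0\pi\sigma^{-1})\nu$ is large --- but of its translate by $p\nu\in pX(\uT)$, which is harmless only because such translations preserve the affine hyperplane arrangement and hence depth (this, together with the injectivity of $p-\pi$ on $X(\uT)/\Z\Phi$, is the content of the argument the paper cites from \cite{herzigthesis}). In (ii), a single Steinberg decomposition does not finish: the $\uG$-constituents $F(\mu^{(1)})$ of $F(\mu_0)\otimes F(\mu_1)^{(\pi)}$ need not be restricted, so the paper sets up a recursion, which terminates after at most $2N$ steps because $\|\mu^{(i+1)}\|\le\|\mu^{(i)}\|-(p-1)$, and at each step the depth is transferred with a loss of at most $N$ (via writing $F(\mu_0)\otimes F(\mu_1)^{(\pi)}$ in terms of dual Weyl modules $W(\mu_0'+\pi\varepsilon)$ with $w\varepsilon\le\mu_1$ for all $w\in W$), yielding the explicit constant $2N^2$; this recursion with its quantitative bookkeeping is exactly the ``careful bookkeeping'' you flag as the main obstacle, and is the step you would need to write out.
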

For instance, to be precise, the statement in (i) means that for each
fixed $\delta > 0$ and for all sufficiently generic $\tau$, if $\tau \cong
  \tau(w,\lambda)$ then $\lambda$ is $\delta$-deep in its  alcove; the
  meaning of (ii) is similar.

\begin{proof}
  (i) There is a finite collection of alcoves (independent of $p$) such that any
  $\lambda$ allowed by $\|\lambda\| < Np$ lies in the closure of one of them.
  Therefore, as explained after~\cite[Def.~6.27]{herzigthesis}, modulo
  $(p-1)X^0(\uT)$ there are only finitely many possible $\lambda$ (independent
  of $p$) and, for $\tau$ sufficiently generic, each one is $\delta$-deep in its alcove. (In the paragraph before~\cite[Prop.~6.28]{herzigthesis} note that
  $p-\pi$ is injective on the free abelian group $X(\uT)/\Z\Phi$, as $\pi$ has
  finite order.)

(ii)   In the argument that follows, if $\nu$ is an element of
$X(\uT)_+$ we will often write $\nu = \nu_0 + p \nu_1$
with $\nu_0\in X_1(\uT)$ and $\nu_1\in X(\uT)_+$ (so that $\nu_1$ is unique
modulo $X^0(\uT)$).   

Choose $\mu\in X(\uT)_+$ such that $F(\lambda')\in \JH_{G(\Fp)}
F(\mu)$ and $F(\mu) \in \JH_{\uG} W(\lambda)$. Then $\mu \uparrow
\lambda$, so $\|\mu\| < Np$ and $\mu$ lies as deep in its alcove as
$\lambda$.

If $\mu \in X_1(\uT)$, then $\mu \equiv \lambda'
\pmod {(p-\pi)X^0(\uT)}$, and we are done.  Otherwise, $F(\mu) \cong
F(\mu_0) \otimes F(\mu_1)^{(\pi)}$ as $G(\Fp)$-representations, so
there exists $\mu^{(1)} \in X(\uT)_+$ such that $F(\lambda') \in
\JH_{G(\Fp)} F(\mu^{(1)})$ and $F(\mu^{(1)}) \in \JH_{\uG}(F(\mu_0)
\otimes F(\mu_1)^{(\pi)})$. In particular, $\mu^{(1)} \le \mu_0 + \pi
\mu_1$, so as $\mu_1 \not\in X^0(\uT)$ we have 
\begin{equation}\label{eq:fh-star}
\|\mu^{(1)}\| \le \|\mu_0\| + \|\mu_1\| < \|\mu\|-(p-1) < \|\mu\|-
p/2.
\end{equation}
Iterating, we can find a sequence of dominant weights $\mu =
\mu^{(0)}, \mu^{(1)}, \ldots,\mu^{(r)}$ with 
\begin{itemize}
\item $F(\mu^{(i+1)}) \in \JH_{\uG}(F(\mu_0^{(i)}) \otimes
  F(\mu_1^{(i)})^{(\pi)})$ for all $0 \le i < r$,
\item $\mu^{(i)} \not\in X_1(\uT)$ for all $0 \le i < r$, but $\mu^{(r)} \in X_1(\uT)$,
\item $F(\mu^{(r)}) \cong F(\lambda')$ as $G(\Fp)$-representations.
\end{itemize}
Moreover, by \eqref{eq:fh-star}, we know that $r < 2N$.

On the other hand, as in the proof of \cite[Prop.~9.1]{herzigthesis}
  we can write $F(\mu_0) \otimes F(\mu_1)^{(\pi)} = \sum a_\varepsilon
  b_{\mu_0'} W(\mu_0' + \pi\varepsilon)$, where the sum runs over
  $\varepsilon\in X(\uT)$ such that $w\varepsilon \le \mu_1$ for all $w \in
  W$, and dominant $\mu_0' \uparrow \mu_0$. Hence $\mu^{(1)} \uparrow
  \sigma\cdot (\mu_0'+\pi\varepsilon)$ for some such $\varepsilon,\mu_0'$
  and some $\sigma\in W$. It follows that if $\mu^{(1)}$ is
  $(\delta+N)$-deep in its alcove, then $\mu_0'$ (and hence $\mu$)
  is $\delta$-deep in its alcove. Therefore, as $r < 2N$, if
  $\lambda'$ is $(\delta + 2N^2)$-deep in its alcove, then $\lambda$
  is $\delta$-deep in its alcove.
\end{proof}

\begin{lemma}\label{lem:fh-jan15-lemma2}
  Suppose $M \subset G$ is a Levi subgroup containing $T$, and that
  $\tau^M : I_{\Qp} \to \Mhat(\Fpbar)$ denotes a tame inertial
  $L$-parameter. Let $\tau : I_{\Qp} \to \Ghat(\Fpbar)$ denote the
  composite of $\tau^M$ and the inclusion $\Mhat \subset \Ghat$. Fix
  $\delta > 0$. For all $\tau$  sufficiently generic, $\tau^M$
  is $\delta$-generic.
\end{lemma}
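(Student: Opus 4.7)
Set $\delta' = \delta + C$ where $C = C(G,M)$ is a $p$-independent constant to be determined, and suppose $\tau \cong \tau(w,\mu)$ with $\mu$ lying $\delta'$-deep in $C_0$. Write $\tau^M \cong \tau^M(w_M, \mu_M)$ for some $(w_M, \mu_M) \in W_M \times X(\uT)$. Since $\tau^M$ composes to $\tau$ through $\Mhat \hookrightarrow \Ghat$, the pair $(w_M, \mu_M)$ lies in the $X(\uT) \rtimes W$-orbit of $(w, \mu)$ under \eqref{eq:extendedaffineweylaction}, so $(w_M, \mu_M) = {}^{(\nu, \sigma)}(w, \mu)$ for some $\sigma \in W$ (subject to the $p$-independent condition $\sigma w \pi \sigma^{-1} \pi^{-1} = w_M$) and some $\nu \in X(\uT)$. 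Applying the $(X(\uT) \rtimes W_M)$-action with $\sigma_M = 1$ and $\nu_M = -\nu$ modifies $(w_M, \mu_M)$ within its $M$-orbit to $(w_M, \mu_M - (p - w_M\pi)\nu) = (w_M, \sigma\mu)$, so $(w_M, \sigma\mu)$ also represents $\tau^M$.

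The key algebraic input is that $W_M$ permutes $\Phi^+ \setminus \Phi_M^+$ (the positive roots of the unipotent radical of the standard parabolic with Levi $M$), combined with the observation from Section~\ref{sec:levi-predictions} that $M$ satisfies Hypothesis~\ref{hyp:gp} with the \emph{same} twisting element $\eta$ as $G$. Together these imply that the hyperplanes $\langle \cdot + \eta, \alpha^\vee\rangle = np$ defining the $M$-alcove structure, for $\alpha \in \Phi_M^+$ and $n \in \Z$, form a subset of the corresponding hyperplanes for $G$. Since $\mu$ is $\delta'$-deep from every such $G$-hyperplane, the identity $\langle \sigma\mu + \eta, \alpha^\vee\rangle = \langle \mu + \eta, \sigma^{-1}\alpha^\vee\rangle + \langle \eta, \alpha^\vee - \sigma^{-1}\alpha^\vee\rangle$ (noting that the second summand is a bounded integer) gives that $\sigma\mu$ is $(\delta' - C_1)$-deep from every $G$-hyperplane, where $C_1 = \max_{\alpha \in \Phi,\, \sigma \in W} |\langle \eta, \alpha^\vee - \sigma^{-1}\alpha^\vee\rangle|$ is a finite, $p$-independent constant. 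In particular $\sigma\mu$ is $(\delta' - C_1)$-deep from every $M$-hyperplane, and hence lies $(\delta' - C_1)$-deep in whichever $M$-alcove contains it.

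Finally, further $(X(\uT) \rtimes W_M)$-action within the $M$-orbit of $\tau^M$ translates the second coordinate by the sublattice $(p - w_M\pi)X(\uT) \subset X(\uT)$ and by $W_M$-reflections. Together these generate a subgroup of the isometry group of $X(\uT) \otimes \R$ that differs from the extended affine Weyl group of $M$ by a subgroup of $p$-independent bounded index: the sublattices $(p - w_M\pi)X(\uT)$ and $pX(\uT)$ both have index $\asymp p^{\rk}$ in $X(\uT)$, their sum is all of $X(\uT)$, and for any $x \in X(\uT)$ the elements $px$ and $(p - w_M\pi)x$ differ by $w_M\pi x \in X(\uT)$, which is $O(1)$ in size relative to $p$ whenever $x$ itself is $O(1)$. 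Using this one may translate $\sigma\mu$ into $C_0^M$ at the cost of losing at most a further $p$-independent constant $C_2$ of deepness; taking $C = C_1 + C_2$ then yields a pair $(w'_M, \mu'_M)$ representing $\tau^M$ with $\mu'_M$ lying $\delta$-deep in $C_0^M$. The main obstacle is this last step: making precise that the $M$-orbit action, which translates by $(p - w_M\pi)X(\uT)$ rather than by $pX(\uT)$, is rich enough to bring $\sigma\mu$ into $C_0^M$ with only bounded loss of deepness, requires a careful combinatorial analysis of the two lattices relative to the $M$-alcove structure, exploiting that the eigenvalues of $w_M\pi$ on $X(\uT) \otimes \Q$ are roots of unity.
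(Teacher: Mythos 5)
Your first two paragraphs are correct, and they reach the key intermediate state --- a representative $(w_M,\sigma\mu)$ of $\tau^M$ with $\sigma\mu$ lying $(\delta'-C_1)$-deep in \emph{some} $M$-alcove and with all pairings $\langle\sigma\mu+\eta,\alpha^\vee\rangle$ of size $O(p)$ --- by a route genuinely different from the paper's. The paper instead iterates the digit shift $(w,\lambda)\mapsto(w,\lambda_0+w\pi\lambda_1)$ (which is the action of ${}^{(-\lambda_1,1)}\in X(\uT)\rtimes W_M$) to drive an arbitrary representative of $\tau^M$ down to one with $|\mu|<p\sum_\Delta n_\beta$, and only then invokes genericity of $\tau$, via the observation that the same pair also represents $\tau$. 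Your shortcut --- transporting the orbit relation $(w_M,\mu_M)={}^{(\nu,\sigma)}(w,\mu)$ in $X(\uT)\rtimes W$ back along ${}^{(-\nu,1)}\in X(\uT)\rtimes W_M$ to land on $(w_M,\sigma\mu)$ --- avoids the iteration entirely and is, in my view, cleaner.

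The proof is nonetheless not finished, and you say so yourself. Definition~\ref{defn:suff-generic} (applied to $\underline{M}$) demands a representative $\tau^M\cong\tau^M(w_M',\mu')$ with $\mu'$ lying $\delta$-deep in the \emph{lowest} $M$-alcove $C_0^M$, not merely deep in some $M$-alcove, so the passage you defer is an essential step of the proof, not a refinement. It closes with less machinery than your final paragraph suggests. Since $\sigma\mu$ is bounded, the unique element $u=t_{p\nu_0}\sigma_M$ of the affine Weyl group $p\Z\Phi_M\rtimes W_M$ with $u\cdot C^M=C_0^M$ (where $C^M$ is the $M$-alcove containing $\sigma\mu$) has $\nu_0\in\Z\Phi_M$ bounded independently of $p$, because $C^M$ ranges over a $p$-independent finite set. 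The element ${}^{(\nu_0,\sigma_M)}$ of \eqref{eq:extendedaffineweylaction} for $\underline{M}$ then carries $\sigma\mu$ to a point differing from the depth-preserving image $u\cdot(\sigma\mu)$ only by $-\sigma_Mw_M\pi\sigma_M^{-1}\nu_0-(\sigma_M\eta-\eta)$, which is bounded; hence the new representative is $(\delta'-C_1-C_2)$-deep in $C_0^M$, and one takes $\delta'=\delta+C_1+C_2$. This is exactly the content of ``reversing the argument'' in the paper's proof, i.e.\ the discussion around \cite[Prop.~6.28]{herzigthesis}. Your remark that $p\nu$ and $(p-w_M\pi)\nu$ differ by the bounded vector $w_M\pi\nu$ is precisely the needed input, but you stop short of assembling it; the framing in terms of comparing the full sublattices $(p-w_M\pi)X(\uT)$ and $pX(\uT)$, and the eigenvalues of $w_M\pi$, is a detour, since only boundedly many bounded $\nu$ ever intervene.
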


\begin{proof}
  Write $\tau^M \cong \tau^M(w,\lambda)$ with $w\in W_M$,
  $\lambda \in X(\uT)$. Write $\lambda = \lambda_0 + p\lambda_1$ with
  $\lambda_0 \in X_1(\uT)$, $\lambda_1 \in X(\uT)_+$. Then
  $\tau^M(w,\lambda) \cong \tau^M(w,\lambda')$, where
  $\lambda' = \lambda_0 + w\pi \lambda_1$. For $\nu \in X(\uT)$ let
  $|\nu| := \sum_{\alpha > 0} |\langle v,\alpha^{\vee}\rangle|.$ Then
$$|\lambda'|  \le |\lambda_0| + |\lambda_1|  =  \frac{p-1}{p} |\lambda_0| +
  \frac{1}{p}|\lambda| \le \frac{(p-1)^2}{p} \sum_{\Delta} n_\beta + \frac{|\lambda|}{p},$$
where we write $\sum_{\alpha>0} \alpha^{\vee} = \sum_{\beta\in\Delta}
n_\beta \beta^{\vee}$.

Iterating, we deduce that $\tau^M \cong \tau^M(w,\mu)$ with $|\mu| < p
\sum_{\Delta} n_\beta$, so $\mu$ lies in the closure of a finite union
of alcoves (for $\uG$, hence also for $\underline{M}$).  A fortiori,
$\tau \cong \tau(w,\mu)$. As in the
proof of Lemma~\ref{lem:fh-jan15-one}(i), for $\tau$
sufficiently generic we have that $\mu$ lies as deep as we like in its
alcove (for $\uG$, hence also for $\underline{M}$). By reversing the
argument we deduce that $\tau^M$ is as generic as we like.
\end{proof}

\begin{lemma}\label{lem:fh-jan15-lemma3}
Suppose that $\lambda,\mu\in X(\uT)_{+,M}-\eta$. Choose $w,w' \in W$ such that
$w\cdot \lambda$, $w' \cdot \mu$ are in $X(\uT)_+-\eta$. Then $$\lambda
\uparrow_M \mu \qquad \implies \qquad w\cdot \lambda
\uparrow w' \cdot \mu.$$
\end{lemma}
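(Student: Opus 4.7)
The plan is to verify three conditions on $\nu_1 := w\cdot\lambda$ and $\nu_2 := w'\cdot\mu$:\ (a) both $\nu_1 + \eta$ and $\nu_2 + \eta$ lie in $X(\uT)_+$; (b) $\nu_1$ and $\nu_2$ belong to a common $W_p$-orbit under the dot action; and (c) $\nu_1 \le \nu_2$ in the standard dominance order on $X(\uT)$.  Once these are in place, the conclusion $\nu_1 \uparrow \nu_2$ will follow from the standard characterisation of the $\uparrow$-relation among dot-dominant elements sharing a $W_p$-orbit (cf.\ \cite[II.6.6]{MR2015057}).

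Condition (a) is immediate from the hypothesis on $w$ and $w'$.  For (b), the relation $\lambda \uparrow_M \mu$ gives $\lambda \in W_{p,M}\cdot\mu \subset W_p\cdot\mu$, so applying the dot action of $w$ yields $\nu_1 \in W_p \cdot (w\cdot\mu)$; and since $w\cdot\mu$ and $\nu_2 = w'\cdot\mu$ both lie in the $W$-dot-orbit of $\mu$ and $W \subset W_p$, we get $W_p\cdot(w\cdot\mu) = W_p\cdot\nu_2$ and therefore $\nu_1 \in W_p\cdot\nu_2$.

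The main task will be to establish the dominance inequality (c).  I would write
\[
\nu_2 - \nu_1 \;=\; (w'-w)(\mu+\eta) \;+\; w(\mu - \lambda)
\]
and verify that each summand is $\ge 0$.  The first summand is $\ge 0$ because $w'(\mu+\eta) \in X(\uT)_+$ is the maximum of the $W$-orbit of $\mu+\eta$ in the dominance order.  For the second summand, $\lambda \uparrow_M \mu$ places $\mu-\lambda$ in $\sum_{\alpha \in \Phi_M^+}\Z_{\ge 0}\,\alpha$, so it suffices to show $w\alpha \in \Phi^+$ for every $\alpha \in \Phi_M^+$.  This positivity claim is the crux of the argument and is precisely where $p$-regularity of $\lambda$ with respect to $\underline M$ enters:\ this hypothesis forbids $\lambda+\eta$ from lying on any reflecting hyperplane of $W_{p,M}$, and in particular yields the strict inequality $\langle \lambda+\eta, \alpha^\vee \rangle > 0$ for each $\alpha \in \Phi_M^+$.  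Since $w(\lambda+\eta)$ is dominant, it follows that $\langle w(\lambda+\eta), (w\alpha)^\vee \rangle = \langle \lambda+\eta, \alpha^\vee\rangle > 0$; but if $-w\alpha$ were a positive root then pairing with the dominant weight $w(\lambda+\eta)$ would give a non-positive value, a contradiction.  Hence $w\alpha \in \Phi^+$ and $w(\mu-\lambda) \ge 0$.

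The main obstacle is the positivity claim $w(\Phi_M^+) \subset \Phi^+$:\ as can be seen from examples with $\lambda$ lying on an $M$-wall (such as $\lambda=(2,3,5)$ for a Levi of $\GL_3$ with $\Phi_M^+ = \{e_1-e_2\}$, where the Weyl element sorting $\lambda+\eta$ into a dominant weight sends $\alpha_1$ to a negative root), this step genuinely depends on $\lambda$ being strictly $M$-dominant and can fail without the $p$-regularity hypothesis.  Once the positivity is secured, the remainder of the argument is formal.
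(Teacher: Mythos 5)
Your argument up to and including the dominance inequality is fine, and in fact your proof that $w(\Phi_M^+)\subset\Phi^+$ is essentially the first step of the paper's proof (carried out there for $w'$ and $\mu$). One small correction on attribution: this positivity does not require $p$-regularity. If $\alpha\in\Phi_M^+$ then $\langle\lambda+\eta,\alpha^\vee\rangle\ge\langle\eta,\alpha^\vee\rangle\ge 1>0$ already for $\lambda\in X(\uT)_{+,M}$, because $\eta$ pairs to $1$ with every simple coroot; your proposed "counterexample" $\lambda=(2,3,5)$ with $\Phi_M^+=\{e_1-e_2\}$ is not $M$-dominant, so it does not satisfy the hypotheses. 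The $p$-regularity hypothesis is needed elsewhere (see below).

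The genuine gap is your final step. Having shown that $\nu_1=w\cdot\lambda$ and $\nu_2=w'\cdot\mu$ are dot-dominant, $W_p$-linked and satisfy $\nu_1\le\nu_2$, you invoke a "standard characterisation" to conclude $\nu_1\uparrow\nu_2$. No such characterisation exists in \cite[II.6.6]{MR2015057}: that reference gives only the forward implications ($\lambda\uparrow\mu$ forces $\lambda\le\mu$ and $d(\lambda)\le d(\mu)$), not a converse, and the converse ("linked $+$ dominant $+\ \le\ \implies\ \uparrow$") is not a standard fact — if it were, the strong linkage principle would be a formal consequence of the linkage principle. So the step you cite away is precisely the content of the lemma, and by passing from the hypothesis $\lambda\uparrow_M\mu$ to the weaker statement $\lambda\le\mu$ (plus linkage) you have discarded exactly the structure that makes the conclusion accessible. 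The paper instead keeps that structure: by Corollary~\ref{cor:ye-wang}(ii) (this is where $p$-regularity with respect to $\underline{M}$ is used) the relation $\lambda\uparrow_M\mu$ can be realised by a chain of \emph{single} affine reflections $s_{\alpha,np}$ ($\alpha\in\Phi_M^+$) passing through $M$-dominant weights, and each single reflection transports along $w'$ to a reflection $s_{w'(\alpha),np}$ with $w'(\alpha)\in\Phi^+$, to which \cite[II.6.9]{MR2015057} applies and yields a genuine $\uparrow$-step for $\uG$. To repair your proof you would need to replace the appeal to II.6.6 by this reduction to single reflections (or by an independent proof of the converse implication, which you should not expect to be available).
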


\begin{proof}
Let $\mu' := w' \cdot \mu$, the unique element in $(X(\uT)_+-\eta) \cap W \cdot \mu$.
We may assume that $w' \in W$ has least possible length, i.e.\
$w'$ is a Kostant representative for $\Stab_W(\mu'+\eta)\backslash W$
(noting that the stabiliser is generated by simple reflections).

  First we claim that $w'(\Phi_M^+) \subset \Phi^+$, or equivalently $w'(\Delta_M) \subset \Phi^+$. 
  Suppose that $\alpha \in \Delta_M$. As $\mu \in X(\uT)_{+,M}-\eta$ we know that
  $\langle \mu'+\eta,w'(\alpha)^\vee\rangle \ge 0$. Hence if $w'(\alpha) \in \Phi^-$, then
  equality holds, i.e.\ $s_{w'(\alpha)} \in \Stab_W(\mu'+\eta)$. By our choice of $w'$
  it follows that $w' s_\alpha = s_{w'(\alpha)} w' > w'$, hence $w'(\alpha) \in \Phi^+$.
  This proves the claim.


By Corollary~\ref{cor:ye-wang} and induction, we may assume that
$\lambda=s_{\alpha,np} \cdot \mu$ for some $\alpha\in\Phi_M^+$, $n \in
\Z$ and that $\lambda \ne \mu$. As $\lambda\in X(\uT)_{+,M}-\eta$, we deduce that $\langle
\mu+\eta,\alpha^\vee\rangle > np >0.$ Hence $w' \cdot \lambda =
s_{w'(\alpha),np} w' \cdot \mu$ with $\langle w'\cdot \mu +
\eta,w'(\alpha)^\vee\rangle > np > 0$ and $w'(\alpha) \in \Phi^+$ by
the above. Then \cite[II.6.9]{MR2015057} shows that $$w'' w' \cdot
\lambda = w'' s_{w'(\alpha),np} w' \cdot \mu \uparrow w' \cdot \mu$$
for any $w'' \in W$ making $w'' w' (\lambda+\eta)$ dominant.
\end{proof}

\begin{lem}
\label{lem: lambda mu relation on bounded epsilon}Suppose that $\mu\in X(\uT)_+-\eta$
and that $\nu\in X(\uT)_+$. Then for $\lambda\in
X(\uT)_+-\eta$, we have $\lambda\uparrow\mu+p\nu$ if and only if
$\lambda=\sigma\cdot(\mu'+p\varepsilon)$ for some $\sigma\in W$, some 
$\mu'\uparrow\mu$ with $\mu' \in X(\uT)_+-\eta$
and some $\varepsilon\in X(\uT)$ such that
$w\varepsilon\le\nu$ for all $w\in W$.
\end{lem}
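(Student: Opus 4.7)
\emph{Overview.} The lemma asserts an equivalence of two descriptions of the dominant weights $\uparrow$-linked to $\mu + p\nu$: one as terms of an $\uparrow$-chain, the other as ``shifted weights'' $\sigma \cdot (\mu' + p\varepsilon)$ where $\mu'$ is linked to $\mu$ and $\varepsilon$ satisfies $w\varepsilon \le \nu$ for all $w \in W$ (which is precisely the classical characterisation of the set of weights of the Weyl module $W(\nu)$, since $\nu$ is dominant). My plan is to prove the two implications separately, with the forward direction being the more substantive one.

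\emph{Backward direction.} Given $\lambda = \sigma \cdot (\mu' + p\varepsilon)$ with $\mu' \uparrow \mu$ dominant and $w\varepsilon \le \nu$ for all $w$, I proceed in three steps. First, a direct calculation with the dot action of an affine reflection shows that translation by $p\varepsilon$ conjugates $s_{\alpha, mp}$ to $s_{\alpha, (m + \langle \varepsilon, \alpha^\vee \rangle)p}$, so $\mu' \uparrow \mu$ implies $\mu' + p\varepsilon \uparrow \mu + p\varepsilon$. Second, since $\mu$ is $p$-regular and dominant, a straightforward induction on $\sum_{\alpha > 0} \langle \nu - \varepsilon, \alpha^\vee\rangle$, using that weights of $W(\nu)$ form a saturated set, produces an explicit chain $\mu + p\varepsilon \uparrow \mu + p\nu$ by crossing one affine wall (parallel to a positive root direction) at a time through intermediate weights of $W(\nu)$. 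Third, since $\lambda = \sigma \cdot (\mu' + p\varepsilon)$ is dominant, applying \cite[II.6.9]{MR2015057} (in the same manner as in the proof of Lemma~\ref{lem:fh-jan15-lemma3}) transfers the chain from $\mu' + p\varepsilon$ to $\lambda$.

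\emph{Forward direction.} I induct on the length of an $\uparrow$-chain from $\lambda$ to $\mu + p\nu$. By Corollary~\ref{cor:ye-wang} of the appendix, such a chain may be taken to consist entirely of dominant weights, so each step is a single affine reflection $s_{\alpha, np}$ with $\alpha \in \Phi^+$. The base case $\lambda = \mu + p\nu$ is trivial with $\sigma = 1$, $\mu' = \mu$, $\varepsilon = \nu$. For the inductive step, suppose $\lambda = s_{\alpha, np} \cdot \lambda'$ with $\lambda, \lambda'$ dominant and, by hypothesis, $\lambda' = \sigma' \cdot (\mu'' + p\varepsilon'')$ with $\mu'' \uparrow \mu$ dominant and $w\varepsilon'' \le \nu$ for all $w$. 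Using $s_{\alpha, np} \cdot x = s_\alpha \cdot x + np\alpha$ and $\sigma \cdot (x + py) = \sigma \cdot x + p\sigma(y)$, a direct computation yields
\begin{equation*}
\lambda = (s_\alpha \sigma') \cdot \bigl( \mu'' + p(\varepsilon'' - n\, \sigma'^{-1}\alpha) \bigr),
\end{equation*}
so I take $\sigma = s_\alpha \sigma'$, $\mu' = \mu''$, and $\varepsilon = \varepsilon'' - n\, \sigma'^{-1}\alpha$.

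\emph{Main obstacle.} The hard part is to verify that the new $\varepsilon$ is still a weight of $W(\nu)$, i.e.\ that $w\varepsilon \le \nu$ for all $w \in W$. The magnitude of $n$ is constrained by the condition $\langle \lambda' + \eta, \alpha^\vee \rangle > np \ge 0$ which makes $s_{\alpha,np}\cdot\lambda' < \lambda'$, but translating this into the required inequality $w(\varepsilon'' - n\sigma'^{-1}\alpha) \le \nu$ (when $\sigma'^{-1}\alpha$ may be a negative root and $w$ is arbitrary) is not immediate from $w\varepsilon'' \le \nu$ alone. I expect the cleanest route is to reformulate the problem in terms of the set of affine hyperplanes crossed by the chain from $\mu + p\nu$ down to $\lambda$, and to use the dominance of $\lambda$ together with a second application of Corollary~\ref{cor:ye-wang} to show that these hyperplanes correspond exactly to the shifts of $\varepsilon$ that keep it within the Weyl polytope of $\nu$; equivalently, this amounts to a saturation property of the set of weights of $W(\nu)$ under the affine Weyl group action restricted to chains between dominant weights.
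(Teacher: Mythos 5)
Your backward direction is essentially sound (the paper gets the step $w\cdot(\mu'+p\varepsilon)\uparrow\mu'+p\nu$ in one stroke from \cite[Lem.~9.4]{herzigthesis} rather than building the chain by hand, but your route can be made to work). The problem is in the forward direction, and it is exactly at the point you flag as the ``main obstacle'': the inductive step as you have set it up is false. If you insist on keeping $\mu'=\mu''$ fixed and only replacing $\varepsilon''$ by $\varepsilon''-n\sigma'^{-1}\alpha$, the new $\varepsilon$ need \emph{not} satisfy $w\varepsilon\le\nu$ for all $w$. The correct argument splits into two cases according to the size of $\langle\varepsilon'',\sigma'^{-1}\alpha^\vee\rangle$ relative to $n$. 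When $\langle\varepsilon'',\sigma'^{-1}\alpha^\vee\rangle\ge n$, your formula works and the membership $w(\varepsilon''-n\sigma'^{-1}\alpha)\le\nu$ follows from a convexity/monotonicity argument: $w(\varepsilon''-n\sigma'^{-1}\alpha)$ lies between $w\varepsilon''$ and $w(s_{\sigma'^{-1}\alpha}\varepsilon'')$ in the $\le$ order, and both endpoints are $\le\nu$. But when $\langle\varepsilon'',\sigma'^{-1}\alpha^\vee\rangle = n-r$ with $r>0$, one must instead descend in the $\mu'$-coordinate: one shows $rp<\langle\mu''+\eta,\sigma'^{-1}\alpha^\vee\rangle<(r+1)p$, hence $\sigma'^{-1}\alpha\in\Phi^+$, replaces $\mu''$ by the dominant weight $w s_{\sigma'^{-1}\alpha,rp}\cdot\mu''\uparrow\mu''$, and replaces $\varepsilon''$ by $w\varepsilon''$. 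No ``saturation of the weight polytope under the affine Weyl group'' argument will rescue the single-case version, because the obstruction is genuinely that the linked weight $\mu'$ must move.

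A second, related omission: before running the induction you must refine the chain supplied by Corollary~\ref{cor:ye-wang} so that each reflection step $\lambda_{i+1}=s_{\alpha_i,n_ip}\cdot\lambda_i$ satisfies $n_ip<\langle\lambda_i+\eta,\alpha_i^\vee\rangle<(n_i+1)p$, i.e.\ crosses exactly one affine hyperplane in the $\alpha_i$-direction. Corollary~\ref{cor:ye-wang} alone does not guarantee this, and without it the numerical bounds that drive both cases above (in particular the inequality $rp<\langle\mu''+\eta,\sigma'^{-1}\alpha^\vee\rangle<(r+1)p$ needed to conclude $\sigma'^{-1}\alpha\in\Phi^+$ and $\mu''$'s replacement is dominant-linked) are not available. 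The refinement is elementary --- interpolate the single reflection through the intermediate reflections $s_{\alpha,(n-i)p}$, using convexity of the dominant region --- but it is a necessary ingredient, not a cosmetic one.
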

\begin{proof}
  Let $X(\mu,\nu)$ denote the subset of $\lambda\in X(\uT)_+-\eta$ defined by the
  right-hand side of the claimed equivalence.

For the ``if'' direction of the lemma note that
$w\cdot(\mu'+p\varepsilon)\uparrow\mu'+p\nu$ for all $w\in W$ by~\cite[Lem.~9.4]{herzigthesis} and that $\mu'+p\nu\uparrow\mu+p\nu$
by~\cite[II.6.4(4)]{MR2015057}. 
(We note that the proof of~\cite[Lem.~9.4]{herzigthesis} holds in our
more general context. The only necessary modifications are that in the
statement of that lemma the weights $\mu$, $\nu$ are to be taken in
$X(\uT)_+-\eta$ and in the proof of reduction step (R1)
we may assume $i > 0$ and then the first displayed inequality becomes
$0 < pi \le \langle \lambda'+\eta, w \alpha^\vee\rangle$.)

Conversely, suppose
$\lambda\uparrow\mu+p\nu$ with $\lambda$, $\mu$, $\nu$ as in the statement of the lemma.
By Corollary~\ref{cor:ye-wang} there is a sequence
$\lambda=\lambda_r\uparrow\lambda_{r-1}\uparrow\dots\uparrow\lambda_0=\mu+p\nu$,
where $\lambda_i\in X(\uT)_+-\eta$ and there exist affine reflections
$s_{\alpha_i,n_ip} =s_{\alpha_i}+n_i p\alpha_i \in W_p$ ($\alpha_i\in\Phi^+, n_i\in \Z$) such that
$\lambda_{i+1}=s_{\alpha_i,n_i p}\cdot\lambda_i$. Without loss of generality,
$\lambda_{i+1} < \lambda_i$ for all $i$.

We now show that $\lambda_i\in X(\mu,\nu)$ by induction on $i$. This is obvious
when $i=0$. For the induction step we are reduced to the following
statement. Given $\lambda,\lambda'$ in $X(\uT)_+-\eta$ such that
$\lambda=s_{\alpha,np}\cdot\lambda'$ with $\langle
\lambda'+\eta,\alpha^\vee\rangle > np$ and $\alpha\in \Phi^+$, then
$\lambda'\in X(\mu,\nu)$ implies $\lambda\in X(\mu,\nu)$. (Note that here
$\lambda$ no longer denotes the element $\lambda_r$ above.) Note that $n>0$, as
$np>\langle \lambda+\eta,\alpha^\vee\rangle\ge 0$. As $\lambda'\in X(\mu,\nu)$ we
can write $\lambda'=\sigma\cdot (\mu'+p\varepsilon)$ as in the statement of the
lemma. Then
\begin{equation}
  \label{eq:sactiononlambda}
  \lambda=s_{\alpha,np}\cdot\lambda'=s_\alpha\sigma\cdot(\mu'+p(\varepsilon-n\sigma^{-1}\alpha)).
\end{equation}

Case 1: Assume that $\langle\varepsilon,\sigma^{-1}\alpha^\vee\rangle\ge n$. 
To see that $\lambda\in X(\mu,\nu)$, by (\ref{eq:sactiononlambda}) it suffices to show that
$w(\varepsilon-n\sigma^{-1}\alpha)\le \nu$ for all $w\in W$. Let
$\varepsilon':=s_{\sigma^{-1}\alpha}\varepsilon=\varepsilon-\langle
\varepsilon,\sigma^{-1}\alpha^\vee\rangle\sigma^{-1}\alpha$. As
$\langle\varepsilon,\sigma^{-1}\alpha^\vee\rangle\ge n$, the sequence
$w\varepsilon$, $w(\varepsilon-n\sigma^{-1}\alpha)$, $w\varepsilon'$ is monotonic with respect
to $\le$ (i.e.\ either increasing or decreasing). As $w\varepsilon\le \nu$ and
$w\varepsilon'\le\nu$ by our assumption on $\varepsilon$, we conclude that $w(\varepsilon-n\sigma^{-1}\alpha)\le\nu$.

Case 2: Assume that $\langle\varepsilon,\sigma^{-1}\alpha^\vee\rangle=n-r$ for some
$r>0$. As
$\langle\mu'+\eta,\sigma^{-1}\alpha^\vee\rangle=\langle\lambda'+\eta,\alpha^\vee\rangle-p\langle\varepsilon,\sigma^{-1}\alpha^\vee\rangle$,
we see that $\langle \mu'+\eta,\sigma^{-1}\alpha^\vee\rangle > rp$. As $\mu' \in X(\uT)_+-\eta$
and $r>0$, we get $\sigma^{-1}\alpha\in \Phi^+$. Let $w\in W$ be such
that $\mu'':=w s_{\sigma^{-1}\alpha,rp}\cdot\mu' \in X(\uT)_+-\eta$. Then
$\mu''\uparrow\mu'$ by~\cite[II.6.9]{MR2015057} and
\begin{align*}
  \sigma
  w^{-1}\cdot(\mu''+pw\varepsilon)&=\sigma\cdot(s_{\sigma^{-1}\alpha}\cdot\mu'+rp\sigma^{-1}\alpha+p\varepsilon)\\
  &=s_\alpha\sigma\cdot(\mu'-rp\sigma^{-1}\alpha+p\varepsilon-p\langle\varepsilon,\sigma^{-1}\alpha^\vee\rangle\sigma^{-1}\alpha)\\
  &= s_\alpha\sigma\cdot (\mu'+p(\varepsilon-n\sigma^{-1}\alpha)),
\end{align*}which equals $\lambda$ by~(\ref{eq:sactiononlambda}). Hence
$\lambda\in X(\mu,\nu)$.
\end{proof}

Recall the definition of $d(C) \in \Z$ for an alcove $C$ (\cite{MR2015057}, II.6.6). For all $\alpha \in \Phi^+$ there is a unique $n_\alpha \in
\Z$ such that
\begin{equation}\label{eq:3app}
  n_{\alpha}p < \langle \lambda + \eta,\alpha\dual\rangle < (n_{\alpha}+1)p
\end{equation}
for all $\lambda \in C$. Then $d(C)
= \sum_{\Phi^+} n_\alpha$. If $C$ is dominant, then $d(C)$ is the number of affine root hyperplanes separating $C$ and the lowest alcove.
If $\lambda \in C$, then we set $d(\lambda) := d(C)$.
Note that if $\lambda$, $\mu \in X(\uT)$ are $p$-regular, then
$d(\mu)\le d(\lambda)$ for $\mu\uparrow\lambda$ and
$d(\mu)\le d(\mu+p\nu)$ for $\nu\in X(\uT)_+$, where equality holds only if
$\mu=\lambda$, respectively $\nu\in X^0(\uT)$ (\cite{MR2015057}, II.6.6).

\begin{prop}\label{prop:fh-jan15-4}
  Fix $N \in \Z_{> 0}$. Then for $\tau$ sufficiently generic and any
  $\lambda \in X(\uT)_+$ with $\|\lambda\| < Np$, the following are
  equivalent.
  \begin{enumerate}
  \item $W^?(\tau) \cap \JH_{G(\Fp)} W(\lambda) \neq \emptyset$.
\item $\tau \cong \tau(w,\lambda'+\eta)$ for some dominant $\lambda'
  \uparrow \lambda$ and some $w \in W$.
  \end{enumerate}
Moreover, if (ii) holds then $\lambda'=\lambda \in X_1(\uT)$ or there
exists $F(\nu)$ in  $W^?(\tau) \cap \JH_{G(\Fp)} W(\lambda) $ with
$d(\nu) < d(\lambda)$.
\end{prop}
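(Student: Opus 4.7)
The plan is to prove the two implications separately, deriving the ``moreover'' clause from the explicit constructions. The central tools are Proposition~\ref{prop: sufficiently generic tame inertial L-param} (which, for restricted weights $\lambda$, translates $F(\lambda)\in\W^?(\tau)$ into the existence of a representative $\tau\cong\tau(w,\lambda'+\eta)$ with dominant $\lambda'\uparrow\lambda$), Lemma~\ref{lem: lambda mu relation on bounded epsilon} (the combinatorial characterization of $\uparrow\mu+p\nu$), the $\tW_p$-action~\eqref{eq:extendedaffineweylaction} on pairs $(w,\mu)$ (which parametrizes the various representatives of a given $\tau$), and Steinberg's tensor product identity $F(\mu_0+p\mu_1)\cong F(\mu_0)\otimes F(\mu_1)^{(\pi)}$ as $G(\Fp)$-representations (as used in the proof of Lemma~\ref{lem:fh-jan15-one}(ii)). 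Lemmas~\ref{lem:fh-jan15-one} and~\ref{lem:fh-jan15-lemma2}, together with $\|\lambda\|<Np$, guarantee that every weight encountered is sufficiently deep in its alcove, in particular $p$-regular, so that these results apply.

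For (ii) $\Rightarrow$ (i), I would start from $\tau\cong\tau(w,\lambda'+\eta)$ and construct a restricted weight in the intersection. Writing the $p$-adic expansion $\lambda'=\sum_{i=0}^r p^i\lambda'_i$ with $\lambda'_i\in X_1(\uT)$, the element $(-\sum_{i\ge 1}p^{i-1}\lambda'_i,1)\in X(\uT)\rtimes W$ acting via~\eqref{eq:extendedaffineweylaction} replaces $\lambda'+\eta$ by $\lambda'_0+w\pi(\lambda'_1+p\lambda'_2+\cdots)+\eta$, and iterating this absorption produces $\tau\cong\tau(w,\nu^\sharp+\eta)$ with $\nu^\sharp=\lambda'_0+w\pi\lambda'_1+(w\pi)^2\lambda'_2+\cdots+(w\pi)^r\lambda'_r$. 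Conjugating by a Weyl element to make the weight dominant, then folding by elements $(x,1)$ with $x\in X^0(\uT)$ (which modify the weight by $(p-\pi)x$), eventually yields a representative $\tau\cong\tau(w_*,\nu+\eta)$ with $\nu\in X_1(\uT)$; Proposition~\ref{prop: sufficiently generic tame inertial L-param} then gives $F(\nu)\in\W^?(\tau)$. Simultaneously, iterating the Steinberg identity along the $p$-adic expansion of $\lambda'$ exhibits $F(\nu)$ as a JH constituent of $F(\lambda')|_{G(\Fp)}$, hence of $W(\lambda)|_{G(\Fp)}$ since $F(\lambda')\in\JH_{\uG}W(\lambda)$; that $\nu\uparrow\lambda$ propagates through these manipulations follows from repeated application of Lemma~\ref{lem: lambda mu relation on bounded epsilon} starting from $\lambda'\uparrow\lambda$. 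Conversely, for (i) $\Rightarrow$ (ii), Proposition~\ref{prop: sufficiently generic tame inertial L-param} first gives $\tau\cong\tau(w_1,\nu_1+\eta)$ with $\nu_1\uparrow\nu$ dominant, and picking dominant $\mu\uparrow\lambda$ with $F(\nu)\in\JH_{G(\Fp)}F(\mu)$ and iterating Steinberg along the $p$-adic expansion of $\mu$ pins $\nu$ down modulo $(p-\pi)X^0(\uT)$ as the restricted folding of $\mu_0+\pi\mu_1+\pi^2\mu_2+\cdots$; inverting the $\tW_p$-action used in the previous direction then yields $\tau\cong\tau(w,\mu+\eta)$, proving (ii) with $\lambda':=\mu$.

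For the ``moreover'' clause, the $\nu\in X_1(\uT)$ produced in the first direction satisfies $\nu=\lambda$ exactly when $\lambda'=\lambda\in X_1(\uT)$; otherwise, either $\lambda'\uparrow\lambda$ is strict or the $p$-adic folding of $\lambda'\notin X_1(\uT)$ into $X_1(\uT)$ produces a genuinely distinct weight, and strict monotonicity of $d(\cdot)$ under strict $\uparrow$ among $p$-regular weights gives $d(\nu)<d(\lambda)$. The most delicate step will be reconciling the $(w\pi)^i\lambda'_i$ structure appearing on the $L$-parameter side with the $\pi^i\mu_i$ structure from Steinberg on the representation-theoretic side, and tracking both through Weyl translations and $(p-\pi)X^0(\uT)$-folding caused by the non-standard Frobenius $F=p\pi^{-1}$; this reconciliation is where Lemma~\ref{lem: lambda mu relation on bounded epsilon}, with its $w\pi-p$ terms, plays its essential role.
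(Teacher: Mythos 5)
Your overall architecture (reduce to Proposition~\ref{prop: sufficiently generic tame inertial L-param}, use the $\tW_p$-action~\eqref{eq:extendedaffineweylaction} and Steinberg folding, and invoke Lemma~\ref{lem: lambda mu relation on bounded epsilon} to reconcile the $w\pi$-twists on the Galois side with the $\pi$-twists on the representation side) matches the paper's, and your (i)~$\Rightarrow$~(ii) sketch and the intended deduction of the ``moreover'' clause are in the right spirit. But your (ii)~$\Rightarrow$~(i) direction has a genuine gap: you assert that $F(\lambda')\in\JH_{\uG}W(\lambda)$ on the sole ground that $\lambda'\uparrow\lambda$ is dominant, and everything downstream (exhibiting $F(\nu)$ inside $W(\lambda)|_{G(\Fp)}$) rests on this. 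The strong linkage principle only gives the reverse implication: every $\uG$-constituent of $W(\lambda)$ is linked to and below $\lambda$, but a dominant $\lambda'\uparrow\lambda$ need not occur in $W(\lambda)$ at all --- the decomposition numbers $[W(\lambda):F(\lambda')]$ vanish for most linked $\lambda'$, and depth in the alcoves does not rescue this.

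The paper closes exactly this gap with a step you have no substitute for: it chooses $\lambda''\neq\lambda$ \emph{maximal} dominant with $\lambda'\uparrow\lambda''\uparrow\lambda$ (such a $\lambda''$ exists by Corollary~\ref{cor:ye-wang}, reached from $\lambda$ by a single affine reflection $s_{\beta,np}$) and then uses Jantzen's sum formula~\cite[II.8.19]{MR2015057}, together with the $p$-regularity of $\lambda$ and the maximality of $\lambda''$, to show that $F(\lambda'')$ genuinely occurs in $W(\lambda)$. Only then does it run the Steinberg/Lemma~\ref{lem: lambda mu relation on bounded epsilon} folding --- applied to $\lambda''$ rather than to $\lambda'$ --- to produce a restricted $G(\Fp)$-constituent $F(\lambda_0''+\pi\varepsilon'')$ of $W(\lambda)$ lying in $W^?(\tau)$ with strictly smaller $d$, which is what yields the ``moreover'' clause. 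You need to replace your unjustified membership claim with this sum-formula argument (or some other device that actually produces a nonzero decomposition number strictly between $\lambda'$ and $\lambda$); as written, the implication (ii)~$\Rightarrow$~(i), and with it the ``moreover'' clause, is unproven.
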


\begin{proof}
 (i) $\Rightarrow$ (ii): Suppose $F(\lambda') \in W^?(\tau) \cap
 \JH_{G(\Fp)} W(\lambda)$ for some $\lambda' \in X_1(\uT)$. By
 Proposition~\ref{prop: sufficiently generic tame inertial L-param} we
 have $\tau \cong \tau(w,\lambda''+\eta)$ for some dominant $\lambda''
 \uparrow \lambda'$ and some $w \in W$. By
 Lemma~\ref{lem:fh-jan15-one} we see that $\lambda''$, $\lambda'$, and
 $\lambda$ are as deep in their respective alcoves as we like. To show
 (ii) we can now follow the proof of \cite[Prop.~9.1]{herzigthesis},
 noting that it never uses that $\lambda$ is restricted (as is assumed
 there) and making the following modifications: 
 $F(\mu_1)$ should be replaced by its $\pi$-twist $F(\mu_1)^{(\pi)}\cong
 F(\pi\mu_1)$ and $\rho$ by $\eta$. In the expressions
 $\mu_0+\varepsilon,\mu_0'+\varepsilon,\mu_0+w'\varepsilon$, $\varepsilon$
 should be replaced by $\pi\varepsilon$. Starting
 with~\cite[(9.2)]{herzigthesis}, the expression $pw^{-1}w'\varepsilon$ should
 be replaced by $p\pi^{-1}w^{-1}w'\pi\varepsilon$, as well as $\sigma w
 \sigma^{-1}$ by $\sigma w \pi\sigma^{-1}\pi^{-1}$. (Note also that $\pi\in W$
 in \emph{loc.\ cit.} is now a bad choice of letter.)

(ii) $\Rightarrow$ (i): Suppose that 
\begin{equation}
  \label{eq:taufromlambda'}
  \tau\cong\tau(w,\lambda'+\eta)\text{ for some dominant $\lambda'\uparrow\lambda$, some $w \in W$}.
\end{equation}
By Lemma~\ref{lem:fh-jan15-one}, we see that $\lambda'$ (and hence
$\lambda$) lie as deep in their respective alcoves as we like. If
$\lambda'=\lambda\in X_1(\uT)$, then $F(\lambda) \in W^?(\tau) \cap
\JH_{G(\Fp)} W(\lambda)$ by Proposition~\ref{prop: sufficiently
  generic tame inertial L-param}, as required. Thus from now on we may
assume $\lambda'=\lambda\not\in X_1(\uT)$ or $\lambda'\neq\lambda$.

We will first find $\lambda'' \in X(\uT)_+$ such that $F(\lambda'')$
is a $\uG$-constituent of $W(\lambda)$, and such that $\lambda''\neq
\lambda$ if $\lambda'\neq \lambda$. If $\lambda' = \lambda \not\in
X_1(\uT)$, we take $\lambda'' := \lambda$. If however $\lambda' \neq
\lambda$, 
choose $\lambda''\ne \lambda$ maximal such that $\lambda''$ is dominant and
$\lambda'\uparrow\lambda''\uparrow\lambda$. By Corollary~\ref{cor:ye-wang} there exists an affine
reflection $s_{\beta,np}\in W_p$ ($\beta\in\Phi^+,n\in\Z$) such
that $s_{\beta,np}\cdot\lambda=\lambda''$. As $\lambda''$ is dominant, $\langle
\lambda+\eta,\beta^\vee\rangle>np>0$. Jantzen's sum
formula~\cite[II.8.19]{MR2015057} says that for a certain descending filtration
$(V(\lambda)^i)_{i\ge 0}$ on the Weyl module $V(\lambda)$ we have\[\sum_{i>0}\ch
V(\lambda)^i=\sum_{\alpha\in\Phi^+}\sum_{0<mp<\langle
  \lambda+\eta,\alpha^\vee\rangle} \nu_p(mp)\sgn(w_{\alpha,m})\ch
W(w_{\alpha,m}s_{\alpha,mp}\cdot\lambda),\] where $w_{\alpha,m}\in W$ is chosen
such that $w_{\alpha,m}s_{\alpha,mp}\cdot\lambda$ is dominant. Note that the $p$-adic valuation
$\nu_p(mp)$ is positive, as $m > 0$. By~\cite[II.6.8]{MR2015057}, for
each term in the sum, $w_{\alpha,m}s_{\alpha,mp}\cdot\lambda\uparrow\lambda$ and equality
does not hold. Also, as $\lambda$ is $p$-regular, all
$w_{\alpha,m}s_{\alpha,mp}\cdot\lambda$ that occur in this sum are
distinct. (See also~\cite[II.8.19, Rk.~3]{MR2015057}.) Note that
$w_{\beta,n}=1$ by the previous paragraph. Therefore, by the maximality of
$\lambda''$ and by the strong linkage principle, $F(\lambda'')$ is a
$\uG$-constituent of $W(\lambda)$, as claimed. (It occurs once in
$W(w_{\beta,n}s_{\beta,np}\cdot\lambda)$, but cannot occur in any other term.)

Suppose first that $\lambda'' \in X_1(\uT)$, so $\lambda'' \neq
\lambda$. Then $F(\lambda'') \in W^?(\tau)$ by Proposition~\ref{prop:
  sufficiently generic tame inertial L-param} and
\eqref{eq:taufromlambda'}, so $F(\lambda'') \in W^?(\tau) \cap
\JH_{G(\Fp)} W(\lambda) $ and $d(\lambda'') < d(\lambda)$, as
required.

Alternatively, if $\lambda''\notin X_1(\uT)$, then
$\lambda''=\lambda_0''+p\lambda_1''$, where $\lambda_0''\in X_1(\uT)$ and
$\lambda_1''\in X(\uT)_+-X^0(\uT)$. By Lemma~\ref{lem: lambda mu relation on
  bounded epsilon}, as $\lambda'\uparrow\lambda''$ we can write
$\lambda'=\sigma\cdot(\mu+p\varepsilon)$ for some $\sigma\in W$, some dominant
$\mu\uparrow\lambda_0''$, and some $\varepsilon\in X(\uT)$ such that
$w'\varepsilon\le\lambda_1''$ for all $w'\in W$. As
$(w,\lambda'+\eta)=(w,\sigma(\mu+p\varepsilon+\eta))$ is in the same $X(\uT)\rtimes
W$-orbit as $(w',\mu+\pi\varepsilon'+\eta)$, where
$w':=\sigma^{-1}w\pi\sigma\pi^{-1}$ and
$\varepsilon':=\pi^{-1}w'\pi\varepsilon$, we have
\begin{equation}
  \label{eq:tauprimeexpression}
  \tau\cong\tau(w',\mu+\pi\varepsilon'+\eta).
\end{equation}
By genericity, we may assume that $p$ is large enough such that $\lambda_1''\in
C_0$ and that $\mu+\pi\varepsilon'$ lies in the same alcove as $\mu$ for any
possible $\lambda_1''$ and $\varepsilon'$. Then
$\mu+\pi\varepsilon'\uparrow\lambda_0''+\pi\varepsilon''$ for some $\varepsilon''\in
W\varepsilon'=W\varepsilon$. Note that $\varepsilon''$ is a weight of
$F(\lambda''_1)=W(\lambda''_1)$. Hence (as in the proof of~\cite[Prop.~9.1]{herzigthesis}), $F(\lambda_0''+\pi\varepsilon'')$ is a $G(\Fp)$-constituent of
$F(\lambda'')\cong F(\lambda_0'')\otimes F(\lambda_1'')^{(\pi)}$,
hence by the above
a $G(\Fp)$-constituent of $W(\lambda)$. By Proposition~\ref{prop: sufficiently
  generic tame inertial L-param} and~(\ref{eq:tauprimeexpression}),
$F(\lambda_0''+\pi\varepsilon'')\in \W^?(\tau)$, 
hence $F(\lambda''_0+\pi\varepsilon'') \in W^?(\tau) \cap \JH_{G(\Fp)}
W(\lambda) $ and $d(\lambda_0''+\pi \varepsilon'') = d(\lambda_0'') <
d(\lambda)$, as required.
\end{proof}

\begin{thm}\label{thm:main-result}
  For sufficiently generic tame inertial $L$-parameters $\tau$ we have $W^?(\tau) =
  \Wexpl(\tau) = \cC(\Wobv(\tau))$.
\end{thm}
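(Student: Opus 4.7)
The inclusion $\cC(\Wobv(\tau))\subset \Wexpl(\tau)$ is Remark~\ref{rem:cl-wobv}, so the work is in establishing
\[
  \Wexpl(\tau) \subset W^?(\tau) \subset \cC(\Wobv(\tau)).
\]
Throughout we exploit that ``sufficiently generic'' is preserved when passing to a Levi (Lemma~\ref{lem:fh-jan15-lemma2}), so any finite list of genericity requirements on $\tau$ and on $\tau^M$ for all standard Levi's $M\supset T$ can be imposed simultaneously.

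For $\Wexpl(\tau)\subset W^?(\tau)$, I would proceed by induction along the recursive definition of $\Wexpl(\tau)$. The base case $\Wobv(\tau)\subset W^?(\tau)$ is immediate: if $F(\mu)\in\Wobv(\tau)$ with $\mu\in X_1(\uT)$, then Proposition~\ref{prop: obvious weights of tau} gives $\tau\cong \tau(w,\mu+\eta)$ for some $w\in W$, so taking $\lambda'=\mu=\lambda$ in Proposition~\ref{prop: sufficiently generic tame inertial L-param} yields $F(\mu)\in W^?(\tau)$. For the inductive step, suppose $F(\nu)\in\Wexpl(\tau)$ arises from a Levi $M$ with $\tau$ factoring through $\tau^M$ and $\Wexpl(\tau^M)\cap \JH_{M(\Fp)}W^M(w\cdot\nu)\ne\varnothing$ for some $w\in W$ with $w\cdot\nu\in X(\uT)_{+,M}$. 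By Lemma~\ref{lem:fh-jan15-lemma2}, $\tau^M$ is itself as generic as we like, and $\|w\cdot\nu\|_M$ is uniformly bounded (in units of $p$) as $\nu$ ranges over $X_1(\uT)$, so the Levi analogue of Proposition~\ref{prop:fh-jan15-4} applies and gives $\tau^M\cong \tau^M(w',\lambda''+\eta)$ for some $w'\in W_M$ and some $M$-dominant $\lambda''\uparrow_M w\cdot\nu$. Pushing the identity $\tau\cong \tau(w',\lambda''+\eta)$ and rewriting $(w',\lambda''+\eta)$ in its $X(T)\rtimes W$-orbit via \eqref{eq:extendedaffineweylaction} using an element $\sigma\in W$ that makes $\sigma(\lambda''+\eta)$ dominant, we obtain $\tau\cong\tau(\wt w,\sigma\cdot\lambda''+\eta)$. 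Finally, Lemma~\ref{lem:fh-jan15-lemma3} applied to $\lambda''$ and $w\cdot\nu$ in $X(\uT)_{+,M}$ (with $\sigma$ and $w^{-1}$ respectively) shows $\sigma\cdot\lambda''\uparrow \nu$, and Proposition~\ref{prop: sufficiently generic tame inertial L-param} then yields $F(\nu)\in W^?(\tau)$.

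For $W^?(\tau)\subset \cC(\Wobv(\tau))$, I will fix $\lambda\in X_1(\uT)$ and induct on $d(\lambda)$ that $F(\lambda)\in W^?(\tau)\Rightarrow F(\lambda)\in \cC(\Wobv(\tau))$. Note $\|\lambda\|<Np$ for a constant $N$ depending only on the root datum, since $\lambda\in X_1(\uT)$; also $F(\lambda)$ is a $\uG$-summand of the socle of $W(\lambda)$, hence a $G(\Fp)$-constituent of $W(\lambda)$ by Lemma~\ref{bijection of weights}. Thus $W^?(\tau)\cap \JH_{G(\Fp)}W(\lambda)\ne\varnothing$, so Proposition~\ref{prop:fh-jan15-4} applies. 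Its ``moreover'' conclusion gives two alternatives: either $\lambda'=\lambda\in X_1(\uT)$, in which case Proposition~\ref{prop: obvious weights of tau} yields $F(\lambda)\in\Wobv(\tau)\subset \cC(\Wobv(\tau))$; or else there exists $F(\nu)\in W^?(\tau)\cap \JH_{G(\Fp)}W(\lambda)$ with $d(\nu)<d(\lambda)$. In the latter case $\nu\in X_1(\uT)$ so the induction hypothesis gives $F(\nu)\in\cC(\Wobv(\tau))$, and then the closure property in Definition~\ref{def:closure-C-for-general-G}, applied with the restricted weight $\lambda$, puts $F(\lambda)\in\cC(\Wobv(\tau))$. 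The base case $d(\lambda)=0$ is automatic because $\lambda'\uparrow\lambda$ forces $d(\lambda')=0$, hence $\lambda'=\lambda$ as they both lie in the closure of the lowest alcove.

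The main obstacle is the inductive step of the first inclusion: it requires invoking Proposition~\ref{prop:fh-jan15-4} for the Levi~$M$ (with the appropriate uniform genericity thanks to Lemma~\ref{lem:fh-jan15-lemma2}) and then passing the $\uparrow_M$ relation on $M$-dominant weights back to an $\uparrow$ relation on $G$-dominant weights, which is exactly what Lemma~\ref{lem:fh-jan15-lemma3} is designed to do; checking that the orbit bookkeeping in $W\times X(T)$ under \eqref{eq:extendedaffineweylaction} correctly tracks $\sigma\cdot\lambda''+\eta=\sigma(\lambda''+\eta)$ is a small but essential verification. Once these ingredients are in place, the remainder of the argument is the $d$-induction outlined above.
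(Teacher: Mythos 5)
Your proposal is correct and follows essentially the same route as the paper: the same reduction to $\Wexpl(\tau)\subset W^?(\tau)\subset\cC(\Wobv(\tau))$, the same Levi argument via Lemmas~\ref{lem:fh-jan15-lemma2} and~\ref{lem:fh-jan15-lemma3} and Proposition~\ref{prop:fh-jan15-4} for the first inclusion, and the same induction on $d(\lambda)$ for the second. The only points you gloss over that the paper spells out are that one must first know $\Wexpl(\tau^M)\subset W^?(\tau^M)$ before invoking the Levi version of Proposition~\ref{prop:fh-jan15-4} (stated there for $W^?$), and that $\sigma\cdot\lambda''$ is actually dominant (not merely $\sigma(\lambda''+\eta)$ dominant), which follows because it lies as deep as one likes in its alcove, as in Lemma~\ref{lem:fh-jan15-one}(i).
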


\begin{proof}
 It suffices to show $\Wexpl(\tau) \subset W^?(\tau) \subset
 \cC(\Wobv(\tau))$. To see that $\Wexpl(\tau) \subset W^?(\tau)$,
 first note that $\Wobv(\tau) \subset W^?(\tau)$ by
 Propositions~\ref{prop: obvious weights of tau} and~\ref{prop:
   sufficiently generic tame inertial L-param}. Suppose there is a
 Levi $M \subset G$ containing $T$ such that $\tau$ factors via
 $\tau^M : I_{\Qp} \to \Mhat(\Fpbar)$. Then $\tau^M$ is as generic as
 we like by Lemma~\ref{lem:fh-jan15-lemma2}. It remains to check that
 if $\nu \in X_1(\uT)$ and $w\in W$ are such
 that $w \cdot \nu \in X(\uT)_{+,M}$, then $W^?(\tau^M)
 \cap \JH_{M(\Fp)} (W^M(w\cdot \nu)) \ne \varnothing$ implies $F(\nu) \in
 W^?(\tau)$. Noting that $\|w\cdot \nu\|_M \le \| \nu\|$, we get from
 Proposition~\ref{prop:fh-jan15-4} that $\tau^M \cong
 \tau^M(w',\lambda'+\eta)$ for some $M$-dominant $\lambda' \uparrow_M
 w\cdot \nu$ and some $w'\in W_M$. By Lemma~\ref{lem:fh-jan15-lemma3}
 we have $\sigma \cdot \lambda' \uparrow \nu$, where $\sigma \in W$
 such that $\sigma(\lambda'+\eta) \in X(\uT)_+$. Hence $\tau \cong
 \tau(w',\lambda'+\eta)\cong \tau(\sigma w' \pi \sigma^{-1} \pi^{-1},
 \sigma \cdot \lambda' + \eta)$. From this we deduce as in
 Lemma~\ref{lem:fh-jan15-one}(i) that $\sigma\cdot \lambda'\in
 X(\uT)_+$, as it is as deep as we like in its alcove, hence that
 $F(\nu) \in W^?(\tau)$ by Proposition~\ref{prop: sufficiently generic
   tame inertial L-param}.

To show $W^?(\tau) \subset \cC(\Wobv(\tau))$, we show 
$$ F(\lambda) \in W^?(\tau) \implies F(\lambda) \in \cC(\Wobv(\tau))
\qquad \text{for all}\ \lambda\in X_1(\uT)$$
by induction on $d(\lambda)$. (Note that $d(\lambda)$ is bounded,
independent of $p$.) As $F(\lambda)\in W^?(\tau)$,
Proposition~\ref{prop: sufficiently generic tame inertial L-param}
implies that $\tau \cong \tau(w,\lambda'+\eta)$ for some dominant
$\lambda'\uparrow \lambda$ and some $w\in W$. If $\lambda' =\lambda$,
then $F(\lambda) \in \Wobv(\tau)$ by Proposition~\ref{prop: obvious
  weights of tau}. Otherwise, by Proposition~\ref{prop:fh-jan15-4}
there exists $\nu\in X_1(\uT)$ such that $F(\nu) \in W^?(\tau) \cap
\JH_{G(\Fp)} W(\lambda)$ and $d(\nu) < d(\lambda)$. By induction, we
have $F(\nu) \in \cC(\Wobv(\tau)) \cap \JH_{G(\Fp)} W(\lambda)$, hence
by definition of $\cC$ we get $F(\lambda) \in \cC(\Wobv(\tau))$. 
\end{proof}

\begin{rem}
  In principle the implied constant in this theorem (as well as in all other results in Section~\ref{sec:herzig-comparison})
  can be made explicit. We also remark that none of the results we use
  depend on Lusztig's conjecture.
\end{rem}

\subsection{The proof of Lemma~\ref{lem:correction to EG 4.1.1.}}
\label{sec:proof-of-correction-to-EG}

In this section we prove Lemma~\ref{lem:correction to EG 4.1.1.},
which we restate here (using once again the notation of Section~\ref{sec:BM}).
 
\begin{lem}
If $\lambda$ is a lift of $a\in (\Xn)^{S_k}$, then
  $L_\lambda\otimes_{\O}\F$ has socle $F_a$, and every other Jordan--H\"older factor
  of $L_\lambda\otimes_{\O}\F$ is of the form $F_b$ with $b\in (X_1^{(n)})^{S_k}$ and $\|b\|<\|a\|$.
\end{lem}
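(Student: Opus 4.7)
The plan is to reinterpret $L_\lambda \otimes_{\O} \F$ as the restriction to $G(\Fp) = \GL_n(k)$ of a dual Weyl module for the algebraic group $\uG := G \times_{\Zp} \Fpbar \cong \prod_{\sigma \in S_k} \GL_{n,\Fpbar}$, where $G := \Res_{W(k)/\Zp} \GL_n$, and then to exploit standard algebraic-group theory together with the generalisation of Steinberg's tensor product theorem to $G(\Fp)$ developed in the appendix to~\cite{herzigthesis}. Concretely, since $\lambda$ is a lift of $a$ with respect to some choice of embeddings $(\emb_\sigma)_{\sigma \in S_k}$, the factors of $L_\lambda$ indexed by embeddings $\emb' \ne \emb_\sigma$ above $\sigma$ are trivial modulo $p$, while the remaining factors reduce to $P_{a_\sigma}$; so as a $G(\Fp)$-module, $L_\lambda \otimes_{\O} \F$ is identified with the restriction to $G(\Fp)$ of the dual Weyl module $W(a) := \Ind_{\uB}^{\uG}(w_0 a)$.

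As a $\uG$-module, $W(a)$ has irreducible socle $F(a)$ and every other constituent is of the form $F(a')$ with $a' \in X(\uT)_+$ strongly linked to $a$ and $a' < a$;\ by Lemma~\ref{bijection of weights}, $F(a)|_{G(\Fp)} = F_a$ is irreducible, supplying $F_a$ inside the socle of $L_\lambda \otimes_{\O} \F$ (the fact that no additional Serre weight appears in the socle will follow once we have controlled the Jordan--H\"older factors). For $\uG = \prod_\sigma \GL_n$ one has $\|\cdot\| = \langle \cdot, 2\rho \rangle$, and writing $a - a' = \sum_{\alpha > 0} n_\alpha \alpha$ with $n_\alpha \ge 0$ not all zero yields $\|a'\| < \|a\|$. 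Observe also that $\|\cdot\|$ vanishes on $X^0(\uT)$, so it is well-defined on the equivalence classes parametrising Serre weights.

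To descend to $G(\Fp)$-Jordan--H\"older factors I would establish by induction on $\|\mu\|$ the key claim that for every $\mu \in X(\uT)_+$, each $G(\Fp)$-Jordan--H\"older factor $F_b$ of $F(\mu)|_{G(\Fp)}$ satisfies $\|b\| \le \|\mu\|$. The base case $\mu \in X_1(\uT)$ is Lemma~\ref{bijection of weights}. Otherwise, write $\mu = \mu_0 + p\mu_1$ with $\mu_0 \in X_1(\uT)$ and $\mu_1 \in X(\uT)_+ \setminus \{0\}$, and apply the Steinberg decomposition $F(\mu) \cong F(\mu_0) \otimes F(\mu_1)^{(1)}$. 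On restriction to $G(\Fp) = \uG^F$ the absolute Frobenius acts via the finite-order automorphism $\pi$ from the proof of Lemma~\ref{bijection of weights}, which permutes the $\sigma$-components and preserves $\|\cdot\|$;\ hence the $G(\Fp)$-constituents of $F(\mu_1)^{(1)}|_{G(\Fp)}$ are, up to the $\pi$-action, those of $F(\mu_1)|_{G(\Fp)}$, which by induction are of the form $F_c$ with $\|c\| \le \|\mu_1\|$. Each tensor product $F_{\mu_0} \otimes F_c = (F(\mu_0) \otimes F(c))|_{G(\Fp)}$ has $\uG$-constituents $F(d)$ with $d \le \mu_0 + c$, so $\|d\| \le \|\mu_0\| + \|c\| \le \|\mu_0\| + \|\mu_1\| < \|\mu_0\| + p\|\mu_1\| = \|\mu\|$ (using $p \ge 2$ and $\mu_1 \ne 0$);\ a further application of the induction hypothesis then bounds $\|b\|$ by $\|d\| < \|\mu\|$. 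Combined with $\|a'\| < \|a\|$, this yields $\|b\| < \|a\|$ for every non-socle Jordan--H\"older factor of $L_\lambda \otimes_{\O} \F$ and forces the multiplicity of $F_a$ to be one, from which the socle assertion follows.

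The main obstacle is the bookkeeping of how the relative Frobenius $F$ on $\uG$ (whose fixed points are $G(\Fp)$) relates to the absolute Frobenius appearing in Steinberg's tensor product theorem:\ they differ by the automorphism $\pi$, and everything hinges on the fact that $\pi$ stabilises $X_1(\uT)$ and preserves the norm $\|\cdot\|$. Once this is set up carefully, the induction itself is routine.
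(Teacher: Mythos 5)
Your Jordan--H\"older analysis is essentially the paper's own argument: identify $L_\lambda\otimes_{\O}\F$ with the restriction to $G(\Fp)$ of the dual Weyl module $W(a)$ for $\uG=\Res_{k/\Fp}\GL_n\times\Fpbar$, apply Steinberg's tensor product theorem to a non-restricted constituent $F(\mu_0+p\mu_1)$, use the finite-order automorphism $\pi$ (with $F=p\pi^{-1}$ on $X(\uT)$, so $F(p\mu_1)\cong F(\pi\mu_1)$ as $\uG^F$-modules) to convert the Frobenius twist into something of strictly smaller norm, and induct. The one imprecision there is that the relevant dichotomy is $\mu_1\in X^0(\uT)$ versus $\mu_1\notin X^0(\uT)$ rather than $\mu_1=0$ versus $\mu_1\neq 0$ (if $\mu_1\in X^0(\uT)$ then $\|\mu_1\|=0$ and your strict inequality $\|\mu_1\|<p\|\mu_1\|$ fails --- but in that case $\mu$ is already restricted, so this is cosmetic). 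With that fixed, the conclusion that every Jordan--H\"older factor other than a single copy of $F_a$ has norm $<\|a\|$ is correct and matches the paper.

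The genuine gap is the socle assertion. You write that the multiplicity-one statement ``forces the multiplicity of $F_a$ to be one, from which the socle assertion follows,'' but knowing the Jordan--H\"older multiplicities of $W(a)|_{G(\Fp)}$ does not determine its $G(\Fp)$-socle: nothing in your argument excludes a submodule $F_a\oplus F_b$ of the socle with $\|b\|<\|a\|$. (The inclusion $F_a\hookrightarrow\soc_{G(\Fp)}W(a)$ does follow, since the $\uG$-socle $F(a)$ is a $G(\Fp)$-submodule that is irreducible over $G(\Fp)$ for restricted $a$; it is the reverse containment that needs an argument.) The paper supplies this by dualising \cite[Thm.~5.9]{bib:Hum}, i.e.\ the statement that for restricted highest weight the Weyl module is a cyclic $\Fpbar[G(\Fp)]$-module generated by its highest weight vector and hence has irreducible $G(\Fp)$-head; this is a separate structural input (whose proof the paper moreover has to adapt, replacing the weight $\sigma$ and the ordering $\le_{\Q}$ in Humphreys' argument by a suitable weight pairing to $p-1$ with the simple coroots and by the $\|\cdot\|$-comparison, using the Jordan--H\"older control you established as an ingredient). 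You need to invoke something of this kind; it does not follow formally from the multiplicity count.
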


\begin{proof}
It suffices to prove the analogous claim over $\Fpbar$.
For this we work in the following more general setting:\ let $G$
denote a connected reductive group over $\Fp$ such that $G^\der$ is
simply connected\footnote{That is,  $G$ is  the special
  fibre of one of the groups that we considered
in Section~\ref{sec:unramified groups}, except we don't assume that
$Z(G)$ is connected or that $G$ has a local twisting element.}. 
 We also let 
$B$ be a Borel subgroup of $G$ with Levi subgroup $T$, $(\uG, \uB, \uT) := (G,B,T) \times \Fpbar$,
and let $F:\uG \to \uG$ denote the relative Frobenius. Let $\pi$ be the finite order automorphism
of $(\uG, \uB, \uT)$ as in the proof of Lemma~\ref{lem:bijection of weights}; in particular, $F = p \pi^{-1}$
on $X(\uT)$.   For the moment we work with the definition of $\|.\|$ given in
Section~\ref{subsec: the main result}, and check at the end of the
proof that it agrees with Definition~\ref{defn: norm on weights}.

We will show that for $a\in X_1(\uT)$, $W(a)$ has $\uG^F$-socle
$F(a)$, and that every other Jordan--H\"older factor is of the form
$F(b)$, $b\in X_1(\uT)$, $\|b\|<\|a\|$.

We first leave aside the socle and show by induction on $\|a\|$ that if
$V$ is a $\uG$-module with unique highest weight $a\in X_1(\uT)$, and $\dim
V_a=1$, then $[V:F(a)]_{\uG^F}=1$, and every other Jordan--H\"older
factor of the $\uG^F$-representation $V$ is of the form $F(b)$, $b\in X_1(\uT)$,
$\|b\|<\|a\|$.

Any irreducible $\uG$-constituent of $V$ is of the form $F(b)$ with $b\le
a$. Hence it is enough to show that if $[F(b):F(c)]_{\uG^F} >0$ ($c\in X_1(\uT)$) then
$\|c\|\le\|b\|$ and that
$[F(b):F(a)]_{\uG^F}=\delta_{ab}$.

If $b\in X_1(\uT)$, then $c\equiv b\pmod{X^0(\uT)}$ by Lemma~\ref{lem:bijection of weights} and we are
done. Otherwise, $b=b_0+pb_1$ with $b_0\in X_1(\uT)$ and $b_1\in
X(\uT)_+-X^0(\uT)$. Then \[F(b)\cong F(b_0)\otimes F(pb_1)\cong
F(b_0)\otimes F(\pi(b_1))\] as $\uG^F$-representations, and the latter
$\uG$-module has unique highest weight $b_0+\pi(b_1)$. As
$\|b_0+\pi(b_1)\|=\|b\|-(p-1)\|b_1\|<\|b\|$, we get by induction
that $\|c\|\le\|b_0+\pi(b_1)\|<\|b\|$.

The claim about the socle follows by dualising the statement
of \cite[Thm.\ 5.9]{bib:Hum}. (In the proof replace $\sigma$ by any element of $X(\uT)$ that pairs
to $p-1$ with any simple coroot and $\le_\Q$
by $\|\cdot\|\le \|\cdot\|$, keeping in mind the above result about
Jordan--H\"older factors.)

To deduce the lemma, apply the above with $G = \Res_{k/\Fp} \GL_n$ as
in Section~\ref{sec:gln}. We
have canonical identifications $\uG^F \cong \GL_n(k)$ and $X(\uT)
\cong (\Z^n)^{\Sk}$. In the notation of Section~\ref{subsec:Serre weights} and the
proof of Proposition~\ref{prop:Wexpl} we get:
$$  L_\lambda \otimes \Fpbar \cong  \prod_{\sigma \in \Sk}
                             \left(M_{a_{\sigma}} \otimes_{k,\sigma}
                             \Fpbar\right)  \cong W(a), $$
$$  F_a \otimes \Fpbar \cong  \prod_{\sigma \in \Sk}
                             \left(N_{a_{\sigma}} \otimes_{k,\sigma}
                             \Fpbar\right)  \cong F(a). $$
To recover Definition~\ref{defn: norm on weights}, note that
$\left(\sum \alpha^{\vee} \right)_{\sigma} = (n-1,n-3,\ldots,-n+1) \in
\Z^n_{+}$ for any $\sigma \in \Sk$.
\end{proof}

\subsection{Comparison with  \texorpdfstring{\cite{bib:ADP}}{[ADP02].}}
\label{sec:adp-comp}

Let $\rbar: G_{\Q} \to \GL_n(\Fp)$ be odd and
irreducible. In this section we prove
Proposition~\ref{prop:adp-comparison}, i.e.\ we show that if 
$\rbar|_{I_{\Qp}}$ is
semisimple and sufficiently generic then the Serre weights
predicted in \cite{bib:ADP} are a subset of
$\Wexpl(\rbar|_{G_{\Qp}})$.

Suppose that $F(\lambda)$, with $\lambda \in X_1^{(n)}$ sufficiently
deep in its alcove, is predicted for $\rbar$ by
\cite[Conj.~3.1]{bib:ADP}. Then according to Definition~2.23 of
\emph{loc.\ cit.}, but using our terminology, there exist integers
$n_i$,  an
$\eta$-partition $(\lambda^{(i)})$ of $\lambda$ with $\lambda^{(i)}
\in \Z^{n_i}_+$, weights $\mu^{(i)} \in X_1^{(n_i)}$, and  $n_i$-cycles
  $w_i \in S_{n_i}$ such that:
  \begin{itemize}
  \item $\lambda^{(i)} \equiv \mu^{(i)} \pmod{(p-1)\Z^{n_i}}$ for all
      $i$,
  \item $\rbar|_{I_{\Qp}} \cong \oplus_i \tau(w_i,\mu^{(i)} +
      \eta_{n_i})$, where each summand is irreducible, and
\item $\mu_1^{(i)} - \mu_{n_i}^{(i)} \le p-1$
  for all $i$.  
\end{itemize}
(In fact, in what follows we make no use of the final condition in the 
above list, nor of the irreducibility of the summands, nor of the fact that $w_i$ is an $n_i$-cycle.)

Write $\lambda^{(i)} = \mu^{(i)} + (p-1)\nu^{(i)}$ with
$\nu^{(i)}\in \Z_+^{n_i}$. Then by~\eqref{eq:extendedaffineweylaction}
we have 
$$ \tau(w_i,\mu^{(i)} +
      \eta_{n_i}) \cong \tau(\sigma w_i \sigma^{-1}, \sigma\cdot
      ((\lambda^{(i)} - p \nu^{(i)}) + p w_i^{-1} \nu^{(i)}) +
      \eta_{n_i})$$ for all $\sigma \in S_{n_i}$. By Lemma~\ref{lem:
        lambda mu relation on bounded epsilon} we have 
$$ \sigma_i \cdot ((\lambda^{(i)} - p\nu^{(i)}) + p w_i^{-1}
\nu^{(i)}) \uparrow (\lambda^{(i)} - p\nu^{(i)}) + p\nu^{(i)} =
\lambda^{(i)},$$
where $\sigma_i$ is chosen so that the left-hand side of this equation
is dominant. Proposition~\ref{prop:fh-jan15-4} then gives
$$W^{?}(\tau(w_i,\mu^{(i)} + \eta_{n_i})) \cap \JH_{\GL_{n_i}(\Fp)}
W(\lambda^{(i)}) \neq \emptyset,$$
and so by Proposition~\ref{prop:Wexpl}, Theorem~\ref{thm:main-result},
and Definition~\ref{defn:explicit-weights-ss} we deduce that
$F(\lambda) \in \Wexpl(\rbar|_{G_{\Qp}})$.

\subsection{Beyond unramified groups}\label{subsec: beyond unramified groups}It
is at present unclear how to formulate versions of the various conjectures of
this paper for general ramified groups, where crystalline representations are
not available. It seems reasonable to expect that at least for inner forms of
$\GL_n$, it should be possible to use the
Breuil--M\'ezard formalism; indeed, this is carried out for quaternion algebras
in the papers~\cite{gee-savitt-quaternion,gee-geraghty-quaternion}. For more
general groups the absence of a local Langlands correspondence and a mature
theory of types at present mean that it is unclear whether to expect the
Breuil--M\'ezard formalism to extend in the necessary fashion.

\appendix

\section{Wang's result on the \texorpdfstring{$\uparrow$}{uparrow}-ordering of alcoves}\label{sec:wangs-result-uparrow}

We give Wang's proof of the following theorem on the geometry of alcoves (see \cite{ye}, \cite{wang}). The following treatment is
based on Chuangxun (Allen) Cheng's translation of parts of \cite{wang}.

\subsection{Ye and Wang's result}\label{sec:ye-wangs-result}

Let $\uG$ denote a connected reductive group over $\Fpbar$, and let
$\uB$ be a Borel subgroup of $\uG$ with Levi subgroup $\uT$. We then keep the same notation as in Sections~\ref{sec:unramified groups}--\ref{sec:herzig-comparison},
for example we have $\Phi$, $\Phi^+$, $W_p$, $\ua$.
However, for convenience, in this section $C_0$ and $w_0$ do not have their usual meaning.

\begin{thm}[Ye, Wang]\label{thm:dominant-sequence}
  Suppose $C$, $C'$ are dominant alcoves such that $C \ua C'$. Then there exists a sequence of dominant
alcoves $C = C_0 \ua C_1 \ua \cdots \ua C_k = C'$ such that $d(C_i)-d(C_{i-1}) = 1$ for all $i$.
\end{thm}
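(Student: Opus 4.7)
The plan is to argue by induction on $N := d(C') - d(C)$. The base case $N = 0$ is immediate, since $C \uparrow C'$ implies $d(C) \le d(C')$ with equality only if $C = C'$. For the inductive step, it suffices to produce a single intermediate dominant alcove: given dominant $C, C'$ with $C \uparrow C'$ and $C \neq C'$, I aim to find a dominant alcove $C^{\ast}$ adjacent to $C$ (i.e., sharing a codimension one wall) with $d(C^{\ast}) = d(C) + 1$, $C \uparrow C^{\ast}$, and $C^{\ast} \uparrow C'$; applying the inductive hypothesis to the pair $(C^{\ast}, C')$ then completes the refinement.

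To construct $C^{\ast}$, I would first consider the finite nonempty set $\cS$ of pairs $(\alpha, n) \in \Phi^+ \times \Z$ such that the affine hyperplane $H_{\alpha, np} := \{\mu : \langle \mu + \eta, \alpha^{\vee}\rangle = np\}$ separates $C$ from $C'$, with $C$ lying on the side where $\langle \mu + \eta, \alpha^{\vee}\rangle < np$. Those $(\alpha, n) \in \cS$ for which $H_{\alpha, np}$ is actually a wall of $C$ (equivalently, $n = n_\alpha + 1$ in the notation of \eqref{eq:3app}) form a nonempty subset $\cS^{\partial}$, because any line segment from a point of $C$ to a point of $C'$ must first exit $C$ through some wall in $\cS$. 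For any such $(\alpha, n) \in \cS^{\partial}$, the adjacent alcove $s_{\alpha, np} \cdot C$ automatically has $d(s_{\alpha, np} \cdot C) = d(C) + 1$, and a wall-exchange argument using \cite[II.6.9]{MR2015057}, applied to any reflection sequence witnessing $C \uparrow C'$, gives $C \uparrow s_{\alpha, np} \cdot C \uparrow C'$.

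The remaining, and principal, difficulty is to select $(\alpha, n) \in \cS^{\partial}$ so that $C^{\ast} := s_{\alpha, np} \cdot C$ is still dominant: a priori, reflection across $H_{\alpha, np}$ could push $C$ across some wall $H_{\beta, 0}$ of the dominant chamber with $\beta \in \Delta$. My plan is to choose $\alpha$ minimal in the set $\{\alpha \in \Phi^+ : (\alpha, n) \in \cS^{\partial}\text{ for some }n\}$ with respect to the partial order generated by subtracting simple roots, and then to rule out loss of dominance by contradiction: if $C^{\ast}$ fails to be dominant, one extracts some $\beta \in \Delta$ with $\langle \alpha, \beta^{\vee}\rangle > 0$, so that $s_{\beta}(\alpha) = \alpha - \langle \alpha, \beta^{\vee}\rangle \beta$ is a strictly smaller positive root. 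A direct calculation manipulating the reflections $s_{\alpha, np}$, $s_{\beta, 0}$, and $s_{s_\beta(\alpha), n'p}$ for an appropriate $n'$ then produces a pair $(s_\beta(\alpha), n') \in \cS^{\partial}$, contradicting minimality of $\alpha$. Carrying out this combinatorial step cleanly — in particular verifying that $H_{s_\beta(\alpha), n'p}$ genuinely separates $C$ from $C'$ and is truly a wall of $C$, rather than merely an element of $\cS$ — is where the subtlety of Ye and Wang's proof lies, and is the main obstacle.
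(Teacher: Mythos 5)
There is a genuine gap, and it sits exactly in the step you describe as automatic. Two things go wrong. First, it is false that $s_{\alpha,np}\cdot C\ua C'$ for every $(\alpha,n)\in\cS^{\partial}$: nothing in \cite[II.6.9]{MR2015057} (which is a statement about reflecting \emph{downwards}) or in the lifting-type statements of Lemma~\ref{lm:basic} and Corollary~\ref{cor:reflect-in-wall} yields this, and Proposition~\ref{prop:reflect-in-wall} only gives a dichotomy (the reflected chain either remains increasing, in which case it ends at $w_hs\cdot C$ rather than at $C'$, or crosses the original chain) with no control over which case occurs. Second, and more fundamentally, the first step of a $d$-increasing $\ua$-chain out of $C$ need not be a reflection in a \emph{wall} of $C$, so restricting attention to $\cS^{\partial}$ can exclude every valid choice. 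Both failures occur already in type $\tilde A_2$. Write $\lambda$ in coordinates $(x,y)=(\langle\lambda+\rho,\alpha_1^{\vee}\rangle,\langle\lambda+\rho,\alpha_2^{\vee}\rangle)$, with $\theta=\alpha_1+\alpha_2$, and take the dominant alcoves $C=\{x>p,\ y>0,\ x+y<2p\}$ (so $d(C)=2$) and $C'=\{y>2p,\ x>0,\ x+y<3p\}$ (so $d(C')=4$). Then $C\ua C'$ via $C\ua s_{\alpha_2,p}\cdot C\ua s_{\alpha_2,2p}s_{\alpha_2,p}\cdot C=C'$, where $s_{\alpha_2,p}\cdot C=\{x<p,\ x+y>2p,\ y<2p\}$. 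The only element of your set $\cS^{\partial}$ is $(\theta,2)$, and $s_{\theta,2p}\cdot C=\{y<p,\ x+y>2p,\ x<2p\}$ does \emph{not} satisfy $s_{\theta,2p}\cdot C\ua C'$: since $d(s_{\theta,2p}\cdot C)=3$, such a relation would force $C'$ to be the image of $s_{\theta,2p}\cdot C$ under a single affine reflection (each step of an $\ua$-chain strictly increases $d$ by Lemma~\ref{lm:reflection}), and no $s_{\beta,mp}$ achieves this, because a reflection translates the centroid of an alcove by a multiple of the root $\beta$ while these two centroids differ by $\tfrac{p}{3}(-4,5)$, which is proportional to no root. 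Meanwhile the unique dominant alcove $X$ with $C\ua X\ua C'$ and $d(X)=3$ is $s_{\alpha_2,p}\cdot C$, and $H_{\alpha_2,p}$ is not a wall of $C$ --- it meets $\overline{C}$ only in the vertex $(p,p)$. So for this pair your inductive step produces no admissible $C^{\ast}$ at all, minimality of $\alpha$ notwithstanding.

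This is precisely why the paper's argument is not a local wall-selection procedure. It runs a double induction, on $d(C')-d(C)$ and on the distance $h(C,d)$ from $C$ to an alcove in general $d$-position through adjacent dominant alcoves (Lemma~\ref{lm:seq-to-general-pos}); the engine is Proposition~\ref{prop:reflect-in-wall}, which transports an entire $\ua$-chain across a single wall, combined with Lemma~\ref{lm:ua-uua} and Proposition~\ref{prop:uua-dominant-seq} to repair any loss of dominance of the intermediate alcoves. Any repair of your approach would have to (a) allow reflections in hyperplanes of $\cS$ that are not walls of $C$, and (b) replace the ``wall-exchange'' step by a genuinely global argument certifying $s_{\alpha,np}\cdot C\ua C'$ for at least one admissible choice; neither is a routine fix, and together they essentially force something like the paper's chain-transfer machinery.
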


The proof of this theorem will be discussed below. We first deduce a corollary.
Let $\rho := \frac 12\sum_{\alpha \in \Phi^+} \alpha$.
We say that $\lambda\in X(\uT)$ is \emph{$\rho$-dominant} if $\langle \lambda + \rho,\alpha^\vee\rangle \ge 0$ for all
$\alpha \in \Delta$.

\begin{cor}\label{cor:ye-wang}  \leavevmode
  \begin{enumerate}
  \item Suppose $C$, $C'$ are dominant alcoves such that $C \ua C'$. Then there exists a sequence of dominant alcoves $C =
    C_0 \ua C_1 \ua \cdots \ua C_k = C'$ and reflections $s_i \in W_p$ such that $s_i \cdot C_{i-1} = C_i$ for all $i$.
  \item Suppose $\lambda$, $\lambda' \in X(\uT)$ are $\rho$-dominant such that $\lambda \ua
    \lambda'$. Then there exists a sequence of $\rho$-dominant weights $\lambda = \lambda_0 \ua \lambda_1 \ua \cdots \ua \lambda_k = \lambda'$ and reflections
    $s_i \in W_p$ such that $s_i \cdot \lambda_{i-1} = \lambda_i$ for all $i$.
  \end{enumerate}
\end{cor}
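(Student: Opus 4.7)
My plan is to deduce both statements from Theorem~\ref{thm:dominant-sequence} by exploiting the strict monotonicity of $d$ along non-trivial $\ua$-steps. For part (i), I would first invoke the theorem to obtain a sequence of dominant alcoves $C = C_0 \ua C_1 \ua \cdots \ua C_k = C'$ with $d(C_i) - d(C_{i-1}) = 1$ for each $i$. Unwinding the definition of $\ua$, each individual step $C_{i-1} \ua C_i$ is witnessed by a chain of alcoves connected by elementary reflection steps, and each such non-trivial elementary step strictly increases $d$ (this is the observation highlighted just after the theorem statement). Since the total $d$-change along $C_{i-1} \ua C_i$ is exactly $1$, after discarding trivial repetitions the witnessing chain collapses to a single affine reflection step, yielding an $s_i \in W_p$ with $s_i \cdot C_{i-1} = C_i$.

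For part (ii), I would start by observing that, since $\lambda$ and $\lambda'$ are $p$-regular, the alcoves $C$ and $C'$ containing them are unambiguously defined and dominant, and the hypothesis $\lambda \ua \lambda'$ forces $C \ua C'$ (the alcove-wise $\ua$-relation descending from the weight-wise one for $p$-regular weights). Applying part (i) yields dominant alcoves $C = C_0 \ua \cdots \ua C_k = C'$ and reflections $s_i$ with $s_i \cdot C_{i-1} = C_i$. I would then set
\[
\lambda_i := s_i s_{i-1} \cdots s_1 \cdot \lambda,
\]
obtaining $p$-regular weights $\lambda_i \in C_i$, which are therefore dominant, satisfying $\lambda_i = s_i \cdot \lambda_{i-1}$. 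The relation $\lambda_{i-1} \ua \lambda_i$ follows from $C_{i-1} \ua C_i$ combined with the reflection identity. To verify $\lambda_k = \lambda'$, I would use that $W_p$ acts simply transitively on alcoves, so each $W_p$-orbit meets each alcove in exactly one point; since $\lambda_k$ and $\lambda'$ both lie in $C' = C_k$ and in the common $W_p$-orbit of $\lambda$ (the latter containing $\lambda'$ because $\lambda \ua \lambda'$), they must coincide.

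The only delicate point is handling the trivial (identity) steps that are permitted in the definition of $\ua$; the argument hinges on the strict inequality $d(D_{j-1}) < d(D_j)$ holding at each non-trivial elementary reflection step, which allows one to extract exactly one such step whenever the total $d$-increment is $1$. Given the hint already provided in the excerpt, I do not anticipate any further obstacles.
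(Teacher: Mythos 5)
Your argument is correct and is essentially the paper's own (one-sentence) deduction: Theorem~\ref{thm:dominant-sequence} supplies a dominant chain with $d$-increments equal to $1$, and since every elementary step in the definition of $\ua$ strictly increases $d$ (Lemma~\ref{lm:reflection}), each such increment is realised by exactly one reflection; part (ii) then follows by transporting $\lambda$ along these reflections and using simple transitivity of $W_p$ on alcoves, exactly as you describe. No gaps.
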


\begin{proof}
  Part (i) follows from Theorem~\ref{thm:dominant-sequence} and the definition of $\uparrow$, as
  $d(C) < d(C')$ whenever $C \uparrow C'$ with $C \ne C'$ (and this in fact implies part (ii) in
  case $\lambda$ and $\lambda'$ are $p$-regular).

  For part (ii), let $F$ (resp.\ $F'$) be the facet containing $\lambda$ (resp.\ $\lambda'$). Let $C$ be the unique
  maximal alcove with respect to $\uparrow$ that contains $\lambda$, or equivalently $F$, in its
  closure. It exists by \cite[II.6.11(5)]{MR2015057}, taking $C = C^+(F)$ in the notation used
  there.  Similarly we let $C'$ be the unique maximal alcove such that $\lambda' \in \o{C'}$. As
  $\lambda$, $\lambda'$ are $\rho$-dominant, we see from \cite[II.6.11]{MR2015057} that $C$, $C'$
  are dominant alcoves.

  We claim that $C \uparrow C'$. An argument exactly as in \cite[II.6.11(4)]{MR2015057} (reflecting
  up from $C$ rather than down from $w\cdot C^-$) shows that $C \uparrow C''$ for some alcove $C''$
  such that $\lambda' \in \o{C''}$, i.e.\ $F' \subset \o{C''}$. By the maximality of $C'$ we deduce that $C \uparrow C''
  \uparrow C'$.

  Applying part (i) we get a sequence of dominant alcoves $C = C_0 \ua C_1 \ua \cdots
  \ua C_k = C'$ and reflections $s_i \in W_p$ such that $s_i \cdot C_{i-1} = C_i$ for all $i$. For
  each $i$ let $\lambda_i$ be the unique $W_p$-translate of $\lambda$ in $\o{C_i}$.  Then
  $s_i\cdot\lambda_{i-1} = \lambda_i$ for all $i$, $\lambda_k = \lambda'$, and $\lambda_i$ is
  $\rho$-dominant as $C_i$ is dominant.
\end{proof}


In the following, let $\AA$ denote the set of alcoves and $\AA^+$ the subset of dominant alcoves.  Let $\CH$ denote the set
of all hyperplanes
\begin{equation}
  \label{eq:4app}
  H_{\alpha,np} = \{ \lambda \in X(\uT) \otimes \R : \langle \lambda+\rho, \alpha\dual\rangle = np \}
\end{equation}
for $\alpha \in \Phi^+$, $n \in \Z$.  
For each hyperplane $H = H_{\alpha,np} \in \CH$, let $s_H \in W_p$ be the reflection in $H$.
It is denoted by $s_{\alpha,np}$ in \cite{MR2015057}, II.6.1.
We will loosely say $H$ is a wall of an alcove $C$ if $H$ contains a facet of $C$ of codimension one.

Given a hyperplane $H = H_{\alpha,np} \in \CH$, we let $H^-$ (resp., $H^+$) denote the half-space obtained by replacing
``$=$'' by ``$<$'' (resp., ``$>$'') in \eqref{eq:4app}. Recall that $C \ua C'$ if there exists a sequence of alcoves $C = C_0$, $C_1$, \dots, $C_k = C'$ such that
$C_i = s_{H_i}\cdot C_{i-1}$ and $C_i \subset H_i^+$ for all $i$ (\cite{MR2015057}, II.6.5). We say that $C \uua C'$ if there is such a
sequence satisfying moreover that $-\rho \in \o{H_i^-}$ for all $i$. (This was considered, for example, in \cite{andersen}.)
We will see in Corollary~\ref{cor:two-uparrows} that the two partial
orders agree on the set of dominant alcoves.

Let $C^+$ denote the lowest alcove and $D^+ = \{ \lambda : \langle
\lambda + \rho, \alpha\dual \rangle > 0  \textrm{ for all }  \alpha
\in \Phi^+ \}$ the ($\rho$-shifted) dominant Weyl chamber.

\begin{lem}\label{lem:reflection}
  If $C \in \AA$ and $H \in \CH$ then $d(C) \ne d(s_H\cdot C)$, and $C \ua s_H \cdot C$ if and only if $d(C) < d(s_H\cdot C)$.
  In particular if $d(s_H \cdot C) = d(C) + 1$, then $H = H_{\beta,(n_\beta+1)p}$ for some $\beta \in \Phi^+$ and $n_\beta$ as in~\eqref{eq:3app}
  \upl with $\rho$ replacing $\eta$\upr .
  Therefore there are only finitely many alcoves $C'$ such that $C \ua C'$ and $d(C')-d(C) = 1$.
\end{lem}

\begin{proof}
  This follows easily from Lemma II.6.6 in \cite{MR2015057} and its proof.
\end{proof}

\begin{lem}\label{lem:basic}
  Suppose $C \in \AA$ and that $H$ is a wall of $C$. Let $s = s_H$.
  Suppose that $r$, $w \in W_p$ and that $r$ is a reflection. If $w\cdot C \ua rw \cdot C$ and $rws \cdot C \ua ws\cdot C$, then
  $rw = ws$.
\end{lem}

\begin{proof}
  Let $H_1 \in \CH$ be the hyperplane fixed by $r$. Then $w\cdot C \subset H_1^-$ and $ws\cdot C \subset H_1^+$.
  But $w\cdot H$ is the \emph{unique} hyperplane separating alcoves $w\cdot C$ and $ws\cdot C$. Thus $H_1 = w\cdot H$
  and $r = s_{w\cdot H} = w s w^{-1}$.
\end{proof}

\begin{prop}\label{prop:reflect-in-wall}
  Suppose that $C$, $H$, $s$ are as in Lemma~\ref{lem:basic}. If there exists a sequence $w_0$, \dots, $w_h \in W_p$ such that
  $w_i w_{i-1}^{-1}$ is a reflection for all $i$ and $w_0 \cdot C \ua \cdots \ua w_h \cdot C$, then one of the following is true:
  \begin{enumerate}
  \item $w_0s \cdot C \ua \cdots \ua w_hs \cdot C$.
  \item There are integers $j$, $k$ such that $1 \le j \le k \le h$ such that
    $w_0s\cdot C \ua \cdots \ua w_{j-1}s \cdot C = w_j \cdot C \ua \cdots \ua w_h \cdot C$ and
    $w_0 \cdot C \ua \cdots \ua w_{k-1} \cdot C = w_k s \cdot C \ua \cdots \ua w_h s\cdot C$.
  \end{enumerate}
\end{prop}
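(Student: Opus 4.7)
The plan is to track, step by step, whether the candidate shifted chain $w_0 s\cdot C, w_1 s\cdot C,\ldots, w_h s\cdot C$ also ascends in the $\ua$ order. Write $A_i := w_i\cdot C$ and $A_i' := w_i s\cdot C$, and let $r_i := w_i w_{i-1}^{-1}$, which is a reflection by hypothesis. Since $(w_i s)(w_{i-1} s)^{-1} = r_i$, consecutive pairs $A_{i-1}', A_i'$ are related by the same reflection $r_i$ as the original pair $A_{i-1}, A_i$. By Lemma~\ref{lm:reflection}, for each $i \in \{1,\ldots,h\}$ exactly one of $A_{i-1}' \ua A_i'$ or $A_i' \ua A_{i-1}'$ holds; I will call step $i$ \emph{bad} in the latter case.

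The key observation is that at a bad step $i$, Lemma~\ref{lm:basic} applied with $w = w_{i-1}$ and $r = r_i$ (using the given $A_{i-1} \ua A_i$ together with the bad-step condition $A_i' \ua A_{i-1}'$) yields $r_i w_{i-1} = w_{i-1} s$, equivalently $w_i = w_{i-1} s$. This immediately gives the crossover identities
\[
A_i = w_i \cdot C = w_{i-1}s\cdot C = A_{i-1}', \qquad A_i' = w_i s\cdot C = w_{i-1}\cdot C = A_{i-1},
\]
so at a bad step the two chains swap positions.

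If no step is bad, then the shifted chain ascends throughout and conclusion (i) holds. Otherwise let $j$ be the smallest bad step and $k$ the largest, so $1 \le j \le k \le h$. For $i < j$ every step in the shifted chain ascends, giving $A_0' \ua \cdots \ua A_{j-1}'$; combining this with the crossover identity $A_{j-1}' = A_j$ and the tail $A_j \ua A_{j+1} \ua \cdots \ua A_h$ inherited from the original chain produces the first displayed sequence of (ii). Symmetrically, for $i > k$ the shifted chain ascends, and combining the head $A_0 \ua \cdots \ua A_{k-1}$ of the original chain with $A_{k-1} = A_k'$ and $A_k' \ua A_{k+1}' \ua \cdots \ua A_h'$ produces the second sequence.

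The argument is essentially formal once Lemmas~\ref{lm:reflection} and~\ref{lm:basic} are in hand; its substance is the bookkeeping around bad steps. The only point that requires any care is the observation that $r_i$ implements the $i$-th step of \emph{both} chains, so that Lemma~\ref{lm:reflection}'s dichotomy applies to the shifted chain and Lemma~\ref{lm:basic} is applicable in precisely the form needed. I do not anticipate any serious obstacle beyond this.
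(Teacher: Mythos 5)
Your proof is correct and follows essentially the same route as the paper's (very terse) proof: identify the set of indices where the shifted chain descends, take its minimum $j$ and maximum $k$, and use Lemma~\ref{lm:basic} at those steps to obtain $w_j=w_{j-1}s$ and $w_k=w_{k-1}s$, which give the crossover identities needed for (ii). Your write-up just makes explicit the dichotomy from Lemma~\ref{lm:reflection} and the bookkeeping that the paper leaves to the reader.
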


\begin{proof}
  Suppose (i) does not hold. Letting $j$ be the minimum and $k$ be the maximum of the non-empty set $\{ i: w_i s \cdot C \ua w_{i-1}s \cdot C\}$,
  the proposition easily follows from Lemma~\ref{lem:basic}.
\end{proof}

\begin{cor}\label{cor:reflect-in-wall}
  Suppose that $C$, $H$, $s$ are as in Lemma~\ref{lem:basic}. If $w$, $w' \in W_p$ such that $w\cdot C \ua w'\cdot C$
  and $w's\cdot C \ua w'\cdot C$, then $ws\cdot C \ua w'\cdot C$.
\end{cor}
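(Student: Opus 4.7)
\medskip

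\noindent\textbf{Proof plan for Corollary \ref{cor:reflect-in-wall}.} The plan is to apply Proposition \ref{prop:reflect-in-wall} directly to an $\uparrow$-chain realising $w\cdot C \ua w'\cdot C$, and then handle the two resulting cases separately. Concretely, by the definition of $\ua$ (combined with the fact that $\ua$-steps may be taken via reflections, as in \cite[II.6.5]{MR2015057}), the hypothesis $w\cdot C \ua w'\cdot C$ gives a sequence $w_0,w_1,\ldots,w_h \in W_p$ with $w_0 = w$, $w_h = w'$, each $w_i w_{i-1}^{-1}$ a reflection, and $w_0\cdot C \ua w_1\cdot C \ua \cdots \ua w_h\cdot C$. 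This is exactly the input to Proposition \ref{prop:reflect-in-wall}.

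First, I would dispose of case (i) of that proposition. If $w_0 s\cdot C \ua \cdots \ua w_h s\cdot C$ holds, then in particular $ws\cdot C \ua w's\cdot C$. Combining this with the hypothesis $w's\cdot C \ua w'\cdot C$ and using transitivity of $\ua$, we conclude $ws\cdot C \ua w'\cdot C$.

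Next, in case (ii) of Proposition \ref{prop:reflect-in-wall}, there exist integers $1 \le j \le k \le h$ such that
\[
w_0 s\cdot C \ua \cdots \ua w_{j-1}s\cdot C = w_j\cdot C \ua \cdots \ua w_h\cdot C.
\]
Reading the two ends of this single chain gives $ws\cdot C = w_0 s\cdot C \ua w_h\cdot C = w'\cdot C$, which is precisely the desired conclusion. (Note that case (ii) is in fact even easier than case (i): one does not need the hypothesis $w's\cdot C \ua w'\cdot C$ at all.)

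There is no real obstacle here once Proposition \ref{prop:reflect-in-wall} is available; the only point requiring a moment's care is the standard fact that an $\uparrow$-relation between alcoves can always be witnessed by a chain in which each step is given by a reflection, so that Proposition \ref{prop:reflect-in-wall} genuinely applies. Both cases then yield $ws\cdot C \ua w'\cdot C$, completing the proof.
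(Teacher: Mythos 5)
Your proof is correct and follows essentially the same route as the paper: take a reflection-chain witnessing $w\cdot C \ua w'\cdot C$, apply Proposition~\ref{prop:reflect-in-wall}, conclude directly in case (ii), and use transitivity with the hypothesis $w's\cdot C \ua w'\cdot C$ in case (i). The observation that the chain can be taken with reflection steps follows from the definition of $\ua$ together with the simple transitivity of $W_p$ on alcoves, exactly as the paper implicitly uses.
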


\begin{proof}
  We can find $w_0$, \dots, $w_h \in W_p$ such that
  $w_i w_{i-1}^{-1}$ is a reflection for all $i$ and $w \cdot C = w_0 \cdot C \ua \cdots \ua w_h \cdot C = w'\cdot C$.
  By the proposition, $w\cdot C \ua w'\cdot C$ implies $ws\cdot C \ua w's\cdot C$ or $ws\cdot C \ua w'\cdot C$.
  We are done in the second case. In the first case use $w's\cdot C \ua w'\cdot C$ to conclude.
\end{proof}

\begin{lem}\label{lem:ua-uua}
  Suppose $C \in \AA^+$ or that $C$ has a wall $H$ such that $s_H \cdot C \in \AA^+$. Let $r \in W_p$ be a reflection.
  Then $C \ua r\cdot C \iff C \uua r\cdot C$.
\end{lem}

\begin{proof}
  Clearly if $C \uua r\cdot C$ then $C \ua r\cdot C$. Conversely, suppose that $C \ua r\cdot C$. Say $r = s_{H_1}$, where
  $H_1 = H_{\alpha,mp}$ with $\alpha \in \Phi^+$. We have $C \subset H_1^-$ and we want to show that $-\rho \in \o{H_1^-}$. If
  $D^+ \cap H_1^- \ne \varnothing$, then for any point $x$ in the intersection, $0 < \langle x+\rho, \alpha\dual\rangle < mp$,
  so $-\rho \in H_1^-$ and we are done.  If $C \in \AA^+$, then $C \subset D^+ \cap H_1^-$ and we are done.

  If $C \not\in \AA^+$, then $s_H\cdot C \in \AA^+$ for some wall $H$ of $C$.
  If $H \ne H_1$, then $C$ and $s_H\cdot C$ lie on the same side of $H_1$, so $s_H \cdot C \subset D^+ \cap H_1^-$ and we are done.
  If $H = H_1$, then $r\cdot C = s_H \cdot C \in \AA^+$, so $H_1$ is a wall of $D^+$ and thus $-\rho \in H_1 \subset \o{H_1^-}$.
\end{proof}

\begin{prop}\label{prop:uua-dominant-seq}
  Suppose $C$, $C' \in \AA$ with $C' \uua C$. If $w$, $w' \in W$ such that $w'\cdot C'$ and $w \cdot C$ are dominant, then there
  exists a sequence of dominant alcoves $w'\cdot C' = C_0 \uua \cdots \uua C_k = w\cdot C$ such that
  $d(C_i) - d(C_{i-1}) = 1$ for all $i$.
\end{prop}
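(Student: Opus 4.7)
The plan is to proceed by induction on the length $m$ of the given $\uua$-chain $C' = D_0 \uua D_1 \uua \cdots \uua D_m = C$, reducing to a single-step case that can be handled by the machinery of Lemmas~\ref{lm:ua-uua}--\ref{lm:basic} and Proposition~\ref{prop:reflect-in-wall}.

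For the base case $m = 0$ we have $C' = C$, so $w' \cdot C'$ and $w \cdot C = w \cdot C'$ are both dominant $W$-translates of the same alcove. Since $W_p$ acts simply transitively on alcoves and $\overline{D^+}$ is a fundamental domain for the induced $W$-action, this forces $w = w'$; the empty chain of length zero then suffices, as $d(w\cdot C) - d(w'\cdot C') = 0$.

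For the inductive step $m \ge 1$, I would let $u_i \in W$ be the unique element such that $u_i \cdot D_i \in \AA^+$ (uniqueness as above), so that $u_0 = w'$ and $u_m = w$. Applying the inductive hypothesis to the chain $D_1 \uua \cdots \uua D_m$ of length $m - 1$ with endpoints $u_1 \cdot D_1$ and $w \cdot C$ produces a dominant $\uua$-chain from $u_1 \cdot D_1$ to $w \cdot C$ with $d$-increment one at each step. It therefore suffices to construct a dominant chain of the same form from $w' \cdot D_0$ to $u_1 \cdot D_1$ and concatenate, reducing to the case $m = 1$.

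For $m = 1$, write $D_1 = s_H \cdot D_0$ with $H = H_{\alpha, np}$, $\alpha \in \Phi^+$, $n \ge 0$ (this is the form of $H$ forced by $-\rho \in \overline{H^-}$), $D_1 \subset H^+$, and $u_0 \cdot D_0$, $u_1 \cdot D_1 \in \AA^+$. When $u_0 = u_1$, the single reflection $u_0 \cdot D_0 \to u_0 \cdot D_1$ across the hyperplane $u_0 \cdot H$ already gives the desired chain: Lemma~\ref{lm:ua-uua} upgrades $\ua$ to $\uua$ since $u_0 \cdot D_0$ is dominant, and Lemma~\ref{lm:reflection} combined with adjacency of the two alcoves shows $d$ increases by exactly one. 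In the general case $u_0 \ne u_1$, the alcove $u_0 \cdot D_1$ lies outside $\overline{D^+}$, and I would transport it into $\overline{D^+}$ by successively reflecting across the Weyl chamber walls $H_{\beta,0}$ ($\beta \in \Phi^+$) separating it from $D^+$. At each such reflection, invariance of $-\rho$ under the dot action of $W$ preserves the $\uua$-property of the intervening hyperplane, while Corollary~\ref{cor:reflect-in-wall} and Proposition~\ref{prop:reflect-in-wall} show how to reconfigure the chain so that all intermediate alcoves remain dominant and each step is a single reflection with $d$-increment one.

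The main obstacle will be the bookkeeping in the general case of the single-step reduction, namely verifying that the iterative wall-folding procedure terminates at $u_1 \cdot D_1$, that the total $d$-increment of the constructed chain equals $d(u_1 \cdot D_1) - d(w' \cdot D_0)$, and that each individual step contributes exactly one to this difference. This amounts to an intricate but essentially combinatorial analysis using Lemma~\ref{lm:basic} to match up reflection data consistently, Proposition~\ref{prop:reflect-in-wall} to split chains whenever an intermediate alcove is swapped across a wall, and Lemma~\ref{lm:ua-uua} to promote each refined $\ua$-step to an $\uua$-step on account of dominance.
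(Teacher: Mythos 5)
Your reduction to the case of a single affine reflection (length $m=1$) is exactly the reduction the paper makes, and the observation that Lemma~\ref{lm:ua-uua} upgrades $\ua$ to $\uua$ once dominance is in hand is also used in the paper. But the heart of the proposition is precisely the case you defer: producing a chain of \emph{dominant} alcoves from $w'\cdot C'$ to $w\cdot C$ with $d$-increment exactly one at each step when $u_0\neq u_1$, i.e.\ when $w'\cdot C'$ and $w\cdot C$ are not themselves related by a single reflection. Your proposal to handle this by ``successively reflecting across the Weyl chamber walls $H_{\beta,0}$'' and then invoking Lemma~\ref{lm:basic}, Proposition~\ref{prop:reflect-in-wall} and Corollary~\ref{cor:reflect-in-wall} does not go through as stated: those results concern the operation $v\mapsto vs$ for $s$ the reflection in a wall of the \emph{fixed base alcove} (right multiplication), which is a different operation from folding across a wall of the dominant Weyl chamber, and none of them produces a unit-increment chain. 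You acknowledge that termination of the folding procedure and the verification that each step raises $d$ by exactly one are ``the main obstacle,'' but that obstacle is the entire content of the statement; as written the proposal is a plan with the essential step missing.

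The paper closes this gap by transporting the reflection rather than the alcove: writing $C = s_{\alpha,np}\cdot C'$ with $-\rho\in\o{H_{\alpha,np}^-}$ and $C\subset H_{\alpha,np}^+$, one sets $\beta = w\alpha$ and checks $\beta\in\Phi^+$ and $w\cdot C\subset H_{\beta,np}^+$ with $-\rho\in\o{H_{\beta,np}^-}$ (this uses dominance of $w\cdot C$ together with $\langle x+\rho,\alpha^\vee\rangle>np\ge 0$ on $C$; your appeal to ``invariance of $-\rho$ under the dot action'' is incomplete without the positivity check on $\beta$). One then applies \cite[II.6.8]{MR2015057} to the dominant alcove $w\cdot C$ and the reflection $s_{\beta,np}$: that result directly supplies a chain of dominant alcoves from $w''s_{\beta,np}w\cdot C = w''w\cdot C'$ up to $w\cdot C$ with $d$-increment one at every step, and Lemma~\ref{lm:ua-uua} converts it to a $\uua$-chain. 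If you want to avoid citing II.6.8 you would have to reprove it, which is essentially the combinatorial argument you have left open.
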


\begin{proof}
  By the definition of $\uua$ we can reduce to the case when $C' = s_{\alpha,np}\cdot C$ for some $\alpha \in \Phi^+$, $-\rho \in
  \o{H_{\alpha,np}^-}$, and $C \subset H_{\alpha,np}^+$. Thus $\langle x+\rho,\alpha\dual\rangle > np \ge 0$ for all $x \in
  C$. If $\beta = w\alpha$, we see that $\langle y+\rho,\beta\dual\rangle > np \ge 0$ for all $y \in w\cdot C$. Since $w \cdot
  C$ is dominant, we have $\beta \in \Phi^+$. Thus $-\rho \in \o{H_{\beta,np}^-}$ and $w\cdot C \subset H_{\beta,np}^+$.
  Now we apply \cite{MR2015057}, II.6.8 to the dominant alcove $w\cdot C$ and the reflection $s_{\beta,np}$ to obtain a sequence of dominant alcoves
  \begin{equation}\label{eq:1app}
    w''s_{\beta,np}w\cdot C = C_0 \ua \cdots \ua C_k = w\cdot C
  \end{equation}
  such that $d(C_i) - d(C_{i-1}) = 1$ for all $i$, for some $w'' \in W$. Finally notice that $s_{\beta,np}w = w s_{\alpha,np}$
  and that $\ua$ can be replaced with $\uua$ in \eqref{eq:1app} by Lemma~\ref{lem:ua-uua}.
\end{proof}

Note that the translations of $X(\uT) \otimes \R$ that stabilise $\CH$ are precisely given by $pX(\uT)$.
The following lemma is obvious.

\begin{lem}\label{lem:translate}
  Suppose $t \in pX(\uT)$ and that $C$, $C'$ are alcoves.

  \begin{enumerate}
  \item $d(C')-d(C) = d(t\cdot C') - d(t\cdot C)$.
  \item $C' \ua C \iff t\cdot C' \ua t\cdot C$.
  \end{enumerate}
\end{lem}

\begin{prop}\label{prop:finiteness}
  Suppose $C$ is an alcove and $n \in \Z_{\ge 0}$. Then
  \[\AA(C,n) = \{ C' : \text{$C \ua C'$ and $d(C') - d(C) \le n$} \}\] is finite. If $t \in pX(\uT)$, then
  $\AA(t\cdot C,n) = t \cdot \AA(C,n)$.
\end{prop}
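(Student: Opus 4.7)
Here is the plan. The proposition has two parts; the second is easy and the first goes by induction on $n$ using Lemma~\ref{lm:reflection}.

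First I would dispatch the translation statement. If $t \in pX(\uT)$, then by Lemma~\ref{lm:translate}(ii) the relation $C \ua C'$ is equivalent to $t\cdot C \ua t\cdot C'$, and by Lemma~\ref{lm:translate}(i) we have $d(t\cdot C')-d(t\cdot C) = d(C')-d(C)$. Therefore $C' \mapsto t\cdot C'$ is a bijection between $\AA(C,n)$ and $\AA(t\cdot C,n)$, giving $\AA(t\cdot C,n) = t\cdot \AA(C,n)$.

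For the finiteness, I would argue by induction on $n$. When $n=0$, if $C \ua C'$ and $d(C') = d(C)$ then $C = C'$: indeed by definition of $\ua$ there is a chain $C = C_0 \ua C_1 \ua \cdots \ua C_k = C'$ with each $C_i = s_{H_i}\cdot C_{i-1}$ and $C_i \subset H_i^+$, and by Lemma~\ref{lm:reflection} each reflection step strictly changes $d$; since the ascending nature of the chain forces $d(C_i) > d(C_{i-1})$ at every step, equal $d$-values force $k = 0$. Hence $\AA(C,0) = \{C\}$.

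For the inductive step, suppose $\AA(C,n)$ is finite and take $C' \in \AA(C,n+1)$ with $d(C')-d(C) \ge 1$. Choose a chain realising $C \ua C'$; by Lemma~\ref{lm:reflection} (applied repeatedly to subdivide any step whose $d$-jump exceeds $1$), we may assume that along the chain $d$ increases by exactly $1$ at each step. Thus there exists $C'' \in \AA(C,n)$ with $C'' \ua C'$ and $d(C')-d(C'') = 1$. By the second assertion of Lemma~\ref{lm:reflection}, for each such $C''$ there are only finitely many possibilities for $C'$ (namely reflections of $C''$ in the hyperplanes $H_{\beta,(n_\beta+1)p}$, $\beta \in \Phi^+$). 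Since $\AA(C,n)$ is finite by induction, we conclude that $\AA(C,n+1)$ is finite, completing the proof.

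There is no real obstacle here; the only mild subtlety is the refinement of the chain to unit $d$-jumps, which is immediate from Lemma~\ref{lm:reflection} since that lemma guarantees, for any $C'' \ua C'$, an intermediate alcove at distance one above $C''$ on the way to $C'$.
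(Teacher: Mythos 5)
Your handling of the translation statement is correct and identical to the paper's (it is immediate from Lemma~\ref{lm:translate}), and your inductive skeleton for the finiteness claim is the right one. But there is a genuine gap at the step you describe as a ``mild subtlety''. Lemma~\ref{lm:reflection} does \emph{not} let you subdivide a reflection step whose $d$-jump exceeds $1$, nor does it ``guarantee, for any $C''\ua C'$, an intermediate alcove at distance one above $C''$''. All that lemma gives you is: (a) every reflection step in a $\ua$-chain strictly increases $d$, and (b) there are only finitely many alcoves at $d$-distance exactly $1$ above a given alcove. It says nothing about the existence of intermediate alcoves between $C''$ and $s_H\cdot C''$ when $d(s_H\cdot C'')-d(C'')\ge 2$. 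The statement you actually need --- that any relation $C\ua C'$ can be realised by a chain along which $d$ increases by exactly $1$ at each step --- is a nontrivial fact about alcove geometry; it is \cite[II.6.10]{MR2015057}, which is precisely the input the paper cites in order to reduce the finiteness claim to the case $n=1$. (The difficulty of producing such unit-step chains, with the additional constraint that all intermediate alcoves be dominant, is the entire content of Theorem~\ref{thm:dominant-sequence}, so it should be a warning sign that the unconstrained version is being obtained for free.)

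Concretely, without that input your induction does not close: an element $C'$ of $\AA(C,n+1)\setminus\AA(C,n)$ is a priori only known to be of the form $s_{H}\cdot C''$ for some $C''\in\AA(C,n)$ (take the penultimate alcove of any reflection chain from $C$ to $C'$), with $d(s_H\cdot C'')-d(C'')$ possibly as large as $n+1$; Lemma~\ref{lm:reflection} controls only the case of jump exactly $1$. To repair the argument, either invoke \cite[II.6.10]{MR2015057} for the refinement, or replace it by a direct proof that for a fixed alcove and a fixed bound $B$ only finitely many hyperplanes $H\in\CH$ satisfy $0<d(s_H\cdot C'')-d(C'')\le B$ --- but that is a separate geometric argument, not a consequence of Lemma~\ref{lm:reflection} as stated.
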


\begin{proof}
  By \cite{MR2015057}, II.6.10 the first claim is reduced to the case $n = 1$, which is covered by Lemma~\ref{lem:reflection}.
  The second claim is immediate from Lemma~\ref{lem:translate}.
\end{proof}

Given $n \in \Z_{\ge 0}$ and an alcove $C$, we say that $C$ is in \emph{general $n$-position} if
$\AA(C,n) \subset \AA^+$. The finiteness of $\AA(C,n)$ guarantees the existence of alcoves in 
general $n$-position.

\begin{lem}\label{lem:seq-to-general-pos}
  If $n \in \Z_{\ge 0}$ and $C \in \AA^+$, then there exists a sequence of dominant alcoves
  $C = \wt C_0 \ua \cdots \ua \wt C_h$ with $h \in \Z_{\ge 0}$ such that 
  $\wt C_{i-1}$ and $\wt C_i$  are adjacent for all $i$ \upl i.e.\
  there is only one hyperplane between them\textup) 
  and such that $\wt C_h$ is in general $n$-position.
\end{lem}

\begin{proof}
First consider the case when $C = C^+$. Take $C' = w\cdot C$ in
general $n$-position, where $w \in W_p$. Let $S$ be the set of
reflections in the walls of $C^+$. Pick a reduced expression $w =
s_1\cdots s_r$ ($s_i \in S$) in the Coxeter group $(W_p,S)$. Letting
$C_i := s_1\cdots s_i \cdot C^+$, it is clear that $C_{i-1}$, $C_i$
are adjacent for all $i$. We claim that the $C_i$ are dominant and
that $C^+ = C_0 \uparrow \cdots \uparrow C_r = C'$. If $w := s_1\cdots
s_{r-1}$ and $H \in \mathcal{H}$ denotes the hyperplane fixed by $ws_r
w^{-1}$, then $\ell(s_H w) > \ell(w)$ implies by
\cite[Thm.~V.3.2.1]{MR1890629} that $C_0 = C^+$ and $C_{r-1} = w\cdot
C^+$ lie on the same side of $H$. As $H$ is the common wall of
$C_{r-1}, C_r$, we see that $C_0,C_r$ lie on opposite sides of $H$. Since
$C_0 = C^+$ and $C_r$ are dominant, it follows that $C_0 \subset H^-$,
so $C_r \subset H^+$ and $C_{r-1} \uparrow C_r$. As $H$ is not a wall
of $D^+$, we deduce that $C_{r-1}$ is dominant. The claim follows by induction.

  If $C$ is general, write $C = w\cdot C^+$ for some $w \in W_p$ and write $w = \sigma + p\nu$ with $\sigma \in W$, $\nu \in X(\uT)$.
  Then it is easy to see that $\langle \nu , \alpha^\vee \rangle > -1$ for all $\alpha \in \Phi^+$, i.e.\ $\nu \in X(\uT)_+$.
  Hence $t := p\nu$ maps dominant alcoves to dominant alcoves and alcoves in general $n$-position to alcoves in general $n$-position.
  So the lemma is true if $C = t\cdot C^+$.

  If $C = w\cdot C^+ \ne t \cdot C^+$, we only have to find a sequence of dominant alcoves $C = \wt C_0 \ua \cdots \ua \wt
  C_h = t\cdot C^+$ such that $\wt C_{i-1}$ and $\wt C_i$ are adjacent for all $i$.  We use an induction on the number of
  hyperplanes between $C$ and $t\cdot C^+$. Let $H$ be a wall of $C$ that lies between $C$ and $t\cdot C^+$, so $H$ cannot
  be a wall of $D^+$. Let $\wt C_1 =
  s_H\cdot C = s_H w \cdot C^+$. Then $\wt C_1 \in \AA^+$ and we have $t\cdot (-\rho) = w\cdot (-\rho) \in H$, so $t\cdot
  (-\rho) = s_H w\cdot (-\rho)$.  Since $t\cdot C^+ \subset H^+$, it follows that $C \ua \wt C_1$. Moreover, the number of
  hyperplanes between $\wt C_1$ and $t\cdot C^+$ is one less than the number of hyperplanes between $C$ and $t\cdot C^+$.
\end{proof}

Given an alcove $C$ and $n \in \Z_{\ge 0}$, we let $h(C,n)$ be the minimum possible value $h$ occurring as
the length of the sequence in Lemma~\ref{lem:seq-to-general-pos}.

\begin{proof}[Proof of Theorem \ref{thm:dominant-sequence}]
  We prove this by induction on $d := d(C')-d(C) \ge 0$. When $d$ is fixed, we induct on $h(C,d)$. The cases $d \le 1$ are trivial.
  For any $d$, the case $h(C,d) = 0$ is trivial. Now for fixed $C$ and $d$ we have a sequence of dominant alcoves
  \begin{equation*}
    C = \wt C_0 \ua \cdots \ua \wt C_h
  \end{equation*}
  as in Lemma~\ref{lem:seq-to-general-pos} and such that $h = h(C,d)$.  If $\wt C_1 \ua C'$ then $d(C')-d(\wt C_1) =
  d(C')-d(C)-1$ and we are done by the induction hypothesis.  So we can assume from now on that $\wt C_1 \nua C'$. We can
  write $\wt C_1 = s_H \cdot C$ and $C' = w\cdot C$ for some wall $H$ of $C$ and some $w \in W_p$. Let $\wt C_1' = w\cdot \wt
  C_1 = w s_H \cdot C$. We claim that $C' \ua \wt C_1'$.  Otherwise $\wt C_1' \ua C'$. So $C \ua w\cdot C$ and $ws_H \cdot C \ua w\cdot C$. By
  Cor.~\ref{cor:reflect-in-wall} this implies that $\wt C_1 = s_H \cdot C \ua w\cdot C = C'$, a contradiction.

  Thus $C' \ua \wt C_1'$, in particular $C \ua \wt C_1'$. We apply Cor.~\ref{cor:reflect-in-wall} again to
  $C \ua ws_H\cdot C$, $w\cdot C \ua ws_H \cdot C$ and get that $\wt C_1 = s_H \cdot C \ua ws_H \cdot C = \wt C_1'$.
  Now note that $d(\wt C_1')-d(\wt C_1) = d(C')-d(C) = d$, but $h(\wt C_1,d) = h(C,d)-1$. By induction hypothesis we have
  a sequence of dominant alcoves
  \begin{equation*}
    \wt C_1 = w_0\cdot \wt C_1 \ua w_1 \cdot \wt C_1 \ua \cdots \ua w_d \cdot \wt C_1 = \wt C_1'
  \end{equation*}
  such that $d(w_i \cdot \wt C_1) - d(w_{i-1} \cdot \wt C_1) = 1$ and so $w_iw_{i-1}^{-1}$ is a reflection in $W_p$ for all $i$.
  Note that $w_0 = 1$ and $w_d = w$. Since $\wt C_1 \nua C' = ws_H \cdot \wt C_1$, by Prop.~\ref{prop:reflect-in-wall} we have
  \begin{equation}
    \label{eq:2app}
    C = w_0 \cdot C \ua w_1 \cdot C \ua \cdots \ua w_d \cdot C = C'.
  \end{equation}
  Since $d = d(C')-d(C)$, we have $d(w_i \cdot C) - d(w_{i-1} \cdot C) = 1$ for all $i$. As $w_i\cdot C$ and the dominant
  alcove $w_i \cdot \wt C_1$ are adjacent, we may replace $\ua$ by $\uua$ in \eqref{eq:2app} (by Lemma~\ref{lem:ua-uua}). In
  particular, we have $w_i \cdot C \uua C'$ for all $i$. If some $w_i \cdot C$ is not dominant, then $w_i \cdot H$ is a wall of $D^+$ and $w_i
  s_H w_i^{-1} \in W$. By Prop.~\ref{prop:uua-dominant-seq}, $w_i \cdot \wt C_1 = w_i s_H \cdot C = (w_i s_H w_i^{-1}) w_i \cdot C \uua C'$, so
  $\wt C_1 \ua w_i \cdot \wt C_1 \ua C'$, contradiction. Thus $w_i \cdot C \in \AA^+$ for all $i$ and \eqref{eq:2app} satisfies the
  condition in the theorem.
\end{proof}

\begin{cor}\label{cor:two-uparrows}
  If $C$, $C'$ are dominant alcoves, then $C \ua C'$ if and only if $C \uua C'$.
\end{cor}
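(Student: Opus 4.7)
The forward direction that $C \uua C'$ implies $C \ua C'$ is immediate from the definitions, since $\uua$ imposes the extra condition $-\rho \in \o{H_i^-}$ on the hyperplanes appearing in a witnessing sequence. So the content of the corollary is in the reverse implication: given dominant alcoves $C \ua C'$, we must produce a $\uua$-witnessing sequence between them.

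The plan is to refine the given $\ua$-sequence into steps so small that Lemma~\ref{lm:ua-uua} can upgrade each one to a $\uua$-step. Specifically, I would invoke Theorem~\ref{thm:dominant-sequence} to obtain a sequence of dominant alcoves
\[
C = C_0 \ua C_1 \ua \cdots \ua C_k = C'
\]
with $d(C_i) - d(C_{i-1}) = 1$ for every $i$. By Lemma~\ref{lm:reflection}, each consecutive pair satisfies $C_i = s_{H_i}\cdot C_{i-1}$ for a unique reflection $s_{H_i} \in W_p$.

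For each $i$, the alcove $C_{i-1}$ lies in $\AA^+$, so Lemma~\ref{lm:ua-uua} (applied with $C = C_{i-1}$ and $r = s_{H_i}$) converts the relation $C_{i-1} \ua C_i$ into $C_{i-1} \uua C_i$. Concatenating these single-step $\uua$-relations produces a $\uua$-witnessing sequence from $C$ to $C'$, which is exactly what is required.

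There is no real obstacle here, since all the technical work has already been carried out: Theorem~\ref{thm:dominant-sequence} provides the refinement to unit-distance steps staying inside $\AA^+$, and Lemma~\ref{lm:ua-uua} provides the local upgrade from $\ua$ to $\uua$ at each step. The corollary is essentially a formal consequence of these two ingredients.
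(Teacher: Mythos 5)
Your proof is correct and follows essentially the same route as the paper: the paper's one-line proof cites Corollary~\ref{cor:ye-wang} (which is exactly Theorem~\ref{thm:dominant-sequence} packaged with the observation that a unit increase in $d$ forces a single reflection step) together with Lemma~\ref{lm:ua-uua}, which is precisely the refine-then-upgrade argument you spell out. The only cosmetic quibble is that the single-reflection claim for each step is really the content of Corollary~\ref{cor:ye-wang}(i) (derived from the definition of $\ua$ and the strict monotonicity of $d$) rather than of Lemma~\ref{lm:reflection} alone, but the underlying fact is the same.
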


\begin{proof}
  This follows from Corollary~\ref{cor:ye-wang} and Lemma~\ref{lem:ua-uua}.
\end{proof}

\section{\texorpdfstring{$\Wobv(\rhobar)$}{W\_obv(rhobar)} is non-empty}\label{sec:wobvrhbar-nonempty}

\setcounter{equation}{0}
\setcounter{subsection}{1}

The purpose of this appendix is to give a proof of the following
result, which was promised in Remark~\ref{rem:wobv-nonempty}.

\begin{thm}\label{thm:wobv-nonempty}
Suppose $K/\Qp$ is a finite extension, and let $\rhobar : G_K \to
\GL_n(\Fpbar)$ be a representation such that $\rhobar|_{I_K}$ is
semisimple. Then the set $\Wobv(\rhobar)$ of obvious weights for
$\rhobar$ is non-empty.
\end{thm}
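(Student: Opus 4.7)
The plan is to decompose $\rhobar|_{I_K}$ and explicitly construct an obvious lift whose Hodge type has the right combinatorial shape.

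Since $\rhobar|_{I_K}$ is semisimple and wild inertia is pro-$p$, wild inertia must act trivially, so $\rhobar|_{I_K}$ factors through $I_K^{\mathrm{tame}}$ and is a direct sum of $n$ characters. The Frobenius conjugation action permutes these characters by $\chi \mapsto \chi^q$ where $q = \#k$, and since $\rhobar$ is a $G_K$-representation the multiset of characters is stable under this action. I would group them into Frobenius orbits, writing $\rhobar|_{I_K} \cong \bigoplus_{j=1}^d \tau_j$ with each $\tau_j$ a single orbit of size $n_j$ (so $\sum n_j = n$). For each $j$, choose a representative character $\chi_j$ of $\tau_j$; by Lemma~\ref{lem:existenceofcrystallinechars} and Corollary~\ref{lem:HT wts of induction}, realizing $\tau_j$ as $\rhobar^K_{\Lambda_j}|_{I_K}$ for an induced-character lift $\rho^K_{\Lambda_j} := \Ind_{G_{K_{n_j}}}^{G_K}\psi^{K_{n_j}}_{\Lambda_j}$ reduces to choosing integers $\lambda^j_{\emb'}$ for $\emb' \in S_{K_{n_j}}$ whose partial sums $b^j_{\sigma'} := \sum_{\emb' \mapsto \sigma'} \lambda^j_{\emb'}$ satisfy $\chi_j = \prod_{\sigma'} \omega_{\sigma'}^{b^j_{\sigma'}}$.

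The goal is then to make these choices so that $\rho' := \bigoplus_j \rho^K_{\Lambda_j}$ has Hodge type equal to a lift of some Serre weight. Fix a distinguished embedding $\emb_\sigma \in S_K$ above each $\sigma \in S_k$; for each triple $(j,\sigma',\emb)$ with $\sigma'|_k = \sigma$ and $\emb \in S_K$ above $\sigma$, unramifiedness of $K_{n_j}/K$ yields a unique $\emb'(j,\sigma',\emb) \in S_{K_{n_j}}$ above $\emb$ with residue $\sigma'$. The Serre-weight-lift requirement unwinds to: at each $\emb \ne \emb_\sigma$ above $\sigma$ the multiset $\{\lambda^j_{\emb'(j,\sigma',\emb)}\}_{(j,\sigma')}$ must equal $\{n-1,n-2,\ldots,0\}$ (these being the Hodge--Tate weights of $\lambda_\emb = 0$), while at $\emb_\sigma$ it must be strictly decreasing with consecutive gaps at most $p$ (corresponding to $\lambda_{\emb_\sigma} \in X_1^{(n)}$). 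I would handle the first requirement by fixing a partition $\{0,\ldots,n-1\} = P_1 \sqcup\cdots\sqcup P_d$ with $|P_j| = n_j$ and, for each $\emb \ne \emb_\sigma$, choosing bijections $S_{k_{n_j}}^\sigma \isoto P_j$ that prescribe the non-$\emb_\sigma$ weights; the weights at $\emb_\sigma$ are then forced by the character constraint.

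The main obstacle will be verifying the condition at $\emb_\sigma$. One has several layers of freedom: choose $b^j_{\sigma'} \in \{0,1,\ldots,p-1\}$ by taking a base-$p$ expansion of the integer exponent defining $\chi_j$, vary $\chi_j$ within its $\Gal(k_{n_j}/k)$-orbit, and shift individual $b^j_{\sigma'}$ by multiples of $p^{fn_j}-1$ (where $f = [k:\Fp]$). In sufficiently ``generic'' situations these suffice to make the raw HT values distinct and clustered in a small interval. The difficulty is ``collisions'': when raw contributions coincide---as happens whenever $\rhobar|_{I_K}$ contains repeated characters---some $b^j_{\sigma'}$ must be increased by a multiple of $p^{fn_j}-1$ to restore regularity, which could a priori produce a gap exceeding $p$ in the sorted HT-weight sequence. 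The combinatorial heart of the proof, and the ``painful'' step alluded to in Remark~\ref{rem:wobv-nonempty}, will be showing by careful case analysis on the pattern of coincidences---together with a coordinated choice of $\chi_j$, the partitions $P_j$, and the bijections to $P_j$---that the necessary shifts can always be arranged so that no consecutive gap exceeds $p$.
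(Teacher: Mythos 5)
Your setup is correct as far as it goes: decomposing $\rhobar|_{I_K}$ into Frobenius orbits, realising each orbit as the reduction of an induced crystalline character, and translating the requirement ``Hodge type is a lift of a Serre weight'' into a condition on integers $\lambda^j_{\emb'}$ with prescribed sums modulo $p^{fn_j}-1$ is exactly the right reformulation. But the proposal stops precisely where the proof has to begin. The assertion that ``the necessary shifts can always be arranged so that no consecutive gap exceeds $p$'' \emph{is} the theorem; deferring it to an unspecified ``careful case analysis on the pattern of coincidences'' leaves the entire content unproven. Note that the difficulty is already present for a \emph{single} orbit: for irreducible $\rhobar=\Ind_{G_{K_d}}^{G_K}\psibar$ one must show that every residue class modulo $p^{df}-1$ can be written as $\sum_i x_ip^i$ so that each of the $f$ arithmetic-progression-indexed subsets of the $x_i$ forms a strictly decreasing sequence with consecutive gaps in $(0,p]$; this is a genuinely nontrivial digit-manipulation lemma (Proposition~\ref{prop:nonempty-combinatorics}), whose proof requires separate treatments according to the parity of $f$ and $d$, and which is actually \emph{false} for $(d,f)=(2,1)$ when the exponent is divisible by $p+1$ --- an exceptional case you do not flag, and which the paper must circumvent by a separate argument exploiting that $\psibar$ does not extend to $G_K$ (equivalently, that the induction is irreducible).

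Beyond the missing core, your ``all at once'' strategy makes the combinatorics strictly harder than necessary, because you must simultaneously control how the Hodge--Tate weights of the $d$ different orbits interleave at each distinguished embedding $\emb_\sigma$ while each orbit's weights are constrained by its own congruence. The paper avoids interleaving altogether by inducting on the number of Jordan--H\"older factors: given an obvious lift $\rho'$ of $\rhobar'$ and an obvious lift $\rho''$ of $\rhobar''\otimes\varepsilonbar^{-d'}$, each already of Serre-weight Hodge type, one twists $\rho''\otimes\varepsilon^{d'}$ by a crystalline character with trivial reduction so that its minimal $\emb_\sigma$-weight lands in $[H_\sigma+1,H_\sigma+p]$ above the maximal weight $H_\sigma$ of $\rho'$, for every $\sigma$ simultaneously. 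That this is possible is a single easy statement about $\Z^{S_k}$ modulo the lattice $\Lambda=\{x:\prod_\sigma\omega_\sigma^{x_\sigma}=1\}$ (using $\Z^{S_k}/\Lambda\cong\Z/(p^f-1)$ and base-$p$ expansions), so all the hard combinatorics is quarantined in the irreducible case. If you want to complete your argument, you should either supply the full case analysis for the simultaneous interleaving problem, or restructure along these inductive lines and then prove the digit lemma for a single orbit, including the $(d,f)=(2,1)$ exception.
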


A fortiori the same is true for $\W_{\expl}(\rhobar)$. Moreover, the proof
shows that $\Wcrisforall(\rhobar)$ is non-empty when $\rhobar$ is semisimple.

\begin{proof}
 For each $\sigma \in S_k$ we fix an element $\emb_{\sigma} \in S_K$
 lifting $\sigma$.  Throughout this proof,
 if we refer to \emph{the} lift of some Serre weight $F$, we mean any
 lift $\lambda$ of $F$ for which $\lambda_{\emb} =0$ if
 $\emb \not\in \{\emb_{\sigma}\}_{\sigma \in S_k}$ (cf.\
 Definition~\ref{defn:lift of a serre weight}). We will prove that $\rhobar$ has an obvious lift
 $\rho$ of Hodge type $\lambda$, where $\lambda$ is the lift of some
 Serre weight.

We may without loss of generality assume that $\rhobar$ itself is
semisimple. We begin by explaining how to reduce to the case where
$\rhobar$ is irreducible, by induction on the number of
Jordan--H\"older factors of $\rhobar$. 

Indeed, suppose that $\rhobar =
\rhobar' \oplus \rhobar''$, where
$\rhobar'$ has dimension $d' > 0$ and $\rhobar''$ is irreducible. By
induction $\rhobar'$ has an obvious lift $\rho'$ of Hodge type
$\lambda'$, the lift of some Serre weight. Similarly $\rhobar''
\otimes \varepsilonbar^{-d'}$ has an obvious lift $\rho''$ of Hodge
type $\lambda''$, the lift of some Serre weight. 

For each $\sigma \in
S_k$, let $H_{\sigma} = \max \HT_{\emb_{\sigma}}(\rho')$ and
$h_{\sigma} = \min \HT_{\emb_{\sigma}}(\rho'') + d'$. Also let $\Lambda
\subset \Z^{S_k}$ be the sublattice consisting of tuples
$(x_{\sigma})$ such that $\prod \omega_{\sigma}^{x_{\sigma}} = 1$.  It
is elementary to see that there exists $x = (x_{\sigma}) \in \Lambda$
such that $h_{\sigma} + x_{\sigma} \in [H_{\sigma}+1,H_{\sigma}+p]$
for all $\sigma \in S_k$. (This comes down to the fact that
$\Z^{S_k}/\Lambda \cong \Z/(p^f-1)\Z$, along with the fact that
integers have base $p$ representations.) Let $\chi$ be a crystalline
character whose Hodge type is the lift of $x$, and such that $\chibar$ is
trivial; such a character exists by
Lemma~\ref{lem:existenceofcrystallinechars}(i) and (ii). Define
$\rho:=\rho' \oplus( \rho''\otimes \varepsilon^{d'} \otimes \chi)$. Then one
checks (considering separately the sets $\HT_{\emb}(\rho)$ where
$\emb=\emb_{\sigma}$ for some $\sigma \in S_k$, and the sets
$\HT_\emb(\rho)$ where $\emb \not\in \{\emb_{\sigma}\}_{\sigma \in S_k}$)
that $\rho$ is
an obvious lift of $\rhobar$ whose Hodge type is the lift of some
Serre weight.

It remains to consider the case where $\rhobar$ is irreducible. Let $d
= \dim \rhobar$, and write $\rhobar \cong \Ind_{K_d}^{K} \psibar$ where
$K_d/K$ is the unramified extension of  degree $d$ and  $\psibar :
G_{K_d} \to \Fpbarx{\times}$ is a character. We wish to prove the
existence of $d$-tuples of integers $\{(h_{\sigma,0}, \ldots,
h_{\sigma,d-1})\}_{\sigma \in S_k}$ such that $0 < h_{\sigma,i} -
h_{\sigma,{i+1}} \le p$ for all $\sigma$ and $i$, and a crystalline
character $\psi$ lifting~$\psibar$ such that 
$$ \bigcup_{\genfrac{}{}{0pt}{}{\emb' \in S_{K_d}}{\emb'|_{K} = \emb_{\sigma}}}
\HT_{\emb'}(\psi) = \{h_{\sigma,0}, \ldots, 
h_{\sigma,d-1}\}$$ 
for each $\sigma$, and such that if $\emb \in S_K$ but $\emb \not\in
\{\emb_{\sigma}\}_{\sigma\in S_k}$ then
$$ \bigcup_{\genfrac{}{}{0pt}{}{\emb' \in S_{K_d}}{\emb'|_{K} = \emb}}
\HT_{\emb'}(\psi) = \{0,1,\dots,d-1\}.$$
Let $\chi$ be any crystalline character of $G_{K_d}$ such that
$\HT_{\emb'}(\chi)=\{0\}$ whenever $\emb'|_K \in
\{\emb_{\sigma}\}_{\sigma \in S_k}$, and such that if $\emb \in S_K$ but $\emb \not\in
\{\emb_{\sigma}\}_{\sigma\in S_k}$ then
$$ \bigcup_{\genfrac{}{}{0pt}{}{\emb' \in S_{K_d}}{\emb'|_{K} = \emb}}
\HT_{\emb'}(\chi) = \{0,1,\dots,d-1\}.$$
Then the theorem comes down to the existence of integers
$h_{\sigma,i}$ as above and a crystalline character $\chi'$ of $G_{K_{d}}$ such that 
$$ \bigcup_{\genfrac{}{}{0pt}{}{\emb' \in S_{K_d}}{\emb'|_{K} = \emb_{\sigma}}}
\HT_{\emb'}(\chi') = \{h_{\sigma,0}, \ldots, 
h_{\sigma,d-1}\}$$ 
for each $\sigma \in S_k$, such that $\HT_{\emb'}(\chi')=\{0\}$ if $\emb'|_K \not\in
\{\emb_\sigma\}_{\sigma \in S_k}$, and such that $\o{\chi'} = \psibar
\chibar^{-1}$ (for then one can take $\psi = \chi' \chi$). 

Unless
$(d,f) = (2,1)$, where $p^f = \#k$, the existence of $\chi'$ is an immediate consequence of
Proposition~\ref{prop:nonempty-combinatorics} below (in combination
with both parts of Lemma~\ref{lem:existenceofcrystallinechars}). When
$(d,f)=(2,1)$, the existence of $\chi'$ will follow in the same way provided
that the character $\psibar \chibar^{-1}$ of $G_{K_2}$ does
not extend to $G_K$. If also $e(K/\Qp)=1$ then $\chibar$ is unramified, and
since $\psibar$ does not extend to $G_K$, the same is true of
$\psibar \chibar^{-1}$. If instead $e(K/\Qp)>1$, it is possible that $\psibar \chibar^{-1}$
extends to $G_K$. In that case choose any $\kappa' \in S_{K_2}$ such
that $\HT_{\kappa'}(\chi) = \{1\}$. Let $\kappa'' \in S_{K_2}$ be the
other embedding such that $\kappa''|_K = \kappa'|_K$ (so that
$\HT_{\kappa''}(\chi) = \{0\}$). Let $\chi_0$ be a crystalline
character of $G_{K_2}$ with the same labeled Hodge--Tate weights as
$\chi$, except that $\HT_{\kappa'}(\chi_0) = \{0\}$ and
$\HT_{\kappa''}(\chi_0) = \{1\}$. We note that $\chibar \chibar_0^{-1}|_{I_{K_2}} =
\omega_{\embb''}^{p-1}$. Hence $\chibar \chibar_0^{-1}$ 
does not extend to $G_K$ (since $\omega_{\embb''}$
is a fundamental character of niveau $2$ and its exponent is not
a multiple of $p+1$); so neither does the character $\psibar \chibar_0^{-1}$, 
and the result follows from Proposition~\ref{prop:nonempty-combinatorics} using $\psibar \chibar_0^{-1}$ in
place of $\psibar \chibar^{-1}$.
\end{proof}

\begin{prop}\label{prop:nonempty-combinatorics}
Given positive integers $d$ and $f$, any residue class modulo
$p^{df}-1$ {\upshape(}with the exception of the residue classes congruent to $0$
modulo $p+1$ when $d=2$, $f=1${\upshape)} is of the form $\sum_{i=0}^{df-1} x_i p^i$, where for any
$i_0 \in \Z$ the set $\{x_i : i \equiv i_0 \pmod{f}\}$ is of the form
$\{h_0,\ldots,h_{d-1}\}$ with $0 < h_i - h_{i+1} \le p$ for all $i$.
\end{prop}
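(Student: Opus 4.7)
The proof proceeds by induction on $d$, with base cases $d=1$ (trivial, since the gap condition is vacuous and $\sum_{i=0}^{f-1} x_i p^i$ takes all integer values as the digits vary over $\Z$) and $d=2$ (requiring a direct argument that pins down the stated exception).

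For $d=2$, reducing modulo $p+1$ gives $\sum x_i p^i \equiv \sum (-1)^i x_i \pmod{p+1}$. When $f=1$ this alternating sum equals $\pm(x_0 - x_1) \in \pm\{1,\ldots,p\}$, which is never divisible by $p+1$—precisely the stated exception. Conversely, for any non-excluded $N$ modulo $p^2-1$, I would choose $x_1 \in \Z$ so that $N - x_1(p+1) \in [-p,p]\setminus\{0\}$ (the interval $[-p,p]$ contains at least one integer in every residue class modulo $p+1$, and $0$ is ruled out by the hypothesis $N \not\equiv 0 \pmod{p+1}$), then set $x_0 := N - x_1 p$. For $f\ge 2$, analogous reductions modulo each cyclotomic factor of $p^{2f}-1$ show no obstruction arises: the alternating sum contribution modulo $p+1$ splits into two (or more) independent terms, and the available range of $\sum_i x_i p^i$ modulo any other prime power divisor of $p^{2f}-1$ is large enough to cover all residues by a pigeonhole argument.

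For the inductive step $d \ge 3$, the strategy is to fix the bottom digit $h_{d-1}^{(i_0)}$ in each residue class $i_0 \pmod f$ as a free anchor and then apply the inductive hypothesis to supply the remaining $d-1$ digits at the lower positions. After subtracting $h_{d-1}^{(i_0)}$ from each of the $d-1$ unknown digits in its residue class (and correspondingly adjusting the target), the remaining digits must be positive integers with consecutive sorted gaps in $[1,p]$ and smallest element at least $1$; this is essentially the $(d-1)$-case for a modified target residue modulo $p^{(d-1)f}-1$, obtained from $N$ by subtracting $\sum_{i_0} h_{d-1}^{(i_0)} (p^{i_0} + p^{i_0+f} + \cdots + p^{i_0+(d-1)f})$ and dividing by an appropriate power of $p$ (a unit modulo $p^{df}-1$). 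The $f$ anchor parameters furnish enough flexibility to arrange the reduced target to be non-exceptional when the recursive problem is $(d-1,f)=(2,1)$.

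The main obstacle will be making the reduction precise: one must carefully track how the target residue modulo $p^{df}-1$ transforms under fixing the anchors, and then verify that the anchors can be chosen to simultaneously produce the correct reduced residue and avoid the $0 \pmod{p+1}$ obstruction in the recursive call when $d=3$, $f=1$. The cleanest route is likely to handle $d=3, f=1$ as an additional base case by a direct argument exploiting the three free integer parameters together with only two gap constraints, which gives ample room to verify surjectivity onto $\Z/(p^3-1)$ directly.
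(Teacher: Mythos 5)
Your approach (induction on $d$, anchoring the smallest digit in each residue class and recursing) is genuinely different from the paper's, which instead starts from the honest base-$p$ expansion of $N$ and repeatedly applies the carry move $(x_i,x_{i+1})\mapsto(x_i+\delta p,\,x_{i+1}-\delta)$ together with a sorting argument to force the gap conditions. Your treatment of the base case $d=2$, $f=1$ is correct and cleanly explains the stated exception: indeed $x_0+x_1p=x_1(p+1)+(x_0-x_1)$ with $x_0-x_1\in[-p,p]\setminus\{0\}$, so exactly the classes not divisible by $p+1$ are hit. However, the rest of the plan has gaps that I do not think can be repaired as stated.

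The central problem is the inductive step. The congruence you must satisfy lives modulo $p^{df}-1$, but your reduction hands the remaining $(d-1)f$ digits a target modulo $p^{(d-1)f}-1$. These moduli are incompatible: knowing that some integer $M$ is congruent to $\sum_{j} y_jp^{j}$ modulo $p^{(d-1)f}-1$ for a valid $(d-1)$-configuration tells you nothing about which residues modulo $p^{df}-1$ the quantity $\sum_j y_jp^{j}$ can realise, which is what you actually need after re-inserting those digits into $(d-1)f$ of the $df$ positions. To make the induction work you would need a strengthened hypothesis controlling the achievable \emph{integers} $\sum_j y_jp^j$ (or their residues modulo the larger modulus), and you have not formulated one. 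A second, related gap is circularity: the recursive configuration must have its minimum lying in the window $(h_{d-1}^{(i_0)},\,h_{d-1}^{(i_0)}+p]$ above the anchor (the gap between $h_{d-2}$ and $h_{d-1}$ is constrained too), so the anchors constrain the recursive solution while simultaneously determining the reduced target; the inductive hypothesis, which only produces \emph{some} valid set, does not let you tune its minimum and the target residue independently. Finally, the base case $d=2$, $f\ge 2$ is asserted via ``a pigeonhole argument'' with no construction; surjectivity onto $\Z/(p^{2f}-1)$ of the constrained digit sums is precisely the kind of claim the proposition is making and cannot be waved through on the grounds that the parameter space is large. For comparison, the paper avoids all three issues by never changing the modulus: it fixes the digits of $N$ itself and only performs value-preserving carries, at the cost of a delicate case analysis on the parity of $f$ and the special case $f=1$.
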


\begin{proof} Let $N$ be the representative in the interval
  $[0,p^{df}-1)$ of our given residue class modulo $p^{df}-1$, and let
  $x_0,\ldots,x_{df-1}$ be the digits in the base $p$ expansion of
  $N$, so that certainly $N \equiv \sum_{i=0}^{df-1} x_i
  p^i \pmod{p^{df}-1}$.  We will argue by altering the $x_i$'s, preserving this congruence, until
  the condition on the sets $\{x_i : i \equiv i_0 \pmod{f}\}$ is met.
  The typical alteration will be to add $\delta p$ to
  $x_i$ and $-\delta$ to $x_{i+1}$ (with $x_{df}$ taken to mean
  $x_0$). We break into cases depending on the value of $f$.

\vskip 0.3cm 

(1) We consider first the case where $f$ is even. Restrict our  attention to the
$x_j$'s with $j\equiv 0,1 \pmod{f}$. Relabel the pairs $\{ (x_{if},
x_{if+1}) : i = 0,\dots,d-1\}$ as pairs
$(a_0,b_0),\dots,(a_{d-1},b_{d-1})$, but \emph{not necessarily in the
  same order}; instead, we choose the labeling so that
\begin{enumerate}
\item $b_0 \ge \dots \ge b_{d-1}$, and
\item if $b_{i} = b_{i+1}$ then $a_{i+1} \le a_{i}$.
\end{enumerate}
There exist integers $\delta_i \in \Z$ such that 
\begin{equation}\label{eq:a-delta}
   (a_{i+1} + \delta_{i+1} p) - (a_i + \delta_i p)  \in (0,p]
\end{equation}
for all $i \in [0,d-1)$. As $a_{i+1}-a_i \in (-p,p)$, we have $\delta_{i+1} -
\delta_i \in [0,1]$. For each $i$ we define $(a'_i,b'_i) =
(a_i+\delta_i p,b_i - \delta_i)$, thereby also altering the
corresponding $x$'s.   We claim that $a'_{i+1}-a'_i$ and
$b'_{i}-b'_{i+1}$ both lie in $(0,p]$ for all $i$.  The first of these
claims is precisely \eqref{eq:a-delta}. For the second claim, write
$$ b'_i - b'_{i+1} = (b_i - b_{i+1}) + (\delta_{i+1} - \delta_i),$$
and observe that $b_i-b_{i+1} \in [0,p-1]$ by (i), while
$\delta_{i+1}-\delta_i \in [0,1]$. It remains to note that if
 $b_i = b_{i+1}$ then $a_{i+1} \le a_i$ by (ii), implying
 $\delta_{i+1}-\delta_i=1$ by \eqref{eq:a-delta}, and so $b'_i-b'_{i+1}
 > 0$ in all cases.

The two claims together show that after making these alterations, the
sets $\{x_j : j \equiv j_0 \pmod{f}\}$ for $j_0 = 0,1$ are both of
the desired form
$\{h_0,\ldots,h_{d-1}\}$ with $0 < h_i - h_{i+1} \le p$ for all
$i$.  Iterating the above procedure for the $x_j$'s with $j \equiv
2j_0,2j_0+1 \pmod{f}\}$ for each $j_0 \in [1, f/2)$ in turn, the proposition follows in
this case. 

\vskip 0.3cm

(2) Next we suppose that $f$ is odd and $f \ge 3$. It is enough to explain
how to alter the triples $\{(x_{if},x_{if+1},x_{if+2}) : i =
0,\dots,d-1\}$, for then we can deal with the remaining consecutive
pairs of residue classes as in the case where $f$ was even.  The truth
of the proposition is certainly unchanged under multiplication of the
given residue class by a power of $p$, or equivalently, under cyclic
permutation of the $x_j$'s. We observe (trivially) that it is possible to cyclically
permute the $x_j$'s so that it is \emph{not} the case that  the pairs
$(x_{if+1},x_{if+2})$ are all of the form $(p-1,p-1)$ or $(0,0)$, with
both occurring, and we make such a cyclic permutation.

Now rewrite
the triples $(x_{if},x_{if+1},x_{if+2}) $ as $(a_0,b_0,c_0),\dots,(a_{d-1},b_{d-1},c_{d-1})$ with
the labeling so that
\begin{enumerate}
\item $c_0 \ge \dots \ge c_{d-1}$, and
\item if $c_{i} = c_{i+1}$ then $b_{i+1} \le b_{i}$.
\end{enumerate}
Conditions (i) and (ii), together with the condition on the pairs
$(x_{if+1},x_{if+2})$ 
 from the previous paragraph, imply
\begin{equation}\label{caseA}
\text{there is no value of $i$ such that } (b_i-b_{i+1},c_i-c_{i+1})=(p-1,p-1). 
  \end{equation}

There exist $\delta_i \in \Z$
such that
\begin{equation}\label{Av}
 (a_{i+1}+\delta_{i+1}p) - (a_i + \delta_i p) \in (0,p]
  \end{equation}
for all $i \in [0,d-1)$, and also $\epsilon_i \in \Z$ such that
\begin{equation}\label{Aiii}
  \lambda_i := (b_{i+1} + \epsilon_{i+1} p) - (b_i + \epsilon_i p) \in (0,p+1]
\end{equation}
for all $i \in [0,d-1)$, with
\begin{equation}\label{Aiv}
  \begin{cases}
    \lambda_i = p+1 & \implies a_{i+1} \le a_{i},\\
\lambda_i = 1 & \implies a_{i+1} > a_{i}.
  \end{cases}
\end{equation}
As in case (1) we have $\delta_{i+1}-\delta_i \in [0,1]$ for all $i$;
similarly we have $\epsilon_{i+1}-\epsilon_i \in [0,2]$, with
\begin{equation}\label{Astar}
\epsilon_{i+1} - \epsilon_i = 2 \qquad \text{iff} \qquad (b_i,b_{i+1})
= (p-1,0)  \text{ and } a_{i+1} \le a_{i} .
\end{equation}

For each $i$ we define $$(a'_i,b'_i,c'_i) = (a_i+\delta_i p, b_i +
\epsilon_i p - \delta_i, c_i - \epsilon_i),$$ thereby altering the
corresponding $x$'s, and claim that we then have $a'_{i+1}-a'_i$,
$b'_{i+1}-b'_i$, $c'_i - c'_{i+1}$ in $(0,p]$ for all $i$. Case (2)
will be complete once we have proved this claim.

That $a'_{i+1}-a'_i \in (0,p]$ is immediate from \eqref{Av}. Next, we
have 
$$b'_{i+1}-b'_i = \lambda_i - (\delta_{i+1}-\delta_i) $$
with the first term on the right-hand side in $(0,p+1]$ and the second
term in $[0,1]$. If $\lambda_i = p+1$ then $a_{i+1} \le a_{i}$ by
\eqref{Aiv}, which implies $\delta_{i+1}-\delta_i = 1$ by \eqref{Av};
similarly if $\lambda_i = 1$ then $a_{i+1} > a_i$ and
$\delta_{i+1}-\delta_i=0$. Thus in all cases we have $b'_{i+1}-b'_i
\in (0,p]$, as desired.

Finally, $$ c'_i - c'_{i+1} = (c_i - c_{i+1}) +
(\epsilon_{i+1}-\epsilon_i) $$
with the first term on the right-hand side in $[0,p-1]$ and the second
term in $[0,2]$. If $c_i=c_{i+1}$ then $b_{i+1} \le b_i$, implying
$\epsilon_{i+1} - \epsilon_i > 0$; thus $c'_i - c'_{i+1} > 0$ in all
cases. Suppose on the other hand that $c_i - c_{i+1} = p-1$. Then by \eqref{caseA} we have $b_i - b_{i+1} \neq
p-1$, and so $\epsilon_{i+1}-\epsilon_i \neq 2$ by \eqref{Astar}; thus
$c'_i - c'_{i+1} \le p$ in all cases, and case (2) is complete.

\vskip 0.3cm

(3) Finally we turn to  the case $f=1$. As usual, we take the $x_i$'s
at the outset to be
the digits in the base $p$ expansion of $N$.
  As in part (2) we will make use of the fact that the truth of the proposition
  is unchanged when multiplying
  the given residue class by a power of $p$ (i.e.\ under cyclic
  permutation of the $x_i$'s), as well as when adding any
  multiple of $(p^d-1)/(p-1)$ to the residue class (i.e.\ adding the same constant to
  each $x_i$).

  We first dispense with the case where $d$ is even and $N$ is divisible by
  $(p^d-1)/(p-1)$. By hypothesis we have $d \ge 4$ (recall that in the
  case $d=2$, $f=1$ the residue classes divisible by $p+1$ are
  excluded from the statement of the proposition).  Subtracting the
  appropriate multiple of $(p^d-1)/(p-1)$ we may suppose that
  $N=0$. Then writing $d = 2m+2$ with $m \ge 1$, we alter the $x_i$'s
  by replacing them with
$$(x'_0,\ldots,x'_{d-1}) := (p,2p-1,p-2,-1,2p,-2,\ldots,mp,-m).$$

In the remaining cases, we can reduce to one of the following three
situations.
\begin{enumerate}
\item[(I)] $d$ is odd, each $x_i$ lies in $[0,p-1]$, and $x_{d-1} = \max_i x_i$.
\item[(II)] $d$ is even, each $x_i$ lies in $[0,p-1]$, $x_1 = \max_i
  x_i > 0$, and $x_i=0$ for all even $i$.
\item[(III)] $d$ is even, each $x_i$ lies in $[0,p-1]$ except $x_1 =
  p$, and $x_i$ is non-zero for some odd $i > 1$.
\end{enumerate}
To see this, argue as follows.  If $d$ is odd, cyclically permute to assume that $x_{d-1}$
is maximal to put ourselves in case (I). Now suppose $d$ is even, so
that $(p^d-1)/(p-1) \nmid N$ and not all the $x_i$'s are equal.
Subtracting $\min_i x_i$ from each $x_i$, we can further suppose that
some $x_i$ is $0$.  Since not all $x_i$'s are zero, we can suppose (after
cyclically permuting if necessary) that $x_1 = 0$ and $x_2 > 0$.   If
$x_i$ is non-zero for some odd $i > 1$, then we add $p$ to $x_1$ and
$-1$ to $x_2$ to put ourselves in case (III).  Otherwise $x_i=0$ for all
odd $i$ but not for all even $i$.  After cyclically permuting so that
$x_1 = \max_i x_i$, we have $x_{2j} = 0$ for all $j$ and are in case (II).  This
completes the reduction.

Write $d = 2m$ if
$d$ is even and $d = 2m+1$ if $d$ is odd.  
We relabel the pairs of variables $(x_0,x_1),\ldots,(x_{2m-2},x_{2m-1})$ as
$(a_0,b_0),\ldots,(a_{m-1},b_{m-1})$, ordered as usual so that
\begin{enumerate}
\item $b_0 \ge \cdots \ge b_{m-1}$ and
\item if $b_i = b_{i+1}$ then $a_{i+1} \le a_{i}$.
\end{enumerate}
Note that when $d$ is odd, $x_{d-1}$ is not relabeled.  When $d$ is
even, we can (and do) take $(a_0,b_0) = (x_0,x_1)$:\ in case
(II) this is a consequence of the fact that $a_i=0$ for all $i$,
whereas in case (III) it is automatic.

There exist unique integers $\delta_i \in \Z$ such that  $\delta_0 = 1$ and 
\begin{equation}\label{eq:3-delta}
   (a_{i+1} + \delta_{i+1} p) - (a_i + \delta_i p)  \in (0,p]
\end{equation}
for all $i \in [0,m-1)$.  For each $i$ we define $(a'_i,b'_i) = (a_i + \delta_i p, b_i - \delta_i)$, thereby
altering the corresponding $x$'s.  (Note when $d$ is odd that
$x_{d-1}$ is unchanged.) It follows almost exactly as in
(1) that we have $a'_{i+1}-a'_i$, $b'_i-b'_{i+1} \in (0,p]$ for
all $i$; the only modification required is to note that in case (III),
although $b_0=p$ we still have $b_0 - b_1 \in [0,p-1]$ because of the
condition that $x_i$ is non-zero for some odd $i > 1$.

To complete the proof,  it will suffice to show that
$$
\begin{cases}
  a'_0-b'_0 \in (0,p] & \text{if $d$ is even}, \\
  a'_0-x_{d-1},\ x_{d-1}-b'_0 \in (0,p] & \text{if $d$ is odd}.
\end{cases}
$$

First suppose that $d$ is even. Since $\delta_0=1$ we have
$$a'_0 - b'_0 = (x_0-x_1) + (p+1).$$
In case (II) we have $x_0-x_1 \in (-p,0)$, while in case (III) we have
$x_0-x_1 \in [-p,0)$; in either case $a'_0-b'_0 \in (0,p]$.

Finally suppose that $d$ is odd. We have $a'_0 = a_0 + p$ and
$b'_0=b_0-1$. Since $a_0 \le x_{d-1}$ and both are in the range
$[0,p-1]$, we have $a'_0 - x_{d-1} = p - (x_{d-1} - a_0) \in
(0,p]$. Similarly $b_0 \le x_{d-1}$ and both are in the range
$[0,p-1]$, so that $x_{d-1} - b'_0 = (x_{d-1}-b_0) + 1 \in
(0,p]$. This completes the proof.
\end{proof}

\bibliography{eghs}

\newcommand{\etalchar}[1]{$^{#1}$}
\renewcommand{\MR}[1]{}\renewcommand{\MR}[1]{}
\providecommand{\bysame}{\leavevmode\hbox to3em{\hrulefill}\thinspace}
\providecommand{\MR}{\relax\ifhmode\unskip\space\fi MR }
\providecommand{\MRhref}[2]{%
  \href{http://www.ams.org/mathscinet-getitem?mr=#1}{#2}
}
\providecommand{\href}[2]{#2}
\begin{thebibliography}{BLGGT14}

\bibitem[ADP02]{bib:ADP}
Avner Ash, Darrin Doud, and David Pollack, \emph{Galois representations with
  conjectural connections to arithmetic cohomology}, Duke Math. J. \textbf{112}
  (2002), no.~3, 521--579. \MR{MR1896473 (2003g:11055)}

\bibitem[And80]{andersen}
Henning~Haahr Andersen, \emph{The strong linkage principle}, J. Reine Angew.
  Math. \textbf{315} (1980), 53--59. \MR{MR564523 (81b:14006)}

\bibitem[APS04]{MR2103328}
Avner Ash, David Pollack, and Dayna Soares, \emph{{${\rm SL}_3(\mathbb
  F_2)$}-extensions of {$\mathbb Q$} and arithmetic cohomology modulo 2},
  Experiment. Math. \textbf{13} (2004), no.~3, 298--307. \MR{2103328
  (2005h:11107)}

\bibitem[AS86]{ashstevens}
Avner Ash and Glenn Stevens, \emph{Modular forms in characteristic {$l$} and
  special values of their {$L$}-functions}, Duke Math. J. \textbf{53} (1986),
  no.~3, 849--868. \MR{MR860675 (88h:11036)}

\bibitem[AS00]{bib:ASinn}
Avner Ash and Warren Sinnott, \emph{An analogue of {S}erre's conjecture for
  {G}alois representations and {H}ecke eigenclasses in the mod {$p$} cohomology
  of {${\rm GL}(n,{\bf Z})$}}, Duke Math. J. \textbf{105} (2000), no.~1, 1--24.
  \MR{MR1788040 (2001g:11081)}

\bibitem[BD14]{breuildiamond}
Christophe Breuil and Fred Diamond, \emph{Formes modulaires de {H}ilbert modulo
  $p$ et valeurs d'extensions entre caract\`{e}res galoisiens}, Ann. Sci. \'Ec.
  Norm. Sup\'er. (4) \textbf{47} (2014), no.~5, 905--974.

\bibitem[BDJ10]{bdj}
Kevin Buzzard, Fred Diamond, and Frazer Jarvis, \emph{On {S}erre's conjecture
  for mod {$l$} {G}alois representations over totally real fields}, Duke Math.
  J. \textbf{155} (2010), no.~1, 105--161.

\bibitem[Ber10]{ber05}
Laurent Berger, \emph{Repr\'esentations modulaires de {${\rm GL}\sb 2(\mathbf
  Q\sb p)$} et repr\'esentations galoisiennes de dimension 2}, Ast\'erisque
  (2010), no.~330, 263--279. \MR{2642408 (2011g:11104)}

\bibitem[BG15]{BuzzardGee}
Kevin Buzzard and Toby Gee, \emph{The conjectural connections between
  automorphic representations and {G}alois representations}, Automorphic
  {F}orms and {G}alois {R}epresentations, London Math. Soc. Lecture Note Ser.
  414 (2015), 135--187.

\bibitem[BLGG13]{blggU2}
Thomas Barnet-Lamb, Toby Gee, and David Geraghty, \emph{Serre weights for rank
  two unitary groups}, Math. Ann. \textbf{356} (2013), no.~4, 1551--1598.
  \MR{3072811}

\bibitem[BLGG14]{blggUn}
\bysame, \emph{Serre weights for {$U(n)$}}, J.\ Reine Angew.\ Math.\ (to
  appear), 2014.

\bibitem[BLGGT14]{BLGGT}
Thomas Barnet-Lamb, Toby Gee, David Geraghty, and Richard Taylor,
  \emph{Potential automorphy and change of weight}, Ann. of Math. (2)
  \textbf{179} (2014), no.~2, 501--609. \MR{3152941}

\bibitem[BM02]{MR1944572}
Christophe Breuil and Ariane M{\'e}zard, \emph{Multiplicit\'es modulaires et
  repr\'esentations de {${\rm GL}_2({\bf Z}_p)$} et de {${\rm
  Gal}(\overline{\bf Q}_p/{\bf Q}_p)$} en {$l=p$}}, Duke Math. J. \textbf{115}
  (2002), no.~2, 205--310, With an appendix by Guy Henniart. \MR{MR1944572
  (2004i:11052)}

\bibitem[BM14]{BreuilMezardRaffinee}
\bysame, \emph{Multiplicit\'es modulaires raffin\'ees}, Bull. Soc. Math. France
  \textbf{142} (2014), no.~1, 127--175. \MR{3248725}

\bibitem[Bou02]{MR1890629}
Nicolas Bourbaki, \emph{Lie groups and {L}ie algebras. {C}hapters 4--6},
  Elements of Mathematics (Berlin), Springer-Verlag, Berlin, 2002, Translated
  from the 1968 French original by Andrew Pressley. \MR{1890629 (2003a:17001)}

\bibitem[Bre10]{MR2827792}
Christophe Breuil, \emph{The emerging {$p$}-adic {L}anglands programme},
  Proceedings of the {I}nternational {C}ongress of {M}athematicians. {V}olume
  {II}, Hindustan Book Agency, New Delhi, 2010, pp.~203--230. \MR{2827792}

\bibitem[BV13]{MR3028790}
Nicolas Bergeron and Akshay Venkatesh, \emph{The asymptotic growth of torsion
  homology for arithmetic groups}, J. Inst. Math. Jussieu \textbf{12} (2013),
  no.~2, 391--447. \MR{3028790}

\bibitem[Cal12]{frankII}
Frank Calegari, \emph{Even {G}alois {R}epresentations and the
  {F}ontaine-{M}azur conjecture {II}}, J. Amer. Math. Soc. \textbf{25} (2012),
  no.~2, 533--554.

\bibitem[CE12]{MR2905536}
Frank Calegari and Matthew Emerton, \emph{Completed cohomology---a survey},
  Non-abelian fundamental groups and {I}wasawa theory, London Math. Soc.
  Lecture Note Ser., vol. 393, Cambridge Univ. Press, Cambridge, 2012,
  pp.~239--257. \MR{2905536}

\bibitem[CEG{\etalchar{+}}16]{EGPatching}
Ana Caraiani, Matthew Emerton, Toby Gee, David Geraghty, Vytautas
  Pa\v~sk\=unas, and Sug~Woo Shin, \emph{Patching and the {$p$}-adic local
  {L}anglands correspondence}, Camb. J. Math. \textbf{4} (2016), no.~2,
  197--287. \MR{3529394}

\bibitem[CEGM17]{cegm}
Frank Calegari, Matthew Emerton, Toby Gee, and Lambros Mavrides, \emph{Explicit
  {S}erre weights for two-dimensional {G}alois representations}, Compositio
  Mathematica \textbf{153} (2017), no.~9, 1893--1907.

\bibitem[CEGS]{CEGSKisinwithdd}
Ana Caraiani, Matthew Emerton, Toby Gee, and David Savitt, \emph{Moduli spaces
  of {K}isin modules with descent data}, in preparation.

\bibitem[CHT08]{cht}
Laurent Clozel, Michael Harris, and Richard Taylor, \emph{Automorphy for some
  {$l$}-adic lifts of automorphic mod {$l$} {G}alois representations}, Publ.
  Math. Inst. Hautes \'Etudes Sci. (2008), no.~108, 1--181, With Appendix A,
  summarizing unpublished work of Russ Mann, and Appendix B by Marie-France
  Vign{\'e}ras. \MR{MR2470687}

\bibitem[Col10]{MR2642409}
Pierre Colmez, \emph{Repr\'esentations de {${\rm GL}_2(\bold Q_p)$} et
  {$(\phi,\Gamma)$}-modules}, Ast\'erisque (2010), no.~330, 281--509.
  \MR{2642409 (2011j:11224)}

\bibitem[Con11]{conradlifting}
Brian Conrad, \emph{Lifting global representations with local properties},
  preprint, 2011.

\bibitem[DDR16]{ddr}
Lassina Demb\'el\'e, Fred Diamond, and David~P. Roberts, \emph{Serre weights
  and wild ramification in two-dimensional {G}alois representations}, Forum
  Math. Sigma \textbf{4} (2016), e33, 49. \MR{3589336}

\bibitem[DL76]{bib:DL}
P.~Deligne and G.~Lusztig, \emph{Representations of reductive groups over
  finite fields}, Ann. of Math. (2) \textbf{103} (1976), no.~1, 103--161.
  \MR{MR0393266 (52 \#14076)}

\bibitem[Dou07]{doud-supersingular}
Darrin Doud, \emph{Supersingular {G}alois representations and a generalization
  of a conjecture of {S}erre}, Experiment. Math. \textbf{16} (2007), no.~1,
  119--128. \MR{2312982 (2007m:11076)}

\bibitem[DS87]{DS}
Stephen~R. Doty and John~B. Sullivan, \emph{Filtration patterns for
  representations of algebraic groups and their {F}robenius kernels}, Math. Z.
  \textbf{195} (1987), no.~3, 391--407. \MR{895309 (88i:20064)}

\bibitem[EG]{emertongeepicture}
Matthew Emerton and Toby Gee, \emph{Moduli spaces of $p$-adic and mod $p$
  representations of {G}alois groups of $p$-adic local fields}, in preparation.

\bibitem[EG14]{emertongeerefinedBM}
\bysame, \emph{A geometric perspective on the {B}reuil-{M}\'ezard conjecture},
  J. Inst. Math. Jussieu \textbf{13} (2014), no.~1, 183--223. \MR{3134019}

\bibitem[EG15]{emertongeeproper}
M.~{Emerton} and T.~{Gee}, \emph{{''Scheme-theoretic images'' of morphisms of
  stacks}}, ArXiv e-prints (2015), 116 pages.

\bibitem[EGH13]{MR3079258}
Matthew Emerton, Toby Gee, and Florian Herzig, \emph{Weight cycling and
  {S}erre-type conjectures for unitary groups}, Duke Math. J. \textbf{162}
  (2013), no.~9, 1649--1722. \MR{3079258}

\bibitem[EGS15]{emertongeesavitt}
Matthew Emerton, Toby Gee, and David Savitt, \emph{Lattices in the cohomology
  of {S}himura curves}, Invent. Math. \textbf{200} (2015), no.~1, 1--96.
  \MR{3323575}

\bibitem[Eme10]{emerton2010local}
Matthew Emerton, \emph{Local-global compatibility in the $p$-adic {L}anglands
  programme for $\operatorname{GL}_2/\mathbb{Q}$}, preprint, 2010.

\bibitem[Eme14]{emertonICM}
\bysame, \emph{Completed cohomology and the $p$-adic {L}anglands program},
  Proceedings of the 2014 ICM, Volume II (2014), 319--342.

\bibitem[Fie12]{MR2999126}
Peter Fiebig, \emph{An upper bound on the exceptional characteristics for
  {L}usztig's character formula}, J. Reine Angew. Math. \textbf{673} (2012),
  1--31. \MR{2999126}

\bibitem[Gee06]{MR2280776}
Toby Gee, \emph{A modularity lifting theorem for weight two {H}ilbert modular
  forms}, Math. Res. Lett. \textbf{13} (2006), no.~5-6, 805--811. \MR{MR2280776
  (2007m:11065)}

\bibitem[Gee11]{gee061}
\bysame, \emph{Automorphic lifts of prescribed types}, Math. Ann. \textbf{350}
  (2011), no.~1, 107--144. \MR{2785764 (2012c:11118)}

\bibitem[GG12]{gg}
Toby Gee and David Geraghty, \emph{Companion forms for unitary and symplectic
  groups}, Duke Math. J. \textbf{161} (2012), no.~2, 247--303. \MR{2876931}

\bibitem[GG15]{gee-geraghty-quaternion}
\bysame, \emph{The {B}reuil-{M}\'ezard conjecture for quaternion algebras},
  Ann. Inst. Fourier (Grenoble) \textbf{65} (2015), no.~4, 1557--1575.
  \MR{3449190}

\bibitem[GHLS15]{gs-fllifts}
T.~{Gee}, F.~{Herzig}, T.~{Liu}, and D.~{Savitt}, \emph{{Potentially
  crystalline lifts of certain prescribed types}}, ArXiv e-prints (2015), 22
  pages.

\bibitem[GK14]{geekisin}
Toby Gee and Mark Kisin, \emph{The {B}reuil--{M}\'ezard conjecture for
  potentially {B}arsotti--{T}ate representations}, Forum Math. Pi \textbf{2}
  (2014), e1 (56 pages). \MR{3292675}

\bibitem[GLS14]{gls12}
Toby Gee, Tong Liu, and David Savitt, \emph{The {B}uzzard-{D}iamond-{J}arvis
  conjecture for unitary groups}, J. Amer. Math. Soc. \textbf{27} (2014),
  no.~2, 389--435. \MR{3164985}

\bibitem[GLS15]{gls13}
\bysame, \emph{The weight part of {S}erre's conjecture for {$\GL(2)$}}, Forum
  Math. Pi \textbf{3} (2015), e2, 52. \MR{3324938}

\bibitem[Gro98a]{MR1643625}
Benedict~H. Gross, \emph{Modular forms {$\pmod p$} and {G}alois
  representations}, Internat. Math. Res. Notices (1998), no.~16, 865--875.
  \MR{1643625 (99j:11058)}

\bibitem[Gro98b]{MR1696481}
\bysame, \emph{On the {S}atake isomorphism}, Galois representations in
  arithmetic algebraic geometry ({D}urham, 1996), London Math. Soc. Lecture
  Note Ser., vol. 254, Cambridge Univ. Press, Cambridge, 1998, pp.~223--237.
  \MR{1696481 (2000e:22008)}

\bibitem[Gro99]{MR1729443}
\bysame, \emph{Algebraic modular forms}, Israel J. Math. \textbf{113} (1999),
  61--93. \MR{1729443 (2001b:11037)}

\bibitem[Gro07]{grossodd}
\bysame, \emph{Odd {G}alois representations}, preprint, 2007.

\bibitem[GS11]{gee-savitt-quaternion}
Toby Gee and David Savitt, \emph{Serre weights for quaternion algebras},
  Compos. Math. \textbf{147} (2011), no.~4, 1059--1086. \MR{2822861
  (2012h:11070)}

\bibitem[Her09]{herzigthesis}
Florian Herzig, \emph{The weight in a {S}erre-type conjecture for tame
  $n$-dimensional {G}alois representations}, Duke Math. J. \textbf{149} (2009),
  no.~1, 37--116.

\bibitem[HLM17]{HerzigMorra}
Florian Herzig, Daniel Le, and Stefano Morra, \emph{On mod {$p$} local-global
  compatibility for {${\rm GL}_3$} in the ordinary case}, Compos. Math. (to
  appear), 2017.

\bibitem[HT01]{ht}
Michael Harris and Richard Taylor, \emph{The geometry and cohomology of some
  simple {S}himura varieties}, Annals of Mathematics Studies, vol. 151,
  Princeton University Press, Princeton, NJ, 2001, With an appendix by Vladimir
  G. Berkovich. \MR{MR1876802 (2002m:11050)}

\bibitem[HT13]{herzigtilouine}
Florian Herzig and Jacques Tilouine, \emph{Conjecture de type de {S}erre et
  formes compagnons pour {$\rm GSp_4$}}, J. Reine Angew. Math. \textbf{676}
  (2013), 1--32. \MR{3028753}

\bibitem[HT15]{HuTan}
Yongquan Hu and Fucheng Tan, \emph{The {B}reuil-{M}\'ezard conjecture for
  non-scalar split residual representations}, Ann. Sci. \'Ec. Norm. Sup\'er.
  (4) \textbf{48} (2015), no.~6, 1383--1421. \MR{3429471}

\bibitem[Hum06]{bib:Hum}
James~E. Humphreys, \emph{Modular representations of finite groups of {L}ie
  type}, London Mathematical Society Lecture Note Series, vol. 326, Cambridge
  University Press, Cambridge, 2006. \MR{MR2199819 (2007f:20023)}

\bibitem[Jan81]{bib:Jan-DL}
Jens~Carsten Jantzen, \emph{Zur {R}eduktion modulo {$p$} der {C}haraktere von
  {D}eligne und {L}usztig}, J. Algebra \textbf{70} (1981), no.~2, 452--474.
  \MR{MR623819 (82m:20045)}

\bibitem[Jan87]{MR0933356}
J.~C. Jantzen, \emph{Representations of {C}hevalley groups in their own
  characteristic}, The {A}rcata {C}onference on {R}epresentations of {F}inite
  {G}roups ({A}rcata, {C}alif., 1986), Proc. Sympos. Pure Math., vol.~47, Amer.
  Math. Soc., Providence, RI, 1987, pp.~127--146. \MR{933356 (89g:20076)}

\bibitem[Jan03]{MR2015057}
Jens~Carsten Jantzen, \emph{Representations of algebraic groups}, second ed.,
  Mathematical Surveys and Monographs, vol. 107, American Mathematical Society,
  Providence, RI, 2003. \MR{MR2015057 (2004h:20061)}

\bibitem[Kis08]{kisindefrings}
Mark Kisin, \emph{Potentially semi-stable deformation rings}, J. Amer. Math.
  Soc. \textbf{21} (2008), no.~2, 513--546. \MR{MR2373358 (2009c:11194)}

\bibitem[Kis09a]{kisinfmc}
\bysame, \emph{The {F}ontaine-{M}azur conjecture for {${\rm GL}_2$}}, J. Amer.
  Math. Soc. \textbf{22} (2009), no.~3, 641--690. \MR{MR2505297}

\bibitem[Kis09b]{MR2551765}
\bysame, \emph{Modularity of 2-adic {B}arsotti-{T}ate representations}, Invent.
  Math. \textbf{178} (2009), no.~3, 587--634. \MR{2551765}

\bibitem[Kis09c]{kis04}
\bysame, \emph{Moduli of finite flat group schemes, and modularity}, Annals of
  Math.(2) \textbf{170} (2009), no.~3, 1085--1180. \MR{MR2600871}

\bibitem[Kis10]{kisinICM}
\bysame, \emph{The structure of potentially semi-stable deformation rings},
  Proceedings of the {I}nternational {C}ongress of {M}athematicians. {V}olume
  {II} (New Delhi), Hindustan Book Agency, 2010, pp.~294--311. \MR{2827797}

\bibitem[Kot84]{MR757954}
Robert~E. Kottwitz, \emph{Stable trace formula: cuspidal tempered terms}, Duke
  Math. J. \textbf{51} (1984), no.~3, 611--650. \MR{MR757954 (85m:11080)}

\bibitem[KW09a]{MR2551763}
Chandrashekhar Khare and Jean-Pierre Wintenberger, \emph{Serre's modularity
  conjecture. {I}}, Invent. Math. \textbf{178} (2009), no.~3, 485--504.
  \MR{2551763}

\bibitem[KW09b]{MR2551764}
\bysame, \emph{Serre's modularity conjecture. {II}}, Invent. Math. \textbf{178}
  (2009), no.~3, 505--586. \MR{2551764}

\bibitem[LLHL16]{LLL}
Daniel Le, Bao~V. Le~Hung, and Brandon Levin, \emph{Weight elimination in
  {S}erre type conjectures}, preprint, 2016.

\bibitem[LLHLM15]{LLLM}
Daniel Le, Bao~V. Le~Hung, Brandon Levin, and Stefano Morra, \emph{Potentially
  crystalline deformation rings and {S}erre weight conjectures:\ {s}hapes and
  {s}hadows}, preprint, 2015.

\bibitem[LLHLM16]{LLLM2}
\bysame, \emph{{S}erre weight conjectures and {B}reuil's lattice conjecture in
  dimension three}, preprint, 2016.

\bibitem[Mat89]{MR1011461}
Hideyuki Matsumura, \emph{Commutative ring theory}, second ed., Cambridge
  Studies in Advanced Mathematics, vol.~8, Cambridge University Press,
  Cambridge, 1989, Translated from the Japanese by M. Reid. \MR{1011461
  (90i:13001)}

\bibitem[MP17]{MorraPark}
Stefano Morra and Chol Park, \emph{{Serre weights for three dimensional
  ordinary Galois representations}}, J. Lond. Math. Soc. (to appear), 2017.

\bibitem[Mul13]{MullerThesis}
Alain Muller, \emph{Rel\`evements cristallins de repr\'esentations
  galoisiennes}, Universit\'e de Strasbourg Ph.D.\ thesis, 2013.

\bibitem[Pa{\v{s}}15]{PaskunasBM}
Vytautas Pa{\v{s}}k{\=u}nas, \emph{On the {B}reuil--{M}\'ezard conjecture},
  Duke Math. J. \textbf{164} (2015), no.~2, 297--359. \MR{3306557}

\bibitem[Pa{\v{s}}16]{PaskunasBMatTwo}
\bysame, \emph{On 2-dimensional 2-adic {G}alois representations of local and
  global fields}, Algebra Number Theory \textbf{10} (2016), no.~6, 1301--1358.
  \MR{3544298}

\bibitem[San14]{sandermultiplicities}
Fabian Sander, \emph{Hilbert-{S}amuel multiplicities of certain deformation
  rings}, Math. Res. Lett. \textbf{21} (2014), no.~3, 605--615. \MR{3272032}

\bibitem[San16]{SanderScalar}
\bysame, \emph{A local proof of the {B}reuil-{M}\'ezard conjecture in the
  scalar semi-simplification case}, J. Lond. Math. Soc. (2) \textbf{94} (2016),
  no.~2, 447--461. \MR{3556448}

\bibitem[Sch08]{MR2430440}
Michael~M. Schein, \emph{Weights in {S}erre's conjecture for {H}ilbert modular
  forms: the ramified case}, Israel J. Math. \textbf{166} (2008), 369--391.
  \MR{2430440 (2009e:11090)}

\bibitem[SD73]{MR0406931}
H.~P.~F. Swinnerton-Dyer, \emph{On {$l$}-adic representations and congruences
  for coefficients of modular forms}, Modular functions of one variable, {III}
  ({P}roc. {I}nternat. {S}ummer {S}chool, {U}niv. {A}ntwerp, 1972), Springer,
  Berlin, 1973, pp.~1--55. Lecture Notes in Math., Vol. 350. \MR{0406931 (53
  \#10717a)}

\bibitem[Ser75]{MR0382173}
Jean-Pierre Serre, \emph{Valeurs propres des op\'erateurs de {H}ecke modulo
  {$l$}}, Journ\'ees {A}rithm\'etiques de {B}ordeaux ({C}onf., {U}niv.
  {B}ordeaux, 1974), Soc. Math. France, Paris, 1975, pp.~109--117.
  Ast\'erisque, Nos. 24--25. \MR{0382173 (52 \#3061)}

\bibitem[Ser79]{MR563476}
\bysame, \emph{Groupes alg\'ebriques associ\'es aux modules de {H}odge-{T}ate},
  Journ\'ees de {G}\'eom\'etrie {A}lg\'ebrique de {R}ennes. ({R}ennes, 1978),
  {V}ol. {III}, Ast\'erisque, vol.~65, Soc. Math. France, Paris, 1979,
  pp.~155--188. \MR{563476 (81j:14027)}

\bibitem[Ser86]{MR926691}
\bysame, \emph{{\OE}uvres. {V}ol. {III}}, Springer-Verlag, Berlin, 1986,
  1972--1984. \MR{926691 (89h:01109c)}

\bibitem[Ser87]{MR885783}
\bysame, \emph{Sur les repr\'esentations modulaires de degr\'e {$2$} de {${\rm
  Gal}(\overline{\bf Q}/{\bf Q})$}}, Duke Math. J. \textbf{54} (1987), no.~1,
  179--230. \MR{MR885783 (88g:11022)}

\bibitem[Shi11]{shin}
Sug~Woo Shin, \emph{Galois representations arising from some compact {S}himura
  varieties}, Ann. of Math. (2) \textbf{173} (2011), no.~3, 1645--1741.
  \MR{2800722}

\bibitem[Sti10]{MR2589844}
Jakob Stix, \emph{Trading degree for dimension in the section conjecture: the
  non-abelian {S}hapiro lemma}, Math. J. Okayama Univ. \textbf{52} (2010),
  29--43. \MR{2589844 (2011e:14046)}

\bibitem[SZ99]{MR1728541}
P.~Schneider and E.-W. Zink, \emph{{$K$}-types for the tempered components of a
  {$p$}-adic general linear group}, J. Reine Angew. Math. \textbf{517} (1999),
  161--208, With an appendix by Schneider and U. Stuhler. \MR{1728541
  (2001f:22029)}

\bibitem[Tat94]{MR1299740}
John Tate, \emph{The non-existence of certain {G}alois extensions of {${\bf
  Q}$} unramified outside {$2$}}, Arithmetic geometry ({T}empe, {AZ}, 1993),
  Contemp. Math., vol. 174, Amer. Math. Soc., Providence, RI, 1994,
  pp.~153--156. \MR{1299740 (95i:11132)}

\bibitem[Tor12]{MR2887610}
Rebecca Torrey, \emph{On {S}erre's conjecture over imaginary quadratic fields},
  J. Number Theory \textbf{132} (2012), no.~4, 637--656. \MR{2887610}

\bibitem[TW95]{Taylor--Wiles}
Richard Taylor and Andrew Wiles, \emph{Ring-theoretic properties of certain
  {H}ecke algebras}, Ann. of Math. (1995), no.~141, 553--572.

\bibitem[Wan87]{wang}
Jian~Pan Wang, \emph{Partial orderings on affine {W}eyl groups}, J. East China
  Norm. Univ. Natur. Sci. Ed. (1987), no.~4, 15--25. \MR{MR980127 (90a:20084)}

\bibitem[Ye86]{ye}
Jia~Chen Ye, \emph{A theorem on the geometry of alcoves}, Tongji Daxue Xuebao
  \textbf{14} (1986), no.~1, 57--64. \MR{MR858820 (88c:20052)}

\end{thebibliography}
\bibliographystyle{amsalpha} 
\end{document}